\renewcommand{\arraystretch}{1.2}
\newcommand\nc\newcommand
\renewcommand
\nc\AND{\qquad\text{and}\qquad}
\nc\ANd{\quad\text{and}\quad}
\nc\COMMA{,\qquad}
\nc\COMMa{,\quad}
\rnc\iff{\ \Leftrightarrow\ }
\nc\IFf{\quad \Leftrightarrow\quad }
\nc\Iff{\ \ \Leftrightarrow\ \ }
\nc\IFF{\qquad \Leftrightarrow\qquad }
\rnc\implies{\ \Rightarrow\ }
\nc\IMPLIES{\qquad \Rightarrow\qquad }
\nc\set[2]{\{#1:#2\}}
\nc\bigset[2]{\big\{#1:#2\big\}}
\nc\bit{\begin{itemize}[label=\textbullet, leftmargin=5mm]}
\nc\eit{\end{itemize}}
\nc\ben{\begin{thmenumerate}}
\nc\bena{\begin{enumerate}[label=\textup{(\alph*)},leftmargin=10mm]}
\nc\een{\end{thmenumerate}}
\nc\eena{\end{enumerate}}
\nc\pf{\begin{proof}}
\nc\epf{\end{proof}}
\nc\pfclaim{\begin{quote}\begin{proof}}
\nc\epfclaim{\end{proof}\end{quote}}
\nc\epfres{\hfill\qed}
\nc\epfreseq{\tag*{\qed}}
\let\oldproofname=\proofname
\renewcommand{\proofname}{\rm\bf{\oldproofname}}
\renewcommand{\H}{\mathscr H}
\renewcommand{\L}{\mathscr L}
\newcommand{\R}{\mathscr R}
\newcommand{\D}{\mathscr D}
\newcommand{\J}{\mathscr J}
\newcommand{\K}{\mathscr K}
\nc\rH{\mathrel{\H}}
\nc\rL{\mathrel{\L}}
\nc\rR{\mathrel{\R}}
\nc\rD{\mathrel{\D}}
\nc\rJ{\mathrel{\J}}
\nc\rK{\mathrel{\K}}
\nc\rsi{\mathrel{\si}}
\renewcommand{\P}{\mathcal P} 
\newcommand{\Ptw}[1]{\mathcal{P}_{#1}^\Phi}
\rnc\S{\mathcal S}
\nc\A{\mathcal A}
\nc\F{\mathcal F}
\nc\I{\mathcal I}
\nc\C{\mathcal C}
\nc\LL{\mathcal L}
\newcommand{\T}{\mathcal T}
\newcommand{\PT}{\mathcal P\mathcal T} 
\renewcommand{\I}{\mathcal I}  
\newcommand{\id}{\operatorname{id}}
\newcommand{\ba}{\boldsymbol{a}}
\newcommand{\bb}{\boldsymbol{b}}
\newcommand{\bc}{\boldsymbol{c}}
\newcommand{\bd}{\boldsymbol{d}}
\nc\bs[1]{\boldsymbol{#1}}
\newcommand{\Float}{\operatorname{\Phi}}
\newcommand{\multw}{}
\newcommand{\multwk}[1]{\cdot}
\nc\Proj[1]{\overline{#1}}
\nc\norm[1]{\langle\!\langle#1\rangle\!\rangle}
\newcommand{\N}{\mathbb{N}}
\nc\bn{{\bf n}}
\nc\bm{{\bf m}}
\nc\bnz{{\bf n}_0}
\nc\bmz{{\bf m}_0}
\nc\bdz{{\bf d}_0}
\nc\bq{{\bf q}}
\nc\zero{{\bf 0}}
\newcommand{\coker}{\operatorname{coker}}
\newcommand{\dom}{\operatorname{dom}} 
\newcommand{\codom}{\operatorname{codom}}
\newcommand{\rank}{\operatorname{rank}}
\newcommand{\per}{\operatorname{per}}
\newcommand{\mumin}{\operatorname{\mu in}}
\DeclareRobustCommand{\stirling}{\genfrac\{\}{0pt}{}}
\newcommand{\Cong}{\text{\sf Cong}}
\newcommand{\muup}{\mu^\uparrow}
\newcommand{\mudown}{\mu^\downarrow}
\newcommand{\cg}{\text{\sf cg}}
\newcommand{\cgx}{\text{\sf cgx}}
\newcommand{\fcg}{\text{\sf cg}}
\nc\thx{\theta^{\textsf{x}}}
\newcommand{\pos}{\text{\sf pos}}
\newcommand{\Cmatsetup}{\arraycolsep=1.4pt\def\arraystretch{1}\footnotesize}
\DeclareRobustCommand
\newcommand{\restr}{\mathord{\restriction}}
\newcommand{\leqc}{\leq_C}
\newcommand{\lessc}{<_C}
\newcommand{\geqc}{\geq_C}
\newcommand{\grc}{>_C}
\newcommand{\hgt}{{\text{\sf x}}}
\rnc\emptyset{\varnothing}
\nc\pc[2]{(#1,#2)^\sharp}
\nc{\uv}[1]{\fill (#1,2)circle(.17);}
\nc{\lv}[1]{\fill (#1,0)circle(.17);}
\nc{\uvs}[1]{{\foreach \x in {#1} { \uv{\x}}}}
\nc{\lvs}[1]{{\foreach \x in {#1} { \lv{\x}}}}
\nc{\darcx}[3]{\draw(#1,0)arc(180:90:#3) (#1+#3,#3)--(#2-#3,#3) (#2-#3,#3) arc(90:0:#3);}
\nc{\darc}[2]{\darcx{#1}{#2}{.4}}
\nc{\uarcx}[3]{\draw(#1,2)arc(180:270:#3) (#1+#3,2-#3)--(#2-#3,2-#3) (#2-#3,2-#3) arc(270:360:#3);}
\nc{\uarc}[2]{\uarcx{#1}{#2}{.4}}
\nc{\stline}[2]{\draw(#1,2)--(#2,0);}
\numberwithin{equation}{section}
\newtheorem{thm}[equation]{Theorem}
\newtheorem{lemma}[equation]{Lemma}
\newtheorem{cor}[equation]{Corollary}
\newtheorem{prop}[equation]{Proposition}
\theoremstyle{definition}
\newtheorem{defn}[equation]{Definition}
\newtheorem{rem}[equation]{Remark}
\newtheorem{exa}[equation]{Example}
\newenvironment{thmenumerate}{
   \begin{enumerate}[label=\textup{(\roman*)}, widest=(5), leftmargin=10mm]}{
    \end{enumerate}}
\newenvironment{thmsubenumerate}{
   \begin{enumerate}[label=\textup{(\alph*)}, widest=(b), leftmargin=10mm]}{
    \end{enumerate}}
\newcounter{caseco}
\newcommand{\case}{\refstepcounter{caseco}\medskip\noindent\textbf{Case \thecaseco:}\ } 
\newcounter{subcaseco}
\newcommand{\subcase}{\refstepcounter{subcaseco}\medskip\noindent\textbf{Subcase \thecaseco.\thesubcaseco:}\ }    
\newcounter{stepco}
\newcommand{\step}{\refstepcounter{stepco}\medskip\noindent\textbf{Step \thestepco. }} 
\newcounter{stageco}
\newcommand{\stage}{\refstepcounter{stageco}\medskip\noindent\textbf{Stage \thestageco: }}
\newcounter{RT}\renewcommand{\theRT}{RT\arabic{RT}}
\newcounter{fR}\renewcommand{\thefR}{fR\arabic{fR}}
\definecolor{delcol}{HTML}{F4F3F4}
\definecolor{excepcol}{HTML}{D50B53}
\definecolor{mucol}{HTML}{B9C406}
\definecolor{Ncol}{HTML}{FAA565}
\definecolor{Rcol}{HTML}{A882C1}
\newcounter{ncols}
\newcounter{incols}
\newenvironment{partn}[1]{
  \setcounter{ncols}{#1} \setcounter{incols}{\thencols - 1}\setlength{\arraycolsep}{1pt}
  \Bigl( \hspace{-1.5truemm}\scriptsize \renewcommand*{\arraystretch}{1}
    \begin{array}{@{\hskip 3pt} c *{\theincols}{|c} @{\hskip 3pt}  }
}{
     \end{array}
     \normalsize \hspace{-1.5truemm}\Bigr)\setlength{\arraycolsep}{6pt}
}
\begin{document}

\title{\vspace{-1cm}Properties of congruences of twisted partition monoids and their lattices}

\author{James East\footnote{Centre for Research in Mathematics and Data Science, Western Sydney University, Locked Bag 1797, Penrith NSW 2751, Australia. {\it Email:} {\tt j.east\,@\,westernsydney.edu.au}} 
\ and
Nik Ru\v{s}kuc\footnote{Mathematical Institute, School of Mathematics and Statistics, University of St Andrews, St Andrews, Fife KY16 9SS, UK. {\it Email:} {\tt nik.ruskuc\,@\,st-andrews.ac.uk}}}
\date{}

\maketitle

\vspace{-1cm}
\begin{abstract}
\noindent
We build on the recent characterisation of congruences on the infinite twisted partition monoids $\Ptw{n}$ and their finite $d$-twisted homomorphic images $\Ptw{n,d}$, and investigate their algebraic and order-theoretic properties.
We prove that each congruence of $\Ptw{n}$ is (finitely) generated by at most $\lceil\frac{5n}2\rceil$ pairs, and we characterise the principal ones.
We also prove that the congruence lattice $\Cong(\Ptw{n})$ is not modular (or distributive); it has no infinite ascending chains, but it does have infinite descending chains and infinite antichains.
By way of contrast, the lattice $\Cong(\Ptw{n,d})$ is modular but still not distributive for $d>0$, while $\Cong(\Ptw{n,0})$ is distributive. 
We also calculate the number of congruences of~$\Ptw{n,d}$, 
showing that the array $\big(|\Cong(\Ptw{n,d})|\big)_{n,d\geq 0}$ has a rational generating function,
and that for a fixed $n$ or $d$, $|\Cong(\Ptw{n,d})|$ is a polynomial in $d$ or $n\geq 4$, respectively.

\medskip

\noindent
\emph{Keywords}: partition monoid, twisted partition monoid, congruence, finitely generated congruence, principal congruence, congruence lattice, modular lattice, distributive lattice, enumeration.
\medskip

\noindent
MSC: 20M20, 08A30.

\end{abstract}

\setcounter{tocdepth}{1}
\tableofcontents

\section{Introduction}\label{sec:intro}

The twisted partition monoid $\Ptw{n}$ is a countably infinite monoid obtained from the classical finite partition monoid $\P_n$ \cite{HR2005,Martin1994,Jones1994_2} by taking into account the number of floating components formed when multiplying partitions.
Its finite $d$-twisted images $\Ptw{n,d}$ are obtained by limiting the number of floating components to at most $d$, and collapsing all other elements to zero.
The congruences on these monoids have been determined in \cite{ERtwisted1}, where the reader may also find further background and additional references; see also \cite{EMRT2018,ERPX,ER2020}.
In the current article we harness the power of the classification from \cite{ERtwisted1} to investigate the algebraic properties of congruences on $\Ptw n$ and~$\Ptw{n,d}$, and the combinatorial/order-theoretic properties of the congruence lattices $\Cong(\Ptw{n})$ and~$\Cong(\Ptw{n,d})$.
We refer the reader to \cite[Section 1]{ERtwisted1} and references therein for the context and background for the investigation presented in the two papers.

Before we discuss the results of the current article, it is instructive to recall the situation for the partition monoid $\P_n$ itself, whose congruences were determined in \cite{EMRT2018}.  The classification is stated in Theorem \ref{thm:CongPn} below, and the congruence lattice $\Cong(\P_n)$ is shown in Figure \ref{fig:CongPn}.  From the figure, a number of facts are easily verified: the lattice has size $3n+8$ for $n\geq4$; it has three atoms, and a single co-atom; it is distributive, and hence also modular (because it does not contain any five-element diamond or pentagon sublattice).  It was shown in \cite[Section 5]{EMRT2018} that all but three congruences of $\P_n$ are principal, and those that are not (denoted $\mu_{\S_2}$, $\lam_{\S_2}$ and~$\rho_{\S_2}$) are generated by two pairs.  
Certain structural properties of the monoid $\P_n$ are responsible for the `neat' structure of the lattice $\Cong(\P_n)$, as seen in Figure \ref{fig:CongPn}.  These include the following:  the ideals of $\P_n$ form a (finite) chain; the maximal subgroups of $\P_n$ are (finite) symmetric groups $\S_q$ ($q=0,1,\ldots,n$), the normal subgroups of which also form (finite) chains; and the minimal ideal is a rectangular band.

The ideal structure of the twisted monoid $\Ptw n$ is very different to that of $\P_n$: there are infinitely many ideals; they do not form a chain; and there is no minimal ideal.  These facts alone already lead to a substantially more complicated structure of the lattice $\Cong(\Ptw n)$, including the existence of infinite descending chains; we will also see that it has infinite anti-chains, though by contrast it has no infinite ascending chains (Theorem \ref{thm:chains}).  This property of ascending chains implies that every congruence on $\Ptw n$ is finitely generated, and this in turn implies that $\Cong(\Ptw n)$ is countable, as also follows directly from the classification in \cite{ERtwisted1}.  In fact, a much stronger property than finite generation holds: Theorem \ref{thm:fg} shows that every congruence is generated by at most $\lceil\frac{5n}2\rceil$ pairs.
We also show that there is no constant bound on the number of pairs needed to generate congruences of $\Ptw{n}$ (Remark \ref{re:nobound}), and classify
 the principal congruences (Theorem~\ref{thm:pc}).
Other order-theoretic properties of the lattice $\Cong(\Ptw n)$ include the following.  The lattice has a single co-atom, but no atoms (Theorem \ref{thm:atoms}).  The latter says that the trivial congruence has no covers in the lattice; in fact, every congruence has only finitely many covers, though some congruences cover infinitely many congruences (Theorem~\ref{thm:covers}).  
In another marked divergence with the ordinary finite partition monoid $\P_n$, the lattice $\Cong(\Ptw n)$ contains five-element diamond and pentagon sublattices, meaning that it is neither distributive nor modular (Theorem \ref{thm:pent}).

Other infinite structures related to the finite partition monoids arise by allowing an infinite base set \cite{ERPX}, or by considering partition categories \cite{ER2020}, where the sizes of the (finite) base sets can be unbounded.  Again contrasting with $\Ptw n$, the congruence lattices in these cases are always distributive and well quasi-ordered (so have no infinite descending chains and no infinite anti-chains), but have infinite ascending chains; congruences can be non-finitely generated.

The situation for the finite $d$-twisted monoids $\Ptw{n,d}$ is as follows.  First, the lattice $\Cong(\Ptw{n,d})$ is of course finite.  In fact, we will obtain an exact formula for $|\Cong(\Ptw{n,d})|$,
and show that for fixed $n\geq 0$ or $d\geq0$, $|\Cong(\Ptw{n,d})|$ is a polynomial in~$d\geq0$ or $n\geq4$, respectively (Theorem~\ref{th:uberenu}).
 Asymptotically, for fixed $d\geq0$ we have $|\Cong(\Ptw{n,d})|\sim(3n)^{d+1}/(d+1)!$ as $n\to\infty$, and for fixed $n\geq4$ we have $|\Cong(\Ptw{n,d})|\sim13d^{3n-1}/(3n-1)!$ as $d\to\infty$ 
 (Remark~\ref{rem:as}).  We also show that the array $\big(|\Cong(\Ptw{n,d})|\big)_{n,d\geq 0}$ has a rational generating function; this is given explicitly in~\eqref{eq:GF}.
 Congruences on $\Ptw{n,d}$ continue to be generated by at most $\lceil\frac{5n}2\rceil$ pairs (Corollary \ref{co:ffg}); the principal congruences are classified in Theorem \ref{thm:fpc}.  Finiteness of the lattice $\Cong(\Ptw{n,d})$ implies the existence of atom(s) and coatom(s), and as we explain at the beginning of Section~\ref{sec:CongPnd}, it has precisely one of each.  In Theorem \ref{thm:distrib} we show that $\Cong(\Ptw{n,0})$ is distributive (and hence modular), while for $d\geq1$, $\Cong(\Ptw{n,d})$ is modular, but not distributive.  Distributivity of the lattices $\Cong(\Ptw{n,0})$ and $\Cong(\P_n)$ is one of several properties shared by the $0$-twisted monoid~$\Ptw{n,0}$ and the (ordinary) monoid $\P_n$; for some others, see Remarks~\ref{rem:CongPn0} and \ref{rem:fg0}, and compare Figures~\ref{fig:CongPn} and~\ref{fig:CongPn0}.  

The article is organised as follows.  After giving definitions and preliminaries in Section \ref{sec:prelim}, we review the classification \cite{ERtwisted1} of congruences on $\Ptw n$ in Section \ref{sec:CongPtwn}.  Section \ref{sec:props} establishes the above-mentioned properties of the lattice $\Cong(\Ptw n)$, including (co)atoms, (anti-)chains, covers and (non)distributivity.  Section \ref{sec:fg} proves the results on generation of congruences.  The $d$-twisted monoids $\Ptw{n,d}$ are treated in Sections \ref{sec:fintwist} (review of the classification), \ref{sec:CongPnd} (order-theoretic properties of the lattice), \ref{sec:fgPnd} (generation of congruences), and \ref{sec:enumeration} (enumeration).
The cases where $n\leq1$ are somewhat different, and are given a special treatment throughout.

\subsection*{Acknowledgements}

The first author is supported by ARC Future Fellowship FT190100632.  The second author is supported by EPSRC grant EP/S020616/1.  We thank Volodymyr Mazorchuk for his suggestion to look at congruences of the $0$-twisted partition monoids $\Ptw{n,0}$, which (eventually) led to the current paper and \cite{ERtwisted1}.  We also thank the referee for their insightful questions, which led to Theorem \ref{thm:sm}, and Remarks \ref{rem:no_covers} and \ref{rem:sts}.

\begin{figure}[ht]

\begin{center}

\scalebox{0.8}{
\begin{tikzpicture}[scale=0.7]
\begin{scope}[minimum size=8mm,inner sep=0.5pt, outer sep=1pt]

\node (m0) at (6,0) [draw=blue,fill=white,circle,line width=2pt] { $\mu_0$};
\node (R0) at (9,4) [draw=blue,fill=white,circle,line width=2pt] { $R_0$};
\node (R1) at (6,6) [draw=blue,fill=white,circle,line width=2pt] { $R_1$};
\node (R2) at (3,10) [draw=blue,fill=white,circle,line width=2pt] { $R_2$};
\node (R3) at (3,16) [draw=blue,fill=white,circle,line width=2pt] { $R_3$};
\node (Rn) at (3,20) [draw=blue,fill=white,circle,line width=2pt] { $R_n$};

\node (r0) at (9,2) [draw,circle] { $\rho_0$};
\node (r1) at (6,4) [draw,circle] { $\rho_1$};
\node (rS2) at (3,6) [draw,circle] { $\rho_{\S_2}$};

\node (l0) at (6,2) [draw,circle] { $\lam_0$};
\node (l1) at (3,4) [draw,circle] { $\lam_1$};
\node (lS2) at (0,6) [draw,circle] { $\lam_{\S_2}$};

\node (m1) at (3,2) [draw,circle] { $\mu_1$};
\node (mS2) at (0,4) [draw,circle] { $\mu_{\S_2}$};

\node (RS2) at (3,8) [draw,circle] { $R_{\S_2}$};
\node (RA3) at (3,12) [draw,circle] { $R_{\A_3}$};
\node (RS3) at (3,14) [draw,circle] { $R_{\S_3}$};

\node (dots) at (3,18) [draw,circle, white] { $\vdots$}; \node (dotss) at (3,18.15)  { $\vdots$};

\node () at (7.6,0) { $=\De_{\P_n}$};
\node () at (4.6,20) { $=\nab_{\P_n}$};

\end{scope}

\draw 
(m0) -- (r0)
(m0) -- (l0)
(m0) -- (m1)
(m1) -- (mS2)
(m1) -- (l1)
(m1) -- (r1)
(l0) -- (l1)
(l0) -- (R0)
(r0) -- (r1)
(r0) -- (R0)
(mS2) -- (lS2)
(mS2) -- (rS2)
(l1) -- (lS2)
(l1) -- (R1)
(r1) -- (rS2)
(r1) -- (R1)
(R0) -- (R1)
(lS2) -- (RS2)
(rS2) -- (RS2)
(R1) -- (RS2)
(RS2) -- (R2)
(R2) -- (RA3)
(RA3) -- (RS3)
(RS3) -- (R3)
(R3) -- (dots)
(dots) -- (Rn)
;

\end{tikzpicture}
}
\caption{The Hasse diagram of $\Cong(\P_n)$; see Theorem \ref{thm:CongPn}.  Rees congruences are indicated in blue outline.}
\label{fig:CongPn}
\end{center}

\end{figure}
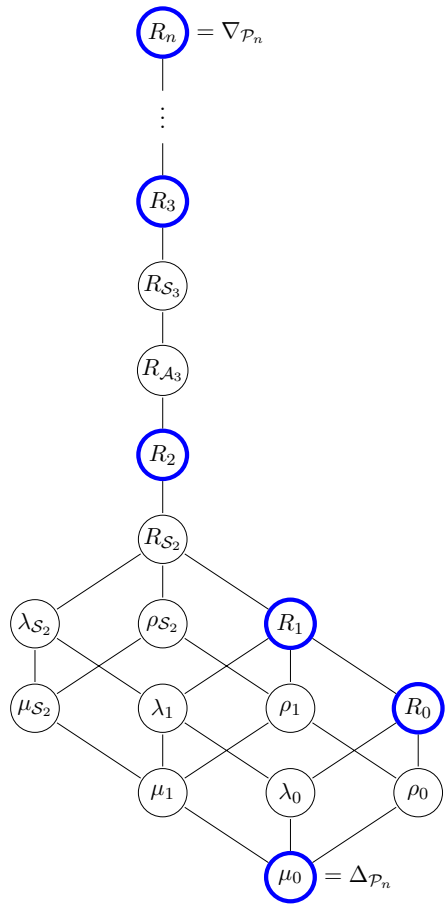

\section{Preliminaries}\label{sec:prelim}

In this section we give a basic overview of monoids and congruences (Subsection \ref{subsec:cong}), the partition monoid~$\P_n$ and its congruences (Subsections \ref{subsec:Pn} and \ref{subsec:CongPn}), and the twisted and $d$-twisted partition monoids~$\Ptw{n}$ and $\Ptw{n,d}$ (Subsection~\ref{subsec:Ptw}).  The exposition will be rapid, and the reader can find a more detailed introduction in \cite{ERtwisted1}.  For congruences of $\Ptw{n}$ and $\Ptw{n,d}$ we will need a more detailed account, and we postpone this until Sections \ref{sec:CongPtwn} and \ref{sec:fintwist}, respectively.

\subsection{Monoids and congruences}\label{subsec:cong}

Recall that a \emph{congruence} on a monoid $M$ is an equivalence relation $\si$ compatible with the operation, in the sense that $(x,y)\in\si\implies(axb,ayb)\in\si$ for all $x,y,a,b\in M$.  The set $\Cong(M)$ of all congruences on $M$ is a lattice under inclusion, with top and bottom elements $\nab_M=M\times M$ and $\De_M=\bigset{(x,x)}{x\in M}$.  

Given a (possibly empty) ideal $I$ of $M$, we have the \emph{Rees congruence} $R_I=\De_M\cup\nab_I$.  Associated to any congruence $\si$ is the ideal $I(\si)$ defined to be the largest ideal $I$ such that $R_I\sub\si$; when $I(\si)$ is non-empty, it is the unique $\si$-class that is an ideal \cite[Lemma 6.1.3]{GMbook}.

For a set of pairs $\Om\sub M\times M$ we write $\Om^\sharp$ for the congruence on $M$ generated by $\Om$.  We write $\pc xy$ for $\Om^\sharp$ when $\Om=\big\{(x,y)\big\}$, and refer to such a congruence as 
\emph{principal}.

Recall also that \emph{Green's relations} on the monoid $M$ are defined, for $x,y\in M$, by
\[
x\rR y \iff xM=yM \COMMA x\rL y \iff Mx=My \COMMA x\rJ y \iff MxM = MyM,
\]
and further $\H = \R\cap\L$ and $\D = \R\vee\L=\R\circ\L=\L\circ\R$.  
The set $M/\J=\set{J_x}{x\in M}$ of all $\J$-classes of $M$ has a partial order $\leq$ defined, for $x,y\in M$, by
\[
J_x\leq J_y \iff x\in MyM.
\]

\subsection[The partition monoid $\P_n$]{The partition monoid \boldmath{$\P_n$}}\label{subsec:Pn}

Let $n\geq 1$ and write $\bn=\{1,\dots,n\}$ and $\bnz=\bn\cup\{0\}$.  We also fix two disjoint copies of $\bn$, namely $\bn'=\{1',\ldots,n'\}$ and $\bn''=\{1'',\ldots,n''\}$.  The elements of the \emph{partition monoid} $\P_n$ are the set partitions of $\bn\cup\bn'$; as usual, such a partition is identified with any graph on vertex set $\bn\cup\bn'$ whose components are the blocks of the partition.  
Given two partitions $\al,\be\in\P_n$, the product $\al\be$ is defined as follows.  First, let $\al_\downarrow$ be the graph on vertex set $\bn\cup\bn''$ obtained by changing every lower vertex $x'$ of $\al$ to $x''$, and let $\be^\uparrow$ be the graph on vertex set $\bn''\cup\bn'$ obtained by changing every upper vertex $x$ of $\be$ to $x''$.  The \emph{product graph} of the pair $(\al,\be)$ is the graph $\Ga(\al,\be)$ on vertex set $\bn\cup\bn''\cup\bn'$ whose edge set is the union of the edge sets of $\al_\downarrow$ and $\be^\uparrow$.  We then define $\al\be$ to be the partition of $\bn\cup\bn'$ such that vertices $x,y\in\bn\cup\bn'$ belong to the same block of $\al\be$ if and only if $x,y$ belong to the same connected component of $\Ga(\al,\be)$. Partitions and the formation of the product can be visualised as shown in
 Figure \ref{fig:P6} .

\begin{figure}[ht]
\begin{center}
\begin{tikzpicture}[scale=.5]

\begin{scope}[shift={(0,0)}]	
\uvs{1,...,6}
\lvs{1,...,6}
\uarcx14{.6}
\uarcx23{.3}
\uarcx56{.3}
\darc12
\darcx26{.6}
\darcx45{.3}
\stline34
\draw(0.6,1)node[left]{$\alpha=$};
\draw[->](7.5,-1)--(9.5,-1);
\end{scope}

\begin{scope}[shift={(0,-4)}]	
\uvs{1,...,6}
\lvs{1,...,6}
\uarc12
\uarc34
\darc45
\darc56
\darc23
\stline31
\stline55
\draw(0.6,1)node[left]{$\beta=$};
\end{scope}

\begin{scope}[shift={(10,-1)}]	
\uvs{1,...,6}
\lvs{1,...,6}
\uarcx14{.6}
\uarcx23{.3}
\uarcx56{.3}
\darc12
\darcx26{.6}
\darcx45{.3}
\stline34
\draw[->](7.5,0)--(9.5,0);
\end{scope}

\begin{scope}[shift={(10,-3)}]	
\uvs{1,...,6}
\lvs{1,...,6}
\uarc12
\uarc34
\darc45
\darc56
\stline31
\stline55
\darc23
\end{scope}

\begin{scope}[shift={(20,-2)}]	
\uvs{1,...,6}
\lvs{1,...,6}
\uarcx14{.6}
\uarcx23{.3}
\uarcx56{.3}
\darc14
\darc45
\darc56
\stline21
\darcx23{.2}
\draw(6.4,1)node[right]{$=\alpha\beta$};
\end{scope}

\end{tikzpicture}
\caption{Multiplication of two partitions in $\P_6$.}
\label{fig:P6}
\end{center}
\end{figure}
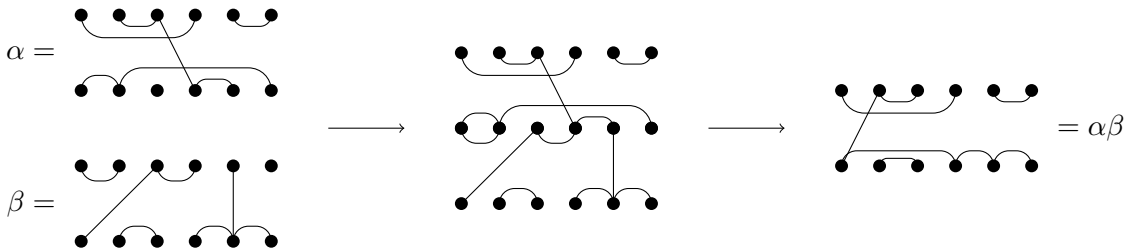

A block of a partition $\al\in\P_n$ is called a \emph{transversal} if it contains both dashed and un-dashed elements; any other block is either an \emph{upper non-transversal} (only un-dashed elements) or a \emph{lower non-transversal} (only dashed elements).  

The \emph{(co)domain} and \emph{(co)kernel} of $\al$ are defined by:
\begin{align*}
\dom\al &:= \set{x\in\bn}{x \text{ belongs to a transversal of }\al}, \\
\codom\al &:= \set{x\in\bn}{x' \text{ belongs to a transversal of }\al}, \\
\ker\al &:= \set{(x,y)\in\bn\times\bn}{x\text{ and }y \text{ belong to the same block of }\al},\\
\coker\al &:= \set{(x,y)\in\bn\times\bn}{x'\text{ and }y' \text{ belong to the same block of }\al}.
\end{align*}
The \emph{rank} of $\al$, denoted $\rank\al$, is the number of transversals of $\al$.  
We will typically use the following characterisation of Green's relations on $\P_n$ from \cite{Wilcox2007,FL2011} without explicit reference.

\begin{lemma}\label{la:Green_Pn}
For $\al,\be\in\P_n$, we have
\ben
\item $\al \rR \be \iff \dom\al=\dom\be$ and $\ker\al=\ker\be$,
\item $\al \rL \be \iff \codom\al=\codom\be$ and $\coker\al=\coker\be$,
\item $\al \rD \be \iff \al \rJ \be \iff \rank\al=\rank\be$.  
\een
The $\D=\J$-classes and non-empty ideals of $\P_n$ are the sets
\[
D_q := \set{\al\in\P_n}{\rank\al=q} \AND I_q := \set{\al\in\P_n}{\rank\al\leq q}  \qquad\text{for $q\in\bnz$,}
\]
and these are ordered by $D_q \leq D_r \iff I_q \sub I_r \iff q\leq r$.  \epfres
\end{lemma}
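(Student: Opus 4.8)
The plan is to derive everything from a single monotonicity observation about how the product graph $\Ga(\al,\be)$ builds $\al\be$, and then feed this into the standard machinery relating Green's relations to principal ideals. First I would record the basic monotonicity properties of multiplication: for all $\al,\be\in\P_n$ one has $\dom(\al\be)\sub\dom\al$ and $\ker\al\sub\ker(\al\be)$, and dually $\codom(\al\be)\sub\codom\be$ and $\coker\be\sub\coker(\al\be)$; consequently $\rank(\al\be)\leq\min(\rank\al,\rank\be)$. These all read off directly from the definition of $\Ga(\al,\be)$: connecting $\al_\downarrow$ and $\be^\uparrow$ can only fuse blocks and can only destroy transversals (a transversal of $\al\be$ must arise from a transversal of $\al$ whose lower endpoints survive the connection), never create new ones.

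For part (i), the forward direction is immediate from monotonicity: since $\P_n$ is a monoid, $\al\rR\be$ gives $\al=\be\ga$ and $\be=\al\de$ for some $\ga,\de$, and applying the domain and kernel inclusions in both directions forces $\dom\al=\dom\be$ and $\ker\al=\ker\be$. The converse is the technical heart of the argument: assuming the upper data of $\al$ and $\be$ agree, I would construct explicitly a partition $\ga$ with $\al=\be\ga$ (and symmetrically one with $\be=\al\ga'$). The idea is that $\ga$ need only re-route the transversals of $\be$ so that their lower ends reproduce the lower structure of $\al$; since $\dom\al=\dom\be$ and $\ker\al=\ker\be$, there is a canonical bijection between the transversals of the two partitions, and $\ga$ is built to realise this bijection while transmitting the lower non-transversals and cokernel of $\al$. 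Part (ii) then follows by applying (i) to the images under the natural anti-automorphism of $\P_n$ that swaps dashed and undashed points, since this interchanges $\R$ with $\L$, domain with codomain, and kernel with cokernel.

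For part (iii), I would first note that $\D\sub\J$ holds in any monoid, and that $\rank$ is constant on $\D$-classes by (i) and (ii). To see that equal rank implies $\D$-equivalence, given $\al,\be$ with $\rank\al=\rank\be=q$ I would produce an intermediate $\al'$ with $\al'\rR\al$ and $\al'\rL\be$, by taking the partition whose upper structure (domain, kernel) copies that of $\al$, whose lower structure (codomain, cokernel) copies that of $\be$, and whose $q$ transversals join these along any fixed bijection---possible exactly because the ranks coincide. Finally $\J\sub\D$ follows from rank-monotonicity: $\al\rJ\be$ yields $\al=\ga\be\de$ and $\be=\ga'\al\de'$, whence $\rank\al\leq\rank\be\leq\rank\al$. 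The description of the $\D=\J$-classes $D_q$ and the ideals $I_q=\bigcup_{r\leq q}D_r$, together with their order, then drops out, since $\al\in\P_n\be\P_n$ iff $\rank\al\leq\rank\be$ (one inequality is monotonicity, and the other follows by first multiplying $\be$ down to rank $\rank\al$ and then moving within a $\D$-class). The main obstacle throughout is the explicit construction of the witnessing partitions in the converses of (i) and (ii); once those are in hand, the remainder is bookkeeping.
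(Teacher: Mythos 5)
The paper does not actually prove this lemma: it is quoted from the cited sources (Wilcox; FitzGerald--Lau) and used throughout without proof, so there is no in-paper argument to compare against. Your proposal is, in substance, the standard proof found in those references, and it is correct. The monotonicity facts $\dom(\al\be)\sub\dom\al$, $\ker\al\sub\ker(\al\be)$ and $\rank(\al\be)\leq\min(\rank\al,\rank\be)$ do follow from the product-graph definition exactly as you say (a transversal of $\al\be$ must contain the upper part of a transversal of $\al$, and distinct transversals of $\al\be$ contain distinct ones), and they give the forward implications in (i)--(ii), the inclusion $\J\sub\D$ via rank, and one half of the ideal description. The converse of (i) does come down to the explicit witness: if $\be$ has transversals $A_t\cup B_t'$, upper non-transversals $C_k$ and lower non-transversals $E_j'$, and $\al$ shares the upper data, one takes $\ga$ with transversals $B_t\cup(B_t^\al)'$ (where $B_t^\al$ is the lower part of the corresponding transversal of $\al$), upper non-transversals $E_j$, and the lower non-transversals of $\al$; the middle-row components of $\Ga(\be,\ga)$ then match up block-by-block, so $\be\ga=\al$ with no unintended fusions. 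Your reduction of (ii) to (i) via the flip anti-automorphism, the intermediate element argument for ``equal rank $\Rightarrow$ $\D$-related'', and the observation that $\al\in\P_n\be\P_n$ iff $\rank\al\leq\rank\be$ are all standard and sound. The only thing I would flag is that the ``technical heart'' you identify is left as a sketch; writing out the witness $\ga$ as above and checking the product graph is the one step that genuinely needs to be done, but it goes through without obstruction.
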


The above notation for the $\D$-classes and ideals of $\P_n$ will be fixed throughout the paper.

Given a partition $\al\in\P_n$, we write
\[
\al = \begin{partn}{6} A_1&\dots&A_q&C_1&\dots&C_s\\ \hhline{~|~|~|-|-|-} B_1&\dots&B_q&E_1&\dots&E_t\end{partn}
\]
to indicate that $\al$ has transversals $A_i\cup B_i'$ ($1\leq i\leq q$), upper non-transversals $C_i$ ($1\leq i\leq s$), and lower non-transversals $E_i'$ ($1\leq i\leq t$).  Here for any $A\sub\bn$ we write $A'=\set{a'}{a\in A}$.

\subsection[Congruences of $\P_n$]{Congruences of \boldmath{$\P_n$}}\label{subsec:CongPn}

We now present a brief account of the classification of congruences of the partition monoid $\P_n$ from~\cite{EMRT2018}.  First, we have a map
\[
\P_n \to D_0 : \al=\begin{partn}{6} A_1&\dots&A_q&C_1&\dots&C_s\\ \hhline{~|~|~|-|-|-} B_1&\dots&B_q&E_1&\dots&E_t\end{partn} \mt \wh\al = \begin{partn}{6} A_1&\dots&A_q&C_1&\dots&C_s\\ \hhline{-|-|-|-|-|-} B_1&\dots&B_q&E_1&\dots&E_t\end{partn},
\]
whose effect is to break apart all transversals of $\al$ into their upper and lower parts.  
Next we have a family of relations on $D_q$ ($2\leq q\leq n$), denoted $\nu_N$, indexed by normal subgroups $N$ of the symmetric group $\S_q$.  To define these relations consider a pair $(\al,\be)$ of $\H$-related elements from $D_q$: 
\[
\al = \begin{partn}{6} A_1&\dots&A_q&C_1&\dots&C_s\\ \hhline{~|~|~|-|-|-} B_1&\dots&B_q&E_1&\dots&E_t\end{partn}
\AND
\be = \begin{partn}{6} A_1&\dots&A_q&C_1&\dots&C_s\\ \hhline{~|~|~|-|-|-} B_{1\pi}&\dots&B_{q\pi}&E_1&\dots&E_t\end{partn} \qquad\text{for some $\pi\in\S_q$.}
\]
We then define $\pd(\al,\be)=\pi$, which we think of as the \emph{permutational difference} of $\al$ and~$\be$.  Note that~$\pd(\al,\be)$ is only well-defined up to conjugacy in $\S_q$, as $\pi$ depends on the above ordering on the transversals of $\al$ and $\be$.  Nevertheless, for any normal subgroup $N\normal\S_q$, we have a well-defined equivalence relation (see~\cite[Lemmas 3.17 and 5.6]{EMRT2018}):
\[
\nu_N = \bigset{(\al,\be)\in\H\restr_{D_q}}{\pd(\al,\be)\in N}.
\]
As extreme cases, note that $\nu_{\S_q}=\H\restr_{D_q}$ and $\nu_{\{\id_q\}}=\De_{D_q}$.

\begin{thm}[{\cite[Theorem 5.4]{EMRT2018}}]
\label{thm:CongPn}
For $n\geq1$, the congruences on the partition monoid~$\P_n$ are precisely:
\bit
\item
the Rees congruences $R_q:= R_{I_q} = \bigset{ (\alpha,\beta)\in\P_n\times\P_n}{ \alpha=\beta\text{ or } \rank\alpha,\rank\beta\leq q}$ for $q\in\{0,\dots,n\}$, including $\nab_{\P_n} = R_n$;
\item
the relations $R_N:=R_{q-1}\cup\nu_N$ for $q\in\{2,\dots,n\}$ and $\{\id_q\}\neq N\unlhd \S_q$;
\item
the relations 
\begin{align*}
\lam_q &:= \bigset{ (\alpha,\beta)\in I_q\times I_q}{ \widehat{\alpha}\rL\widehat{\beta}}\cup\Delta_{\P_n}, \\
\rho_q &:= \bigset{ (\alpha,\beta)\in I_q\times I_q}{ \widehat{\alpha}\rR\widehat{\beta}}\cup\Delta_{\P_n}, \\
\mu_q &:= \bigset{ (\alpha,\beta)\in I_q\times I_q}{ \widehat{\alpha}=\widehat{\beta}}\cup\Delta_{\P_n}, 
\end{align*}
for $q\in\{0,1\}$, including $\De_{\P_n} = \mu_0$, and the relations
\[
\lam_{\S_2}:=\lam_1\cup\nu_{\S_2} \COMMA \rho_{\S_2}:=\rho_1\cup\nu_{\S_2} \COMMA \mu_{\S_2}:=\mu_1\cup\nu_{\S_2}.
\]
\eit
The congruence lattice $\Cong(\P_n)$ is shown in Figure \ref{fig:CongPn}.  
 \epfres
\end{thm}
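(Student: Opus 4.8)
The plan is to prove the classification in two stages: \emph{sufficiency}, that each listed relation is a congruence, and \emph{necessity}, that there are no others.

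Sufficiency is largely routine. The Rees congruences $R_q=R_{I_q}$ are congruences simply because $I_q$ is an ideal. For the relations $\nu_N$ the essential point, recorded in \cite[Lemmas 3.17 and 5.6]{EMRT2018}, is that the permutational difference $\pd(\al,\be)$ is well-defined up to conjugacy in $\S_q$, so that the condition $\pd(\al,\be)\in N$ is unambiguous precisely when $N\normal\S_q$; compatibility then follows by tracking how left- and right-multiplication by a fixed partition permutes the transversals of two $\H$-related elements. The combined relations $R_{q-1}\cup\nu_N$, together with the low-rank relations $\lam_q,\rho_q,\mu_q$ and their enlargements $\lam_{\S_2},\rho_{\S_2},\mu_{\S_2}$, are then verified to be equivalences and to be compatible; here the key observation is that any product having a factor of rank $<q$ again has rank $<q$, and is therefore absorbed into the Rees part, so that transitivity and compatibility reduce to the two building blocks already treated.

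For necessity, let $\si$ be an arbitrary congruence. By \cite[Lemma 6.1.3]{GMbook}, recalled in Section~\ref{subsec:cong}, there is a largest ideal $I(\si)=I_p$ with $R_p\sub\si$, and one analyses the behaviour of $\si$ on the chain of $\D$-classes lying above $I_p$. The engine of the argument is a short collection of \emph{forcing lemmas}, each obtained by multiplying a related pair $(\al,\be)\in\si$ on the left and right by suitably chosen idempotents and partitions and reading off the resulting pairs. These yield, at each rank $q$: first, that the set $\set{\pd(\al,\be)}{(\al,\be)\in\si\cap(\H\restr_{D_q})}$ is a subgroup of $\S_q$, normal because it is closed under conjugation by the partitions realising the symmetries of $D_q$, so that the $\H$-part of $\si$ on $D_q$ is exactly some $\nu_N$; and second, that if $\si$ identifies two distinct $\R$-related (respectively $\L$-related) elements of $D_q$ that are not $\H$-related, then for $q\geq2$ the generated congruence is forced up to the full Rees collapse $R_q$. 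Assembling these level by level shows that above rank $1$ the only congruences are the $R_q$ and the $R_{q-1}\cup\nu_N$.

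The remaining and most delicate work is at the bottom of the chain. At ranks $0$ and $1$ the maximal subgroups degenerate to the trivial group, and the minimal ideal $I_0=D_0$ is a rectangular band, so the forcing mechanism no longer propagates $\R$- and $\L$-type identifications into a full collapse; instead these survive as the genuinely distinct relations $\lam_q,\rho_q,\mu_q$, much as the congruences of a rectangular band split into independent ``row'' and ``column'' parts. The transitional relations $\lam_{\S_2},\rho_{\S_2},\mu_{\S_2}$, which graft the order-two relation $\nu_{\S_2}$ on $D_2$ onto a rank-$1$ relation, must be produced and separated by hand. I expect this low-rank analysis --- showing both that these exceptional families really are congruences and that no further ``mixed'' congruence escapes the forcing lemmas --- to be the main obstacle, as it is exactly here that the clean generic picture of a ``Rees ideal together with a normal subgroup on the top $\D$-class'' breaks down.
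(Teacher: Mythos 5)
The first thing to note is that the paper does not prove this statement at all: Theorem \ref{thm:CongPn} is imported verbatim from \cite[Theorem 5.4]{EMRT2018} and stated here purely as background (it is closed with a bare \textup{\qedsymbol}), so there is no in-paper proof to compare yours against. Judged on its own terms, your outline does follow the strategy of the cited reference --- sufficiency by direct verification, necessity by analysing the restriction of an arbitrary congruence $\si$ to the $\D$-classes above its ideal class $I(\si)$, with the $\H$-restrictions on $D_q$ controlled by normal subgroups of $\S_q$ and any non-$\H$ identification in rank $q\geq2$ forcing the Rees collapse --- and the individual assertions you make (well-definedness of $\pd$ up to conjugacy, $\rank(\al\be)\leq\min(\rank\al,\rank\be)$ absorbing low-rank products into the Rees part, $D_0$ being a rectangular band so that the $\lam$/$\rho$/$\mu$ relations survive at the bottom) are all correct.

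The gap is that this is a proof plan rather than a proof. Every step carrying actual content is deferred: the ``forcing lemmas'' are neither stated nor proved, yet they are exactly where the work lies (one must exhibit explicit multipliers showing that a single non-$\H$ identification in $D_q$ with $q\geq2$ generates $\nab_{I_q}$, and that the set of values $\pd(\al,\be)$ for $(\al,\be)\in\si\cap\H\restr_{D_q}$ is a normal subgroup); the compatibility of $\nu_N$ is attributed to the cited lemmas rather than argued; and the rank-$\leq1$ analysis --- which you correctly identify as the delicate part, and which must both construct $\lam_q,\rho_q,\mu_q$ and $\lam_{\S_2},\rho_{\S_2},\mu_{\S_2}$ and rule out every other combination of kernel/cokernel/hat identifications --- is explicitly left as ``the main obstacle''. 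Nothing you say is wrong, but all of these lemmas would have to be supplied before the proposal constitutes a proof of the classification.
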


The above notation for the congruences of $\P_n$ will be fixed and used throughout the paper.

\subsection{Twisted partition monoids}\label{subsec:Ptw}

For two partitions
$\alpha,\beta\in\P_n$ denote by $\Float(\alpha,\beta)$ the number of
\emph{floating components} of the product graph $\Gamma(\alpha,\beta)$, i.e.~the number of components that are wholly contained in the middle row, $\bn''$. 
The \emph{twisted partition monoid} $\Ptw n$ is defined by
\[
\Ptw{n}:=\N\times\P_n
\qquad\text{with product}\qquad
(i,\alpha)\multw(j,\beta):= \big(i+j+\Float(\alpha,\beta),\alpha\beta\big).
\]%
That the above multiplication is associative follows from \cite[Lemma~4.1]{FL2011}.  Green's relations on~$\Ptw{n}$ work as follows:

\begin{lemma}[{\cite[Lemma 2.10]{ERtwisted1}}]     \label{la:Green_Ptwn}
If $\K$ is any of Green's relations, and if $\al,\be\in\P_n$ and $i,j\in\N$, then
\[
(i,\al) \rK (j,\be) \text{ in } \Ptw n \IFF i=j \ANd \al \rK \be \text{ in } \P_n.
\]
The $\D=\J$-classes and principal ideals of $\Ptw{n}$ are the sets
\[
D_{qi} := \{i\}\times D_q \AND I_{qi} := \{i,i+1,i+2,\dots\} \times I_q  \qquad\text{for $q\in\bnz$ and $i\in\N$,}
\]
and these are ordered by $D_{qi} \leq D_{rj} \iff I_{qi} \sub I_{rj} \iff q\leq r$ and $i\geq j$.
\end{lemma}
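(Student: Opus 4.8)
The plan is to prove the two directions of the stated equivalence separately, and then read off the $\D=\J$-classes, the principal ideals, and their order from the Green's-relation characterisation.

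For the forward implication I would exploit the projection $\pi\colon\Ptw n\to\P_n$, $(i,\al)\mapsto\al$, which is a surjective monoid homomorphism since the second coordinate of a product is simply $\al\be$. As surjective homomorphisms preserve each of Green's relations, $(i,\al)\rK(j,\be)$ immediately yields $\al\rK\be$ in $\P_n$. To force $i=j$ I would use a minimum-coordinate argument: since $\Float\geq0$ and $(i,\al)(0,\id_n)=(i,\al)$ (the identity of $\Ptw n$ being $(0,\id_n)$), every element of the principal right ideal $(i,\al)\Ptw n$ has first coordinate $\geq i$, the value $i$ being attained by the generator; the analogous statements hold for the principal left and two-sided ideals. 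Hence equality of the relevant principal ideals of $(i,\al)$ and $(j,\be)$ forces their minimum first coordinates, $i$ and $j$, to coincide.

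For the converse it suffices, by the left–right dual and by $\rH=\rR\cap\rL$, to treat $\rR$. Here I would first observe that $(i,\al)\in(i,\be)\Ptw n$ holds if and only if $(i,\al)=(i,\be)(l,\delta)$ for some $(l,\delta)$, which unwinds to $\be\delta=\al$, $l=0$ and $\Float(\be,\delta)=0$. Thus $(i,\al)\rR(i,\be)$ reduces exactly to the following \emph{floating-free factorisation} statement, which I expect to be the main obstacle: if $\al\rR\be$ in $\P_n$ then there is $\delta\in\P_n$ with $\be\delta=\al$ and $\Float(\be,\delta)=0$ (and symmetrically). I would prove this by explicit construction. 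Writing $\al$ and $\be$ with a common upper structure (they share domain and kernel by Lemma \ref{la:Green_Pn}), I would build $\delta$ whose transversals carry the codomain blocks of $\be$ down to those of $\al$ and whose lower non-transversals reproduce those of $\al$; the one delicate point is that the lower non-transversals of $\be$ become middle blocks of $\Ga(\be,\delta)$ and would float, so I would absorb them into one transversal block of $\delta$, checking that this attaches them to a genuine component without altering $\be\delta=\al$. The cases $\rank\al=0$ (and small $n$) need a separate, easy treatment along the same lines.

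Finally I would assemble the pieces. The $\rR$ and $\rL$ cases give the $\rH=\rR\cap\rL$ case directly, and $\rD=\rR\circ\rL$ then gives the $\rD$ case: since $\al\rD\be$ in $\P_n$ means $\al\rR\ga\rL\be$ for some $\ga$, we obtain $(i,\al)\rR(i,\ga)\rL(i,\be)$. As $\rD=\rJ$ in $\P_n$, combining the forward direction with the general inclusion $\rD\sub\rJ$ yields $\rJ\sub\rD$ in $\Ptw n$, so $\rD=\rJ$ there as well, and the $\D=\J$-classes are exactly the sets $D_{qi}=\{i\}\times D_q$. For the principal ideals, the second coordinate of $\Ptw n(i,\al)\Ptw n$ ranges over $\P_n\al\P_n=I_q$, while by a two-sided version of the floating-free factorisation (together with the freedom to enlarge the added scalar) the first coordinate ranges over all of $\{i,i+1,\dots\}$; hence $I_{qi}=\{i,i+1,\dots\}\times I_q$. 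The stated order $I_{qi}\sub I_{rj}\iff q\leq r$ and $i\geq j$ is then immediate from comparing the two factors.
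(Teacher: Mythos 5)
This lemma is imported from \cite{ERtwisted1} and the paper gives no proof of it, so there is nothing to compare your argument against directly; judged on its own terms, your proposal is essentially sound and follows the natural route. The forward direction via the projection $(i,\al)\mapsto\al$ (a surjective homomorphism, hence preserving each Green's relation) combined with the minimum-first-coordinate argument is correct: since $\Float\geq0$ and $(0,\id_n)$ is the identity, the first coordinates occurring in $(i,\al)\Ptw n$, $\Ptw n(i,\al)$ and $\Ptw n(i,\al)\Ptw n$ all have minimum exactly $i$, which forces $i=j$; the $\H$ and $\D$ cases then follow from $\H=\R\cap\L$ and $\D\sub\J$. Your reduction of the converse to the floating-free factorisation is exactly the right key lemma, and the construction you sketch does work: with $\de$ having transversals $B_i^*\cup B_i'$ (codomain blocks of $\be$ over those of $\al$, paired via the shared upper parts), lower non-transversals those of $\al$, and the lower non-transversals of $\be$ absorbed into one transversal block of $\de$, every middle component of $\Ga(\be,\de)$ contains some $(B_i^*)''$ and hence reaches $A_i$, so nothing floats and the product is unaffected. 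The rank-$0$ case genuinely needs a different $\de$ (for instance one transversal $\bn\cup E_1'$ with the remaining lower non-transversals of $\al$ kept separate), since there is no transversal block to absorb into; you flag this correctly.

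The one place where the sketch is thinner than it acknowledges is the principal-ideal description. To show $(i,\eta)\in\Ptw n(i,\al)\Ptw n$ for every $\eta$ with $\rank\eta<\rank\al$ you need a rank-dropping floating-free factorisation $\ga\al\de=\eta$ with $\Float(\ga,\al)=\Float(\ga\al,\de)=0$, and this does not follow formally from the $\R$- and $\L$-cases, which only move within a fixed rank. So the phrase ``a two-sided version of the floating-free factorisation'' conceals a further construction. It is of the same flavour and not difficult --- for example, right-multiply $(i,\al)$ by a suitable $(0,\de)$ whose top blocks amalgamate $q-r+1$ of the codomain blocks of $\al$ together with all its lower non-transversals, which lands in $D_{ri}$ without creating floating components, and then invoke the already-established $\J$-characterisation to reach every element of $D_{ri}$ --- but it should be spelled out rather than asserted.
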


It follows that the poset $(\Ptw n/\D,\leq)$ of $\J=\D$-classes is isomorphic to the direct product $(\bnz,\leq)\times(\N,\geq)$, and that an arbitrary ideal of $\Ptw n$ has the form $I_{q_1i_1}\cup\cdots\cup I_{q_ki_k}$, for a sequence of incomparable elements $(q_1,i_1),\ldots,(q_k,i_k)$ of this poset.  
As in \cite{ERtwisted1}, we often view $\Ptw{n}$ as a rectangular grid of $\D$-classes indexed by~$\bnz\times\N$, as in Figure \ref{fig:gridid}, which also pictures the ideal $I_{33}\cup I_{12}\cup I_{00}$ of $\Ptw4$.

\begin{figure}[ht]
\begin{center}
$
\begin{array}{c|*{5}{>{$}m{6mm}<{$}|}c|}\hhline{~|-|-|-|-|-|-|}
\renewcommand{\arraystretch}{2}
\text{\scriptsize 4} &\cellcolor{delcol}D_{40} &\cellcolor{delcol} D_{41}& \cellcolor{delcol} D_{42}& \cellcolor{delcol}D_{43}&\cellcolor{delcol}D_{44}&\cellcolor{delcol}\cdots\\ 
\hhline{~|-|-|-|-|-|-|}
\renewcommand{\arraystretch}{2}
\text{\scriptsize 3} &\cellcolor{delcol} D_{30}&\cellcolor{delcol}D_{31} & \cellcolor{delcol}D_{32} & \cellcolor{Rcol}D_{33}&\cellcolor{Rcol}D_{34}&\cellcolor{Rcol}\cdots\\
\hhline{~|-|-|-|-|-|-|}
\renewcommand{\arraystretch}{2}
\text{\scriptsize 2} &\cellcolor{delcol}D_{20} &\cellcolor{delcol}D_{21} & \cellcolor{delcol}D_{22} & \cellcolor{Rcol}D_{23}&\cellcolor{Rcol}D_{24}&\cellcolor{Rcol}\cdots\\
\hhline{~|-|-|-|-|-|-|}
\renewcommand{\arraystretch}{2}
\text{\scriptsize 1} &\cellcolor{delcol}D_{10} &\cellcolor{delcol}D_{11} & \cellcolor{Rcol}D_{12} & \cellcolor{Rcol}D_{13}&\cellcolor{Rcol}D_{14}&\cellcolor{Rcol}\cdots\\
\hhline{~|-|-|-|-|-|-|}
\renewcommand{\arraystretch}{2}
\text{\scriptsize 0} &\cellcolor{Rcol}D_{00} &\cellcolor{Rcol}D_{01} & \cellcolor{Rcol} D_{02}& \cellcolor{Rcol}D_{03}&\cellcolor{Rcol}D_{04}&\cellcolor{Rcol}\cdots\\
\hhline{~------}
\multicolumn{1}{c}{\text{\scriptsize $q/i$}}&\multicolumn{1}{c}{\text{\scriptsize 0}}&\multicolumn{1}{c}{\text{\scriptsize 1}}&\multicolumn{1}{c}{\text{\scriptsize 2}}&\multicolumn{1}{c}{\text{\scriptsize 3}}&\multicolumn{1}{c}{\text{\scriptsize 4}}&\multicolumn{1}{c}{\cdots}
\end{array}
$
\\
\caption{$\Ptw{4}$ as a grid, and the ideal determined by $(0,0)$, $(1,2)$ and $(3,3)$.}
\label{fig:gridid}
\end{center}
\end{figure}

In addition to the infinite monoid $\Ptw{n}$, we are also interested in its finite homomorphic images~$\Ptw{n,d}$, called the 
\emph{$d$-twisted partition monoids}.  For $d\geq0$, $\Ptw{n,d}$ is defined as the Rees quotient
\[
\Ptw{n,d} := \Ptw n/R_{I_{n,d+1}}
\]
by the  ideal $I_{n,d+1} = \{d+1,d+2,\dots\} \times \P_n$.  We can also think of $\Ptw{n,d}$ as being $\Ptw{n}$ with all elements with more than $d$ floating components equated to a zero element $\zero$. Thus we may take~$\Ptw{n,d}$ to be the set
\[
\Ptw{n,d}:=(\bdz\times\P_n)\cup\{\zero\},
\]
with multiplication
\begin{equation}\label{eq:multwk}
\ba\multwk{d}\bb:=
\begin{cases} 
\ba\multw\bb & \text{if } \ba=(i,\alpha),\ \bb=(j,\beta) \text{ and } i+j+\Float(\alpha,\beta)\leq d,\\
\zero & \text{otherwise}. 
\end{cases}
\end{equation}
In this interpretation, $\Ptw{n,d}$ consists of columns $0,1,\ldots,d$ of $\Ptw n$, plus the zero element $\zero$.

\section{Congruences of the twisted partition monoid \boldmath{$\Ptw{n}$}}
\label{sec:CongPtwn}

We now recount the description of congruences of $\Ptw{n}$ from \cite{ERtwisted1}.
It will involve congruences on the additive monoid $\N$ of natural numbers, and we recall that every such non-trivial congruence~$\theta$ has the form
\[
\th = (m,m+d)^\sharp = \De_\N \cup \bigset{ (i,j)\in\N\times\N}{i,j\geq m,\ i\equiv j\!\!\!\!\pmod{d}} \qquad\text{for some $m\geq0$ and $d\geq1$.}
\]
Here $m=\min\th$ is the \emph{minimum} of $\th$, and $d=\per\th$ is the \emph{period} of $\th$.  
For the trivial congruence we define $\min\De_\N=\per\De_\N=\infty$.  If~$\th_1$ and $\th_2$ are congruences on $\N$, then 
\begin{equation}
\label{eq:th1th2}
\th_1\sub\th_2 \IFF \min\th_1\geq\min\th_2 \ANd \per\th_2\mid\per\th_1.
\end{equation}
Here $\mid$ is the division relation on $\N\cup\{\infty\}$, with the understanding that every element of this set divides $\infty$.

\begin{defn}[\bf C-pair, C-chain, C-matrix]
\label{de:Cpair}
An ordered pair $\Pair=(\Theta,M)$ is called a \emph{C-pair} if the following are satisfied:
\bit
\item
$\Th=(\th_0,\ldots,\th_n)$ is a chain of congruences on $\N$ satisfying $\th_0\supseteq\cdots\supseteq\th_n$.  
\item
$M=(M_{qi})_{\bnz\times\N}$ is a matrix with entries drawn from the following set of symbols:
\[
\{\Delta,\muup,\mudown,\mu,\lambda,\rho, R\}\cup\set{ N}{ \{\id_q\}\neq N\unlhd \S_q,\ 2\leq q\leq n}.
\]
We refer to the entries in the second set collectively as the \emph{$N$-symbols}.
\item
Rows $0$ and $1$ of $M$ must be of one of the row types \ref{RT1}--\ref{RT7} shown in Table \ref{tab:RT}.
\item
Every row $q\geq 2$ must be of one of the row types \ref{RT8}--\ref{RT10} from Table \ref{tab:RT}.
\item
An $N$-symbol cannot be immediately above $\Delta$, $\muup$, $\mudown$ or another $N$-symbol.
\item
Every entry equal to $R$ in row $q\geq 1$ must be directly above an $R$ entry from row~$q-1$.  
\eit
The last two items were referred to as the  \emph{verticality conditions} in \cite{ERtwisted1}, and denoted (V1) and~(V2).
We refer to $\Theta$ as a \emph{C-chain}, and to $M$ as a \emph{C-matrix}.  
With a slight abuse of terminology, we will say that $M$ is of type \ref{RT1}--\ref{RT7}, as appropriate, according to the type of rows $0$ and $1$.
\end{defn}

Some concrete examples of C-pairs can be seen in Examples \ref{ex:fg1}--\ref{ex:fg4} further on.

\begin{table}[ht]
\begin{center}
\footnotesize
\begin{tabular}{|l|l|l|l|}
\hline
\bf Type & \bf Row(s) & \bf\boldmath Relationship with $\th_q$ & \bf Further conditions\\
\hline\hline
\bf\refstepcounter{RT}\theRT
&
$
\Cmatsetup
\begin{array}{r|c|c|c|c|}\hhline{~|-|-|-|-|}
\text{\scriptsize 1} &\cellcolor{delcol}\Delta & \cellcolor{delcol}\Delta & \cellcolor{delcol}\Delta& \cellcolor{delcol}\dots\\ \hhline{~|-|-|-|-|}
\text{\scriptsize 0}&\cellcolor{delcol}\Delta & \cellcolor{delcol}\Delta & \cellcolor{delcol}\Delta& \cellcolor{delcol}\dots\\ \hhline{~|-|-|-|-|}
\end{array}\rule[-5mm]{0mm}{12mm}
$
&
&\\
\hline
\bf\refstepcounter{RT}\theRT
&
$
\Cmatsetup
\begin{array}{r|c|c|c|c|c|c|c|}\hhline{~|-|-|-|-|-|-|-|}
\renewcommand{\arraystretch}{2}
\text{\scriptsize 1} &\cellcolor{delcol}\Delta &\cellcolor{delcol}\dots & \cellcolor{delcol}\Delta & \cellcolor{excepcol}\zeta&\cellcolor{mucol}\mu&\cellcolor{mucol}\mu& \cellcolor{mucol}\dots\\ \hhline{~|-|-|-|-|-|-|-|}
\text{\scriptsize 0}&\cellcolor{delcol}\Delta &\cellcolor{delcol}\dots &\cellcolor{delcol}\Delta & \cellcolor{mucol}\mu&\cellcolor{mucol}\mu&\cellcolor{mucol}\mu& \cellcolor{mucol}\dots\\ \hhline{~|-|-|-|-|-|-|-|}
\multicolumn{1}{c}{}&\multicolumn{1}{c}{}&\multicolumn{1}{c}{}&\multicolumn{1}{c}{}&\multicolumn{1}{c}{\text{\scriptsize $i$}}&\multicolumn{1}{c}{}&\multicolumn{1}{c}{}&\multicolumn{1}{c}{}
\end{array}\rule[-5mm]{0mm}{15mm}
$
& $\th_0=\th_1=\De_\N$ &
$\ze\in\{\De,\muup,\mudown,\mu\}$ 
 \\
\hline
\bf\refstepcounter{RT}\theRT
&
$
\Cmatsetup
\begin{array}{r|c|c|c|c|c|c|c|}\hhline{~|-|-|-|-|-|-|-|}
\renewcommand{\arraystretch}{2}
\text{\scriptsize 1} &\cellcolor{delcol}\Delta &\cellcolor{delcol}\dots & \cellcolor{delcol}\Delta & \cellcolor{delcol}\Delta&\cellcolor{delcol}\Delta&\cellcolor{delcol}\Delta& \cellcolor{delcol}\dots\\ \hhline{~|-|-|-|-|-|-|-|}
\text{\scriptsize 0}&\cellcolor{delcol}\Delta &\cellcolor{delcol}\dots &\cellcolor{delcol}\Delta & \cellcolor{Rcol}\xi&\cellcolor{Rcol}\xi&\cellcolor{Rcol}\xi& \cellcolor{Rcol}\dots\\ \hhline{~|-|-|-|-|-|-|-|}
\multicolumn{1}{c}{}&\multicolumn{1}{c}{}&\multicolumn{1}{c}{}&\multicolumn{1}{c}{}&\multicolumn{1}{c}{\text{\scriptsize $m$}}&\multicolumn{1}{c}{}&\multicolumn{1}{c}{}&\multicolumn{1}{c}{}
\end{array}\rule[-5mm]{0mm}{15mm}
$
& $\th_0=(m,m+1)^\sharp$
& $\xi\in\{\rho,\lam,R\}$ \\
\hline
\bf\refstepcounter{RT}\theRT
&
$
\Cmatsetup
\begin{array}{r|c|c|c|c|c|c|c|}\hhline{~|-|-|-|-|-|-|}
\renewcommand{\arraystretch}{2}
\text{\scriptsize 1} &\cellcolor{delcol}\Delta &\cellcolor{delcol}\dots & \cellcolor{delcol}\Delta & \cellcolor{Rcol}\xi&\cellcolor{Rcol}\xi& \cellcolor{Rcol}\dots\\ \hhline{~|-|-|-|-|-|-|}
\text{\scriptsize 0}&\cellcolor{delcol}\Delta &\cellcolor{delcol}\dots &\cellcolor{delcol}\Delta & \cellcolor{Rcol}\xi&\cellcolor{Rcol}\xi& \cellcolor{Rcol}\dots\\ \hhline{~|-|-|-|-|-|-|}
\multicolumn{1}{c}{}&\multicolumn{1}{c}{}&\multicolumn{1}{c}{}&\multicolumn{1}{c}{}&\multicolumn{1}{c}{\text{\scriptsize $m$}}&\multicolumn{1}{c}{}&\multicolumn{1}{c}{}
\end{array}\rule[-5mm]{0mm}{15mm}
$
& \makecell[l]{$\th_0=\th_1=(m,m+d)^\sharp$}
& \makecell[l]{$\xi\in\{\mu,\rho,\lam,R\}$ if $d=1$,\\ $\xi=\mu$ if $d>1$} \\
\hline
\bf\refstepcounter{RT}\theRT
&
$
\Cmatsetup
\begin{array}{r|c|c|c|c|c|c|c|c|c|c|c|c|}\hhline{~|-|-|-|-|-|-|-|-|-|-|-|}
\renewcommand{\arraystretch}{2}
\text{\scriptsize 1} &\cellcolor{delcol}\Delta &\cellcolor{delcol}\dots & \cellcolor{delcol}\Delta & 
\cellcolor{excepcol}\zeta & \cellcolor{mucol} \mu & \cellcolor{mucol} \dots &\cellcolor{mucol} \mu &\cellcolor{mucol} \mu &
\cellcolor{Rcol}\xi&\cellcolor{Rcol}\xi& \cellcolor{Rcol}\dots\\ \hhline{~|-|-|-|-|-|-|-|-|-|-|-|}
\text{\scriptsize 0}&\cellcolor{delcol}\Delta &\cellcolor{delcol}\dots &\cellcolor{delcol}\Delta & 
\cellcolor{mucol}\mu & \cellcolor{mucol} \mu & \cellcolor{mucol} \dots &\cellcolor{mucol} \mu &
\cellcolor{Rcol} \xi &\cellcolor{Rcol}\xi&\cellcolor{Rcol}\xi& \cellcolor{Rcol}\dots\\ \hhline{~|-|-|-|-|-|-|-|-|-|-|-|}
\multicolumn{1}{c}{}&\multicolumn{1}{c}{}&\multicolumn{1}{c}{}&\multicolumn{1}{c}{}&\multicolumn{1}{c}{\text{\scriptsize $i$}}&\multicolumn{1}{c}{}&\multicolumn{1}{c}{}&\multicolumn{1}{c}{}&\multicolumn{1}{c}{\text{\scriptsize $m$}}&\multicolumn{1}{c}{}&\multicolumn{1}{c}{}
\end{array}\rule[-5mm]{0mm}{15mm}
$
& \makecell[l]{$\th_1=(m+1,m+1+d)^\sharp$, \\  $\th_0=(m,m+d)^\sharp$}
& \makecell[l]{$\ze\in\{\De,\muup,\mudown,\mu\}$,\\   $\xi\in\{\mu,\rho,\lam,R\}$ if $d=1$,\\ $\xi=\mu$ if $d>1$,\\   $i<m$ }\\
\hline
\bf\refstepcounter{RT}\theRT
&
$
\Cmatsetup
\begin{array}{r|c|c|c|c|c|c|c|c|c|}\hhline{~|-|-|-|-|-|-|-|-|-|}
\renewcommand{\arraystretch}{2}
\text{\scriptsize 1} &\cellcolor{delcol}\Delta &\cellcolor{delcol}\dots & \cellcolor{delcol}\Delta & 
\cellcolor{delcol}\Delta & \cellcolor{delcol}\dots & \cellcolor{delcol}\Delta &
\cellcolor{excepcol} \zeta & \cellcolor{Rcol}\xi& \cellcolor{Rcol}\dots
\\ \hhline{~|-|-|-|-|-|-|-|-|-|}
\text{\scriptsize 0}&\cellcolor{delcol}\Delta &\cellcolor{delcol}\dots &\cellcolor{delcol}\Delta & 
\cellcolor{Rcol} \xi &\cellcolor{Rcol}\dots&\cellcolor{Rcol}\xi&\cellcolor{Rcol}\xi&\cellcolor{Rcol}\xi& \cellcolor{Rcol}\dots
\\ \hhline{~|-|-|-|-|-|-|-|-|-|}
\multicolumn{1}{c}{}&\multicolumn{1}{c}{}&\multicolumn{1}{c}{}&\multicolumn{1}{c}{}&\multicolumn{1}{c}{\text{\scriptsize $m$}}&\multicolumn{1}{c}{}&\multicolumn{1}{c}{}&\multicolumn{1}{c}{}&\multicolumn{1}{c}{\text{\scriptsize $l$}}&\multicolumn{1}{c}{}
\end{array}\rule[-5mm]{0mm}{15mm}
$
& \makecell[l]{$\th_1=(l,l+d)^\sharp$, \\  $\th_0=(m,m+d)^\sharp$}
& \makecell[l]{$\ze\in\{\De,\muup,\mudown,\mu\}$,\\   $\xi\in\{\mu,\rho,\lam,R\}$ if $d=1$,\\ $\xi=\mu$ if $d>1$,\\ $m<l$}\\
\hline
\bf\refstepcounter{RT}\theRT
&
$
\Cmatsetup
\begin{array}{r|c|c|c|c|c|c|c|c|c|c|}\hhline{~|-|-|-|-|-|-|-|-|-|-|}
\renewcommand{\arraystretch}{2}
\text{\scriptsize 1} &\cellcolor{delcol}\Delta &\cellcolor{delcol}\dots & \cellcolor{delcol}\Delta  & \cellcolor{delcol}\Delta &
\cellcolor{delcol}\Delta & \cellcolor{delcol}\dots  & \cellcolor{delcol}\Delta &
\cellcolor{mucol} \mu & \cellcolor{Rcol}\xi& \cellcolor{Rcol}\dots
\\ \hhline{~|-|-|-|-|-|-|-|-|-|-|}
\text{\scriptsize 0}&\cellcolor{delcol}\Delta &\cellcolor{delcol}\dots &\cellcolor{delcol}\Delta  
&\cellcolor{mucol}\mu&
\cellcolor{Rcol} \xi &\cellcolor{Rcol}\dots&\cellcolor{Rcol}\xi&\cellcolor{Rcol}\xi&\cellcolor{Rcol}\xi& \cellcolor{Rcol}\dots
\\ \hhline{~|-|-|-|-|-|-|-|-|-|-|}
\multicolumn{1}{c}{}&\multicolumn{1}{c}{}&\multicolumn{1}{c}{}&\multicolumn{1}{c}{}&\multicolumn{1}{c}{}&\multicolumn{1}{c}{\text{\scriptsize $m$}}&\multicolumn{1}{c}{}&\multicolumn{1}{c}{}&\multicolumn{1}{c}{}&\multicolumn{1}{c}{\text{\scriptsize $l$}}
\end{array}\rule[-5mm]{0mm}{15mm}
$
& \makecell[l]{$\th_1=(l,l+d)^\sharp$, \\  $\th_0=(m,m+d)^\sharp$}
& \makecell[l]{$\xi\in\{\mu,\rho,\lam,R\}$ if $d=1$,\\ $\xi=\mu$ if $d>1,$\\ $0<m<l-1$, \\ $m\equiv l-1\pmod d$}\\
\hline\hline
\bf\refstepcounter{RT}\theRT
&
$
\Cmatsetup
\begin{array}{r|c|c|c|c|}\hhline{~|-|-|-|-|}
\text{\scriptsize $q$} &\cellcolor{delcol}\Delta & \cellcolor{delcol}\Delta & \cellcolor{delcol}\Delta& \cellcolor{delcol}\dots\\ \hhline{~|-|-|-|-|}
\end{array}\rule[-3mm]{0mm}{9mm}
$
& 
& \\
\hline
\bf\refstepcounter{RT}\theRT
&
$
\Cmatsetup
\begin{array}{r|c|c|c|c|c|c|c|c|c|c|}\hhline{~|-|-|-|-|-|-|-|-|-|-|}
\renewcommand{\arraystretch}{2}
\text{\scriptsize $q$} &\cellcolor{delcol}\Delta &\cellcolor{delcol}\dots & \cellcolor{delcol}\Delta  & 
\cellcolor{Ncol} N_i &\cellcolor{Ncol} N_{i+1} & \cellcolor{Ncol}\dots & \cellcolor{Ncol} N_{k-1} &
\cellcolor{Ncol} N & \cellcolor{Ncol} N& \cellcolor{Ncol}\dots
\\ \hhline{~|-|-|-|-|-|-|-|-|-|-|}
\multicolumn{1}{c}{}&\multicolumn{1}{c}{}&\multicolumn{1}{c}{}&\multicolumn{1}{c}{}&\multicolumn{1}{c}{\text{\scriptsize $i$}}&\multicolumn{1}{c}{}&\multicolumn{1}{c}{}&\multicolumn{1}{c}{}&\multicolumn{1}{c}{\text{\scriptsize $k$}}&\multicolumn{1}{c}{}&\multicolumn{1}{c}{}
\end{array}\rule[-3mm]{0mm}{11mm}
$
& $\min\th_q\geq k$
& \makecell[l]{ $N_i,\dots,N_{k-1},N\unlhd\S_q$, \\ $N_i\leq \dots \leq N_{k-1}\leq N$}\\
\hline
\bf\refstepcounter{RT}\theRT
&
$
\Cmatsetup
\begin{array}{r|c|c|c|c|c|c|c|c|c|c|}\hhline{~|-|-|-|-|-|-|-|-|-|-|}
\renewcommand{\arraystretch}{2}
\text{\scriptsize $q$} &\cellcolor{delcol}\Delta &\cellcolor{delcol}\dots & \cellcolor{delcol}\Delta  & 
\cellcolor{Ncol} N_i &\cellcolor{Ncol} N_{i+1} & \cellcolor{Ncol}\dots & \cellcolor{Ncol} N_{m-1} &
\cellcolor{Rcol} R & \cellcolor{Rcol} R& \cellcolor{Rcol}\dots
\\ \hhline{~|-|-|-|-|-|-|-|-|-|-|}
\multicolumn{1}{c}{}&\multicolumn{1}{c}{}&\multicolumn{1}{c}{}&\multicolumn{1}{c}{}&\multicolumn{1}{c}{\text{\scriptsize $i$}}&\multicolumn{1}{c}{}&\multicolumn{1}{c}{}&\multicolumn{1}{c}{}&\multicolumn{1}{c}{\text{\scriptsize $m$}}&\multicolumn{1}{c}{}&\multicolumn{1}{c}{}
\end{array}\rule[-3mm]{0mm}{11mm}
$
& $\th_q=(m,m+1)^\sharp$
& \makecell[l]{ $N_i,\dots,N_{m-1}\unlhd\S_q$, \\ $N_i\leq\dots\leq N_{m-1}$}\\
\hline
\end{tabular}
\normalsize
\caption{The specification of row types in a C-matrix.}
\label{tab:RT}
\end{center}
\end{table}

\setcounter{RT}{0}
\refstepcounter{RT}\label{RT1}
\refstepcounter{RT}\label{RT2}
\refstepcounter{RT}\label{RT3}
\refstepcounter{RT}\label{RT4}
\refstepcounter{RT}\label{RT5}
\refstepcounter{RT}\label{RT6}
\refstepcounter{RT}\label{RT7}
\refstepcounter{RT}\label{RT8}
\refstepcounter{RT}\label{RT9}
\refstepcounter{RT}\label{RT10}

\begin{defn}[\bf Congruence corresponding to a C-pair]
\label{de:cg}
The congruence associated with a C-pair $(\Th,M)$ is the relation $\cg(\Th,M)$ on $\Ptw{n}$ consisting of all pairs $((i,\alpha),(j,\beta))\in \Ptw{n}\times \Ptw{n}$ such that one of the following holds, writing $q=\rank\alpha$ and $r=\rank\beta$:
\begin{enumerate}[label=\textsf{(C\arabic*)}, widest=(C8), leftmargin=10mm]
\item
\label{C1}
$M_{qi}=M_{rj}=\De$, $(i,j)\in\th_q$ and $\alpha=\beta$;
\item
\label{C2}
$M_{qi}=M_{rj}=R$;
\item
\label{C3}
$M_{qi}=M_{rj}=N$, $(i,j)\in \th_q$, $\alpha\rH\beta$ and $\pd(\alpha,\beta)\in N$;
\item
\label{C4}
$M_{qi}=M_{rj}=\lambda$ and $\widehat{\alpha}\rL\widehat{\beta}$;
\item
\label{C5}
$M_{qi}=M_{rj}=\rho$ and $\widehat{\alpha}\rR\widehat{\beta}$;
\item
\label{C6}
$M_{qi}=M_{rj}=\mudown$,  $\widehat{\alpha}=\widehat{\beta}$ and $\alpha \rL\beta$;
\item
\label{C7}
$M_{qi}=M_{rj}=\muup$,  $\widehat{\alpha}=\widehat{\beta}$ and $\alpha \rR\beta$;
\item
\label{C8}
$M_{qi}=M_{rj}=\mu$,  $\widehat{\alpha}=\widehat{\beta}$ and one of the following holds:
\bit
\item $q=r$ and $(i,j)\in\th_q$, or
\item $q\neq r$, $(i+r,j+q)\in\th_0$, $i<\min\th_q$ and $j<\min\th_r$, or
\item $q\neq r$, $(i+r,j+q)\in\th_0$, $i\geq\min\th_q$ and $j\geq\min\th_r$.
\eit
\end{enumerate}
\end{defn}

One special kind of C-pair also has a second kind of congruence associated to it:

\begin{defn}[\bf Exceptional C-pairs and congruences]
\label{def:exc}
A C-pair $(\Th,M)$ is \emph{exceptional} if there exists $q\geq 2$ such that:
\bit
\item
$\th_q=(m,m+2d)^\sharp$ for some $m\geq0$ and $d\geq1$; 
associated with this relation, we also set
$\thx_q:=(m,m+d)^\sharp$;
\item
$M_{qm}=\A_q$ if $q>2$;
\item
if $q=2$ then $M_{2m}=\Delta$, $M_{1m}\in\{\mu,\rho,\lam,R\}$ and $\thx_2\subseteq\theta_1$.
\eit
This $q$ is necessarily unique (by the verticality conditions in Definition \ref{de:Cpair}), and we call row $q$ the \emph{exceptional row}, and write $q=:\hgt(M)$.
To the exceptional C-pair $(\Th,M)$, in addition to $\cg(\Th,M)$, we also associate the \emph{exceptional congruence} $\cgx(\Th,M)$ consisting of all pairs $((i,\al),(j,\be))$ such that one of \ref{C1}--\ref{C8} holds, or else:
\begin{enumerate}[label=\textsf{(C\arabic*)}, widest=(C8), leftmargin=10mm]
\addtocounter{enumi}{8}
\item \label{C9} $(i,j)\in\thx_q\sm\th_q$, $\al\rH\be$ and $\pd(\alpha,\beta)\in \S_q\setminus \A_q$. 
\end{enumerate}
\end{defn}

The following is the main result of \cite{ERtwisted1}:

\begin{thm}
\label{thm:main}
For $n\geq1$, the congruences on the twisted partition monoid $\Ptw{n}$ are precisely:
\bit
\item
$\cg(\Th,M)$ where $(\Th,M)$ is any C-pair;
\item
$\cgx(\Th,M)$ where $(\Th,M)$ is any exceptional C-pair.  \epfres
\eit
\end{thm}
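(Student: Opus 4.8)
The plan is to prove both containments: first \emph{soundness}, that every relation of the form $\cg(\Th,M)$ and $\cgx(\Th,M)$ listed is genuinely a congruence on $\Ptw n$, and then \emph{completeness}, that there are no others. Throughout I would lean on the grid description of the $\D=\J$-classes $D_{qi}$ from Lemma~\ref{la:Green_Ptwn}, on the map $\al\mapsto\wh\al$ and the permutational difference $\pd$ recalled in Subsection~\ref{subsec:CongPn}, and on the classification of $\Cong(\P_n)$ in Theorem~\ref{thm:CongPn}, since the restriction of any congruence of $\Ptw n$ to a single column $\{i\}\times\P_n$ ought to behave like a congruence of $\P_n$.

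For soundness I would first check that each $\cg(\Th,M)$ is an equivalence relation --- transitivity across the clauses \ref{C1}--\ref{C8} is the only delicate point --- and then verify left/right compatibility. Compatibility splits into two independent effects of multiplying $(i,\al)$ by a fixed $(k,\ga)$: the rank can only drop (moving down a row), and the floating count shifts the column index by $k+\Float(\cdot,\ga)$ (moving right). The row-type table (Table~\ref{tab:RT}) and the verticality conditions (V1)--(V2) in Definition~\ref{de:Cpair} are precisely engineered so that a $\sigma$-related pair stays $\sigma$-related under both moves: for instance (V2) guarantees that once a $\D$-class is collapsed into the Rees ideal, everything $\J$-below it in the same column is too, while the monotonicity $N_i\le\cdots\le N$ along a row in \ref{RT9}--\ref{RT10} matches the fact that the attainable permutational differences can only grow as $i$ increases. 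The exceptional clause \ref{C9} then needs the extra check that pairing up elements whose permutational difference lies in $\S_q\setminus\A_q$ across columns differing by the half-period $d$ remains compatible; this is where the even period $\per\th_q=2d$ of Definition~\ref{def:exc} is essential.

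For completeness I would start from an arbitrary congruence $\sigma$ and reconstruct a C-pair. The chain $\Th$ is recovered column-wise: for each rank $q$ set $\th_q=\set{(i,j)}{((i,\al),(j,\al))\in\sigma}$ for a fixed $\al\in D_q$, and show (using compatibility with idempotents of rank $q$) that this is a well-defined congruence on $\N$, independent of $\al$, with the chain descending because an identification at rank $q$ forces the corresponding one at every lower rank. The matrix entry $M_{qi}$ is then read off from how $\sigma$ restricts to $D_{qi}$ together with its immediate vertical neighbours: by Theorem~\ref{thm:CongPn} and the fact (Lemma~\ref{la:Green_Ptwn}) that each $D_{qi}$ carries a copy of the rank-$q$ $\D$-class of $\P_n$, the restriction must be one of $\De$, $\lam$, $\rho$, $\mu$, $\muup$, $\mudown$, an $N$-relation, or total collapse $R$. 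The core work is a family of \emph{propagation lemmas}: if $\sigma$ identifies a single pair of a given type, then multiplying by suitable idempotents and rank-lowering elements forces all the identifications demanded by the corresponding clause, and simultaneously forces the row-type and verticality constraints on $M$. Assembling these shows $\sigma\supseteq\cg(\Th,M)$, and a maximality argument (no identification occurs outside those forced) gives equality, except in the single configuration that produces an exceptional row.

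The main obstacle, and the reason the classification is genuinely intricate, is the interaction between the additive $\N$-coordinate and the parity of permutational differences. Since $\Float(\al,\be)$ is not additive and can be made to take prescribed values, products can shift the column index while simultaneously realising a chosen permutation of the transversals; tracking exactly which column-shifts are compatible with which normal subgroups of $\S_q$ is what pins down $\per\th_q$ and forces the divisibility relation in~\eqref{eq:th1th2}. The genuinely exceptional behaviour --- where an odd permutational difference links two columns whose indices differ by $d$ while the $\N$-congruence itself has period only $2d$ --- has to be isolated as the separate family $\cgx(\Th,M)$, and I expect that showing these are the \emph{only} extra congruences, and that they do not already arise as some $\cg(\Th,M')$, will be the most delicate part of the argument. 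Finally I would confirm that distinct (exceptional) C-pairs yield distinct congruences by recovering $(\Th,M)$ uniquely from $\sigma$, thereby establishing the stated bijection.
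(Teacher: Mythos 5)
This theorem carries no proof in the present paper: it is quoted verbatim as the main result of the companion work \cite{ERtwisted1}, so there is no in-paper argument to compare your proposal against. Judged on its own terms, your proposal is a sensible roadmap whose architecture (soundness of each $\cg(\Th,M)$ and $\cgx(\Th,M)$, then completeness via reconstructing a C-pair from an arbitrary congruence) matches what one would expect, and broadly matches how the companion paper proceeds, e.g.\ recovering $\th_q$ and $M_{qi}$ from the restrictions $\si_{qi}$ as in Remark~\ref{rem:amb} and Table~\ref{tab:M}.

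However, as it stands the proposal has a genuine gap: it names the work without doing it. The entire content of the theorem lives in the ``propagation lemmas'' you invoke --- that a single identification of a given type, pushed around by multiplication, forces exactly the closure described by \ref{C1}--\ref{C9} and exactly the row-type and verticality constraints of Table~\ref{tab:RT}, and nothing more. None of these is stated, let alone proved; you yourself flag the isolation of the exceptional family $\cgx(\Th,M)$ as ``the most delicate part'' and then leave it unresolved. That is the crux, not a loose end: one must show that an odd permutational difference linking columns at distance $d$ while $\per\th_q=2d$ genuinely closes up into a congruence (transitivity of \ref{C9} combined with \ref{C3} and \ref{C8} is nontrivial) and that no other ``hybrid'' relations arise. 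A secondary inaccuracy: a single column $\{i\}\times\P_n$ is not a submonoid of $\Ptw n$ (products leave the column because of the floating-component shift), so the restriction of a congruence to a column is not literally a congruence of $\P_n$; the correct unit of analysis is the restriction $\si_{qi}$ to a single $\D$-class $D_{qi}$, and establishing that this restriction must be one of the relations in Table~\ref{tab:M} already requires the machinery of \cite{EMRT2018} applied carefully through the quotient structure. In short, the plan is plausible but the proof is not there.
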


\begin{rem}\label{rem:amb}
Consider a congruence $\si$ on $\Ptw n$.  As in \cite[Section 5]{ERtwisted1}, the C-pair $(\Th,M)$ associated to~$\si$ is determined as follows.  For the C-chain $\Th=(\th_0,\ldots,\th_n)$, we have
\begin{align*}
\th_q &= \bigset{(i,j)\in\N\times\N}{((i,\al),(j,\al))\in\si \ (\exists \al\in D_q)} \\
&= \bigset{(i,j)\in\N\times\N}{((i,\al),(j,\al))\in\si \ (\forall \al\in D_q)} \qquad\text{for $q\in\bnz$.}
\end{align*}
For the C-matrix $M=(M_{qi})_{\bnz\times\N}$, an entry $M_{qi}$ is uniquely determined by the restriction
\[
\si_{qi} := \bigset{(\al,\be)\in D_q\times D_q}{((i,\al),(i,\be))\in\si},
\]
apart from two possible cases:
\bit
\item If $\si_{0i}=\De_{D_0}$, then $M_{0i}$ is either $\mu$ or $\De$, depending  on whether there are $\si$-relationships between elements of $D_{0i}$ and those of some $D_{1j}$.
\item If $\si_{ni}=\nab_{D_n}$, then $M_{ni}$ is either $R$ or $\S_n$, depending  on whether $D_{ni}$ is contained in the ideal class $I(\sigma)$ of $\sigma$.
\eit
The full relationships between the $M_{qi}$ and $\si_{qi}$ is summarised in Table \ref{tab:M}.
The two hitherto undefined relations making an appearance in the table are
\[
\muup:=\bigset{ (\alpha,\beta)\in D_1\times D_1}{ \widehat{\alpha}=\widehat{\beta},\ \alpha\rR\beta}
\quad\text{and}\quad
\mudown:=\bigset{ (\alpha,\beta)\in D_1\times D_1}{ \widehat{\alpha}=\widehat{\beta},\ \alpha\rL\beta}.
\]
\end{rem}

\begin{table}[ht]
\begin{center}
$
\begin{array}{|c|c|c|c|} 
\hline
\text{\boldmath{$q$}} & \text{\boldmath{$M_{qi}$}}& \text{\boldmath{$\si_{qi}$}}& \text{\bf Additional criteria}
\\  \hhline{|=|=|=|=|}
\multirow{3}{*}{$q\geq2$} & \De & \De_{D_q} & 
\\  \hhline{~|-|-|-|}
 & N & \nu_N & D_{qi}\not\subseteq I(\si)
\\ \hhline{~|-|-|-|}
 & R & \nab_{D_q} & D_{qi}\subseteq I(\si) 
\\  \hhline{|=|=|=|=|}
\multirow{7}{*}{$q=1$} & \De & \De_{D_1} & 
\\  \hhline{~|-|-|-|}
 & \muup & \muup & 
\\  \hhline{~|-|-|-|}
 & \mudown & \mudown & 
\\  \hhline{~|-|-|-|}
 & \mu & \mu_1\restr_{D_1} & 
\\  \hhline{~|-|-|-|}
 & \lam &\lam_1\restr_{D_1} & 
\\  \hhline{~|-|-|-|}
 & \rho & \rho_1\restr_{D_1} & 
\\ \hhline{~|-|-|-|}
 & R & \nab_{D_q} & 
\\  \hhline{|=|=|=|=|}
\multirow{5}{*}{$q=0$} & \De & \De_{D_0} & \text{$\si\cap(D_{0i}\times D_{1j})=\emptyset$ ($\forall j\in\N$})
\\  \hhline{~|-|-|-|}
 & \mu & \De_{D_0} & \text{$\si\cap(D_{0i}\times D_{1j})\neq\emptyset$ ($\exists j\in\N$})
\\  \hhline{~|-|-|-|}
 & \lam &\lam_0\restr_{D_0} & 
\\  \hhline{~|-|-|-|}
 & \rho & \rho_0\restr_{D_0} & 
\\ \hhline{~|-|-|-|}
 & R & \nab_{D_q} & 
\\ \hline
\end{array}
$
\caption{The relationship between the entries $M_{qi}$ of a C-matrix and the restrictions $\si_{qi}$ of its associated congruence(s) $\si$ on $\Ptw n$.}
\label{tab:M}
\end{center}
\end{table}

Building on the above classification of congruences on $\Ptw n$, \cite{ERtwisted1} also characterised the inclusion ordering
constituting the congruence lattice $\Cong(\Ptw{n})$, as stated below in Theorem \ref{thm:comparisons}.
This ordering is closely related, but not identical, to the lexicographic ordering on C-pairs, based on the inclusion ordering of congruences on $\N$, and the ordering of C-matrix entries shown in 
Figure~\ref{fig:M}.  Specifically, given two C-pairs $\Pair^1=(\Th^1,M^1)$ and $\Pair^2=(\Th^2,M^2)$, we write:
\bit
\item $\Th^1\leqc\Th^2 \iff \th_q^1\sub\th_q^2$ for all $q\in\bnz$,
\item $M^1\leqc M^2 \iff M_{qi}^1\leqc M_{qi}^2$ for all $q\in\bnz$ and $i\in\N$, and
\item $\Pair^1\leqc\Pair^2 \iff \Th^1\leqc\Th^2$ and $M^1\leqc M^2$.
\eit

\begin{figure}[ht]
\begin{center}

\begin{tikzpicture}[scale=1]
\node (Delta) at (2,-0.5) {$\Delta$};
\node (S2) at (-0.5,3) {$\S_2$};
\node (A3) at (1,1.5) {$\A_3$};
\node (S3) at (1,3) {$\S_3$};
\node (K4) at (2,0.5) {$\mathcal K_4$};
\node (A4) at (2,1.5) {$\A_4$};
\node (S4) at (2,3) {$\S_4$};
\node (A5) at (3,1.5) {$\A_5$};
\node (S5) at (3,3) {$\S_5$};
\node at (4,2.25) {$\dots$};
\node (An) at (5,1.5) {$\A_n$};
\node (Sn) at (5,3) {$\S_n$};
\node (R) at (2,4.5) {$R$};

\node (mudown) at (-1.5,1) {$\mudown$};
\node (muup) at (-3,1) {$\muup$};
\node (mu) at (-2.25,2) {$\mu$};
\node (lambda) at (-1.5,3) {$\rho$};
\node (rho) at (-3,3) {$\lam$};

\draw
(Delta)--(S2)
(Delta)--(A3)--(S3)
(Delta)--(K4)--(A4)--(S4)
(Delta)--(A5)--(S5)
(Delta)--(An)--(Sn)
(Delta)--(mudown)--(mu)--(lambda)
(Delta)--(muup)--(mu)--(rho)
(rho)--(R)
(lambda)--(R)
(S2)--(R)
(S3)--(R)
(S4)--(R)
(S5)--(R)
(Sn)--(R)
;
\end{tikzpicture}

\caption{The partial ordering $\leqc$ on  the C-matrix entries.}
\label{fig:M}

\end{center}
\end{figure}
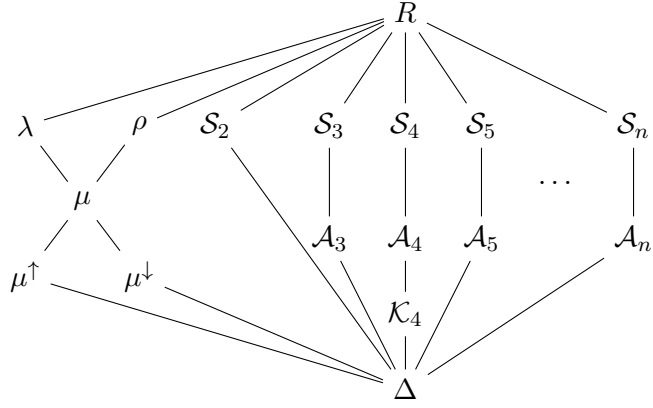

The deviations from the lexicographic ordering are caused by pairs of matching $\mu$s in rows~$0$ and~$1$, as well as by the exceptional congruences.
To capture the former,
suppose $M$ is a C-matrix of type \ref{RT2}, \ref{RT5} or~\ref{RT7}.
These are precisely the types that have `initial $\mu$s' in row $0$, by which we mean entries $M_{0j}=\mu$ with $j<\min\th_0$.  These initial $\mu$s are coloured green in 
Table~\ref{tab:RT}.
We define $\mumin_{0}(M)$ to be the position of the first initial $\mu$ in row $0$.  We then define $\mumin_{1}(M)$ to be the position of its `matching $\mu$' in row $1$.  Thus, in the notation of Table \ref{tab:RT}:
\[
\mumin_{0}(M) = \begin{cases}
i &\text{for \ref{RT2} and \ref{RT5}}\\
m-1 &\text{for \ref{RT7},}
\end{cases}
\AND
\mumin_{1}(M) = \begin{cases}
i+1 &\text{for \ref{RT2} and \ref{RT5}}\\
l-1 &\text{for \ref{RT7}.}
\end{cases}
\]
Note that $\mumin_{1}(M)$ need not be the position of the first $\mu$ in row $1$, as we could have $\zeta=\mu$ in types \ref{RT2} and \ref{RT5}.  

Also, when dealing with exceptional congruences, for an exceptional C-pair $\Pair=(\Th,M)$, recall that~$\hgt(M)$ is the index of the exceptional row (see Definition~\ref{def:exc}).
With this notation we have:

\begin{thm}[{\cite[Theorem 6.5]{ERtwisted1}}]
\label{thm:comparisons}
Let $n\geq 1$, and let $\Pair^1=(\Th^1,M^1)$ and  $\Pair^2=(\Th^2,M^2)$ be two C-pairs for~$\Ptw{n}$.
\begin{thmenumerate}
\item
\label{it:comp1}
We have $\cg(\Pair^1)\subseteq\cg(\Pair^2)$ if and only if both of the following hold:
\begin{thmsubenumerate}
\item
\label{it:comp1a}
$\Pair^1\leqc\Pair^2$, and
\item
\label{it:comp1b}
if $M^1$ has type \ref{RT2}, \ref{RT5} or \ref{RT7}, then at least one of the following hold:
\begin{enumerate}[label=\textup{(b\arabic*)},leftmargin=10mm]
\item \label{1b1} $\min\th_0^2\leq\mumin_{0}(M^1)$ and $\min\th_1^2\leq\mumin_{1}(M^1)$, or
\item \label{1b2} $M^2$ also has type \ref{RT2}, \ref{RT5} or \ref{RT7} (not necessarily the same as $M^1$), and $\mumin_{1}(M^2)-\mumin_{0}(M^2)=\mumin_{1}(M^1)-\mumin_{0}(M^1)$.
\end{enumerate}
\end{thmsubenumerate}
\item
\label{it:comp2}
When $\Pair^2$ is exceptional, we have $\cg(\Pair^1)\subseteq\cgx(\Pair^2)$ if and only if $\cg(\Pair^1)\subseteq \cg(\Pair^2)$.
\item
\label{it:comp3}
When $\Pair^1$ is exceptional, we have $\cgx(\Pair^1)\subseteq\cg(\Pair^2)$ if and only if all of the following hold, where $q:=\hgt(M^1)$:
\begin{thmsubenumerate}
\item
\label{it:comp3a}
$\cg(\Pair^1)\subseteq\cg(\Pair^2)$,
\item
\label{it:comp3b}
$2\per\th_q^2\mid\per\th_q^1$, and
\item
\label{it:comp3c}
$M_{qi}^2\in\{\S_q,R\}$ for all $i\geq\min\th_q^2$.
\end{thmsubenumerate}
\item
\label{it:comp4}
When both $\Pair^1$ and $\Pair^2$ are exceptional, we have $\cgx(\Pair^1)\subseteq\cgx(\Pair^2)$ if and only if  both of the following hold:
\begin{thmsubenumerate}
\item
\label{it:comp4a} $\cg(\Pair^1)\subseteq\cg(\Pair^2)$, and
\item
\label{it:comp4b} if $\hgt(M^1)=\hgt(M^2)=:q$, then the ratio $\per\th_q^1/\per\th_q^2$ is an odd integer.  \epfres
\end{thmsubenumerate}
\end{thmenumerate}
\end{thm}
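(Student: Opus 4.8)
The plan is to reduce each of the four inclusion statements to a pointwise comparison, exploiting that every congruence is given \emph{explicitly} by the membership conditions \ref{C1}--\ref{C9} of Definitions \ref{de:cg} and \ref{def:exc}. Thus $\cg(\Pair^1)\sub\cg(\Pair^2)$ holds if and only if every pair witnessed for $\Pair^1$ by one of \ref{C1}--\ref{C8} is also witnessed for $\Pair^2$ by one of \ref{C1}--\ref{C8}, and similarly for the exceptional variants with \ref{C9} adjoined. I would separate the \emph{local} conditions \ref{C1}--\ref{C7} (and the first bullet of \ref{C8}), which compare only a single column or a single $\D$-class, from the two genuinely \emph{nonlocal} phenomena — the cross-rank case of \ref{C8} and the exceptional pairs of \ref{C9} — since these are the sole source of the extra clauses \ref{it:comp1b}, \ref{it:comp3b}, \ref{it:comp3c} and \ref{it:comp4b}.

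For \ref{it:comp1} I would first establish the two coordinates of $\leqc$. Restricting an inclusion to diagonal pairs $((i,\al),(j,\al))$ with $\al\in D_q$ and reading off the formula for $\th_q$ in Remark \ref{rem:amb} yields $\th_q^1\sub\th_q^2$ for all $q$; restricting to within-column pairs $((i,\al),(i,\be))$ and consulting Table \ref{tab:M} yields $\si_{qi}^1\sub\si_{qi}^2$, which is exactly the entrywise order of Figure \ref{fig:M}. Together these show \ref{it:comp1a} is necessary, and conversely that \ref{it:comp1a} already absorbs every pair from \ref{C1}--\ref{C7} and from the first bullet of \ref{C8}. The only remaining contribution to $\cg(\Pair^1)$ is the family of cross-rank $\mu$-relationships between $D_0$ and $D_1$ produced by the second and third bullets of \ref{C8}; these occur exactly when $M^1$ has type \ref{RT2}, \ref{RT5} or \ref{RT7}, and are controlled by the positions $\mumin_0(M^1)$ and $\mumin_1(M^1)$. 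A direct index calculation shows $\cg(\Pair^2)$ contains precisely these pairs if and only if $\Pair^2$ either joins the relevant columns through its own $\th_0$ (clause \ref{1b1}) or carries a matching pair of initial $\mu$s with the same gap (clause \ref{1b2}); this is where \ref{it:comp1b} originates, and I expect the bookkeeping of exactly which index pairs $(i,j)$ get joined to be the most error-prone step of this part.

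Parts \ref{it:comp2}--\ref{it:comp4} then hinge on when the extra \ref{C9}-pairs matter. For \ref{it:comp2} the forward implication is immediate from $\cg(\Pair^2)\sub\cgx(\Pair^2)$; for the converse I would note that a \ref{C9}-pair of $\cgx(\Pair^2)$ has permutational difference in $\S_q\sm\A_q$, so if it already lay in the non-exceptional $\cg(\Pair^1)$ then, since at rank $q\ge2$ only \ref{C1}--\ref{C3} can witness it and any normal subgroup meeting $\S_q\sm\A_q$ equals $\S_q$, it would be witnessed by \ref{C2} or by \ref{C3} with $N=\S_q$; the corresponding $\nab$- or $\H$-relationship within that column then forces $M^2_{qi}\in\{\S_q,R\}$, so the pair already lies in $\cg(\Pair^2)$ and the \ref{C9}-pairs never enlarge the down-set. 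For \ref{it:comp3} and \ref{it:comp4} the decisive point is arithmetic: the \ref{C9}-pairs of an exceptional $\Pair^1$ have index pairs in $\thx_q\sm\th_q$, that is, differences that are \emph{odd} multiples of the half-period $\per\th_q^1/2$. In \ref{it:comp3}, a non-exceptional $\Pair^2$ absorbs these only if row $q$ of $M^2$ is saturated by $\S_q$ and $R$ from $\min\th_q^2$ onwards (clause \ref{it:comp3c}) and the odd half-period differences fall inside $\th_q^2$, which is equivalent to $2\per\th_q^2\mid\per\th_q^1$ (clause \ref{it:comp3b}). In \ref{it:comp4}, when the exceptional rows coincide, the same odd-multiple analysis shows a \ref{C9}-pair of $\Pair^1$ lands in the \ref{C9}-set of $\Pair^2$ exactly when $\per\th_q^1/\per\th_q^2$ is an odd integer (clause \ref{it:comp4b}); were the ratio even, the offending odd-permutation pairs would fall into $\th_q^2$, where the exceptional entry $\A_q$ declines to relate odd permutations, and the inclusion would fail.

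The hardest part will be this arithmetic interface between the half-period relation $\thx_q$ and the true periods $\th_q^1,\th_q^2$: proving cleanly that `the difference is an odd multiple of $\per\th_q^1/2$' is preserved under the divisibility of \ref{it:comp3b} and the odd-ratio of \ref{it:comp4b}, and in particular verifying that no \ref{C9}-pair can slip into $\cg(\Pair^2)$ by an unintended route — which is exactly what upgrades mere divisibility to the parity condition in \ref{it:comp4b}. I would also keep the low rows $q\in\{0,1\}$ under separate scrutiny, since the $\muup$ and $\mudown$ refinements and the special $q=2$ clause of Definition \ref{def:exc} behave differently from the uniform rows $q\ge2$.
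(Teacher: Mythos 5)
This paper does not actually prove Theorem \ref{thm:comparisons}: it is imported wholesale from the companion paper as \cite[Theorem 6.5]{ERtwisted1} and stated without proof here, so there is no in-paper argument to measure you against. That said, your overall strategy --- a clause-by-clause comparison of \ref{C1}--\ref{C9}, isolating the two non-local phenomena (the cross-rank $\mu$-pairs of \ref{C8} and the \ref{C9}-pairs) as the sole source of conditions \ref{it:comp1b}, \ref{it:comp3b}, \ref{it:comp3c} and \ref{it:comp4b} --- is the natural, essentially forced route, and your half-period arithmetic (differences that are odd multiples of $\per\th_q^1/2$, whence $2\per\th_q^2\mid\per\th_q^1$ in \ref{it:comp3} and the odd ratio in \ref{it:comp4}) is correct.

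Three steps need repair. First, in \ref{it:comp1}, restricting to within-column pairs does \emph{not} recover $M_{qi}^1\leqc M_{qi}^2$ in the two ambiguous cases of Remark \ref{rem:amb}: $\si_{0i}^1=\si_{0i}^2=\De_{D_0}$ is compatible with $M_{0i}^1=\mu$ and $M_{0i}^2=\De$, and $\si_{ni}^1=\si_{ni}^2=\nab_{D_n}$ with $M_{ni}^1=R$ and $M_{ni}^2=\S_n$, neither of which is a $\leqc$-comparison; you must separately invoke a cross-class pair in $D_{0i}\times D_{1j}$, respectively the inclusion of ideal classes $I(\cg(\Pair^1))\sub I(\cg(\Pair^2))$, to rule these out. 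Second, in \ref{it:comp2} your final inference fails as written: $M_{qi}^2=M_{qj}^2=\S_q$ does \emph{not} place the pair in $\cg(\Pair^2)$, because \ref{C3} additionally demands $(i,j)\in\th_q^2$, which is exactly what a pair witnessed only by \ref{C9} lacks. The repair is to feed the diagonal pair $((i,\al),(j,\al))$ --- which lies in $\cg(\Pair^1)$ by the same witness, since $\id_q$ belongs to every relevant $N$, yet can never be a \ref{C9}-pair --- through the hypothesis: this forces either $(i,j)\in\th_q^2$ (a contradiction) or $M_{qi}^2=M_{qj}^2=R$, and only the latter absorbs the original pair, via \ref{C2}. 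Third, in \ref{it:comp4} you treat only the case $\hgt(M^1)=\hgt(M^2)$; when the exceptional rows differ, the \ref{C9}-pairs of $\Pair^1$ cannot be \ref{C9}-pairs of $\Pair^2$ and must be absorbed by \ref{C2} or \ref{C3} in row $\hgt(M^1)$ of $M^2$. Showing that \ref{it:comp4a} alone guarantees this (whereas in \ref{it:comp3} the extra conditions \ref{it:comp3b} and \ref{it:comp3c} are needed) is a genuinely separate argument, resting on the verticality conditions: one checks that exceptionality of $\Pair^2$ at a different row forces row $\hgt(M^1)$ of $M^2$ to be $R$ from position $\min\th_q^1$ onwards, where $q=\hgt(M^1)$. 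Your sketch omits this case entirely.
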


\begin{rem}\label{rem:comp}
Let $\si^1$ and $\si^2$ be congruences on $\Ptw n$, with associated C-pairs $\Pair^1$ and $\Pair^2$.  Then by inspecting the various parts of Theorem \ref{thm:comparisons}, we have $\si^1\sub\si^2 \implies \Pair^1\leqc\Pair^2$.  \end{rem}

\begin{rem}\label{rem:01}
The cases of $n=0,1$ were discussed in \cite[Section 8]{ERtwisted1}.  
When $n=0$, $\Ptw0\cong\N$ and its congruence lattice is completely described by \eqref{eq:th1th2}.  
When $n=1$, Theorems \ref{thm:main} and \ref{thm:comparisons} remain valid, though there are some redundancies/degeneracies:
\bit
\item There are no rows $q\geq2$, and so no $N$-symbols, and no exceptional congruences.
\item Since $\wh\al=\wh\be$ for all $\al,\be\in\P_1$, it follows that certain symbols play the same role:  ${\muup\equiv\mudown\equiv\De}$ and $\lam\equiv\rho\equiv R$, and there are no unmatched~$\mu$s.
\eit
Thus, C-matrices have labels from $\{\De,\mu,R\}$, and only items \ref{C1}, \ref{C2} and \ref{C8} from Definition~\ref{de:cg} are needed to specify the congruence $\cg(\Th,M)$.  
Based on these observations, from now on the case $n=0$ will be ignored, while 
$n=1$ will be accompanied with necessary extra considerations.
\end{rem}

\begin{rem}\label{rem:leqc}
By inspection of the row types in Table \ref{tab:RT}, we see that the rows of a C-matrix $M=(M_{qi})_{\bnz\times\N}$ are weakly increasing with respect to the $\leqc$ order.  That is, we have
\[
M_{q0}\leqc M_{q1}\leqc\cdots \qquad\text{for all $q\in\bnz$.}
\]
\end{rem}

\section{Properties of the congruence lattice \boldmath{$\textsf{\textbf{Cong}}({\Ptw{n}})$}}
\label{sec:props}

At the end of Section 3 of \cite{ERtwisted1}, some properties of the lattice $\Cong(\Ptw{n})$ were derived as relatively easy corollaries of the main results, including the fact that $\Cong(\Ptw{n})$ is countable.  
In this section we provide a much more detailed analysis of the combinatorial and algebraic properties of the lattice, proving results on (co)atoms (Theorem \ref{thm:atoms}), infinite (anti)chains (Theorem \ref{thm:chains}), the covering relation (Theorem \ref{thm:covers}), and non-modularity (Theorem~\ref{thm:pent}).

In what follows we adopt the way of writing down explicit C-pairs introduced in \cite{ERtwisted1}: the C-matrix is written as a rectangular grid of its entries, and the members of the C-sequence are written to the right of their corresponding rows.

\begin{thm}\label{thm:atoms}
For $n\geq1$, the lattice $\Cong(\Ptw{n})$ has a unique co-atom, and no atoms.
\end{thm}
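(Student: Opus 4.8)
The plan is to prove the two claims separately, both by exploiting the combinatorial classification via C-pairs and the comparison criteria of Theorem~\ref{thm:comparisons}.

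\textbf{The unique co-atom.} The top congruence $\nab_{\Ptw n}=R_{I_{00}}$ is the Rees congruence collapsing everything, whose C-pair has $\th_q=\nab_\N=(0,1)^\sharp$ for all $q$ and $M_{qi}=R$ everywhere. To find its covers from below, I would identify the maximal proper congruence. A natural candidate is the congruence $\si_0$ whose C-matrix agrees with that of $\nab$ except that the single entry $M_{00}$ is changed from $R$ to $\mu$ (so $D_{00}$ is no longer in the ideal class), with the C-chain still having every $\th_q=\nab_\N$; one must check this is a legitimate C-pair (row $0$ has type \ref{RT4} with $m=0$, $\xi=R$, and the adjustment at position $0$ is consistent with the verticality conditions). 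By Table~\ref{tab:M}, the only way for a congruence to be properly below $\nab$ is to shrink at least one entry $M_{qi}$ below $R$, or shrink some $\th_q$ below $\nab_\N$; using Remark~\ref{rem:comp} and the ordering $\leqc$ of Figure~\ref{fig:M}, any such congruence $\si$ satisfies $\si\sub\si_0$ provided $\si\neq\nab$. The key point is that from the very top, the ``cheapest'' possible weakening is forced to occur at the minimal $\D$-class $D_{00}$, since any weakening elsewhere propagates downward (by the $\leqc$ ordering being respected along rows, Remark~\ref{rem:leqc}, and the verticality condition that $R$ in row $q$ sits above $R$ in row $q-1$). This forces $\si_0$ to be the unique co-atom; I would verify $\si_0\prec\nab$ directly and that no other congruence is a co-atom by showing each proper $\si$ lies under $\si_0$.

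\textbf{No atoms.} Here I must show the bottom congruence $\De_{\Ptw n}$ (C-pair with every $\th_q=\De_\N$ and every $M_{qi}=\De$) has no cover; equivalently, for every nontrivial congruence $\si\supsetneq\De$ there is a nontrivial $\tau$ with $\De\subsetneq\tau\subsetneq\si$. The strategy is to produce, from any given nontrivial C-pair $(\Th,M)$, a strictly smaller nontrivial one. If some $\th_q\neq\De_\N$, then $\th_q=(m,m+d)^\sharp$ for some finite $m$; I would enlarge $m$ (e.g.\ replace $\th_q$ by $(m+d,m+2d)^\sharp\subsetneq\th_q$, or more simply push the minimum up while keeping it finite) to obtain a strictly smaller congruence that is still nontrivial — the infinitely many choices of minimum $m$ for a congruence on $\N$ is exactly what defeats minimality. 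If instead $\Th=\De_\N$ in every row but $M$ has a non-$\De$ entry, then $M$ has type \ref{RT2} or \ref{RT3} (the only types compatible with a trivial C-chain having a nontrivial entry in rows $0,1$), or a row $q\ge2$ of type \ref{RT9}/\ref{RT10}; in each case the nontrivial behaviour begins at some finite column $i$, and I would shift that starting column to the right to manufacture a properly smaller nontrivial C-pair, again using that $i$ ranges over an infinite set with no least nontrivial option above $\De$.

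\textbf{Main obstacle.} The genuinely delicate part is the no-atoms argument: one must confirm that \emph{every} nontrivial congruence admits such a downward shift while remaining a valid C-pair (respecting all row types in Table~\ref{tab:RT} and the verticality conditions) and while remaining strictly above $\De$. In particular I must be careful with the exceptional congruences of Definition~\ref{def:exc}, checking that the comparison clauses \ref{it:comp3} and \ref{it:comp4} of Theorem~\ref{thm:comparisons} do not obstruct inserting a strictly intermediate congruence; the ratio and divisibility conditions there (e.g.\ $2\per\th_q^2\mid\per\th_q^1$) mean the shift may need to be chosen so as to preserve or break exceptionality appropriately. The co-atom argument, by contrast, is comparatively routine once the candidate $\si_0$ is pinned down, since maximality propagates cleanly from the top via the finite ordering in Figure~\ref{fig:M}.
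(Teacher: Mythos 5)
Your candidate $\si_0$, obtained from the universal congruence by changing the single entry $M_{00}$ from $R$ to $\mu$ while leaving $M_{10}=R$, is not a valid C-pair: the verticality condition (V2) of Definition~\ref{de:Cpair} requires every $R$ in row $q\geq 1$ to sit directly above an $R$ in row $q-1$, so removing the $R$ at position $(0,0)$ forces the removal of the $R$s at $(1,0),(2,0),\dots,(n,0)$ as well. This also shows that your guiding heuristic is inverted: weakenings propagate \emph{upward} through the grid, not downward, so the ``cheapest'' place to break the universal congruence is the top row, not $D_{00}$. The actual unique co-atom, given in \eqref{eq:coatom}, keeps $M_{qi}=R$ everywhere except $M_{n0}$, which becomes $\S_n$ (or $\De$ when $n=1$), with $\th_n=(1,2)^\sharp$ and $\th_q=\nab_\N$ for $q<n$; since $\nu_{\S_n}=\nab_{D_n}$, this congruence has exactly two classes, $D_{n0}$ and everything else, which is why it is a co-atom and why every proper congruence lies below it (a fact one reads off from Theorem~\ref{thm:comparisons}). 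Even if one repaired $\si_0$ into a legitimate C-pair by altering the whole first column, the resulting congruence restricts to $\De_{D_0}$ on $D_{00}$ and therefore does not contain, for example, the congruence of \eqref{eq:coatom}; so it cannot be the unique co-atom.

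The no-atoms half is in the right spirit and close to the paper's argument (the paper also disposes of exceptional $\si=\cgx(\Th,M)$ immediately via $\tau=\cg(\Th,M)$, then either shrinks the largest non-trivial $\th_q$ or, when all $\th_q=\De_\N$, shifts the initial $\mu$ of the type-\ref{RT2} matrix to the right), but two details are missing. First, your case analysis of which row types are compatible with a wholly trivial C-chain is off: types \ref{RT3} and \ref{RT10} force a non-trivial $\th$, and an \ref{RT9} row with $N$-symbols forces non-$\De$ entries beneath it by (V1), so the only possibility is type \ref{RT2}; the paper first replaces $M$ by the all-$\De$ matrix whenever some $\th_q$ is non-trivial, precisely to decouple the chain from the row-type constraints before shrinking $\th_q$. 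Second, when you shift the starting column of a type-\ref{RT2} matrix rightward you must also replace any $\S_2$ in row $2$ by $\De$, or (V1) is violated. Your worry about clauses \ref{it:comp3} and \ref{it:comp4} of Theorem~\ref{thm:comparisons} is unnecessary once exceptional congruences are handled by dropping to $\cg(\Th,M)$ at the outset.
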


\begin{proof}
It follows immediately from Theorem \ref{thm:comparisons} that $\cg(\Pair)$ is the unique co-atom, where for $n\geq2$ and $n=1$, respectively:
\begin{equation}\label{eq:coatom}
\Pair=
\Cmatsetup 
\begin{array}{|c|c|c|c|c|cc}
\hhline{|-|-|-|-|-|~~}
\renewcommand{\arraystretch}{2}
\cellcolor{Ncol}\S_n & \cellcolor{Rcol}R & \cellcolor{Rcol}R & \cellcolor{Rcol}R & \cellcolor{Rcol}\cdots&\hspace{2mm}&(1,2)^\sharp \\ \hhline{|-|-|-|-|-|~~}
\cellcolor{Rcol}R & \cellcolor{Rcol}R & \cellcolor{Rcol}R & \cellcolor{Rcol}R & \cellcolor{Rcol}\cdots&&\nabla_\N \\ \hhline{|-|-|-|-|-|~~}
\cellcolor{Rcol}\vvdots & \cellcolor{Rcol}\vvdots & \cellcolor{Rcol}\vvdots & \cellcolor{Rcol}\vvdots & \cellcolor{Rcol}\vvdots&&\vvdots \\ \hhline{|-|-|-|-|-|~~}
\cellcolor{Rcol}R & \cellcolor{Rcol}R & \cellcolor{Rcol}R & \cellcolor{Rcol}R & \cellcolor{Rcol}\cdots&&\nabla_\N \\ \hhline{|-|-|-|-|-|~~}
\end{array}
\qquad\text{or}\qquad
\Pair=
\Cmatsetup 
\begin{array}{|c|c|c|c|c|cc}
\hhline{|-|-|-|-|-|~~}
\renewcommand{\arraystretch}{2}
\cellcolor{delcol}\De & \cellcolor{Rcol}R & \cellcolor{Rcol}R & \cellcolor{Rcol}R & \cellcolor{Rcol}\cdots&\hspace{2mm}&(1,2)^\sharp \\ \hhline{|-|-|-|-|-|~~}
\cellcolor{Rcol}R & \cellcolor{Rcol}R & \cellcolor{Rcol}R & \cellcolor{Rcol}R & \cellcolor{Rcol}\cdots&&\nabla_\N \\ \hhline{|-|-|-|-|-|~~}
\end{array}
\ .
\end{equation}

To prove that there are no atoms, we show that for every non-trivial congruence $\si$ there exists a non-trivial congruence $\tau\subsetneq\si$.  If $\si=\cgx(\Th,M)$ is exceptional we can take $\tau=\cg(\Th,M)$. Now suppose $\si=\cg(\Th,M)$.

Consider first the case where not all $\th_q$ equal $\Delta_\N$, and let $q$ be the largest such.  If $M$ has a non-$\De$ entry, then we take $\tau= \cg(\Th,M')$, where $M'$ consists entirely of $\De$s.  Otherwise, we take $\tau=\cg(\Th',M)$, where $\Th'$ is obtained from $\Th$ by replacing $\th_q=(m,m+d)^\sharp$ with $(m,m+2d)^\sharp$.  

If all $\th_q$ are equal to $\Delta_\N$, then, since $\si$ is non-trivial, $M$ must be of type \ref{RT2}; we then take $\tau=\cg(\Th,M')$, where $M'$ is obtained from $M$ by increasing the parameter $i=\mumin_{0}(M)$, and (if~$n\geq2$) replacing any $\S_2$ in row $2$ by $\De$.
\end{proof}

\begin{thm}
\label{thm:chains}
Let $n\geq 1$.
\ben
\item \label{chains1} $\Cong(\Ptw{n})$ contains no infinite ascending chains.
\item \label{chains2} $\Cong(\Ptw{n})$ contains infinite descending chains.
\item \label{chains3} $\Cong(\Ptw{n})$ contains infinite antichains.
\een
\end{thm}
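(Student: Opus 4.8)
The three parts are largely independent, so I would treat them separately: parts \ref{chains2} and \ref{chains3} call for explicit families of congruences, while part \ref{chains1} is the substantial one and requires a well-foundedness argument built on Theorem \ref{thm:comparisons}. For part \ref{chains2} I would simply exhibit the Rees congruences of the ideals $I_{n,m}=\{m,m+1,\dots\}\times\P_n$ used to define the finite quotients $\Ptw{n,d}$. Since $I_{n,0}\supsetneq I_{n,1}\supsetneq\cdots$ is a strictly decreasing chain of nonempty ideals, the corresponding Rees congruences satisfy $R_{I_{n,0}}\supsetneq R_{I_{n,1}}\supsetneq\cdots$; strictness is witnessed, for any $\al\neq\be\in\P_n$ (which exist as $n\geq1$), by the pair $((m,\al),(m,\be))$, which lies in $R_{I_{n,m}}$ but not in $R_{I_{n,m+1}}$. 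This is an infinite descending chain.

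For part \ref{chains3} I would exploit the divisibility behaviour of periods recorded in \eqref{eq:th1th2}. For each prime $p$ let $\si_p=\cg(\Th_p,M_p)$ be the congruence whose C-chain has $\th_0=\th_1=(0,p)^\sharp$ and $\th_q=\De_\N$ for $q\geq2$, and whose C-matrix has rows $0$ and $1$ filled with $\mu$ (a valid instance of Row Type \ref{RT4} with $m=0$, $d=p>1$, $\xi=\mu$) and all higher rows equal to $\De$. By Remark \ref{rem:comp}, any inclusion $\si_p\sub\si_{p'}$ forces $(0,p)^\sharp=\th_0^p\sub\th_0^{p'}=(0,p')^\sharp$, which by \eqref{eq:th1th2} requires $p'\mid p$. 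For distinct primes neither divides the other, so the $\si_p$ are pairwise incomparable, giving an infinite antichain. For $n=1$ the same construction works, omitting the nonexistent rows $q\geq2$ (see Remark \ref{rem:01}).

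Part \ref{chains1} is the heart of the matter, and I would prove it by showing that the poset of C-pairs under $\leqc$ has no infinite strictly ascending chain. Since $\si\mapsto\Pair$ is at most two-to-one (with $\cg(\Pair)\subsetneq\cgx(\Pair)$ for exceptional pairs) and Remark \ref{rem:comp} gives $\si^1\sub\si^2\implies\Pair^1\leqc\Pair^2$, an infinite ascending chain of congruences would yield infinitely many strict ascents $\Pair_k\lessc\Pair_{k+1}$. I would then stabilize in two phases. First, along a $\leqc$-ascending sequence each $\th_q^{(k)}$ is $\sub$-increasing, so by \eqref{eq:th1th2} its minimum weakly decreases in $\N\cup\{\infty\}$ and then, once the minimum is fixed, its period weakly decreases under division; as both are bounded below and there are only $n+1$ rows, the C-chain is eventually constant. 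Second, with the C-chain fixed, each matrix entry weakly increases in the finite poset of Figure \ref{fig:M}, and by Remark \ref{rem:leqc} each row is weakly increasing and, by the row types of Table \ref{tab:RT}, eventually constant with some eventual value $e_q$; these eventual values also weakly increase in a finite poset and hence stabilize. Once the $e_q$ are fixed I would measure progress by the \emph{defect} $\operatorname{def}(M)=\sum_q\sum_{i}\big(r(e_q)-r(M_{qi})\big)$, where $r$ is a fixed strictly order-preserving integer labelling of the symbol poset; each inner sum is finite because the row equals $e_q$ cofinitely, and each summand is nonnegative since $M_{qi}\leqc e_q$. A strict ascent of the matrix (C-chain and all $e_q$ fixed) strictly decreases $\operatorname{def}$, so infinitely many strict ascents would force an infinite strictly decreasing sequence of natural numbers, a contradiction.

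The main obstacle, and the point needing the most care, is precisely the second phase: a C-matrix has infinitely many columns, so one cannot argue that only finitely many matrices are compatible with a fixed C-chain (for instance, when $\th_q=\De_\N$ the starting column of an eventual $N$-block is unbounded). The resolution is that along an \emph{ascending} chain such a block can only migrate leftward, so the eventual values stabilize and the finitely-supported defect function becomes available. I would also need to check, using Definition \ref{def:exc} and Theorem \ref{thm:comparisons}\ref{it:comp3}--\ref{it:comp4}, that the finitely many exceptional toggles cannot disrupt the count, i.e.\ that no three consecutive terms of the chain share a single C-pair.
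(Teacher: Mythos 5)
Your proposal is correct, and parts \ref{chains1} and \ref{chains2} follow essentially the paper's own route: for \ref{chains2} the paper likewise invokes the descending chain of Rees congruences (equivalently, the absence of atoms), and for \ref{chains1} it performs the same two-phase stabilisation --- first the C-chains, via the absence of infinite ascending chains of congruences on $\N$, then the C-matrices, by first stabilising the tuple of eventual row values $(L_0,\dots,L_n)$ and then observing that only the finitely many positions where an entry differs from its row's limit can still change. Your finitely-supported ``defect'' potential is a cosmetic repackaging of that last step (the paper instead argues that finitely many positions over a finite alphabet of symbols can only increase finitely often), and you are right that the monotonicity of the limit tuples is the point that makes the stabilisation legitimate; the paper leaves this implicit. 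Where you genuinely diverge is part \ref{chains3}: the paper's antichain consists of type \ref{RT7} C-pairs with varying gap $\mumin_1(M^l)-\mumin_0(M^l)=l-1$, and incomparability is extracted from the failure of condition \ref{it:comp1b} of Theorem \ref{thm:comparisons}\ref{it:comp1} --- i.e.\ it exploits the matching-$\mu$ subtlety of the ordering --- whereas your antichain lives entirely in the C-chain component, using pairwise non-divisibility of distinct primes together with \eqref{eq:th1th2} and Remark \ref{rem:comp}. Your construction is simpler and needs only the ``easy'' direction of the comparison theorem; the paper's shows that antichains already arise within the matrix data even with comparable C-chains. One small simplification: your closing worry about ``exceptional toggles'' needs none of Theorem \ref{thm:comparisons}\ref{it:comp3}--\ref{it:comp4}; it is just the pigeonhole observation that a single C-pair carries at most two congruences, so three consecutive distinct terms of the chain cannot share a C-pair, which is exactly how the paper reduces to eventual constancy of the C-pairs.
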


\pf
\ref{chains1}  
We need to show that a weakly increasing sequence $\si^1\sub\si^2\sub\cdots$ of congruences must be eventually constant.  Writing $\Pair^i=(\Th^i,M^i)$ for the C-pair corresponding to each $\si^i$, it follows from Theorem \ref{thm:comparisons} (see Remark \ref{rem:comp}) that $\Pair^1\leqc\Pair^2\leqc\cdots$.  Since a C-pair defines at most two congruences, it is sufficient to show that this sequence of C-pairs is eventually constant.  
It is well known, and follows from \eqref{eq:th1th2}, that there are no infinite ascending chains of congruences on~$\N$.  It follows that the sequence $\Th^1\leqc\Th^2\leqc\cdots$ is eventually constant.  It remains to show that the sequence $M^1 \leqc M^2 \leqc \cdots$ is eventually constant as well.
Recall that in a C-matrix~$M$ every row is eventually constant; say row $q$ ends with an infinite sequence of the symbol $L_q$.  Gather these symbols into a sequence $\lim M:=(L_0,\dots,L_n)$.  Note that there are only finitely many limit sequences.
Let $k\in\N$ be such that the sequence $\lim M^k,\lim M^{k+1},\dots$ is constant.
Note that any $M^t$ with $t>k$ can only differ from $M^k$ in finitely many places, corresponding to the entries where $M_{qi}^k\neq L_q$.
Since the set of possible C-matrix entries is finite, it follows that our sequence of C-matrices is eventually constant, as required.

\ref{chains2}  This follows immediately from the absence of atoms (Theorem \ref{thm:atoms}).  It also follows from the obvious fact that $\N$ has infinite descending chains of congruences, or that $\Ptw n$ has infinite descending chains of ideals and hence Rees congruences.

\ref{chains3}  For $l\in\{2,3,\dots\}$, let $\si^l := \cg(\Pair^l)$, where
\[
\Pair^l=(\Theta^l,M^l) :=
\Cmatsetup
\begin{array}{|c|c|c|c|c|c|c|cc}\hhline{|-|-|-|-|-|-|-|~~}
\renewcommand{\arraystretch}{2}
\cellcolor{delcol}\Delta & \cellcolor{delcol}\Delta & \cellcolor{delcol}\cdots & \cellcolor{delcol}\Delta & \cellcolor{delcol}\Delta & \cellcolor{delcol}\Delta& \cellcolor{delcol}\cdots&\hspace{2mm}&\Delta_\N
\\ \hhline{|-|-|-|-|-|-|-|~~}
\cellcolor{delcol}\vvdots & \cellcolor{delcol}\vvdots & \cellcolor{delcol}\vvdots & \cellcolor{delcol}\vvdots & \cellcolor{delcol}\vvdots & \cellcolor{delcol}\vvdots& \cellcolor{delcol}\vvdots&&\vvdots
\\ \hhline{|-|-|-|-|-|-|-|~~}
\cellcolor{delcol}\Delta & \cellcolor{delcol}\Delta & \cellcolor{delcol}\cdots & \cellcolor{delcol}\Delta & \cellcolor{delcol}\Delta & \cellcolor{delcol}\Delta& \cellcolor{delcol}\cdots&&\Delta_\N
\\ \hhline{|-|-|-|-|-|-|-|~~}
\cellcolor{delcol}\Delta & \cellcolor{delcol}\Delta & \cellcolor{delcol}\cdots & \cellcolor{delcol}\Delta & \cellcolor{mucol} \mu & \cellcolor{Rcol}\mu& \cellcolor{Rcol}\cdots&&(l,l+1)^\sharp
\\ \hhline{|-|-|-|-|-|-|-|~~}
\cellcolor{mucol}\mu&\cellcolor{Rcol} \mu &\cellcolor{Rcol}\cdots&\cellcolor{Rcol}\mu&\cellcolor{Rcol}\mu&\cellcolor{Rcol}\mu& \cellcolor{Rcol}\cdots&&(1,2)^\sharp
\\ \hhline{|-|-|-|-|-|-|-|~~}
\multicolumn{1}{c}{}&\multicolumn{1}{c}{}&\multicolumn{1}{c}{}&\multicolumn{1}{c}{}&\multicolumn{1}{c}{}&\multicolumn{1}{c}{\text{\scriptsize $l$}}
\end{array}
\ .
\]
Then each $M^l$ has type \ref{RT7}.  For distinct $k,l\geq2$, we have
\[
\min\th_0^l =1>0= \mumin_{0}(M^k) \quad\text{and}\quad \mumin_{1}(M^l)-\mumin_{0}(M^l)=l-1 \neq k-1= \mumin_{1}(M^k)-\mumin_{0}(M^k),
\]
so that condition \ref{it:comp1b} of Theorem \ref{thm:comparisons}\ref{it:comp1} fails.  It follows that $\si^k$ and $\si^l$ are incomparable.
Therefore the set $\{\si^2,\si^3,\dots\}$ is an antichain in $\Cong(\Ptw{n})$.
\epf

\begin{thm}
\label{thm:covers}
For $n\geq1$, every element of $\Cong(\Ptw{n})$ is covered by only finitely many elements, but there are elements of $\Cong(\Ptw{n})$ that cover infinitely many elements.
\end{thm}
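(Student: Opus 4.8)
The plan is to analyze the covering relation via the classification in Theorem~\ref{thm:comparisons}, splitting the statement into its two assertions.

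\textbf{Finiteness of upward covers.}  Fix a congruence $\si=\cg(\Pair)$ or $\si=\cgx(\Pair)$ with $\Pair=(\Th,M)$, and suppose $\si\subsetneq\tau$ with $\tau=\cg(\Pair')$ or $\cgx(\Pair')$, $\Pair'=(\Th',M')$.  By Remark~\ref{rem:comp} we have $\Pair\leqc\Pair'$, so $\th_q\sub\th_q'$ for all $q$ and $M\leqc M'$ entrywise.  The first step is to show that if $\tau$ covers $\si$, then $\Pair'$ can differ from $\Pair$ only in a very controlled way.  Since $\Cong(\Ptw n)$ has no infinite ascending chains (Theorem~\ref{thm:chains}\ref{chains1}), covers do exist, and the idea is that passing from $\si$ to a cover corresponds to a ``minimal admissible change'' in the C-pair.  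I would argue that each such minimal change is localised: either one enlarges some $\th_q$ by the smallest permissible step (increasing $\min$ by one, or halving the period in the sense of $\per\th_q'\mid\per\th_q$ with index a prime), or one raises finitely many C-matrix entries by one step in the order of Figure~\ref{fig:M}, subject to the row-type constraints of Table~\ref{tab:RT} and the verticality conditions.  The key finiteness observation is that a C-matrix is eventually constant in each row (the limit symbols $L_q$ of the proof of Theorem~\ref{thm:chains}\ref{chains1}), and any cover that changes a row's limit symbol $L_q$ forces only finitely many candidate changes, because the set of C-matrix entries is finite and the order in Figure~\ref{fig:M} has finite height.  Thus the number of ``directions'' in which one can make a minimal admissible change — altering the finitely many non-limit entries, altering a single $\th_q$, or changing a limit symbol — is finite, and each direction yields at most finitely many covering C-pairs.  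Since each C-pair determines at most two congruences, $\si$ has only finitely many covers.

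\textbf{Existence of a congruence covering infinitely many.}  For the second assertion I would exhibit an explicit $\si$.  A natural candidate is a Rees-type congruence associated to a ``high'' limit behaviour, for instance the congruence whose C-matrix is all $R$ below some row together with $\Th=(\nab_\N,\ldots)$, or more concretely a congruence sitting just above an infinite antichain of the kind constructed in Theorem~\ref{thm:chains}\ref{chains3}.  The plan is to take the join (in $\Cong(\Ptw n)$) of the antichain $\{\si^2,\si^3,\dots\}$ from that proof — or a closely related congruence — and show that each $\si^l$ is covered by it, or that each $\si^l$ is a lower cover of a single congruence $\si$.  One then verifies via Theorem~\ref{thm:comparisons} that $\si^l\subsetneq\si$ with nothing strictly between: since the $\si^l$ are pairwise incomparable and each differs from $\si$ only by the single ``type-\ref{RT7} defect'' recorded by $\mumin_1(M^l)-\mumin_0(M^l)=l-1$, collapsing that defect (which corresponds to equating the relevant $\mu$-related elements across rows $0$ and $1$) produces a congruence with no intermediate congruence.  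This gives infinitely many distinct lower covers of one element.

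\textbf{Main obstacle.}  The hard part is the first assertion, specifically proving that a covering step forces only a \emph{finite, bounded} change to the C-pair.  The subtlety is that $\Pair\leqc\Pair'$ is necessary but not sufficient for $\si\sub\tau$: condition~\ref{it:comp1b} (the $\mu$-matching condition) and the exceptional-congruence conditions in parts~\ref{it:comp3} and~\ref{it:comp4} of Theorem~\ref{thm:comparisons} mean the ordering is not simply the product order on C-pairs.  One must rule out the possibility of an infinite family of covers arising from, say, ever-larger changes to a single row's limit symbol together with compensating changes elsewhere, or from the period-halving conditions \ref{it:comp3b} and \ref{it:comp4b} allowing infinitely many exceptional covers.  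I expect the resolution to be a careful case analysis showing that any admissible strict enlargement that is ``too large'' factors through an intermediate congruence, so that genuine covers are confined to the finitely many minimal moves described above; the exceptional congruences will require separate treatment because the relevant comparisons involve the extra arithmetic conditions on $\per\th_q$.
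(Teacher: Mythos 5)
There is a genuine gap in both halves. For the finiteness of upward covers, your framework of "minimal admissible changes, each yielding finitely many covering C-pairs" fails exactly where the difficulty lies: when some $\th_q=\De_\N$. The trivial congruence on $\N$ has \emph{infinitely} many congruences above it and no minimal ones, so there is no "smallest permissible step" for that coordinate, and the claim that "altering a single $\th_q$" is one of finitely many directions each contributing finitely many covers is unjustified. (Note also that enlarging $\th_q$ means \emph{decreasing} $\min\th_q$ or passing to a divisor of the period, not "increasing $\min$ by one".) You correctly flag in your "main obstacle" paragraph that one must show any too-large enlargement factors through an intermediate congruence, but you defer this — and that deferral is essentially the entire proof. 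The paper's argument is precisely that case analysis: it first shows the largest non-trivial row index $r$ cannot increase in a cover, then in Case 1 ($\th_r^1\neq\De_\N$) uses the fact that only finitely many congruences on $\N$ contain a fixed non-trivial one, and in the remaining cases ($\th_r^1=\De_\N$) either pins down $\Pair^2$ via the $R$-entries and verticality, or explicitly constructs an intermediate C-pair $\Pair^3$ (e.g.\ by replacing $\De_\N$ with a congruence of doubled period, or replacing the leftmost changed $N$-symbol) to contradict the covering hypothesis. Without carrying out these constructions the first assertion is not proved.

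For the second assertion, your proposed construction is also not established and is likely false as stated: the join $\si$ of the antichain $\{\si^2,\si^3,\dots\}$ from Theorem~\ref{thm:chains}\ref{chains3} must (by condition \ref{it:comp1b}) have $\min\th_0\leq 0$ and small $\min\th_1$, and between each $\si^l$ and such a join there are intermediate congruences obtained, for instance, by lowering $\min\th_1^l$ one step at a time, so the $\si^l$ are not lower covers of $\si$. The paper's construction is far simpler and avoids the antichain entirely: take $M=(\De)_{\bnz\times\N}$ and $\Th=(\nab_\N,\De_\N,\dots,\De_\N)$; then $\cg(\Th,M)$ covers $\cg(\Th^p,M)$ for every prime $p$, where $\Th^p=((0,p)^\sharp,\De_\N,\dots,\De_\N)$, because the only congruences on $\N$ between $(0,p)^\sharp$ and $\nab_\N=(0,1)^\sharp$ are those two, and the all-$\De$ matrix forces any intermediate congruence to have the same C-matrix. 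You would need to either verify your covering claims via Theorem~\ref{thm:comparisons} (which I do not believe can be done for the antichain join) or switch to a construction of this kind.
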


\begin{proof}
Beginning with the second assertion, let $M:=(\De)_{\bnz\times\N}$, let $\Theta:=(\nabla_\N,\Delta_\N,\dots,\Delta_\N)$, and for each prime $p$ let  $\Theta^p:=((0,p)^\sharp,\Delta_\N,\dots,\Delta_\N)$.  Then $\cg(\Theta,M)$ covers all $\cg(\Theta^p,M)$.

For the first assertion, let $\sigma^1$ be an arbitrary congruence of $\Ptw{n}$.  The trivial congruence has no covers since $\Cong(\Ptw{n})$ has no atoms (Theorem \ref{thm:atoms}), so for the rest of the proof we assume that $\sigma^1$ is non-trivial.
Suppose $\sigma^2\in\Cong(\Ptw{n})$ covers $\sigma^1$. For $t=1,2$, let $\Pair^t=(\Theta^t,M^t)$ be the C-pair associated with $\sigma^t$, noting that $\Pair^1\leqc\Pair^2$ by Theorem \ref{thm:comparisons}.
We will prove that, given $\sigma^1$, there are only finitely many possible choices for~$\sigma^2$ by showing that there are only finitely many choices for $\Pair^2$.

For $t=1,2$, let $r_t$ be the largest index of a non-trivial row in $\Pair^t$, by which we mean that $\theta^t_{r_t}\neq\Delta_\N$ or $(M^t_{r_t,0},M^t_{r_t,1},\dots)\neq (\Delta,\Delta,\dots)$.  Note that $0\leq r_1\leq r_2$.  
We first claim that $r_1=r_2$.  To prove this, suppose to the contrary that $r_1<r_2$.  
\bit
\item If $\theta_{r_2}^2\neq\Delta_\N$, then we alter  $\Pair^2$ to obtain a new pair $\Pair^3=(\Theta^3,M^3)$ as follows.  We replace $\theta_{r_2}^2=(m,m+d)^\sharp$ by $(m,m+2d)^\sharp$, and we change all symbols in row $r_2$ of $M^2$ to $\Delta$.  If $r_2=1$, then we also change any $\mu$ entries in row $0$ to $\Delta$.  Then $\sigma^1\subsetneq \cg(\Pair^3)\subsetneq \sigma^2$, contradicting the assumption that $\sigma^2$ covers $\sigma^1$.
\item If $r_2\geq2$ and $\theta_{r_2}^2=\Delta_\N$, then row $r_2$ of $M^2$ must contain symbols distinct from $\Delta$, and hence is of type \ref{RT9}. Replace the leftmost $N$-symbol in this row by $\Delta$ to obtain a new C-pair $\Pair^3$, which then satisfies $\sigma^1\subsetneq \cg(\Pair^3)\subsetneq \sigma^2$, a contradiction again.
\item If $r_2=1$ and $\theta_{1}^2=\Delta_\N$, then $M^2$ must be of type \ref{RT2}.  Hence $\theta_0^2=\Delta_\N$, and then from $\sigma^1\subsetneq\sigma^2$ we conclude that $\theta_0^1=\Delta_\N$ and that row $0$ of $M^1$ consists entirely of $\Delta$s, contradicting the assumption that $\sigma^1$ is non-trivial.
\eit
With the claim proved, we now write $r:=r_1=r_2$.  

\setcounter{caseco}{0}

\case  \label{ca:th_r_NT}
$\th_r^1\not=\De_\N$.  Since $\th_r^1\sub\th_r^2\sub\th_{r-1}^2\sub\dots\sub\th_0^2$, there are only finitely many choices for the congruences $\th_0^r,\dots,\th_r^2$, all of which are non-trivial.  Given such a choice of these congruences, let $0\leq i\leq r$ and write $m:=\min\th_i^2$.  There are only finitely many choices for $M_{i0}^2,\dots,M_{i,m-1}^2$ and for $M_{im}^2=M_{i,m+1}^2=\dots$, i.e.~there are only finitely many choices for row~$i$ of $M^2$.  Consequently, there are only finitely many possibilities for $M^2$.

\case
$r\geq2$ and $\th_r^1=\De_\N$.  In this case, row $r$ of $M^1$ is of type \ref{RT9}, say
\[
\Cmatsetup
 \begin{array}{|c|c|c|c|c|c|c|c|c|c|}
\hline
\cellcolor{delcol}\Delta & \cellcolor{delcol}\cdots &\cellcolor{delcol}\Delta &\cellcolor{Ncol}N_i &\cellcolor{Ncol}N_{i+1} &\cellcolor{Ncol}\cdots &\cellcolor{Ncol}N_{k-1} &\cellcolor{Ncol}N &\cellcolor{Ncol}N &\cellcolor{Ncol}\cdots  \\ \hline
\end{array}\ .
\]
If $\th_r^2\not=\De_\N$, say $\th_r^2=(m,m+d)^\sharp$, then we modify $\Pi^1$ by replacing $\theta_r^1=\De_\N$ by the congruence $\theta_r^3$ on~$\N$ with $\min\theta_r^3 =\max(k,m)$ and $\per\th_r^3=2d$, 
to produce a C-pair $\Pi^3$ with $\sigma^1\subsetneq \cg(\Pair^3)\subsetneq \sigma^2$, and hence a contradiction.  

So now suppose $\th_r^2=\De_\N$.  Then row $r$ of $M^2$ is also of type \ref{RT9}.  If this differed from row $r$ of $M^1$ in more than one place, then we could replace the first such entry in $M^2$ by the corresponding entry in $M^1$, to obtain a contradiction in the usual way.  So in fact there is at most one difference between the two $r$th rows, and hence only finitely many choices for row $r$ of~$M^2$.  Now fix some such choice, and note that $M_{ri}^2,M_{r,i+1}^2,\dots$ are all $N$-symbols.

\setcounter{subcaseco}{0}

\subcase $r\geq3$.
In this case the verticality conditions tell us that $M_{qj}^2=R$ for all $0\leq q<r$ and $j\geq i$.  There are therefore only finitely many ways to complete the matrix $M^2$, and the congruences $\th_0^2,\dots,\th_{r-1}^2$ are then completely determined by the $R$s in $M^2$.

\subcase $r=2$.  
If $\th_1^1\not=\De_\N$, then the argument from Case \ref{ca:th_r_NT} remains valid and shows there are only finitely many possibilities for rows $0$ and $1$ of $\Pair^2$, including the associated congruences on $\N$.  So suppose now that $\th_1^1=\De_\N$, which means that $M^1$ is of type \ref{RT2}.  If also $\th_1^2=\De_\N$, then $M^2$ is also of type \ref{RT2}, and since $\si^1\sub\si^2$ we must have $\mumin_0(M^2)\leq\mumin_0(M^1)$; there are therefore only finitely many choices 
for $M^2$.  So now suppose $\th_1^2\not=\De_\N$, say $\th_1^2=(m,m+d)^\sharp$, and note that $\min\th_0^2\leq m$.  Then with $l:=\max(m,\mumin_0(M^1))$, we modify $\Pair^1$ to $\Pair^3$ by replacing $\th_0^1=\th_1^1=\De_\N$ by $\th_0^3=(l,l+d)^\sharp$ and $\th_1^3=(l+1,l+1+d)^\sharp$, and obtain $\sigma^1\subsetneq \cg(\Pair^3)\subsetneq \sigma^2$, a contradiction again.

\case
$r=1$ and $\th_1^1=\De_\N$.  
The C-matrix $M^1$ is of type \ref{RT2}, since its row 1 cannot consist entirely of $\Delta$s.
It then follows that the type of $M^2$ is one of \ref{RT2} or \ref{RT4}--\ref{RT7}.
If $M^2$ is of type~\ref{RT2} then $\mumin_0(M^2)\leq\mumin_0(M^1)$, and hence there are only finitely many choices for $\Pair^2$.
In any of the remaining row types, we can modify $\Pi^2$ by doubling $\per\theta_0^2=\per\theta_1^2$, and changing all the $\xi$-entries to $\mu$, to obtain a C-pair $\Pi^3$ with $\sigma^1\subsetneq \cg(\Pair^3)\subsetneq \sigma^2$,
and a contradiction.

\case 
$r=0$ and $\th_0^1=\De_\N$.  
Since row $1$ of $\Pair^1$ is trivial, and since $\th_0^1=\De_\N$, $M^1$ can only have type \ref{RT1}.  But then $\si^1=\De_{\Ptw n}$, a contradiction.

This exhausts all the cases, and completes the proof of the theorem.
\end{proof}

\begin{rem}\label{rem:no_covers}
We observed in the proof that the trivial congruence has no covers; so too, obviously, does the universal congruence.  However, these are not the only such congruences.  For example, it is easy to see that the Rees congruence $R_I$ corresponding to an ideal of the form~$I = I_{q0}$ for $q\in\bnz$ has no covers.  Indeed, the C-pair associated to $R_I$ is
\[
\Cmatsetup
\begin{array}{r|c|c|c|c|cc}\hhline{~|-|-|-|-|~~}
\renewcommand{\arraystretch}{2}
\text{\scriptsize $n$} & \cellcolor{delcol}\Delta & \cellcolor{delcol}\Delta & \cellcolor{delcol}\Delta & \cellcolor{delcol}\cdots && \De_\N \\ \hhline{~|-|-|-|-|~~}
 & \cellcolor{delcol}\vvdots & \cellcolor{delcol}\vvdots & \cellcolor{delcol}\vvdots & \cellcolor{delcol}\vvdots && \vdots \\ \hhline{~|-|-|-|-|~~}
 & \cellcolor{delcol}\Delta & \cellcolor{delcol}\Delta & \cellcolor{delcol}\Delta & \cellcolor{delcol}\cdots && \De_\N \\ \hhline{~|-|-|-|-|~~}
\text{\scriptsize $q$} & \cellcolor{Rcol}R & \cellcolor{Rcol}R & \cellcolor{Rcol}R & \cellcolor{Rcol}\cdots && \nab_\N \\ \hhline{~|-|-|-|-|~~}
 & \cellcolor{Rcol}\vdots & \cellcolor{Rcol}\vdots & \cellcolor{Rcol}\vdots & \cellcolor{Rcol}\vdots && \vdots \\ \hhline{~|-|-|-|-|~~}
\text{\scriptsize $0$} & \cellcolor{Rcol}R & \cellcolor{Rcol}R & \cellcolor{Rcol}R & \cellcolor{Rcol}\cdots && \nab_\N \\ \hhline{~|-|-|-|-|~~}
\end{array}.
\]
One then proceeds, as in the proof of Theorem \ref{thm:atoms}, to show that for any congruence ${\si\supsetneq R_I}$, we have $\si\supsetneq\tau\supsetneq R_I$ for some $\tau\in\Cong(\Ptw n)$.  This is done by analysing the C-pair $(\Th,M)$ corresponding to $\si$, and considering separate cases according to whether any of the congruences $\th_{q+1},\ldots,\th_n$ is non-trivial.
\end{rem}

\begin{thm}
\label{thm:pent}
For $n\geq1$, the lattice
$\Cong(\Ptw{n})$ contains five-element diamond and pentagon sublattices, and hence is not modular (or distributive).
\end{thm}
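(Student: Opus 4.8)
The plan is to exhibit explicit five-element sublattices isomorphic to the diamond $M_3$ and the pentagon $N_5$, using the combinatorial description of congruences via C-pairs and the comparison criteria of Theorem \ref{thm:comparisons}. Recall that the non-distributivity of a lattice is witnessed by an $M_3$ (three pairwise-incomparable elements $a,b,c$ with common meet $0$ and common join $1$), and non-modularity by an $N_5$ (a pentagon); since an $M_3$ already rules out distributivity, and an $N_5$ rules out modularity (which is the stronger requirement here), I need both configurations, or at least I need to produce both since the theorem asserts the existence of each kind of sublattice.

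First I would construct the diamond. The natural place to look is inside a single $\D$-class column where the symbol lattice of Figure \ref{fig:M} already contains diamonds: for $q=4$ the entries $\K_4,\A_4,\S_4$ sit between $\Delta$ and $R$, and more usefully the three ``middle'' symbols associated with $\mu,\lambda,\rho$ (in rows $0$ and $1$) form a diamond with $\mu$ at the bottom and $R$ at the top. Concretely, I would fix $n\geq 1$ and take three C-pairs $\Pair^\lambda,\Pair^\rho$ and a third incomparable one, all sharing the same C-chain and differing only by having $\lambda$, $\rho$, or (say) $\muup$/$\mudown$ type entries in the relevant row, engineered so that their pairwise meets all collapse to a common smaller congruence and their pairwise joins all equal a common larger one. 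The symbols $\lambda$ and $\rho$ are incomparable in Figure \ref{fig:M}, and together with a suitably chosen $\mu$-based or $N$-based third element lying in the same ``antichain level'' they give three pairwise-incomparable congruences; I would then verify via Definition \ref{de:cg} and Theorem \ref{thm:comparisons} that meet and join are as required. For $n\geq 3$ one can alternatively use the genuinely non-chain structure of normal-subgroup symbols (e.g.\ $\A_3$ against the $\lambda,\rho$ symbols), which gives an even cleaner diamond.

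For the pentagon I would exploit the phenomenon already isolated in Theorem \ref{thm:chains}\ref{chains3} and in the comparison criterion \ref{it:comp1b}: the ordering on C-pairs deviates from the componentwise order precisely because of matching initial $\mu$s in rows $0$ and $1$. This deviation is exactly what produces non-modular behaviour. I would take two type-\ref{RT7} (or \ref{RT2}) C-pairs whose $\mumin_1-\mumin_0$ differences differ, so that by \ref{it:comp1b} they are incomparable even though one dominates the other in the plain $\leqc$ order; inserting a third congruence that is $\leqc$-comparable to both but related by the stricter criterion only to one of them yields the characteristic ``short side / long side'' asymmetry of a pentagon. I would name five explicit C-pairs $\sigma_0\subsetneq\sigma_1\subsetneq\sigma_2$ on one side and $\sigma_0\subsetneq\tau\subsetneq\sigma_2$ on the other, with $\sigma_1$ and $\tau$ incomparable, and check that $\sigma_1\vee\tau=\sigma_2$ and $\sigma_1\wedge\tau=\sigma_0$ using Theorem \ref{thm:comparisons}.

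The main obstacle will be computing the joins correctly. Meets are comparatively easy because the meet of two congruences is their intersection, which at the level of C-pairs is close to a componentwise minimum; but joins are subtler, since the join is the congruence generated by a union and the sublattice operations must land exactly on the prescribed five elements rather than jumping higher. In particular, because of the $\mumin$-difference condition \ref{it:comp1b}, the join of two incomparable type-\ref{RT7} pairs need not be their naive componentwise supremum, and I will need to argue carefully — invoking the verticality conditions of Definition \ref{de:Cpair} and the precise form of \ref{it:comp1b} — that the join does not overshoot the intended top element. The cases $n=1$ and $n=2$ may need separate attention since, as noted in Remark \ref{rem:01}, several symbols degenerate there; I would check that a pentagon and a diamond survive even under these degeneracies, or else treat small $n$ by a direct ad hoc construction.
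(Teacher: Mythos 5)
Your pentagon plan is broadly sound: the paper's own pentagon for $n=1$ (Figure \ref{fig:pediPtw1}), which it notes survives for all $n\geq2$, is built from exactly the matched-$\mu$ phenomenon you describe, and the join ``overshoot'' you anticipate (a row of $\mu$s being forced up to $R$ when joined with a Rees-type congruence) is indeed the crux. Two caveats: the chain $\sigma_0\subsetneq\sigma_1\subsetneq\sigma_2$, $\sigma_0\subsetneq\tau\subsetneq\sigma_2$ you write down lists only four congruences, whereas a pentagon needs two intermediate elements on one side; and the paper's preferred pentagon for $n\geq2$ actually uses exceptional congruences ($\cgx(\Th^4,M^1)\subsetneq\cgx(\Th^1,M^1)$ against the chain $\cg(\Th^3,M^3)\subsetneq\cg(\Th^3,M^2)$), a mechanism you never invoke.

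The genuine gap is the diamond. Your proposed mechanism --- three pairwise-incomparable congruences obtained from incomparable symbols ``at the same antichain level'' of Figure \ref{fig:M} --- does not exist. At any fixed matrix position the admissible symbols contain at most \emph{two} pairwise-incomparable elements with a common meet and join ($\{\lam,\rho\}$ or $\{\muup,\mudown\}$ in rows $0,1$; the normal subgroups of each $\S_q$, including $\mathcal K_4<\A_4<\S_4$, form a chain, so $N$-symbols in a single row never give an antichain). Your suggested third elements fail concretely: a $\muup$/$\mudown$ entry lies \emph{below} both $\lam$ and $\rho$ in Figure \ref{fig:M}, so the three congruences would not be pairwise incomparable; and $\A_3$ lives in row $3$, where the verticality conditions force $R$s throughout rows $0$--$2$ beneath it, so its pairwise meets with the $\lam$- and $\rho$-congruences do not coincide (one meet retains $\lam$, another retains $\mu$), destroying the diamond. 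Although Figure \ref{fig:M} as an abstract poset does contain copies of $M_3$ (e.g.\ $\{\De,\S_2,\A_3,\mathcal K_4,R\}$), those symbols belong to different rows and cannot be realised at a common position. The paper's diamond for $n\geq2$ instead takes as one of the three atoms the \emph{exceptional} congruence $\cgx(\Th^1,M^1)$ sitting strictly above its companion $\cg(\Th^1,M^1)$, alongside $\cg(\Th^2,M^1)$ (refining the C-chain) and $\cg(\Th^1,M^2)$ (adding an $\S_2$ row); for $n=1$, where exceptional congruences do not exist, it uses the degeneracy that $\mu$ and $\De$ in row $1$ both induce the trivial restriction. Without the exceptional congruences (or some substitute such as the matched/unmatched-$\mu$ diamond of Figure \ref{fig:5dia} transplanted to the infinite setting), your construction of the diamond cannot be completed.
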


\begin{proof}
For $n\geq2$, define four C-chains:
\begin{align*}
\Th^1 &:= (\nab_\N,\nab_\N,(0,2)^\sharp,\De_\N,\dots,\De_\N) , &\Th^3 &:= (\nab_\N,\nab_\N,(1,2)^\sharp,\De_\N,\dots,\De_\N) , \\
\Th^2 &:= (\nab_\N,\nab_\N,\nab_\N,\De_\N,\dots,\De_\N), &\Th^4 &:= (\nab_\N,\nab_\N,(1,3)^\sharp,\De_\N,\dots,\De_\N) ,
\end{align*}
and three C-matrices:
\[
M^1 := 
\Cmatsetup
\begin{array}{|c|c|c|c|}\hline
\renewcommand{\arraystretch}{2}
\cellcolor{delcol}\Delta & \cellcolor{delcol}\Delta & \cellcolor{delcol}\Delta & \cellcolor{delcol}\cdots \\ \hline
\cellcolor{delcol}\vvdots & \cellcolor{delcol}\vvdots & \cellcolor{delcol}\vvdots & \cellcolor{delcol}\vvdots \\ \hline
\cellcolor{delcol}\Delta & \cellcolor{delcol}\Delta & \cellcolor{delcol}\Delta & \cellcolor{delcol}\cdots \\ \hline
\cellcolor{delcol}\Delta & \cellcolor{delcol}\Delta & \cellcolor{delcol}\Delta & \cellcolor{delcol}\cdots \\ \hline
\cellcolor{Rcol}R & \cellcolor{Rcol}R & \cellcolor{Rcol}R & \cellcolor{Rcol}\cdots \\ \hline
\cellcolor{Rcol}R & \cellcolor{Rcol}R & \cellcolor{Rcol}R & \cellcolor{Rcol}\cdots \\ \hline
\end{array}
\normalsize
\ \COMMA
M^2 := 
\Cmatsetup
\begin{array}{|c|c|c|c|}\hline
\renewcommand{\arraystretch}{2}
\cellcolor{delcol}\Delta & \cellcolor{delcol}\Delta & \cellcolor{delcol}\Delta & \cellcolor{delcol}\cdots \\ \hline
\cellcolor{delcol}\vvdots & \cellcolor{delcol}\vvdots & \cellcolor{delcol}\vvdots & \cellcolor{delcol}\vvdots \\ \hline
\cellcolor{delcol}\Delta & \cellcolor{delcol}\Delta & \cellcolor{delcol}\Delta & \cellcolor{delcol}\cdots \\ \hline
\cellcolor{Ncol}\S_2 & \cellcolor{Ncol}\S_2 & \cellcolor{Ncol}\S_2 & \cellcolor{Ncol}\cdots \\ \hline
\cellcolor{Rcol}R & \cellcolor{Rcol}R & \cellcolor{Rcol}R & \cellcolor{Rcol}\cdots \\ \hline
\cellcolor{Rcol}R & \cellcolor{Rcol}R & \cellcolor{Rcol}R & \cellcolor{Rcol}\cdots \\ \hline
\end{array}
\ \COMMA
M^3 := 
\Cmatsetup
\begin{array}{|c|c|c|c|}\hline
\renewcommand{\arraystretch}{2}
\cellcolor{delcol}\Delta & \cellcolor{delcol}\Delta & \cellcolor{delcol}\Delta & \cellcolor{delcol}\cdots \\ \hline
\cellcolor{delcol}\vvdots & \cellcolor{delcol}\vvdots & \cellcolor{delcol}\vvdots & \cellcolor{delcol}\vvdots \\ \hline
\cellcolor{delcol}\Delta & \cellcolor{delcol}\Delta & \cellcolor{delcol}\Delta & \cellcolor{delcol}\cdots \\ \hline
\cellcolor{delcol}\Delta & \cellcolor{Ncol}\S_2 & \cellcolor{Ncol}\S_2 & \cellcolor{Ncol}\cdots \\ \hline
\cellcolor{Rcol}R & \cellcolor{Rcol}R & \cellcolor{Rcol}R & \cellcolor{Rcol}\cdots \\ \hline
\cellcolor{Rcol}R & \cellcolor{Rcol}R & \cellcolor{Rcol}R & \cellcolor{Rcol}\cdots \\ \hline
\end{array}
\ .
\]
Then the following are all valid C-pairs:
\[
(\Th^1,M^1) \COMMa\ \ 
(\Th^1,M^2) \COMMa\ \ 
(\Th^2,M^1) \COMMa\ \ 
(\Th^2,M^2) \COMMa\ \ 
(\Th^3,M^2) \COMMa\ \ 
(\Th^3,M^3) \COMMa\ \ 
(\Th^4,M^1) ,
\]
and furthermore the pairs $(\Th^1,M^1)$ and $(\Th^4,M^1)$ are exceptional.  By Theorem \ref{thm:comparisons}, the resulting congruences are ordered as shown in Figure \ref{fig:notmod}.  Moreover, it is easy to check that their meets and joins are as indicated in the figure.  

For $n=1$, we cannot use exceptional congruences to construct pentagon and diamond sublattices.
Nonetheless, examples of such sublattices are shown in Figure \ref{fig:pediPtw1}, with the congruences indicated by their associated C-pairs.
As a curiosity we mention that for $n\geq2$ the lattice $\Cong(\Ptw{n})$ contains the analogue of this pentagon (with the remaining rows consisting entirely of $\Delta$s, and their corresponding congruences on $\N$ trivial), but not of the
diamond, as the latter relies on the fact that for $n=1$ both $\mu$ and $\Delta$ in row $1$ correspond to the trivial restriction.
\end{proof}

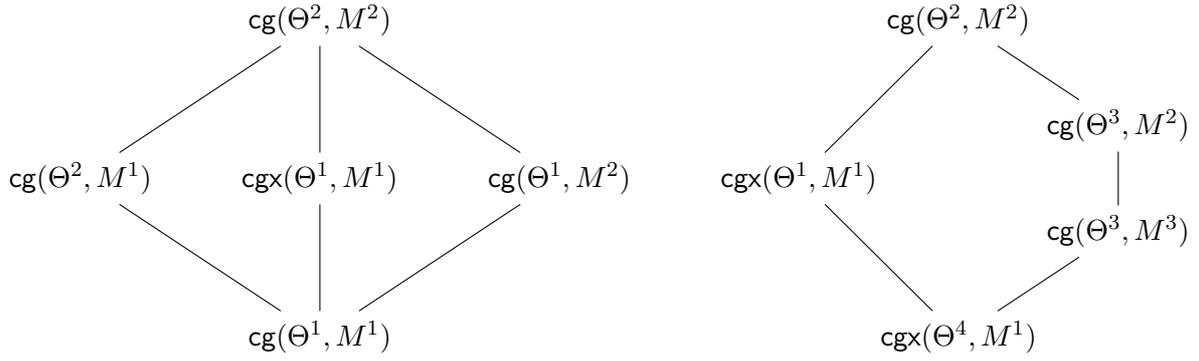
\begin{figure}[ht]
\begin{center}
\begin{tikzpicture}[scale=.7]
\begin{scope}
\node (11) at (0,0) {$\cg(\Th^1,M^1)$};
\node (21) at (-4.5,3) {$\cg(\Th^2,M^1)$};
\node (11') at (0,3) {$\cgx(\Th^1,M^1)$};
\node (12) at (4.5,3) {$\cg(\Th^1,M^2)$};
\node (22) at (0,6) {$\cg(\Th^2,M^2)$};
\draw (11)--(21)--(22)--(11')--(11)--(12)--(22);
\end{scope}
\begin{scope}[shift={(12,0)}]
\node (41') at (0,0) {$\cgx(\Th^4,M^1)$};
\node (11') at (-3,3) {$\cgx(\Th^1,M^1)$};
\node (22) at (0,6) {$\cg(\Th^2,M^2)$};
\node (33) at (3,2) {$\cg(\Th^3,M^3)$};
\node (32) at (3,4) {$\cg(\Th^3,M^2)$};
\draw (41')--(33)--(32)--(22)--(11')--(41');
\end{scope}
\end{tikzpicture}
\caption{Diamond and pentagon sublattices in $\Cong(\Ptw n)$ for $n\geq2$.  See the proof of Theorem \ref{thm:pent} for more details.}
\label{fig:notmod}
\end{center}
\end{figure}

\begin{figure}[ht]
\begin{center}
\begin{tikzpicture}[xscale=0.6,yscale=0.8]
\node (B) at (0,0) {
$\Cmatsetup
\begin{array}{c|c|c|c|c|c}\hhline{~-|-|-|-~}
&\cellcolor{delcol}\De &\cellcolor{delcol}\De & \cellcolor{delcol}\De& \cellcolor{delcol}\cdots & (1,3)^\sharp\\ \hhline{~-|-|-|-~}
&\cellcolor{delcol}\De &\cellcolor{delcol}\De & \cellcolor{delcol}\De& \cellcolor{delcol}\cdots & (0,2)^\sharp\\ \hhline{~-|-|-|-~}
\end{array}$};
\node (L) at (-3,3) {$
\Cmatsetup
\begin{array}{c|c|c|c|c|c}\hhline{~-|-|-|-~}
&\cellcolor{delcol}\De &\cellcolor{mucol}\mu & \cellcolor{mucol}\mu& \cellcolor{mucol}\cdots & (1,3)^\sharp\\ \hhline{~-|-|-|-~}
&\cellcolor{mucol}\mu &\cellcolor{mucol}\mu & \cellcolor{mucol}\mu& \cellcolor{mucol}\cdots & (0,2)^\sharp\\ \hhline{~-|-|-|-~}
\end{array}
$};
\node (T) at (0,6) {$
\Cmatsetup
\begin{array}{c|c|c|c|c|c}\hhline{~-|-|-|-~}
&\cellcolor{delcol}\De &\cellcolor{Rcol}R & \cellcolor{Rcol}R& \cellcolor{Rcol}\cdots & (1,2)^\sharp\\ \hhline{~-|-|-|-~}
&\cellcolor{Rcol}R &\cellcolor{Rcol}R & \cellcolor{Rcol}R& \cellcolor{Rcol}\cdots & (0,1)^\sharp\\ \hhline{~-|-|-|-~}
\end{array}
$};
\node (RB) at (3.5,2) {$
\Cmatsetup
\begin{array}{c|c|c|c|c|c}\hhline{~-|-|-|-~}
&\cellcolor{delcol}\De &\cellcolor{delcol}\De & \cellcolor{delcol}\De& \cellcolor{delcol}\cdots & (1,3)^\sharp\\ \hhline{~-|-|-|-~}
&\cellcolor{Rcol}R &\cellcolor{Rcol}R & \cellcolor{Rcol}R& \cellcolor{Rcol}\cdots & (0,1)^\sharp\\ \hhline{~-|-|-|-~}
\end{array}
$};
\node (RT) at (3.5,4) {$
\Cmatsetup
\begin{array}{c|c|c|c|c|c}\hhline{~-|-|-|-~}
&\cellcolor{delcol}\De &\cellcolor{delcol}\De & \cellcolor{delcol}\De& \cellcolor{delcol}\cdots & (1,2)^\sharp\\ \hhline{~-|-|-|-~}
&\cellcolor{Rcol}R &\cellcolor{Rcol}R & \cellcolor{Rcol}R& \cellcolor{Rcol}\cdots & (0,1)^\sharp\\ \hhline{~-|-|-|-~}
\end{array}
$};
\draw (B)--(RB)--(RT)--(T)--(L)--(B);
\begin{scope}[shift={(-14,0)}]
\node (B) at (0,0) {
$\Cmatsetup
\begin{array}{c|c|c|c|c|c}\hhline{~-|-|-|-~}
&\cellcolor{delcol}\De &\cellcolor{delcol}\De & \cellcolor{Rcol}R& \cellcolor{Rcol}\cdots & (2,3)^\sharp\\ \hhline{~-|-|-|-~}
&\cellcolor{delcol}\De &\cellcolor{Rcol}R & \cellcolor{Rcol}R& \cellcolor{Rcol}\cdots & (1,2)^\sharp\\ \hhline{~-|-|-|-~}
\end{array}$};
\node (L) at (-4.8,3) {$
\Cmatsetup
\begin{array}{c|c|c|c|c|c}\hhline{~-|-|-|-~}
&\cellcolor{delcol}\De &\cellcolor{mucol}\mu & \cellcolor{Rcol}R& \cellcolor{Rcol}\cdots & (1,2)^\sharp\\ \hhline{~-|-|-|-~}
&\cellcolor{mucol}\mu &\cellcolor{Rcol}R & \cellcolor{Rcol}R& \cellcolor{Rcol}\cdots & (0,1)^\sharp\\ \hhline{~-|-|-|-~}
\end{array}
$};
\node (T) at (0,6) {$
\Cmatsetup
\begin{array}{c|c|c|c|c|c}\hhline{~-|-|-|-~}
&\cellcolor{delcol}\De &\cellcolor{Rcol}R & \cellcolor{Rcol}R& \cellcolor{Rcol}\cdots & (1,2)^\sharp\\ \hhline{~-|-|-|-~}
&\cellcolor{Rcol}R &\cellcolor{Rcol}R & \cellcolor{Rcol}R& \cellcolor{Rcol}\cdots & (0,1)^\sharp\\ \hhline{~-|-|-|-~}
\end{array}
$};
\node (M) at (0,3) {$
\Cmatsetup
\begin{array}{c|c|c|c|c|c}\hhline{~-|-|-|-~}
&\cellcolor{delcol}\De &\cellcolor{Rcol}R & \cellcolor{Rcol}R& \cellcolor{Rcol}\cdots & (1,2)^\sharp\\ \hhline{~-|-|-|-~}
&\cellcolor{delcol}\De &\cellcolor{Rcol}R & \cellcolor{Rcol}R& \cellcolor{Rcol}\cdots & (1,2)^\sharp\\ \hhline{~-|-|-|-~}
\end{array}
$};
\node (R) at (4.8,3) {$
\Cmatsetup
\begin{array}{c|c|c|c|c|c}\hhline{~-|-|-|-~}
&\cellcolor{delcol}\De &\cellcolor{delcol}\De & \cellcolor{Rcol}R& \cellcolor{Rcol}\cdots & (2,3)^\sharp\\ \hhline{~-|-|-|-~}
&\cellcolor{Rcol}R &\cellcolor{Rcol}R & \cellcolor{Rcol}R& \cellcolor{Rcol}\cdots & (0,1)^\sharp\\ \hhline{~-|-|-|-~}
\end{array}
$};
\draw (B)--(L)--(T)--(M)--(B)--(R)--(T);
\end{scope}
\end{tikzpicture}
\caption{Diamond and pentagon sublattices in $\Cong(\Ptw{1})$.}
\label{fig:pediPtw1}
\end{center}
\end{figure}
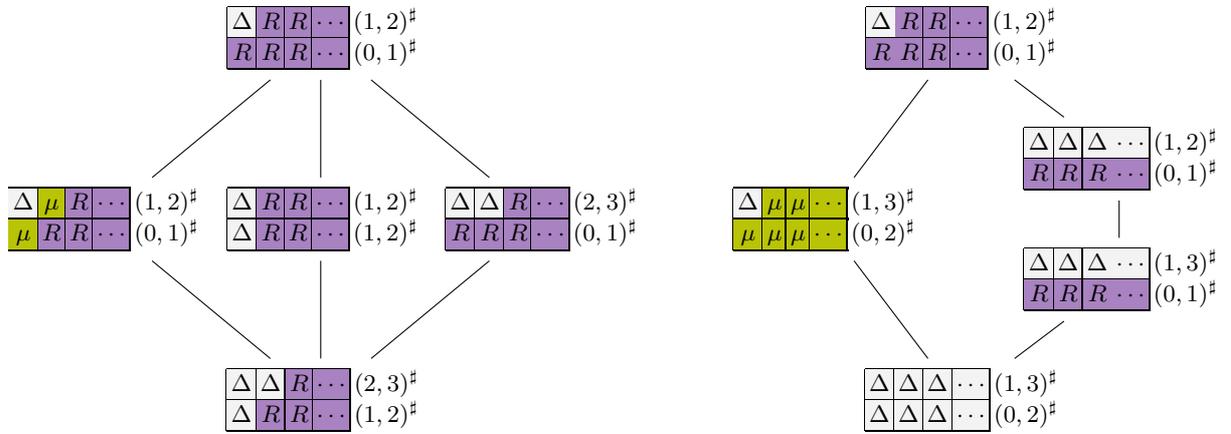

We used the pentagon sublattices displayed in Figures \ref{fig:notmod} and \ref{fig:pediPtw1} to show that $\Cong(\Ptw n)$ is not modular for $n\geq1$.  In fact, these show something more.  Consider a lattice $L$, and write $\prec$ for the covering relation in $L$.  Recall \cite{Stern1999} that $L$ is \emph{upper} or \emph{lower semimodular} if for all $a,b\in L$ we have
\begin{equation}\label{eq:sm}
a \wedge b \prec a \implies b \prec a\vee b \qquad\text{or}\qquad a\prec a\vee b \implies a\wedge b\prec b,
\end{equation}
respectively.  

\begin{thm}\label{thm:sm}
For $n\geq1$, the lattice $\Cong(\Ptw n)$ is neither upper nor lower semimodular.
\end{thm}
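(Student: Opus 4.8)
The plan is to reuse the pentagon sublattices already produced in the proof of Theorem~\ref{thm:pent}, and to promote two of their edges to covers of the full lattice $\Cong(\Ptw n)$. Recall the standard mechanism: in a pentagon $N_5$ with bottom $\mathbf 0$, top $\mathbf 1$, a short side $\mathbf 0<p<\mathbf 1$, and a long side $\mathbf 0<r<s<\mathbf 1$, if the two short-side edges happen to be covers in the ambient lattice then semimodularity fails in both directions. Indeed, putting $a:=p$ and $b:=r$ in \eqref{eq:sm} we have $a\wedge b=\mathbf 0$ and $a\vee b=\mathbf 1$; the cover $\mathbf 0\prec p$ supplies the hypothesis $a\wedge b\prec a$ of upper semimodularity, whereas its conclusion $b\prec a\vee b$ would read $r\prec\mathbf 1$, which is false because $r<s<\mathbf 1$. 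Symmetrically, putting $a:=p$ and $b:=s$, the cover $p\prec\mathbf 1$ supplies the hypothesis $a\prec a\vee b$ of lower semimodularity, while its conclusion $a\wedge b\prec b$ would read $\mathbf 0\prec s$, which is false because $\mathbf 0<r<s$.

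Thus the proof reduces to two tasks. First, the meets and joins $p\wedge r=p\wedge s=\mathbf 0$ and $p\vee r=p\vee s=\mathbf 1$, together with the strict chain $\mathbf 0<r<s<\mathbf 1$ along the long side, are all inherited from Theorem~\ref{thm:pent}, which already verifies that the five congruences form a pentagon sublattice; in particular the non-cover facts $r\not\prec\mathbf 1$ and $\mathbf 0\not\prec s$ are then immediate. Second, and this is where the real work lies, one must check that the two short-side edges $\mathbf 0\prec p$ and $p\prec\mathbf 1$ are covers not merely inside the sublattice but in all of $\Cong(\Ptw n)$.

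For $n\geq2$ I would take $\mathbf 0=\cgx(\Th^4,M^1)$, $p=\cgx(\Th^1,M^1)$ and $\mathbf 1=\cg(\Th^2,M^2)$ from Figure~\ref{fig:notmod}. To establish each cover I would take an arbitrary congruence $\tau$ in the relevant interval, pass to its C-pair $(\Th^\tau,M^\tau)$, and use Remark~\ref{rem:comp} to sandwich this C-pair, in the $\leqc$ order, between the C-pairs of the two endpoints. Squeezing the matrices via the order of Figure~\ref{fig:M} and the congruences on $\N$ via \eqref{eq:th1th2} reduces $(\Th^\tau,M^\tau)$ to a short, explicit list: for the edge $\mathbf 0\prec p$ the only freedom is $\th_2^\tau\in\{(1,3)^\sharp,(0,2)^\sharp\}$ with $M^\tau=M^1$; for the edge $p\prec\mathbf 1$ the freedom is $\th_2^\tau\in\{(0,2)^\sharp,\nab_\N\}$ together with row~$2$ of $M^\tau$ equal to all-$\De$ or all-$\S_2$. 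The main obstacle is to eliminate the candidate C-pairs that are lexicographically interior but should not yield a congruence strictly between the endpoints. This is precisely where the exceptional clauses \ref{it:comp3} and \ref{it:comp4} of Theorem~\ref{thm:comparisons} enter: since $\mathbf 0$ and $p$ are exceptional congruences of height $2$, the divisibility condition $2\per\th_q^2\mid\per\th_q^1$ and the odd-ratio condition force the interior candidates (such as $\cg(\Th^1,M^1)$, $\cg(\Th^1,M^2)$ and $\cg(\Th^2,M^1)$) to fall outside the interval, so that each interval collapses to its two endpoints.

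The case $n=1$ is handled by the same argument applied to the pentagon of Figure~\ref{fig:pediPtw1}, with $\mathbf 0$, $p$, $\mathbf 1$ the bottom, short-side and top congruences displayed there. Here the verification of the two short-side covers is considerably easier, since by Remark~\ref{rem:01} there are no exceptional congruences and the C-matrix entries are drawn only from $\{\De,\mu,R\}$; the candidate list in each interval is short and is pruned directly by Theorem~\ref{thm:comparisons}\ref{it:comp1}. I expect the only genuinely delicate bookkeeping to be the period and parity conditions governing the exceptional congruences in the $n\geq2$ case; everything else is routine.
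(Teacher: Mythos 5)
Your proposal is correct and follows essentially the same route as the paper: both arguments reuse the pentagon from Theorem~\ref{thm:pent} (Figure~\ref{fig:notmod} for $n\geq2$, Figure~\ref{fig:pediPtw1} for $n=1$), take the same witnessing pairs $(p,r)$ and $(p,s)$ for the two implications in \eqref{eq:sm}, and reduce everything to checking that the two short-side edges are covers in the ambient lattice, which is done exactly as you describe — sandwiching the C-pair of an intermediate congruence via Remark~\ref{rem:comp} and then using the exceptional clauses of Theorem~\ref{thm:comparisons} to collapse each interval to its endpoints. The only caveat is that your "short, explicit list" for the edge $p\prec\mathbf 1$ implicitly uses the row-type constraint $\min\th_2\geq k$ to rule out matrices whose row $2$ switches from $\De$ to $\S_2$ at a positive position; this is a detail worth making explicit but does not affect the argument.
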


\pf
Consider the diamond sublattice of $\Cong(\Ptw n)$ displayed in Figure \ref{fig:notmod} (for $n\geq2$) or Figure~\ref{fig:pediPtw1} (for~$n=1$), and label the congruences in the following way:
\[
\begin{tikzpicture}[scale=0.5]
\node (B) at (0,0) {$\si^1$};
\node (L) at (-3,3) {$\si^2$};
\node (T) at (0,6) {$\si^3$};
\node (RB) at (3.5,2) {$\tau^1$};
\node (RT) at (3.5,4) {$\tau^2$};
\draw (B)--(RB)--(RT)--(T)--(L)--(B);
\end{tikzpicture},
\]
It sufices to show that $\si^1\prec\si^2\prec\si^3$.  Indeed, then $(a,b)=(\si^2,\tau^1)$ or $(a,b)=(\si^2,\tau^2)$ will witness the failure of the first and second implications in \eqref{eq:sm}, respectively.  Checking these covering relations is fairly routine.  We just give the details for $\si^1\prec\si^2$ in the case $n\geq2$.  

So suppose $\si^1 \sub \si \sub \si^2$ for some $\si\in\Cong(\Ptw n)$, where $n\geq2$, and let $\Pair$ be the C-pair associated to $\si$.  As in Remark \ref{rem:comp}, $\si^1\sub\si\sub\si^2$ implies $(\Th^4,M^1) \leqc \Pair \leqc (\Th^1,M^1)$.  It then follows by the form of these pairs that $\Pair$ is in fact one of $(\Th^4,M^1)$ or $(\Th^1,M^1)$, and hence that~$\si$ is one of 
\[
\cg(\Th^4,M^1) \COMMA \cgx(\Th^4,M^1) \COMMA \cg(\Th^1,M^1) \COMMA \cgx(\Th^1,M^1).
\]
But $\si^1=\cgx(\Th^4,M^1)$ is contained in neither $\cg(\Th^4,M^1)$ nor $\cg(\Th^1,M^1)$, so it follows that $\si$ is one of $\cgx(\Th^4,M^1)=\si^1$ or~$\cgx(\Th^1,M^1)=\si^2$.  This completes the proof that $\si^1\prec\si^2$.
\epf

\section{Generators of congruences of \boldmath{$\Ptw{n}$}}\label{sec:fg}

It follows immediately from Theorem \ref{thm:chains}\ref{chains1} that every congruence of $\Ptw{n}$ is finitely generated.
We can in fact do better than that, and in Theorem \ref{thm:fg} below we show that the number of pairs needed to generate an arbitrary congruence is bounded above by $\lceil\frac{5n}2\rceil$.
As a stepping stone, Theorem~\ref{thm:pc} classifies the principal congruences.  The main technical result needed for all these calculations is the following:

\begin{lemma}\label{lem:fg}
Let $\Pair$ be a C-pair, let $\Om\sub\Ptw n\times\Ptw n$, and let $\Pair'$ be the C-pair corresponding to $\Om^\sharp$.  Then:
\ben
\item \label{it:fg1} $\cg(\Pair)=\Om^\sharp$ if and only if $\Om\sub\cg(\Pair)$ and $\Pair\leqc\Pair'$;
\item \label{it:fg2} if $\Pair$ is exceptional, then $\cgx(\Pair)=\Om^\sharp$ if and only if $\Om\sub\cgx(\Pair)$, $\Om\not\sub\cg(\Pair)$ and $\Pair\leqc\Pair'$.
\een
\end{lemma}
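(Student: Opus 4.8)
The plan is to derive both parts from three facts already in place: that $\Om^\sharp$ is by definition the smallest congruence of $\Ptw n$ containing $\Om$; the monotonicity recorded in Remark~\ref{rem:comp}, that $\si^1\sub\si^2$ forces $\Pair^1\leqc\Pair^2$ on the associated C-pairs; and the fact that $\leqc$ is a partial order on C-pairs while, by Theorem~\ref{thm:main} together with the recovery procedure of Remark~\ref{rem:amb}, a single C-pair $\Pair$ is the associated C-pair of at most two congruences, namely $\cg(\Pair)$ and, when $\Pair$ is exceptional, $\cgx(\Pair)$, with $\cg(\Pair)\subsetneq\cgx(\Pair)$. Notably this uses only Remark~\ref{rem:comp}, not the full comparison Theorem~\ref{thm:comparisons}. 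The forward implications are immediate: if $\cg(\Pair)=\Om^\sharp$ then $\Om\sub\Om^\sharp=\cg(\Pair)$, and $\Pair'$, being the C-pair of $\Om^\sharp=\cg(\Pair)$, equals $\Pair$, so $\Pair\leqc\Pair'$ holds by reflexivity. In the exceptional forward direction the same reasoning gives $\Om\sub\cgx(\Pair)$ and $\Pair\leqc\Pair'$, while $\Om\not\sub\cg(\Pair)$ follows because $\Om\sub\cg(\Pair)$ would yield $\cgx(\Pair)=\Om^\sharp\sub\cg(\Pair)$, contradicting $\cg(\Pair)\subsetneq\cgx(\Pair)$.

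For the backward implication of~\ref{it:fg1}, I would start from $\Om\sub\cg(\Pair)$, which yields $\Om^\sharp\sub\cg(\Pair)$ since $\cg(\Pair)$ is a congruence containing $\Om$. Applying Remark~\ref{rem:comp} to this inclusion gives $\Pair'\leqc\Pair$, and combining with the hypothesis $\Pair\leqc\Pair'$ and the antisymmetry of $\leqc$ yields $\Pair=\Pair'$. Hence $\Om^\sharp$ is a congruence whose associated C-pair is $\Pair$, so $\Om^\sharp$ is either $\cg(\Pair)$ or $\cgx(\Pair)$; the latter is excluded by $\Om^\sharp\sub\cg(\Pair)\subsetneq\cgx(\Pair)$, leaving $\Om^\sharp=\cg(\Pair)$.

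Part~\ref{it:fg2} runs along identical lines for the exceptional pair $\Pair$. From $\Om\sub\cgx(\Pair)$ I obtain $\Om^\sharp\sub\cgx(\Pair)$, whence $\Pair'\leqc\Pair$ by Remark~\ref{rem:comp}; together with $\Pair\leqc\Pair'$ this again forces $\Pair=\Pair'$, so $\Om^\sharp\in\{\cg(\Pair),\cgx(\Pair)\}$. This time the extra hypothesis $\Om\not\sub\cg(\Pair)$ eliminates $\Om^\sharp=\cg(\Pair)$, since that equality would give $\Om\sub\Om^\sharp=\cg(\Pair)$; therefore $\Om^\sharp=\cgx(\Pair)$, as required.

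The one genuinely delicate point, which I would take care over, is the canonicity underlying the whole argument: that the C-pair associated via Remark~\ref{rem:amb} to $\cg(\Pair)$, and to $\cgx(\Pair)$ when $\Pair$ is exceptional, is in each case $\Pair$ itself, so that $\Pair\mapsto\cg(\Pair)$ is injective and the antisymmetry step truly pins down $\Om^\sharp$. Everything else is a short chase through inclusions; the strict containment $\cg(\Pair)\subsetneq\cgx(\Pair)$, immediate from clause~\ref{C9} of Definition~\ref{def:exc} since $\thx_q\sm\th_q\neq\emptyset$ and $\S_q\sm\A_q\neq\emptyset$, is exactly what lets the two cases of the lemma be separated cleanly.
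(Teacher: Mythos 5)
Your proposal is correct and follows essentially the same route as the paper's own proof: reduce to $\Omega^\sharp\subseteq\cg(\Pi)$ (resp.\ $\cgx(\Pi)$), apply Remark~\ref{rem:comp} to get $\Pi'\leqc\Pi$, use antisymmetry of $\leqc$ with the hypothesis $\Pi\leqc\Pi'$ to force $\Pi=\Pi'$, and then distinguish $\cg(\Pi)$ from $\cgx(\Pi)$ via the given (non-)inclusions. The extra care you take over canonicity of the associated C-pair and the strict containment $\cg(\Pi)\subsetneq\cgx(\Pi)$ is implicit in the paper and correctly justified.
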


\pf
For the direct implication in each part the inequality $\Pair\leqc\Pair'$ follows from Remark \ref{rem:comp},
and the remaining (non-)inclusions are obvious.
So we we just need to
prove the converse.

\ref{it:fg1}  Since $\cg(\Pair)$ is a congruence, it follows from $\Om\sub\cg(\Pair)$ that $\Om^\sharp\sub\cg(\Pair)$.  It follows from this (and Remark \ref{rem:comp}) that $\Pair'\leqc\Pair$.  Together with $\Pair\leqc\Pair'$, it follows that $\Pair'=\Pair$.  Thus, $\Om^\sharp$ is either $\cg(\Pair)$ or else possibly $\cgx(\Pair)$ if $\Pair$ is exceptional; but the latter is ruled out by $\Om^\sharp\sub\cg(\Pair)$.

\ref{it:fg2}  The proof is almost identical to the previous part.  In the first step, $\cg(\Pair)$ is replaced by $\cgx(\Pair)$.  In the last step we use $\Om\not\sub\cg(\Pair)$ to rule out $\Om^\sharp=\cg(\Pair)$.
\epf

We now determine the principal congruences on~$\Ptw n$, for which it suffices by symmetry to determine $\pc\ba\bb$ where $\ba\in D_{qi}$ and $\bb\in D_{rj}$, $q\geq r$, and $i\leq j$ when $q=r$.  
The congruences will be described by means of their associated C-pairs in the usual manner;
additionally, we will omit rows ${q+1,\ldots,n}$, as these consist entirely of $\De$s and have $\De_\N$ as their associated congruence on~$\N$.
For a permutation $\pi\in\S_q$, we write $\norm\pi$ for the 
\emph{normal closure} of $\pi$ in $\S_q$, i.e.~the smallest normal subgroup of $\S_q$ containing $\pi$.

\begin{thm}\label{thm:pc}
Let $n\geq 1$, and let
$\ba=(i,\al)\in D_{qi}$, $\bb=(j,\be)\in D_{rj}$ be two elements of $\Ptw{n}$ with $q< r$, or $q=r$ and  $i\leq j$.
\ben
\item \label{pc1} 
If $\al=\be$ and $i=j$ (i.e.~$\ba=\bb$), then $\pc\ba\bb=\De_{\Ptw n}=\cg(\Th,M)$,
where $M=(\De)_{\bnz\times\N}$ and $\theta_s=\Delta_\N$ for all $s$.
\item \label{pc2} 
If $\al=\be$ and $i<j$, then $\pc\ba\bb=\cg(\Th,M)$, where $M=(\De)_{\bnz\times\N}$ and
$\th_s = (i,j)^\sharp $ for all $s=0,\dots,q$.
\item \label{pc3} 
If $q\geq2$ and $(\al,\be)\not\in\H$, then $\pc\ba\bb=R_{I_{qi}\cup I_{rj}}=\cg(\Th,M)$,
where $M_{sl}=R$ precisely when $D_{sl}\subseteq I_{qi}\cup I_{rj}$ and $M_{sl}=\Delta$ otherwise,
and where $\theta_s=(m_s,m_s+1)^\sharp$ ($0\leq s\leq q$) for $m_s$ the first index of an $R$-entry in row $s$ of $M$.
\item \label{pc4} If $q\geq3$, $i=j$, $(\al,\be)\in\H$ and $\al\neq \be$, then with $N:=\norm{\pd(\al,\be)}$ we have $\pc\ba\bb=\cg(\Th,M)$ for the C-pair
\[
\Cmatsetup
\begin{array}{r|c|c|c|c|c|c|cc}\hhline{~|-|-|-|-|-|-|~~}
\text{\scriptsize $q$} &\cellcolor{delcol}\De &\cellcolor{delcol}\cdots & \cellcolor{delcol}\De  & \cellcolor{Ncol} N & \cellcolor{Ncol}N & \cellcolor{Ncol} \cdots &\hspace{2mm}& \De_\N\\ \hhline{~|-|-|-|-|-|-|~~} 
&\cellcolor{delcol}\De &\cellcolor{delcol}\cdots & \cellcolor{delcol}\De  & \cellcolor{Rcol} R & \cellcolor{Rcol}R & \cellcolor{Rcol} \cdots && (i,i+1)^\sharp\\ \hhline{~|-|-|-|-|-|-|~~}
&\cellcolor{delcol}\vvdots &\cellcolor{delcol}\vvdots & \cellcolor{delcol}\vvdots  & \cellcolor{Rcol} \vvdots & \cellcolor{Rcol}\vvdots & \cellcolor{Rcol} \vvdots &&\vvdots  \\ \hhline{~|-|-|-|-|-|-|~~}
\text{\scriptsize $0$} &\cellcolor{delcol}\De &\cellcolor{delcol}\cdots & \cellcolor{delcol}\De  & \cellcolor{Rcol} R & \cellcolor{Rcol}R & \cellcolor{Rcol} \cdots && (i,i+1)^\sharp\\ \hhline{~|-|-|-|-|-|-|~~}
\multicolumn{1}{c}{}&\multicolumn{1}{c}{}&\multicolumn{1}{c}{}&\multicolumn{1}{c}{}&\multicolumn{1}{c}{\text{\scriptsize $i$}}&\multicolumn{1}{c}{}&\multicolumn{1}{c}{}&\multicolumn{1}{c}{}
\end{array}.
\]
\item \label{pc5} If $q\geq3$, $i<j$, $(\al,\be)\in\H$ and $\pd(\al,\be)\in\A_q\sm\{\id_q\}$, then with $N:=\norm{\pd(\al,\be)}$ we have $\pc\ba\bb=\cg(\Th,M)$ for the C-pair
\[
\Cmatsetup
\begin{array}{r|c|c|c|c|c|c|cc}\hhline{~|-|-|-|-|-|-|~~}
\text{\scriptsize $q$} &\cellcolor{delcol}\De &\cellcolor{delcol}\cdots & \cellcolor{delcol}\De  & \cellcolor{Ncol} N & \cellcolor{Ncol} N & \cellcolor{Ncol} \cdots &\hspace{2mm}& (i,j)^\sharp\\ \hhline{~|-|-|-|-|-|-|~~} 
&\cellcolor{delcol}\De &\cellcolor{delcol}\cdots & \cellcolor{delcol}\De  & \cellcolor{Rcol} R & \cellcolor{Rcol}R & \cellcolor{Rcol} \cdots && (i,i+1)^\sharp\\ \hhline{~|-|-|-|-|-|-|~~}
&\cellcolor{delcol}\vvdots &\cellcolor{delcol}\vvdots & \cellcolor{delcol}\vvdots  & \cellcolor{Rcol} \vvdots & \cellcolor{Rcol}\vvdots & \cellcolor{Rcol} \vvdots && \vvdots\\ \hhline{~|-|-|-|-|-|-|~~}
\text{\scriptsize $0$} &\cellcolor{delcol}\De &\cellcolor{delcol}\cdots & \cellcolor{delcol}\De  & \cellcolor{Rcol} R & \cellcolor{Rcol}R & \cellcolor{Rcol} \cdots && (i,i+1)^\sharp\\ \hhline{~|-|-|-|-|-|-|~~}
\multicolumn{1}{c}{}&\multicolumn{1}{c}{}&\multicolumn{1}{c}{}&\multicolumn{1}{c}{}&\multicolumn{1}{c}{\text{\scriptsize $i$}}&\multicolumn{1}{c}{}&\multicolumn{1}{c}{}&\multicolumn{1}{c}{}
\end{array}.
\]
\item \label{pc6} If $q\geq3$, $i<j$, $(\al,\be)\in\H$ and $\pd(\al,\be)\not\in\A_q$, then with $d:=j-i$ we have ${\pc\ba\bb=\cgx(\Th,M)}$ for the exceptional C-pair
\[
\Cmatsetup
\begin{array}{r|c|c|c|c|c|c|cc}\hhline{~|-|-|-|-|-|-|~~}
\text{\scriptsize $q$} &\cellcolor{delcol}\De &\cellcolor{delcol}\cdots & \cellcolor{delcol}\De  & \cellcolor{Ncol} \A_q & \cellcolor{Ncol}\A_q & \cellcolor{Ncol} \cdots &\hspace{2mm}& (i,i+2d)^\sharp\\ \hhline{~|-|-|-|-|-|-|~~} 
&\cellcolor{delcol}\De &\cellcolor{delcol}\cdots & \cellcolor{delcol}\De  & \cellcolor{Rcol} R & \cellcolor{Rcol}R & \cellcolor{Rcol} \cdots && (i,i+1)^\sharp\\ \hhline{~|-|-|-|-|-|-|~~}
&\cellcolor{delcol}\vvdots &\cellcolor{delcol}\vvdots & \cellcolor{delcol}\vvdots  & \cellcolor{Rcol} \vvdots & \cellcolor{Rcol}\vvdots & \cellcolor{Rcol} \vvdots && \vvdots\\ \hhline{~|-|-|-|-|-|-|~~}
\text{\scriptsize $0$} &\cellcolor{delcol}\De &\cellcolor{delcol}\cdots & \cellcolor{delcol}\De  & \cellcolor{Rcol} R & \cellcolor{Rcol}R & \cellcolor{Rcol} \cdots && (i,i+1)^\sharp\\ \hhline{~|-|-|-|-|-|-|~~}
\multicolumn{1}{c}{}&\multicolumn{1}{c}{}&\multicolumn{1}{c}{}&\multicolumn{1}{c}{}&\multicolumn{1}{c}{\text{\scriptsize $i$}}&\multicolumn{1}{c}{}&\multicolumn{1}{c}{}&\multicolumn{1}{c}{}
\end{array}.
\]
\item \label{pc7} If $q=2$, $i=j$, $(\al,\be)\in\H$ and $\al\neq \be$, then $\pc\ba\bb=\cg(\Th,M)$ for the C-pair
\[
\Cmatsetup
\begin{array}{r|c|c|c|c|c|c|cc}\hhline{~|-|-|-|-|-|-|~~}
\text{\scriptsize $2$} &\cellcolor{delcol}\De &\cellcolor{delcol}\cdots & \cellcolor{delcol}\De  & \cellcolor{Ncol} \S_2 & \cellcolor{Ncol}\S_2 & \cellcolor{Ncol} \cdots &\hspace{2mm}& \De_\N\\ \hhline{~|-|-|-|-|-|-|~~} 
\text{\scriptsize $1$} &\cellcolor{delcol}\De &\cellcolor{delcol}\cdots & \cellcolor{delcol}\De  & \cellcolor{excepcol} \mu & \cellcolor{mucol}\mu & \cellcolor{mucol} \cdots && \De_\N\\ \hhline{~|-|-|-|-|-|-|~~}
\text{\scriptsize $0$} &\cellcolor{delcol}\De &\cellcolor{delcol}\cdots & \cellcolor{delcol}\De  & \cellcolor{mucol} \mu & \cellcolor{mucol}\mu & \cellcolor{mucol} \cdots && \De_\N\\ \hhline{~|-|-|-|-|-|-|~~}
\multicolumn{1}{c}{}&\multicolumn{1}{c}{}&\multicolumn{1}{c}{}&\multicolumn{1}{c}{}&\multicolumn{1}{c}{\text{\scriptsize $i$}}&\multicolumn{1}{c}{}&\multicolumn{1}{c}{}&\multicolumn{1}{c}{}
\end{array}.
\]
\item \label{pc8} If $q=2$, $i<j$, $(\al,\be)\in\H$ and $\al\neq \be$, then with $d:=j-i$ we have $\pc\ba\bb=\cgx(\Th,M)$ for the exceptional C-pair
\[
\Cmatsetup
\begin{array}{r|c|c|c|c|c|c|cc}\hhline{~|-|-|-|-|-|-|~~}
\text{\scriptsize $2$} &\cellcolor{delcol}\De &\cellcolor{delcol}\cdots & \cellcolor{delcol}\De  & \cellcolor{delcol} \De & \cellcolor{delcol}\De & \cellcolor{delcol} \cdots &\hspace{2mm}& (i,i+2d)^\sharp\\ \hhline{~|-|-|-|-|-|-|~~} 
\text{\scriptsize $1$} &\cellcolor{delcol}\De &\cellcolor{delcol}\cdots & \cellcolor{delcol}\De  & \cellcolor{Rcol} \mu & \cellcolor{Rcol}\mu & \cellcolor{Rcol} \cdots && (i,i+d)^\sharp\\ \hhline{~|-|-|-|-|-|-|~~}
\text{\scriptsize $0$} &\cellcolor{delcol}\De &\cellcolor{delcol}\cdots & \cellcolor{delcol}\De  & \cellcolor{Rcol} \mu & \cellcolor{Rcol}\mu & \cellcolor{Rcol} \cdots && (i,i+d)^\sharp\\ \hhline{~|-|-|-|-|-|-|~~}
\multicolumn{1}{c}{}&\multicolumn{1}{c}{}&\multicolumn{1}{c}{}&\multicolumn{1}{c}{}&\multicolumn{1}{c}{\text{\scriptsize $i$}}&\multicolumn{1}{c}{}&\multicolumn{1}{c}{}&\multicolumn{1}{c}{}
\end{array}.
\]
\item \label{pc9} If $q=1$ and $i\leq j$, and if $(\al,\be)$ belongs to one of $R_{I_1}\sm(\lam_1\cup\rho_1)$, $\lam_1\sm\mu_1$ or $\rho_1\sm\mu_1$, then with $\xi:=R$, $\lam$ or~$\rho$, respectively, we have $\pc\ba\bb=\cg(\Th,M)$ for the C-pair
\[
\Cmatsetup
\begin{array}{r|c|c|c|c|c|c|cc}\hhline{~|-|-|-|-|-|-|~~}
\text{\scriptsize $1$} &\cellcolor{delcol}\De &\cellcolor{delcol}\cdots & \cellcolor{delcol}\De  & \cellcolor{Rcol} \xi & \cellcolor{Rcol}\xi & \cellcolor{Rcol} \cdots &\hspace{2mm}& (i,i+1)^\sharp\\ \hhline{~|-|-|-|-|-|-|~~}
\text{\scriptsize $0$} &\cellcolor{delcol}\De &\cellcolor{delcol}\cdots & \cellcolor{delcol}\De  & \cellcolor{Rcol} \xi & \cellcolor{Rcol}\xi & \cellcolor{Rcol} \cdots && (i,i+1)^\sharp\\ \hhline{~|-|-|-|-|-|-|~~}
\multicolumn{1}{c}{}&\multicolumn{1}{c}{}&\multicolumn{1}{c}{}&\multicolumn{1}{c}{}&\multicolumn{1}{c}{\text{\scriptsize $i$}}&\multicolumn{1}{c}{}&\multicolumn{1}{c}{}&\multicolumn{1}{c}{}
\end{array}.
\]
\item \label{pc10} If $q=1$, $r=0$ and $i>j$, and if $(\al,\be)$ belongs to one of $R_{I_1}\sm(\lam_1\cup\rho_1)$, $\lam_1\sm\mu_1$ or $\rho_1\sm\mu_1$, then with $\xi:=R$, $\lam$ or~$\rho$, respectively, we have $\pc\ba\bb=\cg(\Th,M)$ for the C-pair
\[
\Cmatsetup
\begin{array}{r|c|c|c|c|c|c|c|c|c|cc}\hhline{~|-|-|-|-|-|-|-|-|-|~~}
\text{\scriptsize $1$} &\cellcolor{delcol}\De &\cellcolor{delcol}\cdots & \cellcolor{delcol}\De& \cellcolor{delcol}\De &\cellcolor{delcol}\cdots & \cellcolor{delcol}\De  & \cellcolor{Rcol} \xi & \cellcolor{Rcol}\xi & \cellcolor{Rcol} \cdots &\hspace{2mm}& (i,i+1)^\sharp\\ \hhline{~|-|-|-|-|-|-|-|-|-|~~}
\text{\scriptsize $0$} &\cellcolor{delcol}\De &\cellcolor{delcol}\cdots & \cellcolor{delcol}\De  & \cellcolor{Rcol} \xi & \cellcolor{Rcol}\cdots & \cellcolor{Rcol} \xi & \cellcolor{Rcol} \xi & \cellcolor{Rcol}\xi & \cellcolor{Rcol} \cdots && (j,j+1)^\sharp\\ \hhline{~|-|-|-|-|-|-|-|-|-|~~}
\multicolumn{1}{c}{}&\multicolumn{1}{c}{}&\multicolumn{1}{c}{}&\multicolumn{1}{c}{}&\multicolumn{1}{c}{\text{\scriptsize $j$}}&\multicolumn{1}{c}{}&\multicolumn{1}{c}{}&\multicolumn{1}{c}{\text{\scriptsize $i$}}&\multicolumn{1}{c}{}&\multicolumn{1}{c}{}&\multicolumn{1}{c}{}
\end{array}.
\]
\item \label{pc11} If $q=0$ and $i\leq j$, and if $(\al,\be)$ belongs to one of $R_{I_0}\sm(\lam_0\cup\rho_0)$, $\lam_0\sm\De_{D_0}$ or $\rho_0\sm\De_{D_0}$, then with $\xi:=R$, $\lam$ or~$\rho$, respectively, we have $\pc\ba\bb=\cg(\Th,M)$ for the C-pair
\[
\Cmatsetup
\begin{array}{r|c|c|c|c|c|c|cc}\hhline{~|-|-|-|-|-|-|~~}
\text{\scriptsize $0$} &\cellcolor{delcol}\De &\cellcolor{delcol}\cdots & \cellcolor{delcol}\De  & \cellcolor{Rcol} \xi & \cellcolor{Rcol}\xi & \cellcolor{Rcol} \cdots &\hspace{2mm} (i,i+1)^\sharp\\ \hhline{~|-|-|-|-|-|-|~~}
\multicolumn{1}{c}{}&\multicolumn{1}{c}{}&\multicolumn{1}{c}{}&\multicolumn{1}{c}{}&\multicolumn{1}{c}{\text{\scriptsize $i$}}&\multicolumn{1}{c}{}&\multicolumn{1}{c}{}&\multicolumn{1}{c}{}
\end{array}.
\]
\item \label{pc12} If $q=r=1$ and $i=j$, and if $(\al,\be)$ belongs to one of $\mu_1\sm(\muup\cup\mudown)$, $\muup\sm\De_{D_1}$ or $\mudown\sm\De_{D_1}$, then with $\ze:=\mu$, $\muup$ or~$\mudown$, respectively, we have $\pc\ba\bb=\cg(\Th,M)$ for the C-pair
\[
\Cmatsetup
\begin{array}{r|c|c|c|c|c|c|cc}\hhline{~|-|-|-|-|-|-|~~}
\text{\scriptsize $1$} &\cellcolor{delcol}\De &\cellcolor{delcol}\cdots & \cellcolor{delcol}\De  & \cellcolor{excepcol} \ze & \cellcolor{mucol}\mu & \cellcolor{mucol} \cdots &\hspace{2mm}& \De_\N\\ \hhline{~|-|-|-|-|-|-|~~}
\text{\scriptsize $0$} &\cellcolor{delcol}\De &\cellcolor{delcol}\cdots & \cellcolor{delcol}\De  & \cellcolor{mucol} \mu & \cellcolor{mucol}\mu & \cellcolor{mucol} \cdots && \De_\N\\ \hhline{~|-|-|-|-|-|-|~~}
\multicolumn{1}{c}{}&\multicolumn{1}{c}{}&\multicolumn{1}{c}{}&\multicolumn{1}{c}{}&\multicolumn{1}{c}{\text{\scriptsize $i$}}&\multicolumn{1}{c}{}&\multicolumn{1}{c}{}&\multicolumn{1}{c}{}
\end{array}.
\]
\item \label{pc13} If $q=r=1$, $i<j$, and $(\al,\be)\in\mu_1\sm\De_{D_1}$, then $\pc\ba\bb=\cg(\Th,M)$ for the C-pair
\[
\Cmatsetup
\begin{array}{r|c|c|c|c|c|c|cc}\hhline{~|-|-|-|-|-|-|~~}
\text{\scriptsize $1$} &\cellcolor{delcol}\De &\cellcolor{delcol}\cdots & \cellcolor{delcol}\De  & \cellcolor{Rcol} \mu & \cellcolor{Rcol}\mu & \cellcolor{Rcol} \cdots &\hspace{2mm}& (i,j)^\sharp\\ \hhline{~|-|-|-|-|-|-|~~}
\text{\scriptsize $0$} &\cellcolor{delcol}\De &\cellcolor{delcol}\cdots & \cellcolor{delcol}\De  & \cellcolor{Rcol} \mu & \cellcolor{Rcol}\mu & \cellcolor{Rcol} \cdots && (i,j)^\sharp\\ \hhline{~|-|-|-|-|-|-|~~}
\multicolumn{1}{c}{}&\multicolumn{1}{c}{}&\multicolumn{1}{c}{}&\multicolumn{1}{c}{}&\multicolumn{1}{c}{\text{\scriptsize $i$}}&\multicolumn{1}{c}{}&\multicolumn{1}{c}{}&\multicolumn{1}{c}{}
\end{array}.
\]
\item \label{pc14} If $q=1$, $r=0$, $i\leq j$, and $\wh\al=\be$, then $\pc\ba\bb=\cg(\Th,M)$ for the C-pair
\[
\Cmatsetup
\begin{array}{r|c|c|c|c|c|c|cc}\hhline{~|-|-|-|-|-|-|~~}
\text{\scriptsize $1$} &\cellcolor{delcol}\De &\cellcolor{delcol}\cdots & \cellcolor{delcol}\De  & \cellcolor{Rcol} \mu & \cellcolor{Rcol}\mu & \cellcolor{Rcol} \cdots &\hspace{2mm}& (i,j+1)^\sharp\\ \hhline{~|-|-|-|-|-|-|~~}
\text{\scriptsize $0$} &\cellcolor{delcol}\De &\cellcolor{delcol}\cdots & \cellcolor{delcol}\De  & \cellcolor{Rcol} \mu & \cellcolor{Rcol}\mu & \cellcolor{Rcol} \cdots && (i,j+1)^\sharp\\ \hhline{~|-|-|-|-|-|-|~~}
\multicolumn{1}{c}{}&\multicolumn{1}{c}{}&\multicolumn{1}{c}{}&\multicolumn{1}{c}{}&\multicolumn{1}{c}{\text{\scriptsize $i$}}&\multicolumn{1}{c}{}&\multicolumn{1}{c}{}&\multicolumn{1}{c}{}
\end{array}.
\]
\item \label{pc15} If $q=1$, $r=0$, $i=j+1$ and $\wh\al=\be$, then $\pc\ba\bb=\cg(\Th,M)$ for the C-pair
\[
\Cmatsetup
\begin{array}{r|c|c|c|c|c|c|cc}\hhline{~|-|-|-|-|-|-|~~}
\text{\scriptsize $1$} &\cellcolor{delcol}\De &\cellcolor{delcol}\cdots & \cellcolor{delcol}\De  & \cellcolor{excepcol} \De & \cellcolor{mucol}\mu & \cellcolor{mucol} \cdots &\hspace{2mm}& \De_\N\\ \hhline{~|-|-|-|-|-|-|~~}
\text{\scriptsize $0$} &\cellcolor{delcol}\De &\cellcolor{delcol}\cdots & \cellcolor{delcol}\De  & \cellcolor{mucol} \mu & \cellcolor{mucol}\mu & \cellcolor{mucol} \cdots && \De_\N\\ \hhline{~|-|-|-|-|-|-|~~}
\multicolumn{1}{c}{}&\multicolumn{1}{c}{}&\multicolumn{1}{c}{}&\multicolumn{1}{c}{}&\multicolumn{1}{c}{\text{\scriptsize $j$}}&\multicolumn{1}{c}{\text{\scriptsize $i$}}&\multicolumn{1}{c}{}&\multicolumn{1}{c}{}
\end{array}. 
\]
\item \label{pc16} If $q=1$, $r=0$, $i>j+1$, and $\wh\al=\be$, then with $d:=i-j-1$ we have $\pc\ba\bb=\cg(\Th,M)$ for the C-pair
\[
\Cmatsetup
\begin{array}{r|c|c|c|c|c|c|c|c|c|c|c|cc}\hhline{~|-|-|-|-|-|-|-|-|-|-|-|~~}
\text{\scriptsize $1$} &\cellcolor{delcol}\De &\cellcolor{delcol}\cdots & \cellcolor{delcol}\De& \cellcolor{delcol}\De & \cellcolor{delcol}\De &\cellcolor{delcol}\cdots & \cellcolor{delcol}\De  & \cellcolor{mucol}\mu & \cellcolor{Rcol} \mu & \cellcolor{Rcol}\mu & \cellcolor{Rcol} \cdots &\hspace{2mm}& (i+1,i+1+d)^\sharp\\ \hhline{~|-|-|-|-|-|-|-|-|-|-|-|~~}
\text{\scriptsize $0$} &\cellcolor{delcol}\De &\cellcolor{delcol}\cdots & \cellcolor{delcol}\De  & \cellcolor{mucol}\mu & \cellcolor{Rcol} \mu & \cellcolor{Rcol}\cdots & \cellcolor{Rcol} \mu & \cellcolor{Rcol} \mu & \cellcolor{Rcol} \mu & \cellcolor{Rcol}\mu & \cellcolor{Rcol} \cdots && (j+1,j+1+d)^\sharp\\ \hhline{~|-|-|-|-|-|-|-|-|-|-|-|~~}
\multicolumn{1}{c}{}&\multicolumn{1}{c}{}&\multicolumn{1}{c}{}&\multicolumn{1}{c}{}&\multicolumn{1}{c}{\text{\scriptsize $j$}}&\multicolumn{1}{c}{}&\multicolumn{1}{c}{}&\multicolumn{1}{c}{}&\multicolumn{1}{c}{\text{\scriptsize $i$}}&\multicolumn{1}{c}{}&\multicolumn{1}{c}{}&\multicolumn{1}{c}{}&\multicolumn{1}{c}{}
\end{array}.
\]
\een
The above exhaust all principal congruences on $\Ptw{n}$.
\end{thm}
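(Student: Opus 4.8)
The plan is to run all sixteen cases through the single tool provided by Lemma~\ref{lem:fg}. As reduced just before the statement, it suffices to treat $\ba=(i,\al)$ and $\bb=(j,\be)$ with $q\geq r$, and with $i\leq j$ when $q=r$; a short bookkeeping check then confirms that the listed cases are mutually exclusive and exhaust all possibilities for the data $(q,r,i,j,\al,\be)$ under this reduction, yielding the final sentence. In each case write $\Pair=(\Th,M)$ for the C-pair named in that item, set $\Om=\{(\ba,\bb)\}$ so that $\pc\ba\bb=\Om^\sharp$, and let $\Pair'$ be the C-pair associated with $\Om^\sharp$. By Lemma~\ref{lem:fg} it then suffices to verify two things: first, that $(\ba,\bb)\in\cg(\Pair)$ (respectively $(\ba,\bb)\in\cgx(\Pair)\sm\cg(\Pair)$ in the exceptional items \ref{pc6} and \ref{pc8}); and second, that $\Pair\leqc\Pair'$. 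The first verification is a direct check of the membership clauses \ref{C1}--\ref{C9} of Definitions~\ref{de:cg} and~\ref{def:exc} against the data of each case, and is routine.

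The substance lies in the inequality $\Pair\leqc\Pair'$, which asserts that the single pair $(\ba,\bb)$ already forces the generated congruence to be as large as $\cg(\Pair)$; note that the reverse inequality $\Pair'\leqc\Pair$ is automatic from $\Om\sub\cg(\Pair)$ via Remark~\ref{rem:comp}, so the two together give $\Pair'=\Pair$ and hence the claimed equality. To establish $\Pair\leqc\Pair'$ I would exhibit explicit consequences of $(\ba,\bb)$ under the congruence-closure operations, namely left and right multiplication by elements $(0,\ga)$ and $(0,\de)$ together with transitivity. The three standard moves are: multiplying by suitable idempotent projections that descend to a lower-rank row $s<q$ without creating any floating component, which shows $\th_s\supseteq(i,j)^\sharp$ (or $(i,i+1)^\sharp$, as appropriate) and forces the named symbol into row $s$; conjugating an $\H$-pair by permutation partitions, whose permutational differences range over the conjugacy class of $\pd(\al,\be)$ and whose products generate $\norm{\pd(\al,\be)}$, thereby producing the $N$-symbol $N=\norm{\pd(\al,\be)}$ in row $q$; and capping off a transversal to manufacture exactly one extra floating component, which yields pairs of the form $\big((l,\ze),(l+1,\ze)\big)$ and hence the period-$1$ congruences on $\N$ and the $R$-entries lying beneath a non-trivial $\H$-relationship.

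The exceptional items \ref{pc6} and \ref{pc8} are where I expect the real difficulty. Here $j=i+d$ with $\pd(\al,\be)\notin\A_q$ (and $q=2$ in \ref{pc8}), and the claim is that $\pc\ba\bb$ is the \emph{exceptional} congruence with $\th_q=(i,i+2d)^\sharp$ rather than the naive $(i,i+d)^\sharp$. The mechanism to be made precise is that composing two (conjugated) copies of the generating relationship advances the floating count by $2d$ while composing the two odd permutational differences into the alternating group, so that the honest period in row $q$ is $2d$ and the even permutations so produced fill out $\A_q$, whereas the odd-permutation relations occurring at odd multiples of $d$ are exactly those recorded by clause~\ref{C9}; in \ref{pc8} this even part degenerates, since $\A_2=\{\id_2\}$, leaving $\De$ in row $2$ but the same period doubling and $\mu$-entries below. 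The crux is to show that no congruence on $\N$ finer than $(i,i+2d)^\sharp$ is forced in row $q$ -- equivalently, that any period-$d$ identification would compel an odd permutational difference absent from $\cgx(\Pair)$ -- which is precisely what separates these items from \ref{pc5}. Once this is in hand, Lemma~\ref{lem:fg}\ref{it:fg2} applies, using that $(\ba,\bb)\in\cgx(\Pair)\sm\cg(\Pair)$.

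The remaining low-rank items \ref{pc9}--\ref{pc16}, with $q\leq1$, follow the same template, but require the degeneracies of Remark~\ref{rem:01} to be tracked carefully: the coincidences $\muup\equiv\mudown\equiv\De$ and $\lam\equiv\rho\equiv R$ when $n=1$, the absence of unmatched $\mu$s, and the behaviour of the map $\al\mapsto\wh\al$ that links rows~$1$ and~$0$ (governing items \ref{pc9}--\ref{pc16} through the various relationships $R$, $\lam$, $\rho$, $\mu$, $\muup$, $\mudown$). The only genuinely new point there is identifying which of the symbols $\mu,\muup,\mudown,\lam,\rho,R$ the generated entry equals, and how the relative positions of $i$ and $j$ determine the minima and periods of the two associated congruences on $\N$; both are settled by the same multiplication arguments as above, read off directly from Table~\ref{tab:M}.
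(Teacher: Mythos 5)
Your proposal is correct in outline and shares the paper's skeleton exactly: both reduce each of the sixteen cases to the two conditions of Lemma \ref{lem:fg} (membership of $(\ba,\bb)$ in $\cg(\Pair)$, or in $\cgx(\Pair)\sm\cg(\Pair)$ for items \ref{pc6} and \ref{pc8}, plus the inequality $\Pair\leqc\Pair'$), and both observe that the reverse inequality is free via Remark \ref{rem:comp}. Where you diverge is in the engine used to prove $\Pair\leqc\Pair'$. You propose to re-derive the forced entries of $\Pair'$ by explicit congruence-closure computations in $\Ptw{n}$ (rank-lowering projections, conjugation by permutation partitions, capping a transversal to create a floating component). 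The paper never multiplies a single pair of elements: it reads off from Definition \ref{de:cg} which clause $(\ba,\bb)$ can belong to $\si$ under, converts that into one ``seed'' constraint on a single entry of $M'$, or on the row type of $M'$, via Table \ref{tab:M} and Remark \ref{rem:amb}, and then lets the combinatorial constraints on valid C-pairs --- the row types of Table \ref{tab:RT}, the verticality conditions of Definition \ref{de:Cpair}, and the ordering of Figure \ref{fig:M} --- propagate that seed to all remaining entries and to the congruences $\th_s'$; Remark \ref{rem:leqc} further cuts the verification down to the leftmost occurrence of each symbol in each row. Your route is viable and more self-contained, but it repeats, sixteen times over, closure computations that the classification theorem has already packaged once and for all; the paper's route is correspondingly much lighter per case (compare its treatments of items \ref{pc7}, \ref{pc10} and \ref{pc16}).

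One small correction on the exceptional items \ref{pc6} and \ref{pc8}: what you single out as the crux --- that no period-$d$ identification is forced in row $q$, i.e.\ that $\th_q'$ is not coarser than $(i,i+2d)^\sharp$ --- comes for free from $\Om\sub\cgx(\Pair)$ together with Remark \ref{rem:comp}, and need not be argued at all. The genuine content is the opposite containment $\th_q=(i,i+2d)^\sharp\sub\th_q'$, which your ``compose two conjugated copies of the generating pair'' move does deliver; the paper instead notes that $(\ba,\bb)$ can only lie in $\si$ via \ref{C9}, forces $M'$ to be exceptional with $\hgt(M')=q$, and extracts $i\geq m$ and $e\mid d$ from $(i,j)\in(m,m+e)^\sharp$. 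So there is no gap, but the emphasis in that paragraph is inverted.
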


\pf
In each part, write $\Pair=(\Th,M)$ for the stated C-pair, and let $\Pair'=(\Th',M')$ be the C-pair associated to $\pc\ba\bb$.  

The proof for each part follows the same basic pattern.  
For all parts, except \ref{pc6} and \ref{pc8}, we use Lemma \ref{lem:fg}\ref{it:fg1}, and it is routine to verify that $(\ba,\bb)\in\cg(\Pair)$.
In parts \ref{pc6} and \ref{pc8} we use Lemma \ref{lem:fg}\ref{it:fg2} instead, and it is equally routine to verify that
$(\ba,\bb)\in\cgx(\Pair)\sm\cg(\Pair)$.
It then remains to check that $\Pair\leqc\Pair'$, which amounts to showing that 
\[
\th_s\sub\th_s' \AND M_{sk}\leqc M_{sk}' \qquad\text{for all $s\in\bnz$ and $k\in\N$.}
\]
The first inequality is vacuously true when $\theta_s=\Delta_\N$. Likewise, the second inequality needs to be verified only when $M_{sk}\neq \Delta$, and, furthermore, by Remark \ref{rem:leqc}, when ${M_{sk}\grc M_{s,k-1}}$ (or $k=0$).

Below we give some representative proofs; the rest are similar.  It will be convenient to write~$\si:=\pc\ba\bb$.

\ref{pc7}  As just noted, 
we only need to check that $M_{2i}'\geqc\S_2$ and $M_{1i}',M_{0i}'\geq\mu$.  From the form of $(\ba,\bb)$, we have $\si_{2i}\not=\De_{D_2}$.  (The~$\si_{sk}$ notation is explained in Remark~\ref{rem:amb}.)  It follows from Table \ref{tab:M} that $M_{2i}'\not=\De$, and so from Figure~\ref{fig:M} that $M_{2i}'\geqc\S_2$.  
Combining this with the verticality conditions in Definition \ref{de:Cpair}, it follows that $M_{1i}'\geqc\mu$, and then by examining the row types in Table~\ref{tab:RT} that $M_{0i}'\geqc\mu$.  

\ref{pc8}  As this part involves an exceptional congruence, a little more work is required.  As in 
part \ref{pc7}, we need to show that $M_{1i}',M_{0i}'\geq\mu$, but this time we also need to show that $\th_s'\supseteq\th_s$ for $s=0,1,2$.  

Beginning with the latter, first note that since $M'\leqc M$ (as $\si\sub\cgx(\Pair)$), row $2$ of $M'$ consists entirely of~$\De$s.  Thus, by the form of $(\ba,\bb)$, this pair can only belong to $\si$ via \ref{C9}; it follows that $M'$ is exceptional, with $\hgt(M')=2$.  In particular, we have $\th_2'=(m,m+2e)^\sharp$ for some $m\geq0$ and $e\geq1$, and moreover we have $(m,m+e)^\sharp\sub\th_1'$ (see Definition \ref{def:exc}).  
Since $(\ba,\bb)=((i,\al),(j,\be))$ belongs to $\si$ via \ref{C9}, we have $(i,i+d)=(i,j)\in(m,m+e)^\sharp$, and so $i\geq m$ and $e\mid d$, which gives $\th_2\sub\th_2'$.  The reverse inclusion follows from $\Th'\leqc\Th$, and so $\th_2'=\th_2=(i,i+2d)^\sharp$.
Also, $\th_0=\th_1=(i,i+d)^\sharp=(m,m+e)^\sharp\sub\th_1'\sub\th_0'$.
Moving on to the matrix entries, since $\hgt(M')=2$ and $i=\min\th_2'$, it follows from Definition \ref{def:exc} that $M_{1i}'\geq\mu$, and then again by examining the row types that $M_{0i}'\geq\mu$ as well.

\ref{pc10}  We consider the case in which $(\al,\be)\in\lam_1\sm\mu_1$, the others being similar.  Referring to Theorem \ref{thm:CongPn}, this means that $\wh\al\rL\wh\be({}=\be)$ but $\wh\al\not=\wh\be$.  Examining Definition \ref{de:cg}, it follows that $(\ba,\bb)\in\si$ via \ref{C2} or \ref{C4}, so that $M_{1i}'=M_{0j}'\in\{\lam,R\}$, and so $M_{1i}',M_{0j}'\geqc\lam$.  As usual this is all we need to show regarding the matrix entries.  

Examining Table \ref{tab:RT}, and remembering that $\per\th_s'=1$ if $\xi\not=\mu$ in row $s\leq1$ of $M'$, and that an entry ${}\grc\mu$ can only appear after $\min\th_s'$, we see that $\th_1'\supseteq(i,i+1)^\sharp=\th_1$ and $\th_0'\supseteq(j,j+1)^\sharp=\th_0$.

\ref{pc16}  Since $(\ba,\bb)\in\si$, and since $\ba$ and $\bb$ belong to the $\D$-classes $D_{1i}$ and $D_{0j}$ in different rows, we see that $M_{1i}'=M_{0j}'\geqc\mu$, and again this takes care of the matrices.

Since $\th_1'\sub\th_1=(i+1,i+1+d)^\sharp$ and $\th_0'\sub\th_0=(j+1,j+1+d)^\sharp$, we have $i<\min\th_1'$ and $j<\min\th_0'$.  Thus, since $(\ba,\bb)\in\si\cap(D_{1i}\times D_{0j})$, $M'$ is of type \ref{RT2}, \ref{RT5} or \ref{RT7}.  Since $i>j+1$ we can rule out types \ref{RT2} and \ref{RT5}, and so
\[
\th_0'=(j+1,j+1+e)^\sharp \ANd \th_1'=(i+1,i+1+e)^\sharp \qquad\text{for some $e\geq1$ with $i\equiv j+1\!\!\!\!\pmod e$.}
\]
Thus, since we wish to show that $\th_0'\supseteq\th_0$ and $\th_1'\supseteq\th_1$, it remains to show that $e\mid d=i-j-1$.  But this follows from $i\equiv j+1\pmod e$.
\epf

Our next result, Theorem \ref{thm:fg}, shows that every congruence on $\Ptw n$ can be generated by at most $\lceil\frac{5n}2\rceil$ pairs.  As the proof is somewhat technical, we begin by considering some examples, all with $n=4$.

\begin{exa}\label{ex:fg1}
Consider the Rees congruence $R_I$, where $I:=I_{33}\cup I_{12}\cup I_{00}$.  Then $R_I=\cg(\Pair)$ for the C-pair
\[
\Pi:= \Cmatsetup\begin{array}{|c|c|c|c|c|c|cc}
\hhline{|-|-|-|-|-|-|~~}
\cellcolor{delcol}\Delta & \cellcolor{delcol}\Delta & \cellcolor{delcol}\Delta & \cellcolor{delcol}\Delta & \cellcolor{delcol}\Delta & \cellcolor{delcol}\cdots &\hspace{2mm}& \Delta_\N \\ \hhline{|-|-|-|-|-|-|~~}
\cellcolor{delcol}\Delta & \cellcolor{delcol}\Delta & \cellcolor{delcol}\Delta & \cellcolor{Rcol}R & \cellcolor{Rcol}R & \cellcolor{Rcol}\cdots && (3,4)^\sharp \\ \hhline{|-|-|-|-|-|-|~~}
\cellcolor{delcol}\Delta & \cellcolor{delcol}\Delta & \cellcolor{delcol}\Delta & \cellcolor{Rcol}R & \cellcolor{Rcol}R & \cellcolor{Rcol}\cdots &&  (3,4)^\sharp \\ \hhline{|-|-|-|-|-|-|~~}
\cellcolor{delcol}\Delta & \cellcolor{delcol}\Delta & \cellcolor{Rcol}R & \cellcolor{Rcol}R & \cellcolor{Rcol}R & \cellcolor{Rcol}\cdots &&  (2,3)^\sharp\\ \hhline{|-|-|-|-|-|-|~~}
\cellcolor{Rcol}R & \cellcolor{Rcol}R & \cellcolor{Rcol}R & \cellcolor{Rcol}R & \cellcolor{Rcol}R & \cellcolor{Rcol}\cdots
&& \nabla_\N \\ \hhline{|-|-|-|-|-|-|~~}
\end{array}
\ .
\]
We claim that $R_I = \big\{(\ba,\bb),(\bc,\bd)\big\}^\sharp$ for any $\ba\in D_{33}$, $\bb\in D_{12}$, 
$\bc=(0,\ga)\in D_{00}$, $\bd=(0,\de)\in D_{00}$, with $(\ga,\de)\not\in\L\cup\R$.  Indeed, first note that by Theorem \ref{thm:pc}\ref{pc3} and \ref{pc11} we have ${\pc\ba\bb=R_{I_{33}\cup I_{12}}}$ and $\pc\bc\bd=R_{I_{00}}$.  And then, since $R_{I_1}\vee R_{I_2}=R_{I_1\cup I_2}$ for any ideals $I_1,I_2$, we have
\[
R_I = R_{I_{33}\cup I_{12}} \vee R_{I_{00}} = \pc\ba\bb \vee \pc\bc\bd = \big\{(\ba,\bb),(\bc,\bd)\big\}^\sharp.
\]
Many other pairs of pairs could be chosen to generate $R_I$.  
For example, $\bd$ could be replaced by any element from $I\setminus D_{00}$.
\end{exa}

\begin{exa}\label{ex:fg2}
Consider the C-pair $\Pair=(\Th,M)$ defined by
\[
\Cmatsetup
\Pair := \begin{array}{|c|c|c|c|c|c|c|c|c|c|c|cc}
\hhline{|-|-|-|-|-|-|-|-|-|-|-|~~}
\cellcolor{delcol}\Delta & \cellcolor{delcol}\Delta &\cellcolor{delcol}\Delta &\cellcolor{delcol}\Delta &\cellcolor{delcol}\Delta &\cellcolor{delcol}\Delta &\cellcolor{delcol}\Delta &\cellcolor{delcol}\Delta &\cellcolor{delcol}\Delta &\cellcolor{delcol}\Delta &\cellcolor{delcol}\cdots &\hspace{2mm}&\Delta_\N \\ \hhline{|-|-|-|-|-|-|-|-|-|-|-|~~}
\cellcolor{delcol}\Delta & \cellcolor{delcol}\Delta &\cellcolor{delcol}\Delta &\cellcolor{delcol}\Delta &\cellcolor{delcol}\Delta &\cellcolor{delcol}\Delta &\cellcolor{delcol}\Delta &\cellcolor{delcol}\Delta &\cellcolor{delcol}\Delta &\cellcolor{delcol}\Delta &\cellcolor{delcol}\cdots && (9,13)^\sharp \\ \hhline{|-|-|-|-|-|-|-|-|-|-|-|~~}
\cellcolor{delcol}\Delta & \cellcolor{delcol}\Delta &\cellcolor{delcol}\Delta &\cellcolor{delcol}\Delta &\cellcolor{delcol}\Delta &\cellcolor{delcol}\Delta &\cellcolor{delcol}\Delta &\cellcolor{delcol}\Delta &\cellcolor{delcol}\Delta &\cellcolor{delcol}\Delta &\cellcolor{delcol}\cdots && (8,12)^\sharp \\ \hhline{|-|-|-|-|-|-|-|-|-|-|-|~~}
\cellcolor{delcol}\Delta & \cellcolor{excepcol}\mudown &\cellcolor{mucol}\mu &\cellcolor{mucol}\mu &\cellcolor{mucol}\mu &\cellcolor{mucol}\mu &\cellcolor{Rcol}\mu &\cellcolor{Rcol}\mu &\cellcolor{Rcol}\mu &\cellcolor{Rcol}\mu &\cellcolor{Rcol}\cdots && (6,8)^\sharp \\ \hhline{|-|-|-|-|-|-|-|-|-|-|-|~~}
\cellcolor{delcol}\Delta & \cellcolor{mucol}\mu &\cellcolor{mucol}\mu &\cellcolor{mucol}\mu &\cellcolor{mucol}\mu &\cellcolor{Rcol}\mu &\cellcolor{Rcol}\mu &\cellcolor{Rcol}\mu &\cellcolor{Rcol}\mu &\cellcolor{Rcol}\mu &\cellcolor{Rcol}\cdots && (5,7)^\sharp \\\hhline{|-|-|-|-|-|-|-|-|-|-|-|~~}
\end{array}
\ .
\]
Fix arbitrary partitions $\al\in D_3$, $\be\in D_2$, and $\ga,\de\in D_1$ such that $(\ga,\de)\in\mudown\sm\De_{D_1}$.  We claim that $\cg(\Pair)=\Om^\sharp$, where
\[
\Om = \big\{
((9,\al),(13,\al)),
((8,\be),(12,\be)),
((6,\ga),(8,\ga)),
((1,\ga),(1,\de))
\big\}.
\]
To show this, let $\Pair'=(\Th',M')$ be the C-pair associated to $\Om^\sharp$.  It is easy to check that each pair from $\Om$ belongs to $\cg(\Pair)$.  Thus, it suffices by Lemma \ref{lem:fg}\ref{it:fg1} to show that $\Pair\leqc\Pair'$.  
Again, it is enough to show that 
\[
M_{11}'\geqc\mudown \COMMA M_{12}',M_{01}'\geqc\mu \AND \th_q\sub\th_q' \text{ for $q=0,1,2,3$.}
\]
By Theorem \ref{thm:pc}\ref{pc12}, the C-matrix corresponding to the congruence $((1,\ga),(1,\de))^\sharp$ is of type~\ref{RT2}, with $i=1$ and $\ze=\mudown$.  Since ${((1,\ga),(1,\de))^\sharp\sub\Om^\sharp}$, it follows that $M_{11}'\geqc\mudown$ and $M_{12}',M_{01}'\geqc\mu$.  This takes care of the matrix entries, and in particular we have $M'=M$.

Turning to the C-chains, first note that the pair $((9,\al),(13,\al))\in\Om^\sharp$ tells us that $(9,13)\in\th_3'$ (see Remark \ref{rem:amb}). It follows that $\th_3'\supseteq(9,13)^\sharp=\th_3$, and we similarly obtain $\th_q'\supseteq\th_q$ for $q=1,2$.  
Since $M'=M$, it follows that $\Pair'$ is of type \ref{RT2} or \ref{RT5}, and since $\th_1'=(6,8)^\sharp\not=\De_\N$ the former is ruled out.  It then follows (by the definition of type \ref{RT5}) that $\th_0'=(5,7)^\sharp=\th_0$.
\end{exa}

\begin{exa}\label{ex:fg3}
Consider again the C-pair $\Pair=(\Th,M)$ from Example \ref{ex:fg2}, and keep the notation for the partitions $\al,\be,\ga,\de$ defined there.  Note in fact that $\Pair$ is exceptional, with exceptional row $\hgt(M)=2$.  This time let $\eta\in D_2$ be such that $\be\rH\eta$ but $\be\not=\eta$, and set
\[
\Xi = \big\{
((9,\al),(13,\al)),
((8,\be),(10,\eta)),
((6,\ga),(8,\ga)),
((1,\ga),(1,\de))
\big\}.
\]
Note that the only place in which $\Xi$ differs from $\Omega$ is the second component of the second generating pair.
Note also that $((8,\be),(10,\eta))\in\cgx(\Pair)\sm\cg(\Pair)$.  Thus, we can show that $\cgx(\Pair)=\Xi^\sharp$ by showing that $\Pair\leqc\Pair'$, this time by applying Lemma \ref{lem:fg}\ref{it:fg2}, where $\Pair'$ is the C-pair associated to $\Xi^\sharp$.  We establish $M\leqc M'$, and the inclusions $\th_q\sub\th_q'$ for $q\not=2$, as in Example \ref{ex:fg2}.  For $\th_2\sub\th_2'$ we use the pair $((8,\be),(10,\eta))$, and argue as in case \ref{pc8} in the proof of Theorem \ref{thm:pc}.
\end{exa}

\begin{exa}\label{ex:fg4}
Consider the C-pair
\[
\Pair := \Cmatsetup
 \begin{array}{|c|c|c|c|c|c|c|c|c|c|c|c|cc}
\hhline{|-|-|-|-|-|-|-|-|-|-|-|-|~~}
\cellcolor{delcol}\Delta & \cellcolor{delcol}\Delta &\cellcolor{delcol}\Delta &\cellcolor{delcol}\Delta &\cellcolor{delcol}\Delta &\cellcolor{delcol}\Delta &\cellcolor{Ncol} \mathcal K_4 &\cellcolor{Ncol}\mathcal A_4 &\cellcolor{Ncol}\S_4 &\cellcolor{Ncol}\S_4 &\cellcolor{Ncol}\S_4&\cellcolor{Ncol}\cdots &\hspace{2mm}&(9,11)^\sharp \\ \hhline{|-|-|-|-|-|-|-|-|-|-|-|-|~~}
\cellcolor{delcol}\Delta & \cellcolor{delcol}\Delta &\cellcolor{delcol}\Delta &\cellcolor{delcol}\Delta &\cellcolor{Ncol}\A_3 &\cellcolor{Ncol}\S_3 &\cellcolor{Rcol} R &\cellcolor{Rcol}R &\cellcolor{Rcol}R &\cellcolor{Rcol}R &\cellcolor{Rcol}R&\cellcolor{Rcol}\cdots &&(6,7)^\sharp \\ \hhline{|-|-|-|-|-|-|-|-|-|-|-|-|~~}
\cellcolor{delcol}\Delta & \cellcolor{delcol}\Delta &\cellcolor{Ncol}\S_2 &\cellcolor{Rcol}R &\cellcolor{Rcol}R &\cellcolor{Rcol}R &\cellcolor{Rcol} R &\cellcolor{Rcol}R &\cellcolor{Rcol}R &\cellcolor{Rcol}R &\cellcolor{Rcol}R&\cellcolor{Rcol}\cdots &&(3,4)^\sharp \\ \hhline{|-|-|-|-|-|-|-|-|-|-|-|-|~~}
\cellcolor{delcol}\Delta & \cellcolor{excepcol}\mu &\cellcolor{Rcol}R&\cellcolor{Rcol}R &\cellcolor{Rcol}R &\cellcolor{Rcol}R &\cellcolor{Rcol} R &\cellcolor{Rcol}R &\cellcolor{Rcol}R &\cellcolor{Rcol}R &\cellcolor{Rcol}R&\cellcolor{Rcol}\cdots &&(2,3)^\sharp \\ \hhline{|-|-|-|-|-|-|-|-|-|-|-|-|~~}
\cellcolor{Rcol}R & \cellcolor{Rcol}R &\cellcolor{Rcol}R&\cellcolor{Rcol}R &\cellcolor{Rcol}R &\cellcolor{Rcol}R &\cellcolor{Rcol} R &\cellcolor{Rcol}R &\cellcolor{Rcol}R &\cellcolor{Rcol}R &\cellcolor{Rcol}R&\cellcolor{Rcol}\cdots &&\nabla_\N \\ \hhline{|-|-|-|-|-|-|-|-|-|-|-|-|~~}
\end{array}
\ ,
\]
We claim that $\cg(\Pair)=\Om^\sharp$, where $\Om$ consists of:
\bena
\item \label{fg41} any pair from $D_{36}\times D_{23}$, and any pair from $D_{12}\times D_{00}$ satisfying the conditions of Theorem~\ref{thm:pc}\ref{pc10} with $\xi=R$;
\item \label{fg42} any pair from $D_{11}\times D_{11}$ as in Theorem \ref{thm:pc}\ref{pc12} with $\ze=\mu$;
\item \label{fg43} any pair from 
$D_{22}\times D_{22}$ as in Theorem \ref{thm:pc}\ref{pc7}, and any pairs from 
$D_{34}\times D_{34}$, 
$D_{35}\times D_{35}$, 
$D_{46}\times D_{46}$,
$D_{47}\times D_{47}$ and
$D_{48}\times D_{48}$
as in Theorem \ref{thm:pc}\ref{pc4} with $N=\A_3$, $\S_3$, $\mathcal K_4$, $\A_4$ and~$\S_4$, respectively;
\item \label{fg44} any pair from $D_{49}\times D_{4,11}$ as in Theorem \ref{thm:pc}\ref{pc2}.
\eena
Indeed, once again we have $\Om\sub\cg(\Pair)$, so by Lemma \ref{lem:fg}\ref{it:fg1} it remains to check that $\Pair\leqc\Pair'$, where $\Pair'$ is the C-pair of $\Om^\sharp$.  As in Example \ref{ex:fg1}, the pairs from \ref{fg41} above generate the Rees congruence associated to the ideal class $I(\Om^\sharp) = I_{36}\cup I_{23}\cup I_{12}\cup I_{00}$, and hence `fix' the $R$s of~$M'$; these also fix the congruences $\th_q'$ for $q\not=4$.  The pair in \ref{fg42} fixes the $\mu$ label.  The pairs in \ref{fg43} fix the $N$-symbols.  Finally, the pair in \ref{fg44} fixes the congruence $\th_4'$.
\end{exa}

Here is the main result of this section.  Its constructive proof allows one to find a generating set of suitably bounded size for an arbitrary congruence.

\begin{thm}\label{thm:fg}
Let $n\geq 1$.
\ben
\item \label{fg1} Every Rees congruence on $\Ptw{n}$ can be generated by at most $\lceil \frac{n+1}2\rceil$ pairs. 
\item \label{fg2} Every congruence on $\Ptw{n}$ can be generated by at most $\lceil\frac{5n}2\rceil$ pairs. 
\een
\end{thm}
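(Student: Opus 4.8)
The plan is to reduce every generation problem to Lemma~\ref{lem:fg}. To show that a congruence $\si$ with C-pair $\Pair=(\Th,M)$ equals $\Om^\sharp$, it suffices to produce $\Om\sub\si$ (with the extra condition $\Om\not\sub\cg(\Pair)$ in the exceptional case) and then verify the single inequality $\Pair\leqc\Pair'$, where $\Pair'$ is the C-pair of $\Om^\sharp$; concretely this means $\th_s\sub\th_s'$ and $M_{sk}\leqc M_{sk}'$ for all $s,k$, which by Remark~\ref{rem:leqc} need only be checked where a row of $M$ first strictly increases. I would assemble $\Om$ as a union of principal generators drawn from the catalogue of Theorem~\ref{thm:pc}, exactly as in Examples~\ref{ex:fg1}--\ref{ex:fg4}, and read off $\Pair'$ from the join, using that joins of these principal congruences reproduce the staircase pattern of $M$ (the join of two $N$-symbol generators at columns $c_1<c_2$ with values $N_1\leqc N_2$ has value $N_1$ on $[c_1,c_2)$ and $N_2$ beyond).

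For part~\ref{fg1}, recall from Lemma~\ref{la:Green_Ptwn} that an ideal is $I_{q_1i_1}\cup\dots\cup I_{q_ki_k}$ for incomparable $(q_s,i_s)$, which sorted by rank satisfy $q_1<\dots<q_k$ and $i_1<\dots<i_k$, so $k\le n+1$. I would pair these corners consecutively: a pair whose larger rank is $\ge2$ is produced by a single generator via Theorem~\ref{thm:pc}\ref{pc3} (yielding $R_{I_{q_ai_a}\cup I_{q_bi_b}}$), the only possible exception being a bottom pair of ranks $1$ and $0$, which is produced via~\ref{pc10} (whose hypothesis $i>j$ matches $i_a>i_b$ for the higher-rank partner); a single leftover corner is handled by~\ref{pc3}, \ref{pc9} or~\ref{pc11}. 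Since $R_{I_1}\vee R_{I_2}=R_{I_1\cup I_2}$, the join is $R_I$, giving at most $\lceil k/2\rceil\le\lceil\tfrac{n+1}2\rceil$ pairs.

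For part~\ref{fg2} I would build $\Om$ feature by feature from $M$. First realise the ideal $I(\si)$ (the $R$-region) as in part~\ref{fg1}; this simultaneously fixes all $R$s and the congruences $\th_q$ on every row meeting the $R$-region. Next, for each row $q\ge2$ I would introduce one generator per \emph{distinct} $N$-symbol in that row, via Theorem~\ref{thm:pc}\ref{pc4} (which also lays down exactly the $R$s forced beneath an $N$-symbol by the verticality conditions) or~\ref{pc7} when $q=2$; when $\th_q$ is nontrivial and not already forced---that is, when row $q$ is of type~\ref{RT9} and lies above the $R$-region---I would fold $\th_q$ into the generator of the top $N$-symbol using~\ref{pc5} (for values $\le\A_q$), fall back to a generator via~\ref{pc2} for any residual nontrivial $\th_q$ (e.g.\ when the top value is $\S_q$, or the row is all $\Delta$), and use~\ref{pc6} or~\ref{pc8} together with Lemma~\ref{lem:fg}\ref{it:fg2} in the exceptional cases. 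Rows $0$ and $1$ are handled by~\ref{pc9}--\ref{pc16}. The number of distinct $N$-symbols in row $q$ is bounded by the longest chain of nontrivial normal subgroups of $\S_q$, namely $1,2,3,2,2,\dots$ for $q=2,3,4,5,\dots$, summing to at most $2(n-1)$ over $q=2,\dots,n$ (the surplus at $q=4$ being cancelled by the deficit at $q=2$).

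The counting is the step I expect to be the main obstacle. Naively adding the $2(n-1)$ symbol generators, the $\le\lceil\tfrac{n+1}2\rceil$ corner generators, and the contributions of rows $0,1$ and of the free top congruences overshoots $\lceil\tfrac{5n}2\rceil=2n+\lceil\tfrac n2\rceil$. To reach the sharp bound I would argue by amortisation, splitting on the height $p$ of the top of the $R$-region: when $p$ is large there are many corners but few free rows above (hence no free $\th_q$ and no fallback generators), whereas when $p$ is small the corners are few but the free $N$-staircase is tall. Balancing these competing contributions---and checking that folding $\th_q$ via~\ref{pc5} and pairing corners via~\ref{pc3} never incur their worst cases simultaneously---is the delicate part. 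Finally I would dispose of the degenerate case $n=1$ by hand, where by Remark~\ref{rem:01} there are no $N$-symbols or exceptional congruences and the bound $\lceil\tfrac52\rceil=3$ is checked directly.
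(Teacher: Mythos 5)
Your overall architecture is the same as the paper's: reduce everything to Lemma~\ref{lem:fg}, realise the ideal as in part~(i), and then add one principal generator per remaining feature of the C-pair ($N$-symbols, rows $0$--$1$, free $\th_q$'s), drawing on the catalogue of Theorem~\ref{thm:pc}. Part~(i) is essentially identical to the paper and is fine. The gap is in the counting for part~(ii), which you explicitly leave open, and moreover the balancing heuristic you propose points in the wrong direction. You write that when the top of the $R$-region is low ``the free $N$-staircase is tall''; in fact the opposite is true, and this is the one fact that makes the bound work. By the verticality condition in Definition~\ref{de:Cpair}, an $N$-symbol cannot sit immediately above $\De$, $\muup$, $\mudown$ or another $N$-symbol; since a row $q-1\geq 2$ contains only $\De$, $N$-symbols and $R$, every $N$-symbol in a row $q\geq 3$ must sit directly above an $R$. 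Hence if $m$ denotes the largest rank occurring in the ideal $I(\si)$, $N$-symbols can occur only in rows $2,\dots,m+1$ (and when $I(\si)=\emptyset$ only $\S_2$ can occur at all, above a $\mu$, $\lam$ or $\rho$). This caps the number of $N$-generators at $2m$ rather than your global $2(n-1)$, and the count then closes without any genuine trade-off: $\lceil\frac{m+1}{2}\rceil+(n-m)+1+2m\leq\lceil\frac n2\rceil+2n=\lceil\frac{5n}{2}\rceil$ for $0\leq m\leq n-1$, with the cases $m=n$ and $m=-1$ (the latter needing a small redundancy argument for $n\leq 2$) handled separately.

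Without this observation your budget really does overshoot: $2(n-1)$ $N$-generators plus $\lceil\frac{n+1}{2}\rceil$ corner generators plus up to $n-m$ free-row generators plus the rows~$0$--$1$ contribution exceeds $\lceil\frac{5n}{2}\rceil$, and no amortisation of the kind you describe can rescue it, because the premise that a short $R$-region permits a tall $N$-staircase is false --- and if it were true, the theorem's bound would actually fail. So the missing ingredient is not a delicate balancing act but the structural confinement of $N$-symbols to the rows immediately atop the $R$-region, which converts the two ``competing'' contributions $2m$ and $n-m$ into a single sum bounded by $2n$. One smaller point: your suggestion to fold a nontrivial $\th_q$ into the top $N$-symbol generator via Theorem~\ref{thm:pc}\ref{pc5} is an optimisation the paper does not need; it simply spends one pair of type~\ref{pc2} (or \ref{pc6}/\ref{pc8} in the exceptional case) per nontrivial free $\th_q$, at most $n-m$ in total, and this is already within budget once the $N$-count is fixed.
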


\begin{proof}
\ref{fg1}  Let $I$ be an ideal of $\Ptw n$.  If $I=\emptyset$, then $R_I=\De_{\Ptw n}=\emptyset^\sharp$.  So suppose $I\not=\emptyset$, so that $I=I_{q_1i_1}\cup\dots\cup I_{q_ki_k}$ for some set of incomparable elements $(q_1,i_1),\dots,(q_k,i_k)$ of the poset $(\bnz,\leq)\times(\N,\geq)$.  Now,
\begin{equation}
\label{eq:Reesjoin}
R_I = R_{I_{q_1i_1}\cup I_{q_2i_2}} \vee R_{I_{q_3i_3}\cup I_{q_4i_4}} \vee \cdots,
\end{equation}
where the last term in the join is either $R_{I_{q_ki_k}}$ or $R_{I_{q_{k-1}i_{k-1}}\cup I_{q_ki_k}}$, depending on the parity of~$k$.  
Since $k\leq n+1$ and each Rees congruence in \eqref{eq:Reesjoin} is principal by
Theorem~\ref{thm:pc}\ref{pc3}, \ref{pc9}--\ref{pc11}, the assertion follows.

\ref{fg2}  Let $\sigma\in\Cong(\Ptw{n})$ be arbitrary, and let $\Pair=(\Theta,M)$ be the associated C-pair, so that~$\si$ is one of $\cg(\Pair)$ or $\cgx(\Pair)$.  We create a set $\Om$ with $\Om^\sharp=\si$, and with $|\Om|$ appropriately bounded, as follows.  Keeping Lemma \ref{lem:fg} in mind, we ensure that:
\bit
\item each pair from $\Om$ belongs to $\si$;
\item some pair from $\Om$ does not belong to $\cg(\Pair)$ if $\si=\cgx(\Pair)$ is exceptional;
\item each entry $M_{qi}$ of the C-matrix $M$ is `fixed' by some pair $(\ba,\bb)$ from $\Om$, in the sense that the $(q,i)$-th entry of the C-matrix corresponding to $\pc\ba\bb$ is ${}\geqc M_{qi}$;
\item each congruence $\th_q$ from the C-chain $\Th$ is also suitably fixed by a pair from $\Om$.
\eit
We now proceed to construct the set $\Om$, as a union $\Om:=\Om_1\cup\Om_2\cup\Om_3\cup\Om_4$, in four steps.  As in part \ref{fg1}, the (possibly empty) ideal $I:=I(\si)$ has the form $I=I_{q_1i_1}\cup\dots\cup I_{q_ki_k}$, where $k\geq0$ and the $(q_t,i_t)$ are incomparable.  Let $m=\max(q_1,\ldots,q_k)$, with the convention that $m=-1$ if $k=0$, and note that $k\leq m+1$.  Note also that 
\[
m=-1 \iff k=0 \iff I=\emptyset \iff M \text{ has no $R$-entries}.
\]
In what follows, the `type' \ref{pc1}--\ref{pc16} of a pair 
$(\ba,\bb)\in\Ptw{n}\times\Ptw{n}$
refers to the enumeration in Theorem \ref{thm:pc}

\setcounter{stepco}{0}

\step \label{fgS1}  As in part \ref{fg1}, we construct a set $\Om_1\sub\si$ such that $R_I=\Om_1^\sharp$ and $|\Om_1|=\lceil\frac k2\rceil$.  
This fixes any $R$-entries of $M$, and also fixes the associated congruences $\th_0,\ldots,\th_m$; each is of the form $\th_q=(i_q,i_q+1)^\sharp$ where $i_q$ is the position of the first $R$ in row $q$.

\step \label{fgS2}  Next we construct a set $\Om_2\sub\si$ to fix any non-trivial congruences among $\th_{m+1},\ldots,\th_n$. For any $q\in\{m+1,\ldots,n\}$ with $\th_q$ non-trivial, $\Om_2$ contains a suitable pair of type \ref{pc2} unless $\si$ is exceptional and $q=\hgt(M)$ in which case we include a pair of type~\ref{pc6} or~\ref{pc8} as appropriate.  In any case, we have $|\Om_2|\leq n-m$.

\step \label{fgS3}  
Next we construct a set $\Om_3\sub\si$ to fix the
entries in rows $0$ and $1$ of $M$ that are distinct from $\Delta$ and $R$.
Depending on the row type \ref{RT1}--\ref{RT7} of $M$, $\Om_3$ consists of the following pairs:
\smallskip

\ref{RT1}: no pairs are required (because $\th_0$ and $\th_1$ have been fixed by $\Om_2$);
\smallskip

\ref{RT2}:  one pair of type \ref{pc12} or \ref{pc15};
\smallskip

\ref{RT3}: at most one pair of type \ref{pc11}; the clause `at most' appears here as no such pair is needed when $\xi=R$ (since the $R$s were fixed by $\Om_1$); similar comments apply in the remaining cases;
\smallskip

\ref{RT4}: at most one pair of type \ref{pc9} or \ref{pc13};
\smallskip

\ref{RT5}: one pair of type \ref{pc12} or \ref{pc15}, and at most one of type \ref{pc2} or \ref{pc11};
\smallskip

\ref{RT6}: if $\ze\not=\De$, then one pair of type \ref{pc12}, and at most one of type \ref{pc2} or \ref{pc11}; if $\ze=\De$ and $\xi\not=\mu$, then at most one pair of type \ref{pc10}; if $\ze=\De$ and $\xi=\mu$, then one pair of type \ref{pc15} and one of type \ref{pc2};
\smallskip

\ref{RT7}: one pair of type \ref{pc16}, and at most one of type \ref{pc11}.
\smallskip

\noindent
By construction we have 
\begin{equation}\label{eq:Om3}
|\Om_3| \leq \begin{cases}
1 &\text{if $m\geq0$}\\
2 &\text{if $m=-1$.}
\end{cases}
\end{equation}

\step \label{fgS4}  Finally, we construct a set $\Om_4\sub\si$ of pairs of type \ref{pc4} or \ref{pc7} to fix the $N$-symbols in~$M$.  As in the proof of Theorem \ref{thm:pc} (see also Example \ref{ex:fg4}), we need only include one pair for each distinct $N$-symbol appearing in $M$.  
We claim that
\begin{equation}\label{eq:Om4}
|\Om_4| \leq \begin{cases}
2n-2 &\text{if $m=n$}\\
2m &\text{if $0\leq m\leq n-1$}\\
1 &\text{if $m=-1$.}
\end{cases}
\end{equation}
To see this, first recall that the $N$-symbols in row $q\geq2$ are the non-trivial normal subgroups of~$\S_q$.  Thus, the number of possible distinct $N$-symbols in rows $2,\ldots,n$ are $1,2,3,2,2,2,\ldots,2$, respectively.
When $m\geq1$, it follows from the verticality conditions in Definition \ref{de:Cpair} that only rows $2,\ldots,m+1$ can have $N$-symbols, so that $|\Om_4|\leq2m$; 
furthermore,
when $m=n$, there is no row $m+1$, so the upper bound is $2n-2$.  
When $m=0$ we have $|\Om_4|=0(=2m)$; indeed, here $M$ is necessarily of type \ref{RT3} (with $\xi=R$), and the verticality conditions prevent $M$ from having any $N$-symbols.
Finally, when $m=-1$ (i.e.~$I=\emptyset$), $M$ can have any of types \ref{RT1}--\ref{RT7} (with $\xi\not=R$), and $\S_2$ can occur in any of types \ref{RT2}--\ref{RT7}, but no other $N$-symbols can occur, meaning that $|\Om_4|\leq1$.

\bigskip

This completes the definition of the set $\Om:=\Om_1\cup\Om_2\cup\Om_3\cup\Om_4$, and, as explained above, we have $\si=\Om^\sharp$.  It remains to check the size of $\Om$.  By construction, and recalling that $k\leq m+1$, we have
\[
\textstyle |\Om| = |\Om_1|+|\Om_2|+|\Om_3|+|\Om_4| \leq \lceil\frac {m+1}2\rceil + (n-m) + |\Om_3| + |\Om_4|.
\]
It is now a matter of checking that this is bounded above by $\lceil \frac{5n}2\rceil$, which we do by combining~\eqref{eq:Om3} and \eqref{eq:Om4}.  When $m=n$, we have
\[
\textstyle
|\Om| \leq \lceil\frac {n+1}2\rceil + 0 + 1 + (2n-2) = \lceil\frac {5n-1}2\rceil.
\]
When $0\leq m\leq n-1$, we have
\[
\textstyle
|\Om|\leq\lceil\frac {m+1}2\rceil + (n-m) + 1 + 2m = \lceil\frac {m+1}2\rceil + n+(m+1) \leq \lceil\frac n2\rceil+2n = \lceil\frac{5n}2\rceil.
\]
When $m=-1$, similar calculations give $|\Om|\leq n+4$.  When $n\geq3$ we have $n+4\leq\lceil \frac{5n}2\rceil$.  Thus, the proof is complete except when $m=-1$ and $n\leq2$.

Now suppose $m=-1$ and $n=2$, and that $\Om$ as constructed above has size $n+4=6$.  Since $\lceil\frac{5n}2\rceil=5$, it suffices to show that at least one pair from $\Om$ is redundant.  By maximality of $|\Om|$ we must have $|\Om_3|=2$, so examining the cases in Step \ref{fgS3} we see that $M$ has one of types \ref{RT5}--\ref{RT7}.  In type \ref{RT5} the second pair is either of type \ref{pc2} or \ref{pc11}; in the first case the pair is redundant because such a pair already comes from $\Om_2$; in the second case the pair of type \ref{pc11} fixes $\th_0$, so the corresponding pair from $\Om_2$ is redundant.  Types \ref{RT6} and \ref{RT7} are treated in similar fashion.

Finally, when $m=-1$ and $n=1$, we have $|\Om|\leq n+4=5$, and we need to show this can be reduced to $\lceil\frac{5n}2\rceil=3$.  First note that in this case we have $\Om_4=\emptyset$ as there are no $N$-symbols when $n=1$, so in fact $|\Om|\leq4$.  If we had $|\Om|=4$, then we could remove a pair from $\Om$ as in the previous case.  
\end{proof}

\begin{rem}
\label{re:nobound}
We observe that there is no uniform constant bound (independent of $n$) for the numbers of pairs needed to generate congruences on $\Ptw{n}$.  Indeed, for any $n\geq 2$, consider the congruence $\sigma=\cg(\Pair)$, for the C-pair
\[
\Pair := \Cmatsetup
 \begin{array}{|c|c|c|c|c|c|c|c|cc}
\hhline{|-|-|-|-|-|-|-|-|~~}
\cellcolor{delcol}\Delta & \cellcolor{delcol}\Delta & \cellcolor{delcol}\cdots &\cellcolor{delcol}\Delta &\cellcolor{Ncol}\S_n &\cellcolor{Rcol}R &\cellcolor{Rcol}R &\cellcolor{Rcol} \cdots &\hspace{2mm}&(n-1,n)^\sharp \\ \hhline{|-|-|-|-|-|-|-|-|~~}
\cellcolor{delcol}\vdots&\cellcolor{delcol}\vdots&\cellcolor{delcol}\iddots&\cellcolor{delcol}\iddots&\cellcolor{delcol}\iddots&\cellcolor{delcol}\vdots&\cellcolor{delcol}\vdots&\cellcolor{delcol}\vdots&&\vdots\\ \hhline{|-|-|-|-|-|-|-|-|~~}
\cellcolor{delcol}\Delta & \cellcolor{delcol}\Delta &\cellcolor{Ncol}\S_4 &\cellcolor{Rcol} R &\cellcolor{Rcol}\cdots &\cellcolor{Rcol}R &\cellcolor{Rcol}R &\cellcolor{Rcol} \cdots &&(3,4)^\sharp \\ \hhline{|-|-|-|-|-|-|-|-|~~}
\cellcolor{delcol}\Delta & \cellcolor{Ncol}\S_3 &\cellcolor{Rcol} R &\cellcolor{Rcol} R &\cellcolor{Rcol}\cdots &\cellcolor{Rcol}R &\cellcolor{Rcol}R &\cellcolor{Rcol} \cdots &&(2,3)^\sharp \\ \hhline{|-|-|-|-|-|-|-|-|~~}
\cellcolor{Ncol}\S_2 & \cellcolor{Rcol}R&\cellcolor{Rcol}R &\cellcolor{Rcol} R &\cellcolor{Rcol}\cdots &\cellcolor{Rcol}R &\cellcolor{Rcol} R &\cellcolor{Rcol}\cdots &&(1,2)^\sharp \\ \hhline{|-|-|-|-|-|-|-|-|~~}
\cellcolor{Rcol}R & \cellcolor{Rcol}R &\cellcolor{Rcol}R&\cellcolor{Rcol} R &\cellcolor{Rcol}\cdots &\cellcolor{Rcol}R &\cellcolor{Rcol}R &\cellcolor{Rcol} \cdots &&(0,1)^\sharp \\ \hhline{|-|-|-|-|-|-|-|-|~~}
\cellcolor{Rcol}R & \cellcolor{Rcol}R &\cellcolor{Rcol}R&\cellcolor{Rcol} R &\cellcolor{Rcol}\cdots &\cellcolor{Rcol}R &\cellcolor{Rcol}R &\cellcolor{Rcol} \cdots &&(0,1)^\sharp \\ \hhline{|-|-|-|-|-|-|-|-|~~}
\end{array}
\ .
\]
If $\si=\Om^\sharp$, then $\Om$ must contain at least one pair from $D_{q,q-2}\times D_{q,q-2}$ for each $q=2,\dots,n$, because otherwise $\Om$
would be wholly contained in the congruence defined by the C-pair obtained from $\Pair$ by replacing the entry $M_{q,q-2}=\S_q$ by $\Delta$.
Hence $|\Om|\geq n-1$, and the assertion is proved.

Congruences minimally generated by more than $n-1$ pairs can easily be constructed, as in Example \ref{ex:fg4}.
\end{rem}

\section{Congruences of finite twisted partition monoids \boldmath{$\Ptw{n,d}$}}
\label{sec:fintwist}

We now turn our attention to the finite $d$-twisted partition monoids $\Ptw{n,d}=(\bdz\times\P_n)\cup\{\zero\}$
introduced in Subsection \ref{subsec:Ptw}.
Their congruences were described in \cite{ERtwisted1}, by viewing $\Ptw{n,d}$ as a quotient of $\Ptw{n}$ and deploying the Correspondence Theorem.
In the course of this argument several simplifications take place: one can dispense with C-chains; C-matrices become finite;  several row types are not possible; and there are no exceptional matrices/congruences.

\begin{defn}[\bf finitary C-matrix]
\label{de:fC}
A matrix ${M=(M_{qi})_{\bnz\times \bdz}}$
is a  \emph{finitary C-matrix}, or \emph{fC-matrix} for short, if the following are satisfied:
\bit
\item
The entries $M_{qi}$ are drawn from $\{ \Delta,\muup,\mudown,\mu,R\}\cup\set{N}{\{\id_q\}\neq N\normal\S_q,\ 2\leq q\leq n}$.
\item
Rows $0$ and $1$ of $M$ must be of one of the finitary row types \ref{fRT1}--\ref{fRT4} shown in Table \ref{tab:fRT}.
\item
Every row $q\geq 2$ must be of  type \ref{fRT5} from Table \ref{tab:fRT}.
\item
An $N$-symbol cannot be immediately above $\Delta$, $\muup$, $\mudown$ or another $N$-symbol.
\item
Every entry equal to $R$ in row $q\geq 1$ must be directly above an $R$ entry from row~$q-1$.  
\eit
\end{defn}

\newcounter{fRT}\renewcommand{\thefRT}{fRT\arabic{fRT}}

\begin{table}[ht]
\begin{center}
\footnotesize
\begin{tabular}{|l|l|l|}
\hline
\bf Type & \bf Row(s) & \bf Conditions\\
\hline\hline
\bf\refstepcounter{fRT}\thefRT
&
$
\Cmatsetup
\begin{array}{r|c|c|c|c|c|c|c|}\hhline{~|-|-|-|-|-|-|}
\renewcommand{\arraystretch}{2}
\text{\scriptsize 1} &\cellcolor{delcol}\Delta &\cellcolor{delcol}\dots & \cellcolor{delcol}\Delta & \cellcolor{Rcol}R&\cellcolor{Rcol}\dots& \cellcolor{Rcol} R\\ \hhline{~|-|-|-|-|-|-|}
\text{\scriptsize 0}&\cellcolor{delcol}\Delta &\cellcolor{delcol}\dots &\cellcolor{delcol}\Delta & \cellcolor{Rcol}R&\cellcolor{Rcol}\dots& \cellcolor{Rcol} R\\ \hhline{~|-|-|-|-|-|-|}
\multicolumn{1}{c}{}&\multicolumn{1}{c}{}&\multicolumn{1}{c}{}&\multicolumn{1}{c}{}&\multicolumn{1}{c}{\text{\scriptsize $k$}}&\multicolumn{1}{c}{}&\multicolumn{1}{c}{}
\end{array}\rule[-5mm]{0mm}{15mm}
$
& $0\leq k\leq d+1$
\\
\hline
\bf\refstepcounter{fRT}\thefRT
&
$
\Cmatsetup
\begin{array}{r|c|c|c|c|c|c|c|c|c|c|c|c|}\hhline{~|-|-|-|-|-|-|-|-|-|-|-|}
\renewcommand{\arraystretch}{2}
\text{\scriptsize 1} &\cellcolor{delcol}\Delta &\cellcolor{delcol}\dots & \cellcolor{delcol}\Delta & 
\cellcolor{excepcol}\zeta & \cellcolor{mucol} \mu & \cellcolor{mucol} \dots &\cellcolor{mucol} \mu &
\cellcolor{mucol} \mu &
\cellcolor{Rcol} R&\cellcolor{Rcol}\dots& \cellcolor{Rcol} R\\ \hhline{~|-|-|-|-|-|-|-|-|-|-|-|}
\text{\scriptsize 0}&\cellcolor{delcol}\Delta &\cellcolor{delcol}\dots &\cellcolor{delcol}\Delta & 
\cellcolor{mucol}\mu & \cellcolor{mucol} \mu & \cellcolor{mucol} \dots &\cellcolor{mucol} \mu &
\cellcolor{Rcol} R &\cellcolor{Rcol} R&\cellcolor{Rcol}\dots& \cellcolor{Rcol} R\\ \hhline{~|-|-|-|-|-|-|-|-|-|-|-|}
\multicolumn{1}{c}{}&\multicolumn{1}{c}{}&\multicolumn{1}{c}{}&\multicolumn{1}{c}{}&\multicolumn{1}{c}{\text{\scriptsize $i$}}&\multicolumn{1}{c}{}&\multicolumn{1}{c}{}&\multicolumn{1}{c}{}&\multicolumn{1}{c}{\text{\scriptsize $k$}}&\multicolumn{1}{c}{}&\multicolumn{1}{c}{}
\end{array}
\rule[-5mm]{0mm}{15mm}
$
& \makecell[l]{$0\leq i< k\leq d,$\\ $\zeta\in \{\mu,\muup,\mudown,\Delta\} $}
 \\
\hline
\bf\refstepcounter{fRT}\thefRT
&
$
\Cmatsetup
\begin{array}{r|c|c|c|c|c|c|c|c|c|c|}\hhline{~|-|-|-|-|-|-|-|-|-|-|}
\renewcommand{\arraystretch}{2}
\text{\scriptsize 1} &\cellcolor{delcol}\Delta &\cellcolor{delcol}\dots & \cellcolor{delcol}\Delta & 
\cellcolor{delcol}\Delta & \cellcolor{delcol}\dots & \cellcolor{delcol}\Delta &
\cellcolor{excepcol} \zeta & \cellcolor{Rcol} R& \cellcolor{Rcol}\dots&\cellcolor{Rcol} R
\\ \hhline{~|-|-|-|-|-|-|-|-|-|-|}
\text{\scriptsize 0}&\cellcolor{delcol}\Delta &\cellcolor{delcol}\dots &\cellcolor{delcol}\Delta & 
\cellcolor{Rcol} R &\cellcolor{Rcol}\dots&\cellcolor{Rcol} R&\cellcolor{Rcol} R&\cellcolor{Rcol} R& \cellcolor{Rcol}\dots&\cellcolor{Rcol} R
\\ \hhline{~|-|-|-|-|-|-|-|-|-|-|}
\multicolumn{1}{c}{}&\multicolumn{1}{c}{}&\multicolumn{1}{c}{}&\multicolumn{1}{c}{}&\multicolumn{1}{c}{\text{\scriptsize $k$}}&\multicolumn{1}{c}{}&\multicolumn{1}{c}{}&\multicolumn{1}{c}{}&\multicolumn{1}{c}{\text{\scriptsize $l$}}&\multicolumn{1}{c}{}&\multicolumn{1}{c}{}
\end{array}
\rule[-5mm]{0mm}{15mm}
$
& 
\makecell[l]{$0\leq k<l\leq d+1$,\\  $\zeta\in \{\mu,\muup,\mudown,\Delta\}$}
 \\
\hline
\bf\refstepcounter{fRT}\thefRT
&
$
\Cmatsetup
\begin{array}{r|c|c|c|c|c|c|c|c|c|c|c|}\hhline{~|-|-|-|-|-|-|-|-|-|-|-|}
\renewcommand{\arraystretch}{2}
\text{\scriptsize 1} &\cellcolor{delcol}\Delta &\cellcolor{delcol}\dots & \cellcolor{delcol}\Delta  & \cellcolor{delcol}\Delta &
\cellcolor{delcol}\Delta & \cellcolor{delcol}\dots & \cellcolor{delcol}\Delta &
\cellcolor{mucol} \mu & \cellcolor{Rcol} R& \cellcolor{Rcol}\dots & \cellcolor{Rcol} R 
\\ \hhline{~|-|-|-|-|-|-|-|-|-|-|-|}
\text{\scriptsize 0}&\cellcolor{delcol}\Delta &\cellcolor{delcol}\dots &\cellcolor{delcol}\Delta  
&\cellcolor{mucol}\mu&
\cellcolor{Rcol} R &\cellcolor{Rcol}\dots&\cellcolor{Rcol} R&\cellcolor{Rcol} R&\cellcolor{Rcol} R& \cellcolor{Rcol}\dots & \cellcolor{Rcol} R
\\ \hhline{~|-|-|-|-|-|-|-|-|-|-|-|}
\multicolumn{1}{c}{}&\multicolumn{1}{c}{}&\multicolumn{1}{c}{}&\multicolumn{1}{c}{}&\multicolumn{1}{c}{}&\multicolumn{1}{c}{\text{\scriptsize $k$}}&\multicolumn{1}{c}{}&\multicolumn{1}{c}{}&\multicolumn{1}{c}{}&\multicolumn{1}{c}{\text{\scriptsize $l$}}
\end{array}
\rule[-5mm]{0mm}{15mm}
$
& $0<k<l-1\leq d$ \\
\hline
\bf\refstepcounter{fRT}\thefRT
&
$
\Cmatsetup
\begin{array}{r|c|c|c|c|c|c|c|c|c|c|}\hhline{~|-|-|-|-|-|-|-|-|-|-|}
\renewcommand{\arraystretch}{2}
\text{\scriptsize $q$} &\cellcolor{delcol}\Delta &\cellcolor{delcol}\dots & \cellcolor{delcol}\Delta  & 
\cellcolor{Ncol} N_i &\cellcolor{Ncol} N_{i+1} & \cellcolor{Ncol}\dots & \cellcolor{Ncol} N_{k-1} &
\cellcolor{Rcol} R & \cellcolor{Rcol}\dots & \cellcolor{Rcol} R
\\ \hhline{~|-|-|-|-|-|-|-|-|-|-|}
\multicolumn{1}{c}{}&\multicolumn{1}{c}{}&\multicolumn{1}{c}{}&\multicolumn{1}{c}{}&\multicolumn{1}{c}{\text{\scriptsize $i$}}&\multicolumn{1}{c}{}&\multicolumn{1}{c}{}&\multicolumn{1}{c}{}&\multicolumn{1}{c}{\text{\scriptsize $k$}}&\multicolumn{1}{c}{}&\multicolumn{1}{c}{}
\end{array}
\rule[-5mm]{0mm}{15mm}
$
& \makecell[l]{$q\geq2$,\\
$0\leq i \leq k\leq d+1$,\\
$\{\id_q\}\not=N_i\leq \dots \leq N_{k-1}$,\\
$N_i,\dots,N_{k-1}\unlhd \S_q$ }\\
\hline
\end{tabular}
\normalsize
\caption{The specification of finitary row types in an fC-matrix.}
\label{tab:fRT}
\end{center}
\end{table}

\setcounter{fRT}{0}
\refstepcounter{fRT}\label{fRT1}
\refstepcounter{fRT}\label{fRT2}
\refstepcounter{fRT}\label{fRT3}
\refstepcounter{fRT}\label{fRT4}
\refstepcounter{fRT}\label{fRT5}

\begin{defn}[\bf Congruence corresponding to a finitary C-matrix]
\label{de:fCcong}
The congruence associated with a finitary C-matrix $M$ is the relation $\cg(M)$ on $\Ptw{n,d}$ consisting of all pairs ${((i,\alpha),(j,\beta))\in \Ptw{n,d}\times \Ptw{n,d}}$ such that one of the following holds, writing $q=\rank\alpha$ and ${r=\rank\beta}$:
\begin{enumerate}[label=\textsf{(fC\arabic*)}, widest=(C8), leftmargin=10mm]
\item
$M_{qi}=M_{rj}=\De$, $i=j$ and $\alpha=\beta$;
\item
$M_{qi}=M_{rj}=R$;
\item
$M_{qi}=M_{rj}=N$, $i=j$, $\alpha\rH\beta$ and $\pd(\alpha,\beta)\in N$;
\item
$M_{qi}=M_{rj}=\mudown$, $\widehat{\alpha}=\widehat{\beta}$ and $\alpha\rL\beta$;
\item
$M_{qi}=M_{rj}=\muup$, $\widehat{\alpha}=\widehat{\beta}$ and $\alpha\rR\beta$;
\item
$M_{qi}=M_{rj}=\mu$, $\widehat{\alpha}=\widehat{\beta}$, and either $(q,i)=(r,j)$ or $i-j = \min_q(M)-\min_r(M)$;
\end{enumerate}
as well as the pairs:
\begin{enumerate}[label=\textsf{(fC\arabic*)}, widest=(C8), leftmargin=10mm]
\addtocounter{enumi}{6}
\item
$((i,\alpha),\zero),(\zero,(i,\alpha))$ with $M_{qi}=R$;
\item
$(\zero,\zero)$.
\end{enumerate}
\end{defn}

Here is the classification of congruences:

\begin{thm}[{\cite[Theorem 7.3]{ERtwisted1}}]
\label{thm:finmain}
For $n\geq1$ and $d\geq0$, the congruences on the $d$-twisted partition monoid $\Ptw{n,d}$ are precisely $\fcg(M)$, where $M$ is any finitary C-matrix. \epfres
\end{thm}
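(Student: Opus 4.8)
The plan is to deduce this classification directly from the description of congruences on the infinite monoid $\Ptw n$ (Theorem \ref{thm:main}), using the fact that $\Ptw{n,d}=\Ptw n/R_{I_{n,d+1}}$ is a Rees quotient. The Correspondence Theorem for congruences gives an order-isomorphism between $\Cong(\Ptw{n,d})$ and the interval $\bigset{\si\in\Cong(\Ptw n)}{R_{I_{n,d+1}}\sub\si}$, sending each $\si$ to the induced congruence $\si/R_{I_{n,d+1}}$. The argument therefore splits into two tasks: first, to identify in the language of C-pairs exactly which congruences of $\Ptw n$ contain $R_{I_{n,d+1}}$, and to show that these are naturally parametrised by fC-matrices; and second, to verify that the induced congruence on $\Ptw{n,d}$ is precisely $\fcg(M)$ for the corresponding fC-matrix $M$.

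For the first task I would translate the condition $R_{I_{n,d+1}}\sub\si$ into a statement about the associated C-pair $\Pair=(\Th,M)$. Since $I_{n,d+1}=\{d+1,d+2,\dots\}\times\P_n$, this containment is equivalent to $D_{qi}\sub I(\si)$ for all $q\in\bnz$ and all $i\geq d+1$, which by Table \ref{tab:M} holds if and only if $M_{qi}=R$ for all such $q,i$. I would then read off the consequences of this constraint from Table \ref{tab:RT}. First, every row now carries $R$-entries, so each row $q\geq2$ must be of type \ref{RT10}, and rows $0,1$ of a type carrying $R$s; in every case the relevant congruence on $\N$ has period $1$. Consequently no such C-pair can be exceptional, since exceptionality (Definition \ref{def:exc}) forces some $\th_q$ to have period $2d'\geq2$. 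Second, period $1$ throughout means each $\th_q=(m_q,m_q+1)^\sharp$ is completely determined by the position $m_q$ of the first $R$ in row $q$; the verticality condition forces $m_0\leq m_1\leq\dots\leq m_n$, so $\Th$ is redundant, being reconstructible from $M$ (and \eqref{eq:th1th2} confirms the reconstructed chain satisfies $\th_0\supseteq\cdots\supseteq\th_n$). Restricting $M$ to its first $d+1$ columns yields a matrix indexed by $\bnz\times\bdz$, and comparing the constrained versions of the infinite row types \ref{RT1}--\ref{RT10} with the finitary ones of Table \ref{tab:fRT} shows that these restrictions are exactly the fC-matrices, the appended $R$-columns being precisely what the bounds $k,l\leq d+1$ in Table \ref{tab:fRT} encode. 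This yields the desired bijection between the interval of C-pairs and fC-matrices.

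For the second task I would describe the quotient map $\pi\colon\Ptw n\to\Ptw{n,d}$ explicitly: it fixes $(i,\al)$ for $i\leq d$ and collapses the ideal $I_{n,d+1}$ to $\zero$. Two elements $(i,\al),(j,\be)$ with $i,j\leq d$ are $\si/R_{I_{n,d+1}}$-related if and only if they are $\cg(\Th,M)$-related, so one checks that, under the reconstruction $\th_q=(m_q,m_q+1)^\sharp$, clauses \ref{C1}--\ref{C8} of Definition \ref{de:cg} restrict to clauses (fC1)--(fC6) of Definition \ref{de:fCcong}; the pairs involving $\zero$ are then handled by (fC7)--(fC8), as $(i,\al)$ collapses to $\zero$ exactly when $(i,\al)\in I(\si)$, i.e.\ when $M_{qi}=R$. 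Most of these matchings are immediate, since period $1$ makes a membership condition $(i,j)\in\th_q$ collapse to $i=j$ whenever the entries in question are not $R$.

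The main obstacle, and the one place requiring genuine care, is matching the $\mu$-clause \ref{C8} with its finitary counterpart (fC6). Because $\mu$-entries occur only in rows $0$ and $1$, the cross-row case $q\neq r$ of \ref{C8} only ever relates a row-$0$ element to a row-$1$ element, and its condition $(i+r,j+q)\in\th_0$, together with the placement of $i,j$ relative to $\min\th_q$ and $\min\th_r$, must be shown to reduce, under period $1$, to the clean diagonal condition $i-j=\min_q(M)-\min_r(M)$. Getting the off-by-one bookkeeping of the $\mu$-staircase right, and checking its compatibility with the reconstructed minima $m_q$, is the crux; the remaining verifications, that the reconstructed $\Th$ meets the C-chain inequalities and the verticality and row-type conditions, are routine given \eqref{eq:th1th2} and Tables \ref{tab:RT} and \ref{tab:fRT}.
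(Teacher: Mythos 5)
Your proposal is correct and follows essentially the same route as the source: the paper explicitly describes the proof in \cite{ERtwisted1} as viewing $\Ptw{n,d}$ as the Rees quotient $\Ptw n/R_{I_{n,d+1}}$ and applying the Correspondence Theorem, whereupon the C-chains become redundant, the matrices become finite, several row types are excluded, and exceptional congruences disappear — precisely the simplifications you derive. Your identification of the $\mu$-clause \ref{C8} versus (fC6) as the one step needing careful bookkeeping is also accurate.
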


The analogue of Theorem \ref{thm:comparisons}, describing the inclusion ordering for congruences of $\Ptw{n,d}$, also simplifies considerably for $\Ptw{n,d}$.  In particular, only part~\ref{it:comp1} applies as there are no exceptional congruences.  
The main complication remaining is caused by the matching $\mu$s in rows~$0$ and~$1$.
To handle these we use the $\mumin_{0}(M)$ and $\mumin_{1}(M)$ notation introduced before Theorem~\ref{thm:comparisons}, which here applies to fC-matrices of types \ref{fRT2} and \ref{fRT4}.
Also, in the absence of C-chains, we write $\min_q(M)$, for $q=0,1$, to denote the smallest $i\in\{0,\dots,d\}$ such that $M_{qi}=R$ if it exists, and $\min_q(M):=d+1$ otherwise.
We have $\mumin_{1}(M)-\mumin_{0}(M)=\min_1(M)-\min_0(M)$ in types \ref{fRT2} and \ref{fRT4}.

\begin{thm}[{\cite[Theorem 7.4]{ERtwisted1}}]
\label{thm:fincomp}
Let $n\geq 1$ and $d\geq 0$, and let 
$M^1$ and $M^2$ be any two fC-matrices for $\Ptw{n,d}$. Then
$\fcg(M^1)\sub\fcg(M^2)$ if and only if both of the following hold:
\begin{thmsubenumerate}
\item
\label{it:fc1}
$M^1\leqc M^2$;
\item
\label{it:fc2}
If $M^1$ has type \ref{fRT2} or \ref{fRT4}, then at least one of the following hold:
\begin{enumerate}[label=\textup{(b\arabic*)},leftmargin=10mm]
\item \label{fb1} $\min_0(M^2)\leq\mumin_{0}(M^1)$ and $\min_1(M^2)\leq\mumin_{1}(M^1)$, or
\item \label{fb2} $M^2$ also has type \ref{fRT2} or \ref{fRT4}, and $\min_1(M^2)-\min_0(M^2)=\min_1(M^1)-\min_0(M^1)$.  \epfres
\end{enumerate}
\end{thmsubenumerate}
\end{thm}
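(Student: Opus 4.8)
The plan is to deduce the statement from its infinite counterpart, Theorem~\ref{thm:comparisons}\ref{it:comp1}, via the Correspondence Theorem, exploiting the fact that $\Ptw{n,d}$ is the Rees quotient $\Ptw n/R_{I_{n,d+1}}$ (Subsection~\ref{subsec:Ptw}). The Correspondence Theorem gives an order-isomorphism between $\Cong(\Ptw{n,d})$ and the interval $[R_{I_{n,d+1}},\nab_{\Ptw n}]$ of $\Cong(\Ptw n)$. Under this isomorphism an fC-matrix $M$ lifts to a C-pair $\Pair_M=(\Th_M,\wh M)$, where $\wh M$ agrees with $M$ on columns $0,\dots,d$ and has $R$ in every cell of columns $>d$ (these cells lie in the collapsed ideal), and where each $\th_q=(\min_q(M),\min_q(M)+1)^\sharp$ has period~$1$ (beyond its first $R$, row $q$ is entirely collapsed to the zero, so all of $D_{q,\min_q(M)},D_{q,\min_q(M)+1},\dots$ form a single class). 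That the lift of $\fcg(M)$ equals $\cg(\Pair_M)$ is routine from Definitions~\ref{de:cg} and~\ref{de:fCcong}, and since the review of \cite{ERtwisted1} records that no exceptional congruences occur for $\Ptw{n,d}$, only part~\ref{it:comp1} of Theorem~\ref{thm:comparisons} is relevant. Thus $\fcg(M^1)\sub\fcg(M^2)$ holds if and only if $\cg(\Pair_{M^1})\sub\cg(\Pair_{M^2})$.

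It then remains to translate conditions~\ref{it:comp1a} and~\ref{it:comp1b} of Theorem~\ref{thm:comparisons} into conditions~\ref{it:fc1} and~\ref{it:fc2}. First I would show that $\Pair_{M^1}\leqc\Pair_{M^2}$ is equivalent to $M^1\leqc M^2$. The inequality $\wh{M^1}\leqc\wh{M^2}$ restricts to $M^1\leqc M^2$ on the first $d+1$ columns and is trivial beyond; conversely, the C-chain comparison $\Th_{M^1}\leqc\Th_{M^2}$ is automatic from $M^1\leqc M^2$, because all periods are~$1$ and $\min\th_q$ equals the position of the first $R$ in row~$q$, so by~\eqref{eq:th1th2} the inclusion $\th_q^1\sub\th_q^2$ amounts to $\min_q(M^2)\leq\min_q(M^1)$, which follows from $\leqc$ on row~$q$ (as $R$ is the top symbol). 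This yields~\ref{it:fc1}. For~\ref{it:fc2}, note that the infinite types \ref{RT2}, \ref{RT5}, \ref{RT7}---those having initial $\mu$s---are precisely the lifts of the finitary types \ref{fRT2} and \ref{fRT4}, and that $\mumin_{0}$ and $\mumin_{1}$ are unchanged by the lift since the initial-$\mu$ region sits in columns $\leq d$. Substituting $\min\th_q^2=\min_q(M^2)$ turns \ref{1b1} into \ref{fb1}, while the identity $\mumin_{1}(M)-\mumin_{0}(M)=\min_1(M)-\min_0(M)$ turns \ref{1b2} into \ref{fb2}, completing the translation.

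The main obstacle is the faithful bookkeeping in this translation---in particular verifying that the lift $\wh M$ really carries the asserted infinite row type and that the Rees data $\min_q$ of the lift matches the quantities $\min_q(M)$ and $\mumin_{i}(M)$ of $M$. I expect no essential difficulty here, only care. Alternatively, one could argue directly from Definition~\ref{de:fCcong}, mirroring the proof of Theorem~\ref{thm:comparisons}: for~\ref{it:fc1}, localise each congruence to the cells $D_{qi}$ and recover the entries $M_{qi}$ via the correspondence of Table~\ref{tab:M}; for~\ref{it:fc2}, take a matching-$\mu$ pair of $\fcg(M^1)$ (relating $D_{1,\mumin_{1}(M^1)}$ to $D_{0,\mumin_{0}(M^1)}$) and determine which clause of Definition~\ref{de:fCcong} places it in $\fcg(M^2)$. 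In either approach the heart of the matter is the cross-row $\mu$ case, clause (fC6) with $q\neq r$: here $M^1\leqc M^2$ alone is insufficient, and condition~\ref{it:fc2} is exactly what rules out the ``mixed'' possibility of a $\mu$ sitting over an $R$, forcing the two relevant entries of $M^2$ either both to be $R$ (so the pair lands in $\fcg(M^2)$ via clause (fC2)) or both to be $\mu$ with the matching offset $\min_1(M^2)-\min_0(M^2)=\min_1(M^1)-\min_0(M^1)$ (so the pair lands via clause (fC6)).
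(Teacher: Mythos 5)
Your proposal is correct and follows essentially the route the paper (and its companion \cite{ERtwisted1}, from which Theorem~\ref{thm:fincomp} is imported) takes: the paper explicitly states that congruences of $\Ptw{n,d}$ are handled by viewing it as the Rees quotient $\Ptw n/R_{I_{n,d+1}}$ and deploying the Correspondence Theorem, after which only part~(i) of Theorem~\ref{thm:comparisons} is relevant since exceptional congruences disappear. Your bookkeeping of the lift (all $\th_q$ of period $1$ with minimum $\min_q(M)$, the identification of types \ref{fRT2}/\ref{fRT4} with \ref{RT2}/\ref{RT5}/\ref{RT7}, and the identity $\mumin_1-\mumin_0=\min_1-\min_0$) is accurate and matches the translation recorded in the paper.
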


\begin{rem}
\label{rem:CongPn0}
The congruences of
the 0-twisted partition monoid $\Ptw{n,0}$ are particularly easy to understand, and these will play an important role in subsequent sections when looking at $\Ptw{n,d}$ for arbitrary $d$.
When $d=0$, the fC-matrices are simply fC-\emph{columns}, and they come in two families (with ${\{\id_q\}\neq N\normal\S_q}$ in row $q\geq2$ in the second), and four individual `sporadic' matrices:
\begin{equation}\label{eq:fC0}
{
\Cmatsetup
\begin{array}{|c|}\hline
\renewcommand{\arraystretch}{2}
\cellcolor{delcol}\Delta \\ \hline
\cellcolor{delcol}\vvdots \\ \hline
\cellcolor{delcol}\Delta \\ \hline
\cellcolor{delcol}\Delta \\ \hline
\cellcolor{Rcol}R \\ \hline
\cellcolor{Rcol}\vvdots \\ \hline
\cellcolor{Rcol}R \\ \hline
\end{array}
\ \COMMA 
\begin{array}{c|c|}\hhline{~|-}
 \renewcommand{\arraystretch}{2}
& \cellcolor{delcol}\Delta \\ \hhline{~|-}
& \cellcolor{delcol}\vvdots \\ \hhline{~|-}
& \cellcolor{delcol}\Delta \\ \hhline{~|-}
\text{\scriptsize $q$} & \cellcolor{Ncol}N\\ \hhline{~|-}
& \cellcolor{Rcol}R \\ \hhline{~|-}
& \cellcolor{Rcol}\vvdots \\ \hhline{~|-}
& \cellcolor{Rcol}R \\ \hhline{~|-}
\end{array}
\qquad\AND\qquad
\begin{array}{|c|}\hline
\renewcommand{\arraystretch}{2}
\cellcolor{delcol}\Delta \\ \hline
\cellcolor{delcol}\vvdots \\ \hline
\cellcolor{delcol}\Delta \\ \hline
\cellcolor{delcol}\Delta \\ \hline
\cellcolor{excepcol}\muup \\ \hline
\cellcolor{Rcol}R \\ \hline
\end{array}
\ \COMMA
\begin{array}{|c|}\hline
\renewcommand{\arraystretch}{2}
\cellcolor{delcol}\Delta \\ \hline
\cellcolor{delcol}\vvdots \\ \hline
\cellcolor{delcol}\Delta \\ \hline
\cellcolor{delcol}\Delta \\ \hline
\cellcolor{excepcol}\mudown \\ \hline
\cellcolor{Rcol}R \\ \hline
\end{array}
\ \COMMA
\begin{array}{|c|}\hline
\renewcommand{\arraystretch}{2}
\cellcolor{delcol}\Delta \\ \hline
\cellcolor{delcol}\vvdots \\ \hline
\cellcolor{delcol}\Delta \\ \hline
\cellcolor{delcol}\Delta \\ \hline
\cellcolor{excepcol}\mu \\ \hline
\cellcolor{Rcol}R \\ \hline
\end{array}
\ \COMMA
\begin{array}{|c|}\hline
\renewcommand{\arraystretch}{2}
\cellcolor{delcol}\Delta \\ \hline
\cellcolor{delcol}\vvdots \\ \hline
\cellcolor{delcol}\Delta \\ \hline
\cellcolor{Ncol}\S_2 \\ \hline
\cellcolor{excepcol}\mu \\ \hline
\cellcolor{Rcol}R \\ \hline
\end{array}
\ .
}
\end{equation}
Denoting the congruences associated to the first two types by $R_q$ and $R_N$, and reusing existing symbols 
 to denote the four sporadic congruences by $\muup$, $\mudown$, $\mu$ and $\mu_{\S_2}$, the congruence lattice $\Cong(\Ptw{n,0})$ is shown in Figure \ref{fig:CongPn0}.
\end{rem}

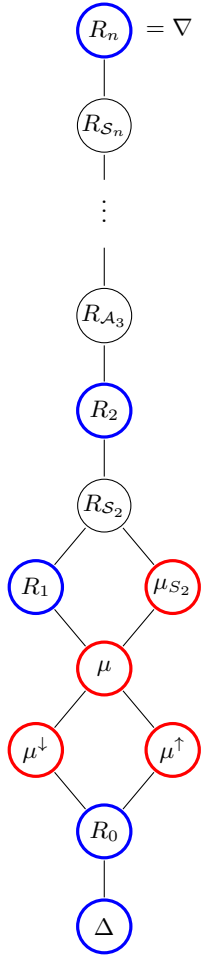
\begin{figure}[ht]
\begin{center}
\begin{tikzpicture}[scale=0.9]
\begin{scope}[minimum size=7mm,inner sep=0.5pt, outer sep=1pt]
\node (D) at (0,0) [draw=blue,fill=white,circle,line width=1.2pt] {\footnotesize $\De$};
\node (R0) at (0,1.4) [draw=blue,fill=white,circle,line width=1.2pt] {\footnotesize $R_0$};
\node (mud) at (-1,2.6) [draw=red,fill=white,circle,line width=1.2pt] {\footnotesize $\mudown$};
\node (muu) at (1,2.6) [draw=red,fill=white,circle,line width=1.2pt] {\footnotesize $\muup$};
\node (mu) at (0,3.8) [draw=red,fill=white,circle,line width=1.2pt] {\footnotesize $\mu$};
\node (R1) at (-1,5) [draw=blue,fill=white,circle,line width=1.2pt] {\footnotesize $R_1$};
\node (muS2) at (1,5) [draw=red,fill=white,circle,line width=1.2pt] {\footnotesize $\mu_{S_2}$};
\node (S2) at (0,6.2) [draw,circle]  {\footnotesize $R_{\S_2}$};
\node (R2) at (0,7.6) [draw=blue,fill=white,circle,line width=1.2pt]{\footnotesize$R_2$};
\node (A3) at (0,9) [draw,circle] {\footnotesize $R_{\A_3}$};
\node (Sn) at (0,11.8)[draw,circle]  {\footnotesize $R_{\S_n}$};
\node (Rn) at (0,13.2)  [draw=blue,fill=white,circle,line width=1.2pt]  {\footnotesize $R_n$};
\node (dots) at (0,10.65) {\footnotesize $\vdots$};
\node () at (.95,13.25) {\footnotesize $=\nab$};
\end{scope}
\draw (D)--(R0) (R0)--(mud) (R0)--(muu) (mud)--(mu) (muu)--(mu) (mu)--(R1) (mu)--(muS2) (R1)--(S2) (muS2)--(S2)
(S2)--(R2) (R2)--(A3) (A3)--(0,10) (0,11)--(Sn)--(Rn);
\end{tikzpicture}
\caption{The Hasse diagram of $\Cong(\Ptw{n,0})$; Rees congruences are indicated in blue outline, `sporadic' congruences in red, and we abbreviate $\De=\De_{\Ptw{n,0}}$ and $\nab=\nab_{\Ptw{n,0}}$.  }
\label{fig:CongPn0}
\end{center}
\end{figure}

\section{Properties of the congruence lattice \boldmath{$\textsf{\textbf{Cong}}({\Ptw{n,d}})$}}
\label{sec:CongPnd}

Let us compare the properties of the lattice $\Cong(\Ptw{n,d})$ with those of $\Cong(\Ptw{n})$ established
in Section \ref{sec:props}.
Since the monoids $\Ptw{n,d}$ are finite, so are their congruence lattices;
we shall compute their sizes exactly in Section \ref{sec:enumeration}.
So, trivially, $\Cong(\Ptw{n,d})$ has no infinite ascending/descending/anti-chains; also, every element has finitely many covers, and is in turn covered by finitely many elements.
The lattice continues to have a single co-atom, as in \eqref{eq:coatom}, and additionally it has a single atom, which is defined by the fC-matrix
\[
\Cmatsetup
\begin{array}{|c|c|c|c|c|}
\hline
\cellcolor{delcol}\De & \cellcolor{delcol}\De & \cellcolor{delcol}\cdots & \cellcolor{delcol}\De & \cellcolor{delcol}\De \\ \hline
\cellcolor{delcol}\vvdots & \cellcolor{delcol}\vvdots & \cellcolor{delcol}\vvdots & \cellcolor{delcol}\vvdots & \cellcolor{delcol}\vvdots \\ \hline
\cellcolor{delcol}\De & \cellcolor{delcol}\De & \cellcolor{delcol}\cdots & \cellcolor{delcol}\De & \cellcolor{delcol}\De \\ \hline
\cellcolor{delcol}\De & \cellcolor{delcol}\De & \cellcolor{delcol}\dots & \cellcolor{delcol}\De & \cellcolor{Rcol}R \\ \hline
\end{array}\ .
\]
In one marked contrast with $\Cong(\Ptw{n})$, this time we have the following:

\begin{thm}
\label{thm:distrib}
Let $n\geq2$.
\ben
\item \label{dist1} The lattice $\Cong(\Ptw{n,0})$ is distributive.  
\item \label{dist2} For $d>0$, the lattice $\Cong(\Ptw{n,d})$ is modular but not distributive.  
\een
\end{thm}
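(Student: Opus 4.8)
\textbf{Part \ref{dist1}.} Here I would read the lattice straight off Figure~\ref{fig:CongPn0}. It is the vertical sum (ordinal sum, glued at the successive cut-points $R_0$, $\mu$, $R_{\S_2}$ and then each of $R_2,R_{\A_3},\dots,R_n$) of the two-element chain $\{\De,R_0\}$, the Boolean square $\{R_0,\mudown,\muup,\mu\}$, the Boolean square $\{\mu,R_1,\mu_{\S_2},R_{\S_2}\}$, and the chain $R_{\S_2}<R_2<\dots<R_n$. Each summand is distributive, and since every cut-point is comparable to all of $\Cong(\Ptw{n,0})$, any $M_3$ or $N_5$ sublattice, being connected, must lie inside a single summand; hence the whole lattice is distributive. (Equivalently one inspects directly that neither a diamond nor a pentagon occurs.)

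\textbf{Non-distributivity in part \ref{dist2}.} I would exhibit an explicit diamond. Take $d=1$ (two columns) and the five fC-matrices
\[
\tau=\bigl(\begin{smallmatrix}\De&\mu\\ \De&R\end{smallmatrix}\bigr),\quad
\alpha=\bigl(\begin{smallmatrix}\De&\mu\\ \mu&R\end{smallmatrix}\bigr),\quad
\beta=\bigl(\begin{smallmatrix}\De&\mu\\ R&R\end{smallmatrix}\bigr),\quad
\gamma=\bigl(\begin{smallmatrix}\De&R\\ \De&R\end{smallmatrix}\bigr),\quad
\upsilon=\bigl(\begin{smallmatrix}\De&R\\ R&R\end{smallmatrix}\bigr),
\]
where the top row is row $1$, the bottom row is row $0$, and rows $2,\dots,n$ are all $\De$; thus $\alpha$ has type~\ref{fRT2}, $\beta,\tau,\upsilon$ have type~\ref{fRT3}, and $\gamma$ has type~\ref{fRT1}. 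Using Theorem~\ref{thm:fincomp} one checks that $\alpha,\beta,\gamma$ are pairwise incomparable with common meet $\tau$ and common join $\upsilon$, so $\{\tau,\alpha,\beta,\gamma,\upsilon\}\cong M_3$. The subtle point is that although $\alpha\leqc\beta$ entrywise, we have $\alpha\not\subseteq\beta$: indeed $\alpha$ has type~\ref{fRT2} with $\mumin_1(\alpha)=1$, whereas row~$1$ of $\beta$ contains no $R$, so $\min_1(\beta)=2>1$, and neither \ref{fb1} nor \ref{fb2} of Theorem~\ref{thm:fincomp} holds. This failure of the matching condition is exactly what turns a would-be $\mathbf 2^2$ into a diamond (note that consequently $\alpha\wedge\beta=\tau$, not $\alpha$). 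For general $d\geq1$ the same five patterns, placed in the last two columns with all earlier row-$0,1$ entries equal to $\De$, again form an $M_3$; since rows $\geq2$ play no role this works for every $n\geq2$.

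\textbf{Modularity in part \ref{dist2}.} As the lattice is finite, it suffices to show it contains no pentagon. I would suppose $\sigma_1\subsetneq\sigma_3$ together with $\sigma_2$ form an $N_5$, i.e.\ $\sigma_1\vee\sigma_2=\sigma_3\vee\sigma_2$ and $\sigma_1\wedge\sigma_2=\sigma_3\wedge\sigma_2$, and aim to force $\sigma_1=\sigma_3$. The guiding principle is that $\subseteq$ coincides with the entrywise order $\leqc$ except for the extra incomparabilities imposed by condition~\ref{it:fc2} on the matching-$\mu$ types~\ref{fRT2} and~\ref{fRT4}; on any sublattice avoiding these types the comparison is purely entrywise, so one is inside a product of chains (the rows $0,1$ entry-posets together with the chain normal-subgroup lattices of the $\S_q$), which is distributive and hence pentagon-free. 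It then remains to treat configurations in which $M^1$ or $M^3$ has type~\ref{fRT2} or~\ref{fRT4}, and to show that equality of the joins and of the meets against $M^2$ pins down both the $R$-onsets $\min_0,\min_1$ and the matched-$\mu$ gap $\mumin_1(M)-\mumin_0(M)$, forcing $M^1=M^3$.

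I expect this last case analysis to be the main obstacle. Because joins and meets are \emph{not} entrywise, the entrywise result being ``rounded'' up or down to the nearest valid fC-matrix (as already seen above where $\alpha\wedge\beta=\tau$ rather than $\alpha$), there is no closed form for $\sigma_i\vee\sigma_2$ or $\sigma_i\wedge\sigma_2$, and one must track the $R$-regions, the single $\ze$-entries, and the $\mu$-gaps of types~\ref{fRT2}/\ref{fRT4} by hand. An equivalent route I would keep in reserve is to equip $\Cong(\Ptw{n,d})$ with a rank function and verify that it is graded with a modular rank, equivalently that it is both upper and lower semimodular (in contrast to $\Cong(\Ptw n)$ by Theorem~\ref{thm:sm}); the same difficulty reappears there, since the naive entrywise weight fails to be graded precisely because of the $\mu$-matching and so must be corrected.
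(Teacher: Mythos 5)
Your part \ref{dist1} and the non-distributivity half of part \ref{dist2} are correct and essentially identical to the paper's proof: the five fC-matrices you list are exactly the diamond of Figure \ref{fig:5dia}, and your explanation of why $\alpha\not\subseteq\beta$ despite $\alpha\leqc\beta$ (failure of condition \ref{it:fc2} of Theorem \ref{thm:fincomp}, since $\min_1(\beta)=d+1>\mumin_1(\alpha)$ and $\beta$ is not of type \ref{fRT2} or \ref{fRT4}) is the verification the paper leaves to the reader.

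The gap is modularity. You have only sketched a strategy and explicitly deferred ``the main obstacle'', but that obstacle is the entire content of this half of the theorem: the paper devotes Definition \ref{de:matched}, Lemmas \ref{la:Dqirj}--\ref{la:jifC}, and the long two-stage case analysis of Proposition \ref{pr:5gons} to it. Moreover, your proposed reduction is not sound as stated. Excluding types \ref{fRT2} and \ref{fRT4} does not place you inside a product of chains on which $\subseteq$, $\wedge$ and $\vee$ are computed entrywise: the meet and join of a putative pentagon are taken in the full lattice $\Cong(\Ptw{n,d})$, not in a chosen sublattice of ``nice'' matrices, and even away from those two types the operations are not entrywise. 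For instance, the join of two congruences with $\mu$ entries can produce an $R$ entry (Lemma \ref{la:jifC}\ref{it:jifC2}), and a $\mu$ entry can occur in type \ref{fRT3} (as the entry $\ze$); likewise the meet can drop a matched $\mu$ to $\Delta$ even when only one of the two matrices carries a $\mu$ there --- your own computation $\alpha\wedge\beta=\tau$ is such an instance, and there $\beta$ has type \ref{fRT3}, not \ref{fRT2} or \ref{fRT4}. So the residual case analysis cannot be confined to matrices of types \ref{fRT2}/\ref{fRT4}; one must track matched versus unmatched $\mu$s, the ideal class, and the positions of the $R$-onsets throughout, which is precisely what the paper's auxiliary lemmas are engineered to do. The alternative route via semimodularity that you hold in reserve is likewise not carried out, and as you note it runs into the same difficulty. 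As it stands, the modularity assertion is unproven.
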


\begin{proof}
\ref{dist1}  This follows by direct inspection of Figure \ref{fig:CongPn0}.  

\ref{dist2}  To prove non-distributivity, we exhibit a five-element diamond sublattice of $\Cong(\Ptw{n,d})$ in Figure \ref{fig:5dia}.  To simplify the diagram, we only indicate the bottom-right $2\times2$ corner of each fC-matrix; all other entries are $\De$.  
Verification of meets and joins are straightforward with the help of Theorem \ref{thm:fincomp}.
To prove modularity we need to show that $\Cong(\Ptw{n,d})$ does not contain a sublattice isomorphic to the pentagon.
This is rather involved, occupies the remainder of this section, and is finally accomplished in Proposition \ref{pr:5gons}.
\end{proof}

\begin{figure}[ht]
\begin{center}
\begin{tikzpicture}[scale=.5]
\node (11) at (0,0) {$\Cmatsetup
\begin{array}{|c|c|}
\hline
\cellcolor{delcol}\De & \cellcolor{excepcol}\mu \\ \hline
\cellcolor{delcol}\De & \cellcolor{Rcol}R \\ \hline
\end{array}$};
\node (21) at (4.5,3) {$\Cmatsetup
\begin{array}{|c|c|}
\hline
\cellcolor{delcol}\De & \cellcolor{mucol}\mu \\ \hline
\cellcolor{mucol}\mu & \cellcolor{Rcol}R \\ \hline
\end{array}$};
\node (11') at (-4.5,3) {$\Cmatsetup
\begin{array}{|c|c|}
\hline
\cellcolor{delcol}\De & \cellcolor{excepcol}\mu \\ \hline
\cellcolor{Rcol}R & \cellcolor{Rcol}R \\ \hline
\end{array}$};
\node (12) at (0,3) {$\Cmatsetup
\begin{array}{|c|c|}
\hline
\cellcolor{delcol}\De & \cellcolor{Rcol}R \\ \hline
\cellcolor{delcol}\De & \cellcolor{Rcol}R \\ \hline
\end{array}$};
\node (22) at (0,6) {$\Cmatsetup
\begin{array}{|c|c|}
\hline
\cellcolor{delcol}\De & \cellcolor{Rcol}R \\ \hline
\cellcolor{Rcol}R & \cellcolor{Rcol}R \\ \hline
\end{array}$};
\draw (11)--(21)--(22)--(11')--(11)--(12)--(22);
\end{tikzpicture}
\caption{A five-element diamond sublattice of $\Cong(\Ptw{n,d})$, where $d>0$.  See the proof of Theorem~\ref{thm:distrib} for more details.}
\label{fig:5dia}
\end{center}
\end{figure}
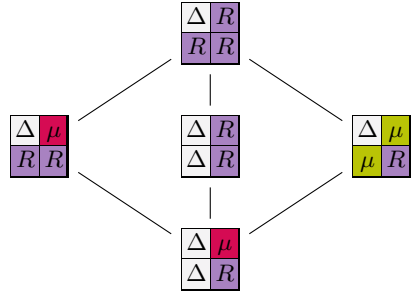

We begin our work towards Proposition \ref{pr:5gons} with some preliminaries that gauge the extent to which the inclusion ordering on $\Cong(\Ptw{n,d})$ differs
from the $\leqc$ ordering on fC-matrices.

\begin{defn}
\label{de:matched}
Let $M$ be an fC-matrix, and let $\sigma:=\cg(M)$.
We say that a pair of entries $M_{qi}$ and $M_{rj}$ of $M$ are \emph{matched} if
$(q,i)\neq(r,j)$, $M_{qi}=M_{rj}=\mu$ and $\sigma\cap (D_{qi}\times D_{rj})\neq\emptyset$.
\end{defn}

We note that if $M_{qi}=M_{rj}$ are matched then necessarily $\{q,r\}=\{0,1\}$.
We also note that every $\mu$ entry in row $0$ is matched, whereas those in row $1$ can be matched or unmatched. However, there can be at most one unmatched $\mu$ in $M$, corresponding to $\ze=\mu$ in types \ref{fRT2} and \ref{fRT4} (see Table \ref{tab:fRT}).

We now have the following obvious consequence of Definition \ref{de:fCcong}:

\begin{lemma}
\label{la:Dqirj}
Let $\sigma=\cg(M)\in \Cong(\Ptw{n,d})$, and let $D_{qi}$ and $D_{rj}$ be two distinct $\D$-classes.
Then $\sigma\cap (D_{qi}\times D_{rj})\neq\emptyset$ if and only if either
$M_{qi}=M_{rj}=R$ or else $M_{qi}=M_{rj}=\mu$ are matched. \qed
\end{lemma}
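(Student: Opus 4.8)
The plan is to read the statement straight off the defining clauses \textsf{(fC1)}--\textsf{(fC8)} of Definition~\ref{de:fCcong}, using Lemma~\ref{la:Green_Ptwn} to recall that the non-zero $\D$-classes of $\Ptw{n,d}$ are exactly $D_{qi}=\{i\}\times D_q$, so that $(i,\al)$ with $\rank\al=q$ lies in $D_{qi}$, and two elements lie in distinct $\D$-classes precisely when $(q,i)\neq(r,j)$.

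For the forward implication I would take a pair $((i,\al),(j,\be))\in\sigma$ with $\rank\al=q$, $\rank\be=r$ and $(q,i)\neq(r,j)$, and eliminate the clauses one by one. Since $\zero$ belongs to no $D_{qi}$, clauses \textsf{(fC7)} and \textsf{(fC8)} cannot apply. Clauses \textsf{(fC1)} and \textsf{(fC3)} each force $i=j$ and (via $\al=\be$, resp.\ $\al\rH\be$) $q=r$, hence $(q,i)=(r,j)$, and are excluded. Clauses \textsf{(fC4)} and \textsf{(fC5)} would require $M_{qi}=M_{rj}\in\{\mudown,\muup\}$; here I would invoke the observation that $\mudown$ and $\muup$ are relations on $D_1\times D_1$ and, by inspection of Table~\ref{tab:fRT}, occur at most once in any fC-matrix (only as the single entry $\zeta$ in row~$1$ of a type \ref{fRT2} or \ref{fRT3} matrix), so both entries being $\mudown$ (or both $\muup$) would force $(q,i)=(r,j)$, a contradiction. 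What remains is \textsf{(fC2)}, yielding $M_{qi}=M_{rj}=R$, and \textsf{(fC6)}, which (with the alternative $(q,i)=(r,j)$ ruled out) yields $M_{qi}=M_{rj}=\mu$ together with $i-j=\min_q(M)-\min_r(M)$; in this case the witnessing pair shows $\sigma\cap(D_{qi}\times D_{rj})\neq\emptyset$, so the two $\mu$-entries are matched in the sense of Definition~\ref{de:matched}.

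The converse is immediate: if $M_{qi}=M_{rj}=R$ then \textsf{(fC2)} places some $((i,\al),(j,\be))$ into $\sigma$ for any $\al\in D_q$, $\be\in D_r$, whence $\sigma\cap(D_{qi}\times D_{rj})\neq\emptyset$; while if $M_{qi}=M_{rj}=\mu$ are matched, then $\sigma\cap(D_{qi}\times D_{rj})\neq\emptyset$ holds by the very definition of matched.

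I expect no real obstacle, since the lemma is essentially a case check; the only point to phrase carefully is that the ``$\mu$-matched'' alternative is all but definitional, so the substantive content is the elimination argument showing that a cross-$\D$-class relationship can arise only from two equal entries that are both $R$ or both $\mu$ --- and the one slightly non-obvious ingredient there is the at-most-once occurrence of $\muup$ and $\mudown$ that disposes of \textsf{(fC4)} and \textsf{(fC5)}.
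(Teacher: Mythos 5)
Your argument is correct and is exactly the case-check the paper has in mind: the lemma is stated there as an immediate consequence of Definition~\ref{de:fCcong} with no written proof, and your elimination of clauses \textsf{(fC1)}--\textsf{(fC8)} (including the observation that $\muup$ and $\mudown$ occur at most once in an fC-matrix, disposing of \textsf{(fC4)} and \textsf{(fC5)}) fills in precisely the routine verification being left to the reader.
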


Next we consider the behaviour of the join and the intersection of two congruences restricted to a single $\D$-class.
We observe that since $\Ptw{n,d}$ has a zero element, the ideal class $I(\sigma)$ of any congruence $\sigma$ coincides with the 
class of $\zero$, and is non-empty in particular.

\begin{lemma}
\label{la:joinneq}
Let $\sigma^1,\sigma^2\in\Cong(\Ptw{n,d})$, and let $q\in \bnz$ and $i\in\N$.
If ${(\sigma^1\vee\sigma^2)\restr_{D_{qi}}\neq \sigma^1\restr_{D_{qi}} \vee \sigma^2\restr_{D_{qi}}}$
then $\sigma^t\cap (D_{qi}\times D_{rj})\neq \emptyset$ for some $t=1,2$ and some $(r,j)\neq (q,i)$.
\end{lemma}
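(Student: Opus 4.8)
The plan is a direct proof: keep the hypothesis that the restriction of the join strictly exceeds the join of the restrictions, assume for contradiction that the stated conclusion fails, and derive a contradiction by a connectivity argument on $\sigma^1\vee\sigma^2$ (which is the transitive closure of $\sigma^1\cup\sigma^2$). First I would record the inclusion that always holds: each $\sigma^t\restr_{D_{qi}}$ is contained in $(\sigma^1\vee\sigma^2)\restr_{D_{qi}}$, and the latter is an equivalence relation on $D_{qi}$, so $\sigma^1\restr_{D_{qi}}\vee\sigma^2\restr_{D_{qi}}\sub(\sigma^1\vee\sigma^2)\restr_{D_{qi}}$. The hypothesis therefore provides a pair $(x,y)\in(\sigma^1\vee\sigma^2)\restr_{D_{qi}}$ with $(x,y)\notin\sigma^1\restr_{D_{qi}}\vee\sigma^2\restr_{D_{qi}}$, together with a chain $x=z_0,z_1,\dots,z_k=y$ in which each $(z_l,z_{l+1})$ lies in $\sigma^1\cup\sigma^2$.

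A crucial consequence of the hypothesis is that $M^1_{qi}\neq R\neq M^2_{qi}$, where $M^t$ is the fC-matrix of $\sigma^t$. Indeed, if $M^t_{qi}=R$ then $D_{qi}\sub I(\sigma^t)$, and condition \textsf{(fC2)} of Definition \ref{de:fCcong} gives $\sigma^t\restr_{D_{qi}}=\nab_{D_{qi}}$; this would make both sides of the lemma's hypothesis equal to $\nab_{D_{qi}}$, contrary to the assumed strict inequality. Having excluded $M^t_{qi}=R$, condition \textsf{(fC7)} of the same definition shows that no element of $D_{qi}$ is $\sigma^t$-related to the zero $\zero$, for either $t$. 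I expect this to be the only delicate point: the element $\zero$ lies in no $\D$-class $D_{rj}$, so a priori a relationship between $D_{qi}$ and $\zero$ is not excluded by merely negating the conclusion, and it must instead be ruled out through the matrix entry $M^t_{qi}$.

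With these preliminaries in place, I would assume for contradiction that $\sigma^t\cap(D_{qi}\times D_{rj})=\emptyset$ for every $t\in\{1,2\}$ and every $(r,j)\neq(q,i)$, and show by induction on $l$ that each $z_l\in D_{qi}$. For the inductive step, $z_{l+1}$ is $\sigma^t$-related to $z_l\in D_{qi}$ for some $t$; by the previous paragraph $z_{l+1}\neq\zero$, and if $z_{l+1}$ lay in some $D_{rj}$ with $(r,j)\neq(q,i)$ then $(z_l,z_{l+1})\in\sigma^t\cap(D_{qi}\times D_{rj})$, contrary to assumption; hence $z_{l+1}\in D_{qi}$. Thus every step satisfies $(z_l,z_{l+1})\in\sigma^t\restr_{D_{qi}}$, so the chain witnesses $(x,y)\in\sigma^1\restr_{D_{qi}}\vee\sigma^2\restr_{D_{qi}}$, the desired contradiction. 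Apart from the care needed around $\zero$, the argument is the routine observation that a join-chain starting in a $\D$-class which is $\sigma^t$-isolated from the rest of the monoid (for both $t$) cannot escape it.
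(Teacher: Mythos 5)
Your proof is correct and follows essentially the same route as the paper's: both arguments trace a $\sigma^1\cup\sigma^2$-chain witnessing the join, observe that it can only leave $D_{qi}$ by landing in another $\D$-class or at $\zero$, and dispose of the $\zero$ case by noting it would force $\sigma^t\restr_{D_{qi}}=\nab_{D_{qi}}$ and hence equality of the two sides of the hypothesis. The only difference is organisational — you exclude the $\zero$ escape up front via the matrix entry and then run an induction along the chain, while the paper inspects the first step that exits $D_{qi}$ — but the substance is identical.
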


\begin{proof}
Consider an arbitrary $(\ba,\bb)\in (\sigma^1\vee\sigma^2)\restr_{D_{qi}}\setminus ( \sigma^1\restr_{D_{qi}} \vee \sigma^2\restr_{D_{qi}})$.
This means that there exists a sequence
$\ba=\ba_1,\ba_2,\dots,\ba_m=\bb$ such that the successive pairs belong to $\sigma^1\cup\sigma^2$,
and not all the terms belong to $D_{qi}$.
Let $(\ba_l,\ba_{l+1})\in\sigma^t$ be any pair where $\ba_l\in D_{qi}$ and $\ba_{l+1}\not\in D_{qi}$.
If $\ba_{l+1}\in D_{rj}$ for some $(r,j)$ we are finished. Otherwise $\ba_{l+1}=\zero$.
But then $D_{qi}\sub I(\sigma^t)$, which implies 
$\sigma^t\restr_{D_{qi}}=\nabla_{D_{qi}}$, and therefore $(\sigma^1\vee\sigma^2)\restr_{D_{qi}}=\nabla_{D_{qi}}= \sigma^1\restr_{D_{qi}} \vee \sigma^2\restr_{D_{qi}}$, a contradiction.
\end{proof}

\begin{lemma}
\label{la:jirest}
Let $\sigma^1,\sigma^2\in\Cong(\Ptw{n,d})$, and $q\in\bnz$, $i\in \N$. Then:
\begin{thmenumerate}
\item
\label{it:jir1}
$(\sigma^1\cap \sigma^2)\restr_{D_{qi}}=\sigma^1\restr_{D_{qi}}\cap \sigma^2\restr_{D_{qi}}$;
\item
\label{it:jir2}
$(\sigma^1\vee \sigma^2)\restr_{D_{qi}}=\sigma^1\restr_{D_{qi}}\vee \sigma^2\restr_{D_{qi}}$,
with the only possible exceptions when $q\in\{0,1\}$ and $M_{qi}^1,M_{qi}^2\in\{\De,\mudown,\muup,\mu\}$.
\end{thmenumerate}
\end{lemma}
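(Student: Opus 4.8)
The plan is to dispose of \ref{it:jir1} by pure relation algebra and to reduce \ref{it:jir2} to a short case analysis driven by the two preceding lemmas. For \ref{it:jir1} I would simply note that a pair $(\ba,\bb)$ belongs to $(\sigma^1\cap\sigma^2)\restr_{D_{qi}}$ exactly when $\ba,\bb\in D_{qi}$ and $(\ba,\bb)\in\sigma^1$ and $(\ba,\bb)\in\sigma^2$, which is verbatim the membership condition for $\sigma^1\restr_{D_{qi}}\cap\sigma^2\restr_{D_{qi}}$; no structural facts about $\Ptw{n,d}$ are needed.

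For \ref{it:jir2} I would first record the inclusion that holds in general. Because $\sigma^t\restr_{D_{qi}}\sub(\sigma^1\vee\sigma^2)\restr_{D_{qi}}$ for $t=1,2$, and the latter is an equivalence relation on $D_{qi}$, it is an upper bound for both restrictions and hence contains their join $\sigma^1\restr_{D_{qi}}\vee\sigma^2\restr_{D_{qi}}$ formed in the equivalence lattice of $D_{qi}$. Thus $\supseteq$ always holds, and the whole content of \ref{it:jir2} is to pin down when the reverse inclusion can fail.

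So I would assume the two sides differ and track down the obstruction. Lemma \ref{la:joinneq} immediately supplies some $t\in\{1,2\}$ and some $(r,j)\neq(q,i)$ with $\sigma^t\cap(D_{qi}\times D_{rj})\neq\emptyset$, and Lemma \ref{la:Dqirj} then forces either $M^t_{qi}=M^t_{rj}=R$, or $M^t_{qi}=M^t_{rj}=\mu$ with these entries matched. The crux, and the one step to handle carefully, is ruling out any $R$-entry at position $(q,i)$: if $M^t_{qi}=R$ then $D_{qi}\sub I(\sigma^t)$, so $\sigma^t\restr_{D_{qi}}=\nabla_{D_{qi}}$, which forces both sides of \ref{it:jir2} to equal $\nabla_{D_{qi}}$, contradicting the assumed inequality. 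Hence neither $M^1_{qi}$ nor $M^2_{qi}$ can be $R$, the first alternative is excluded, and we are left with matched $\mu$s. Since matched $\mu$s occur only between rows $0$ and $1$ (the remark after Definition \ref{de:matched}), this gives $q\in\{0,1\}$; and as the only entry available in rows $0,1$ outside $\{\De,\mudown,\muup,\mu\}$ is $R$, which we have just excluded, both $M^1_{qi}$ and $M^2_{qi}$ lie in $\{\De,\mudown,\muup,\mu\}$, exactly the stated exceptional set.

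I do not anticipate a genuine obstacle: the argument is essentially bookkeeping on top of Lemmas \ref{la:joinneq} and \ref{la:Dqirj}. The only point needing a moment's attention is the symmetric use of the $R$-collapse observation, applied to both indices $t$ and $3-t$, to conclude that \emph{both} matrix entries avoid $R$ and therefore both land in the exceptional set.
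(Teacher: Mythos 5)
Your proof is correct and follows essentially the same route as the paper's: part (i) is immediate from the definitions, and part (ii) is exactly the intended combination of Lemmas \ref{la:joinneq} and \ref{la:Dqirj} together with the observation that an $R$ entry at position $(q,i)$ in either matrix forces both sides to equal $\nabla_{D_{qi}}$. The paper states this argument in one sentence; you have merely spelled out the bookkeeping, including the correct symmetric exclusion of $R$ for both $t=1,2$.
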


\begin{proof}
\ref{it:jir1} is obvious. \ref{it:jir2} follows from Lemmas \ref{la:Dqirj} and \ref{la:joinneq}, and the observation that if
either $M_{qi}^1$ or $M_{qi}^2$ equals $R$ then 
$(\sigma^1\vee \sigma^2)\restr_{D_{qi}}=\nabla_{D_{qi}}=\sigma^1\restr_{D_{qi}}\vee \sigma^2\restr_{D_{qi}}$.
\end{proof}

Next we move on to the ideal class $I(\si)$.

\begin{lemma}
\label{la:jiid}
For any $\sigma^1,\sigma^2\in\Cong(\Ptw{n,d})$ the following hold:
\begin{thmenumerate}
\item
\label{it:jiid1}
$I(\sigma^1\cap\sigma^2)=I(\sigma^1)\cap I(\sigma^2)$;
\item
\label{it:jiid2}
If $q\geq 2$, then $D_{qi}\subseteq I(\sigma^1\vee\sigma^2)$ if and only if $D_{qi}\subseteq I(\sigma^t)$ for some $t=1,2$.
\end{thmenumerate}
\end{lemma}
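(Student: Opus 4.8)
The starting point for both parts is the observation, already recorded before the lemma, that because $\Ptw{n,d}$ has a zero the ideal class $I(\sigma)$ of any $\sigma=\cg(M)$ is exactly the $\sigma$-class of $\zero$; moreover, inspecting the defining clauses of Definition \ref{de:fCcong} (specifically \textsf{(fC2)} and \textsf{(fC7)}) shows that $D_{qi}\sub I(\sigma)$ if and only if $M_{qi}=R$. I would state these two facts first, as everything reduces to them.

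For part \ref{it:jiid1} the plan is to use the first fact directly. Since $I(\tau)=[\zero]_\tau$ for every congruence $\tau$, and since for any two equivalence relations the class of a point under the intersection is the intersection of the two classes, we obtain
\[
I(\sigma^1\cap\sigma^2)=[\zero]_{\sigma^1\cap\sigma^2}=[\zero]_{\sigma^1}\cap[\zero]_{\sigma^2}=I(\sigma^1)\cap I(\sigma^2).
\]
This needs no case analysis and no appeal to the matrix description at all.

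For part \ref{it:jiid2} the reverse (`if') implication is immediate from monotonicity: $\sigma^t\sub\sigma^1\vee\sigma^2$ gives $I(\sigma^t)\sub I(\sigma^1\vee\sigma^2)$. The substance is the forward implication. First I would establish the key structural fact that, \emph{for $q\geq2$}, if $M^t_{qi}\neq R$ then the entire $\sigma^t$-class of any element of $D_{qi}$ is contained in $D_{qi}$: a relation to a different $\D$-class is ruled out by Lemma \ref{la:Dqirj} (which for $q\geq2$ forces $M^t_{qi}=R$, since matched $\mu$s occur only in rows $0$ and $1$), and a relation to $\zero$ is ruled out by \textsf{(fC7)}. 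Granting this, suppose $((i,\al),\zero)\in\sigma^1\vee\sigma^2$ with $\al\in D_q$; take a connecting sequence $(i,\al)=\bc_0,\bc_1,\dots,\bc_m=\zero$ whose consecutive pairs lie in $\sigma^1\cup\sigma^2$, and let $l$ be least with $\bc_l\notin D_{qi}$. Then $\bc_{l-1}\in D_{qi}$ is $\sigma^t$-related to $\bc_l\notin D_{qi}$ for some $t$, so the structural fact forces $M^t_{qi}=R$, i.e.\ $D_{qi}\sub I(\sigma^t)$, as required.

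The only genuinely delicate point — and where I expect to have to be careful — is the structural fact, together with the role of the hypothesis $q\geq2$. For $q\in\{0,1\}$ the conclusion can fail: a matched pair of $\mu$s links $D_{0i}$ to $D_{1j}$ without passing through $\zero$, so a class can leave its $\D$-class even when no $R$ is present, which is precisely why the statement is restricted to $q\geq2$. I would therefore isolate the structural fact as a short claim, justified by running through the clauses \textsf{(fC1)}–\textsf{(fC8)} for an element of $D_{qi}$ with $q\geq2$: only \textsf{(fC2)} and \textsf{(fC7)} (both requiring $M_{qi}=R$) can move it out of $D_{qi}$, while the remaining applicable clauses \textsf{(fC1)} and \textsf{(fC3)} keep it inside, the entries $\mudown,\muup,\mu$ not occurring in rows $q\geq2$ by type \ref{fRT5}.
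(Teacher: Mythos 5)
Your proof is correct and follows essentially the same route as the paper: part (i) is dismissed via the identification $I(\tau)=[\zero]_\tau$, and part (ii) rests on exactly the observation the paper uses, namely that for $q\geq2$ either $D_{qi}\subseteq I(\sigma^t)$ or there are no $\sigma^t$-relationships between $D_{qi}$ and its complement. Your version merely spells out the details (the appeal to Lemma \ref{la:Dqirj} and the connecting-sequence argument) that the paper leaves implicit.
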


\begin{proof}
\ref{it:jiid1} is obvious.
For \ref{it:jiid2} notice that for $q\geq 2$ we either have $D_{qi}\subseteq I(\sigma^t)$, or else
there are no $\sigma^t$-relationships between $D_{qi}$ and $\Ptw{n,d}\setminus D_{qi}$.
\end{proof}

We bring these results together into the following description of the fC-matrices for the intersection and join of two congruences. The meets and joins between the matrix entries are computed under the ordering $\leqc$ as
depicted in Figure \ref{fig:M}.

\begin{lemma}
\label{la:jifC}
Let $\sigma^1,\sigma^2\in\Cong(\Ptw{n,d})$ with $\sigma^t=\cg(M^t)$, and furthermore let
$\sigma^1\cap\sigma^2=\cg(M^\cap)$, $\sigma^1\vee\sigma^2=\cg(M^\vee)$.
For $q\in\bnz$ and $i\in\N$, the following hold:
\begin{thmenumerate}
\item
\label{it:jifC1}
$M_{qi}^\cap=M_{qi}^1\wedge M_{qi}^2$, except when $q=0$, $M_{0i}^1=M_{0i}^2=\mu$, but the matching entries in row $1$ are in different positions in $M^1$ and $M^2$, in which case $M_{qi}^\cap=\Delta$.
\item
\label{it:jifC2}
$M_{qi}^\vee=M_{qi}^1\vee M_{qi}^2$, with possible exceptions for $q\in\{0,1\}$, $M_{qi}^1,M_{qi}^2\in \{\Delta,\muup,\mudown,\mu\}$, and at least one of $M_{qi}^1,M_{qi}^2$ equals $\mu$, in which case
we may have $M_{qi}^\vee=R$.
\end{thmenumerate}
\end{lemma}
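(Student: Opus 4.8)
The plan is to exploit the fact that, in the finitary setting, each entry $M_{qi}$ is almost completely determined by the restriction $\sigma\restr_{D_{qi}}$ together with the membership $D_{qi}\subseteq I(\sigma)$. Reading off Definition \ref{de:fCcong}, the correspondence between $M_{qi}$ and $\sigma\restr_{D_{qi}}$ is a bijection save for two ambiguities: for $q\geq2$, a restriction of $\nabla_{D_q}$ is recorded as $R$ or as $\S_q$ according to whether $D_{qi}\subseteq I(\sigma)$; and for $q=0$, a restriction of $\De_{D_0}$ is recorded as $\mu$ or as $\De$ according to whether $D_{0i}$ is matched to some $D_{1j}$ (Definition \ref{de:matched}). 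Note that in the finitary case no $\lam,\rho$ symbols occur, so rows $0$ and $1$ draw only from $\{\De,\muup,\mudown,\mu,R\}$. Throughout I would read meets and joins of symbols from Figure \ref{fig:M}, and use that $\sigma^1\sub\sigma^2$ forces $M^1\leqc M^2$ (Theorem \ref{thm:fincomp}\ref{it:fc1}); in particular $M_{qi}^\vee\geqc M_{qi}^1\vee M_{qi}^2$ and $M_{qi}^\cap\leqc M_{qi}^1\wedge M_{qi}^2$, which serve as orientation.

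For part \ref{it:jifC1} I would combine Lemma \ref{la:jirest}\ref{it:jir1}, giving $(\sigma^1\cap\sigma^2)\restr_{D_{qi}}=\sigma^1\restr_{D_{qi}}\cap\sigma^2\restr_{D_{qi}}$, with Lemma \ref{la:jiid}\ref{it:jiid1}, giving $I(\sigma^1\cap\sigma^2)=I(\sigma^1)\cap I(\sigma^2)$. For $q\geq2$ and $q=1$ the symbol-to-restriction dictionary commutes with intersection: one checks the finitely many cases ($\nu_{N_1}\cap\nu_{N_2}=\nu_{N_1\cap N_2}$, $\muup\cap\mudown=\De_{D_1}$, and so on) against the meet in Figure \ref{fig:M}, while the $R$-versus-$\S_q$ ambiguity is resolved by $I(\sigma^1\cap\sigma^2)=I(\sigma^1)\cap I(\sigma^2)$, yielding $M_{qi}^\cap=M_{qi}^1\wedge M_{qi}^2$ with no exception. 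The delicate point is $q=0$: when the intersection restriction is $\De_{D_0}$ (i.e. not both entries are $R$), the value of $M_{0i}^\cap$ is $\mu$ precisely when $D_{0i}$ remains matched in $\sigma^1\cap\sigma^2$, and $\sigma^1\cap\sigma^2$ links $D_{0i}$ to a given $D_{1j}$ only if both $\sigma^1$ and $\sigma^2$ do so and to the \emph{same} $D_{1j}$. Hence $M_{0i}^\cap=\mu$ iff $M_{0i}^1=M_{0i}^2=\mu$ with coinciding matching positions in row $1$; if both are $\mu$ but the matched positions differ, all links from $D_{0i}$ to row $1$ are severed and $M_{0i}^\cap=\De$, which is exactly the stated exception.

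For part \ref{it:jifC2} I would use Lemma \ref{la:jirest}\ref{it:jir2} and Lemma \ref{la:jiid}\ref{it:jiid2}. For $q\geq2$ there is no exception in Lemma \ref{la:jirest}\ref{it:jir2}, so the join restriction equals $\sigma^1\restr_{D_{qi}}\vee\sigma^2\restr_{D_{qi}}$; the dictionary again commutes with join ($\nu_{N_1}\vee\nu_{N_2}=\nu_{\langle N_1,N_2\rangle}$), and the $R$-versus-$\S_q$ ambiguity is decided by Lemma \ref{la:jiid}\ref{it:jiid2}, giving $M_{qi}^\vee=M_{qi}^1\vee M_{qi}^2$. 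For $q\in\{0,1\}$ I would split on the entries: if some entry equals $R$, then $D_{qi}$ lies in that congruence's ideal, hence in $I(\sigma^1\vee\sigma^2)$, so $M_{qi}^\vee=R=M_{qi}^1\vee M_{qi}^2$; if both entries lie in $\{\De,\muup,\mudown\}$, then by Lemma \ref{la:Dqirj} neither $\sigma^t$ links $D_{qi}$ to any other $\D$-class or to $\zero$, so Lemma \ref{la:joinneq} forbids the exceptional behaviour and $M_{qi}^\vee=M_{qi}^1\vee M_{qi}^2$ with no jump to $R$.

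The remaining case — both entries in $\{\De,\muup,\mudown,\mu\}$ with at least one equal to $\mu$ — is where I expect the only real subtlety, and it is dispatched by a clean observation rather than by tracing join-closures. Here $M_{qi}^1\vee M_{qi}^2=\mu$ (each symbol in $\{\De,\muup,\mudown,\mu\}$ is $\leqc\mu$), so $M_{qi}^\vee\geqc\mu$; but in rows $0$ and $1$ of a finitary C-matrix the only symbols that are $\geqc\mu$ are $\mu$ and $R$, since $\lam$ and $\rho$ never occur. Therefore $M_{qi}^\vee\in\{\mu,R\}=\{M_{qi}^1\vee M_{qi}^2,R\}$, exactly as claimed, and the diamond of Figure \ref{fig:5dia} shows that the value $R$ is genuinely attained. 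The main obstacle in the whole argument is thus the bookkeeping in part \ref{it:jifC1}: checking that the restriction-to-symbol dictionary commutes with meet in each case and pinning down the matched-position condition behind the $q=0$ exception; the potentially hard $R$-jump in the join collapses to the elementary ``$\geqc\mu$ forces $\mu$ or $R$'' remark above.
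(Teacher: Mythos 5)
Your overall route is the paper's own: part \ref{it:jifC1} from Lemma \ref{la:jirest}\ref{it:jir1} together with the ambiguity analysis of Remark \ref{rem:amb} (the $R$-versus-$\S_n$ ambiguity settled by Lemma \ref{la:jiid}\ref{it:jiid1}), and part \ref{it:jifC2} from Lemmas \ref{la:jirest}\ref{it:jir2}, \ref{la:joinneq} and \ref{la:jiid}\ref{it:jiid2}, finishing with exactly the paper's closing observation that in rows $0$ and $1$ of an fC-matrix the only symbols $\geqc\mu$ are $\mu$ and $R$. Your treatment of part \ref{it:jifC2} is sound.

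The gap is in the $q=0$ case of part \ref{it:jifC1}. You assert that $\si^1\cap\si^2$ links $D_{0i}$ to a given $D_{1j}$ only if both $\si^1$ and $\si^2$ do so via matched $\mu$s, and conclude that $M_{0i}^\cap=\mu$ iff $M_{0i}^1=M_{0i}^2=\mu$ with coinciding matching positions. This overlooks the other mechanism in Lemma \ref{la:Dqirj}: $\si^t$ also links $D_{0i}$ to $D_{1j}$ when $M_{0i}^t=M_{1j}^t=R$. So your ``iff'' fails in one direction (if $M_{0i}^1=M_{1j}^1=R$ while $M_{0i}^2,M_{1j}^2$ are matched $\mu$s, the link survives the intersection and $M_{0i}^\cap=\mu$ although $M_{0i}^1\neq\mu$), and, more seriously, the sub-case $\{M_{0i}^1,M_{0i}^2\}=\{\mu,R\}$ is never addressed. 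There the symbol meet is $R\wedge\mu=\mu$, but whether $D_{0i}$ stays matched in $\si^1\cap\si^2$ depends on whether the matrix carrying the $R$ also has an $R$ at the matching row-$1$ position, and the row types do not force this. For instance, with $n=2$, $d=1$, take $M^1$ of type \ref{fRT3} with row $0=(R,R)$, row $1=(\De,\De)$, and $M^2$ of type \ref{fRT2} with row $0=(\mu,R)$, row $1=(\De,\mu)$ (rows $2$ all $\De$); then $M^2_{00}=\mu$ is matched to $M^2_{11}$, but $M^1_{11}=\De$ kills that link in the intersection, so $M^\cap_{00}=\De$ while $M^1_{00}\wedge M^2_{00}=\mu$. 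To be fair, the paper's own proof is equally brisk at exactly this point (``the first alternative rapidly leads to the assertion''), and the configuration just described indicates that the stated list of exceptions is itself incomplete; so this is not a defect you introduced, but as written your argument does not close part \ref{it:jifC1} in that sub-case, and a complete proof has to confront it explicitly.
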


\begin{proof}
\ref{it:jifC1}
By Lemma \ref{la:jirest}\ref{it:jir1}, the restriction $(\sigma^1\cap \sigma^2)\restr_{D_{qi}}$ is uniquely determined
by the $\sigma^t\restr_{D_{qi}}$.
Therefore, we can only have $M_{qi}^\cap\neq M_{qi}^1\wedge M_{qi}^2$ when $M_{qi}^\cap$ is not uniquely determined by ${(\sigma^1\cap \sigma^2)\restr_{D_{qi}}}$.
As in Remark \ref{rem:amb}, the latter is only the case when
$q=0$ and $(\sigma^1\cap \sigma^2)\restr_{D_{0i}}=\Delta_{D_{0i}}$, or when $q=n$ and 
$(\sigma^1\cap \sigma^2)\restr_{D_{ni}}=\nabla_{D_{ni}}$.
The first alternative rapidly leads to the assertion, by eliminating the possibility $M_{qi}^t=\Delta$ for some $t=1,2$.
For the second, suppose $n=q$ and $(\sigma^1\cap \sigma^2)\restr_{D_{qi}}=\nabla_{D_{qi}}$.
Then
$M_{qi}^1,M_{qi}^2,M_{qi}^\cap\in\{S_n,R\}$, and, using Lemma \ref{la:jiid}\ref{it:jiid1},
\[
M_{qi}^\cap=R \iff D_{qi}\subseteq I(\sigma^1\cap\sigma^2)=I(\sigma^1)\cap I(\sigma^2)\iff
M_{qi}^1=M_{qi}^2=R,
\]
which implies $M_{qi}^\cap=M_{qi}^1\wedge M_{qi}^2$.

\ref{it:jifC2}
Reasoning as in \ref{it:jifC1}, we see that the only exceptions to $M_{qi}^\vee=M_{qi}^1\vee M_{qi}^2$ may arise if:
\bit
\item
$(\sigma^1\vee \sigma^2)\restr_{D_{qi}}\neq \sigma^1\restr_{D_{qi}}\vee \sigma^2\restr_{D_{qi}}$; or
\item
$M_{qi}^\vee$ is not uniquely determined by $(\sigma^1\vee \sigma^2)\restr_{D_{qi}}$.
\eit
We begin with the first option.  Here Lemma \ref{la:jirest}\ref{it:jir2} gives $q\in\{0,1\}$ and $M_{qi}^1,M_{qi}^2\in \{\Delta,\muup,\mudown,\mu\}$.  If in fact $M_{qi}^1,M_{qi}^2\in \{\Delta,\muup,\mudown\}$, then would be no $\si^t$-relationships between $D_{qi}$ and $\Ptw{n,d}\sm D_{qi}$ ($t=1,2$), and hence $(\sigma^1\vee \sigma^2)\restr_{D_{qi}}= \sigma^1\restr_{D_{qi}}\vee \sigma^2\restr_{D_{qi}}$, a contradiction.  Thus, at least one of $M_{qi}^1,M_{qi}^2$ equals $\mu$.  But then if $M_{qi}^\vee>M_{qi}^1\vee M_{qi}^2$, we can only have $M_{qi}^\vee=R$ (see Figure \ref{fig:M}, remembering that $\lam$ and $\rho$ do not appear in fC-matrices).

For the second option, as in \ref{it:jifC1}, we have
$q=0$ and $(\sigma^1\vee \sigma^2)\restr_{D_{0i}}=\Delta_{D_{0i}}$, or $q=n$ and 
$(\sigma^1\vee \sigma^2)\restr_{D_{ni}}=\nabla_{D_{ni}}$.
In the first of these alternatives we have:
\begin{align*}
M_{0i}^\vee=\mu &\iff 
(\sigma^1\vee\sigma^2)\cap (D_{0i}\times D_{1j})\neq \emptyset
\text{ for some } j\in\N
\\
&\iff \sigma^t\cap (D_{0i}\times D_{1j})\neq \emptyset
\text{ for some } t=1,2,\  j\in\N
\\
&\iff M_{0i}^t=\mu \text{ for some } t=1,2.
\end{align*}
And in the second alternative, using Lemma \ref{la:jiid}\ref{it:jiid2}, we have
\begin{align*}
M_{ni}^\vee=R &\iff D_{ni}\subseteq I(\sigma^1\vee\sigma^2)
\\
&\iff
D_{ni}\subseteq I(\sigma^t) \text{ for some } t=1,2
\\
&\iff
M_{ni}^t=R \text{ for some } t=1,2,
\end{align*}
and as in \ref{it:jifC1} this leads to $M_{ni}^\vee=M_{ni}^1\vee M_{ni}^2$, completing the proof.
\end{proof}

We are now ready to demonstrate the non-existence of pentagons in $\Cong(\Ptw{n,d})$, which is the final step in the proof of Theorem \ref{thm:distrib}.

\begin{prop}
\label{pr:5gons}
For $n\geq1$ and $d\geq0$, the lattice $\Cong(\Ptw{n,d})$ does not contain a sublattice isomorphic to the pentagon.  
\end{prop}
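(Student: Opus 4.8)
The plan is to argue by contradiction: assume $\Cong(\Ptw{n,d})$ contains a pentagon and show that its two comparable middle elements must coincide. (When $d=0$ this is vacuous, since $\Cong(\Ptw{n,0})$ is distributive by Theorem \ref{thm:distrib}\ref{dist1}, so I assume $d\geq1$.) Label the pentagon so that $\si_1\subsetneq\si_2$ is its two-element chain, $\si$ is the element incomparable to both, the top is $\si_1\vee\si=\si_2\vee\si$, and the bottom is $\si_1\wedge\si=\si_2\wedge\si$; the aim is to deduce $\si_1=\si_2$, contradicting $\si_1\subsetneq\si_2$. Write $M^1,M^2,M$ for the fC-matrices of $\si_1,\si_2,\si$, and recall from Theorem \ref{thm:finmain} that distinct congruences have distinct fC-matrices, so it suffices to prove $M^1=M^2$.

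The first reduction handles the rows $q\geq2$. By Lemma \ref{la:jifC}, both $\vee$ and $\wedge$ are computed \emph{entrywise} (in the ordering of Figure \ref{fig:M}) on every row $q\geq2$, all the exceptions being confined to rows $0$ and $1$. Since each normal subgroup lattice of $\S_q$ is a chain, the restriction of an fC-matrix to its rows $q\geq2$ takes values in a product of chains, which is distributive, and the restriction map $\phi$ is a lattice homomorphism. Applying $\phi$ to the pentagon places its image in a distributive (hence modular) lattice; from $\phi(\si_1)\leqc\phi(\si_2)$, $\phi(\si_1)\vee\phi(\si)=\phi(\si_2)\vee\phi(\si)$ and $\phi(\si_1)\wedge\phi(\si)=\phi(\si_2)\wedge\phi(\si)$, the modular law forces $\phi(\si_1)=\phi(\si_2)$. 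Thus $M^1$ and $M^2$ agree on all rows $q\geq2$.

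Next I would decouple rows $0$ and $1$ from the rest. Because $M^1$ and $M^2$ share every row $q\geq2$ (in particular row $2$), the verticality conditions of Definition \ref{de:fC} impose identical lower bounds on rows $0$ and $1$ of $M^1$ and of $M^2$: an $R$ in row $2$ forces an $R$ beneath it, and an $N$-symbol forces the entry beneath to lie in $\{\mu,R\}$, and these constraints propagate down to row $0$. Consequently all four combinations $\si_1\vee\si,\ \si_2\vee\si,\ \si_1\wedge\si,\ \si_2\wedge\si$ have identical rows $q\geq2$ within the top and bottom pairs, and the whole discrepancy between $M^1$ and $M^2$ lives in rows $0$ and $1$. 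It remains to rule out a pentagon in this essentially two-row situation (which also directly covers the case $n=1$, where rows $q\geq2$ are absent and the previous step is vacuous).

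This last step is the heart of the argument and the main obstacle. On a single cell of rows $0,1$ the entry poset $\{\Delta,\muup,\mudown,\mu,R\}$ is itself distributive, so the only sources of non-distributive behaviour are the two genuine anomalies isolated in Lemma \ref{la:jifC} — the meet of two \emph{matched} $\mu$'s in row $0$ collapsing to $\Delta$, and a join involving a $\mu$ jumping to $R$ — together with the offset condition of Theorem \ref{thm:fincomp}\ref{it:fc2}, which renders the inclusion order strictly finer than $\leqc$ for matrices of type \ref{fRT2} or \ref{fRT4}. I would therefore record, for each of $\si_1,\si_2,\si$, the pair $(\min_0(M),\min_1(M))$ of $R$-thresholds and, when the type is \ref{fRT2} or \ref{fRT4}, the offset $\mumin_1(M)-\mumin_0(M)$, and then run through the finitely many type combinations for $(M^1,M^2)$ against $M$. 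In each case one evaluates the fC-matrices of $\si_1\vee\si$, $\si_2\vee\si$, $\si_1\wedge\si$ and $\si_2\wedge\si$ by Lemma \ref{la:jifC}, and checks via Theorem \ref{thm:fincomp} that the two pentagon equalities force $\min_0(M^1)=\min_0(M^2)$, $\min_1(M^1)=\min_1(M^2)$ and equality of the intervening $\mu/\muup/\mudown$ entries, i.e.\ $M^1=M^2$. The delicate point — and precisely the reason the diamond of Figure \ref{fig:5dia} survives while a pentagon cannot — is the bookkeeping of the $\mu$-offset: one must verify that whenever the join equality is realised through the $\mu\mapsto R$ exception (so that $\si$ supplies the missing $R$'s), the matching clause \ref{fb2} of Theorem \ref{thm:fincomp} pins the offsets of $\si_1$ and $\si_2$ to a common value, after which the meet equality forces their $R$-thresholds to coincide. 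Completing this case analysis yields $M^1=M^2$, the required contradiction.
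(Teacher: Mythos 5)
Your opening reduction is sound and, for rows $q\geq2$, genuinely slicker than what the paper does: since Lemma \ref{la:jifC} says meets and joins are computed entrywise away from rows $0$ and $1$, and the allowable entries in each row $q\geq2$ form a chain, the restriction to rows $q\geq2$ is a lattice homomorphism into a product of chains, and modularity of that image forces the two comparable middle elements of a putative pentagon to agree there. (The paper instead absorbs this into its case analysis, using only that the entry lattice at a single position has no pentagon.) So that part I accept.

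The problem is that everything after this is a plan rather than a proof, and it is precisely the part where all the difficulty of the proposition lives. The paper's argument for rows $0$ and $1$ occupies two full ``stages'': Stage 1 treats a witness pair inside a single $\D$-class and runs through five configurations of the entries $M^1_{qi},M^2_{qi}$ (matched $\mu$ against matched $\mu$, matched against unmatched, the two row-$0$ cases, etc.), each splitting further into subcases according to the \emph{positions} $l,u$ of the matching partners ($l<u$, $l=u$, $l>u$), with each subcase killed by exhibiting a specific pair in $(\si^1\cap\si^3)\setminus(\si^1\cap\si^2)$ or a failure of $(\si^1\vee\si^2)\restr_X=(\si^1\vee\si^3)\restr_X$; Stage 2 then handles the case where the two middle congruences agree on every $\D$-class but differ on cross-class pairs, which your framework does not isolate at all (two congruences can have identical restrictions to all $\D$-classes yet differ, via the $\De$-versus-matched-$\mu$ ambiguity of Remark \ref{rem:amb}). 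Your final paragraph says ``run through the finitely many type combinations \dots one evaluates \dots and checks''; but the check is not finite over \emph{types} --- it requires bookkeeping of which $\mu$ is matched to which and where, exactly the positional subcases above --- and none of it is carried out. Since this is the content that distinguishes the pentagon from the diamond of Figure \ref{fig:5dia} (which your outline would not obviously exclude either, until the cases are actually worked), the proposal as written does not establish the proposition.
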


\begin{proof}
Seeking a contradiction, suppose $\Cong(\Ptw{n,d})$ does in fact contain the following five-element sublattice, with (non)inclusions, meets and joins as indicated:
\[
\begin{tikzpicture}[scale=.7]
\node (cap) at (0,0) {$\si^\cap=\cg(M^\cap)$};
\node (1) at (-3.5,3) {$\si^1=\cg(M^1)$};
\node (vee) at (0,6) {$\si^\vee=\cg(M^\vee)$};
\node (2) at (3.5,2) {$\si^2=\cg(M^2)$};
\node (3) at (3.5,4) {$\si^3=\cg(M^3)$};
\draw (cap)--(2)--(3)--(vee)--(1)--(cap);
\end{tikzpicture}
\]

First we claim that there exists $(\ba,\bb)\in\sigma^3\setminus\sigma^2$ with $\ba,\bb\not=\zero$.  To see this, let $(\bc,\bd)\in\sigma^3\setminus\sigma^2$ be arbitrary.  Certainly $\bc$ and $\bd$ are not both $\zero$, say $\bc\not=\zero$; take $\ba:=\bc$.  If $\bd\not=\zero$ we take $\bb:=\bd$, so suppose instead that $\bd=\zero$.  Since $\sigma^2\neq\Delta_{\Ptw{n,d}}$, we may take any $\bb\in I(\si^2)\sm\{\zero\}$, with the required conditions easily checked.

With the claim established, for the rest of the proof we fix some $(\ba,\bb)\in\sigma^3\setminus\sigma^2$ with $\ba\in D_{qi}$ and $\bb\in D_{rj}$.
Notice immediately from $(\ba,\bb)\not\in\sigma^1\cap\sigma^2=\sigma^1\cap\sigma^3$,
and from $\sigma^3\subseteq\sigma^1\vee\sigma^3=\sigma^1\vee\sigma^2$ that
\begin{equation}
\label{eq:5g1}
(\ba,\bb)\not \in \sigma^1 \AND (\ba,\bb)\in\sigma^1\vee\sigma^2.
\end{equation}

From here the proof will proceed in two stages: first we will consider the situation where $(\ba,\bb)\in\D$;
and then afterwards we consider $(\ba,\bb)\not\in\D$, with the additional assumption
that the restrictions of $\sigma^2$ and $\sigma^3$ to every $\D$-class coincide.

\setcounter{stageco}{0}

\stage \label{st:5g1}
$\ba,\bb\in D_{qi}(=D_{qj})$.
From $(\ba,\bb)\in\si^3\restr_{D_{qi}}\sm\si^2\restr_{D_{qi}}$, we obtain
\begin{equation}\label{eq:M>M}
M_{qi}^3>M_{qi}^2.
\end{equation}
We claim that
\begin{equation}
\label{eq:5g3}
M_{qi}^\cap\neq M_{qi}^1\wedge M_{qi}^t \qquad\text{or}\qquad
M_{qi}^\vee\neq M_{qi}^1\vee M_{qi}^t\qquad \text{for some } t=2,3.
\end{equation}
Indeed, if not, then $M_{qi}^1,M_{qi}^2,M_{qi}^3,M_{qi}^\cap,M_{qi}^\vee$ would  form a homomorphic image of the the pentagon.
Furthermore, $M_{qi}^2\lessc M_{qi}^3$, and it quickly follows that in fact these entries form a pentagon.
This is a contradiction, because the lattice of allowable entries in any fixed position in an fC-matrix has no pentagons
(see Figure \ref{fig:M}), and \eqref{eq:5g3} is proved.

Now, use Lemma \ref{la:jifC} to observe that
\begin{align*}
&M_{qi}^\cap\neq M_{qi}^1\wedge M_{qi}^2\implies M_{qi}^\cap\lessc M_{qi}^1\wedge M_{qi}^2\leqc M_{qi}^1\wedge M_{qi}^3,
\\[2mm]
\AND&M_{qi}^\vee\neq M_{qi}^1\vee M_{qi}^3\implies M_{qi}^\vee\grc M_{qi}^1\vee M_{qi}^3\geqc M_{qi}^1\wedge M_{qi}^2.
\end{align*}
Thus \eqref{eq:5g3} is equivalent to
\[
M_{qi}^\cap\lessc M_{qi}^1\wedge M_{qi}^3 \qquad\text{or}\qquad
M_{qi}^\vee\grc M_{qi}^1\vee M_{qi}^2.
\]

Consider the first of these alternatives, i.e.~$M_{qi}^\cap\lessc M_{qi}^1\wedge M_{qi}^3$.
By Lemma \ref{la:jifC}\ref{it:jifC1} this can only happen when
$q=0$ and $M_{0i}^1=M_{0i}^3=\mu$.
But this implies $\sigma^3\restr_{D_{0i}}=\Delta_{D_{0i}}$, in contradiction with 
$(\ba,\bb)\in\sigma^3\setminus\sigma^2\subseteq \sigma^3\setminus\Delta_{\Ptw{n,d}}$.
Therefore we conclude that
\begin{equation}
\label{eq:5g3b}
M_{qi}^\vee\grc M_{qi}^1\vee M_{qi}^2.
\end{equation}
Lemma \ref{la:jifC}\ref{it:jifC2} gives us all the situations in which \eqref{eq:5g3b} may happen.
For the purposes of this proof we organise the cases as follows:
\bit
\item
$q=1$, $M_{1i}^1=M_{1i}^2=\mu$ and both are matched;
\item
$q=1$, $M_{1i}^1$ is one of $\Delta,\muup,\mudown$ or  an unmatched $\mu$,
and $M_{1i}^2=\mu$;
\item
$q=1$, $M_{1i}^1=\mu$, and $M_{1i}^2$ is one of $\Delta,\muup,\mudown$ or an unmatched $\mu$;
\item
$q=0$, $M_{0i}^1\in\{\Delta,\mu\}$ and $M_{0i}^2=\mu$;
\item
$q=0$, $M_{0i}^1=\mu$ and $M_{0i}^2=\Delta$.
\eit
Note that in the first three cases, if $M_{1i}^2=\mu$ then $M_{1i}^3=R$ by \eqref{eq:M>M}. Likewise, in the last two cases, $M_{0i}^3=R$ always (remembering that $(\ba,\bb)\in\si^3\restr_{D_{qi}}\sm\De_{D_{qi}}$ for the fifth).  In all five cases, Lemma \ref{la:jifC}\ref{it:jifC2} gives $M_{qi}^\vee=R$.
Throughout what follows we will make use of the following relation:
\[
\widetilde{\mu}:=\bigset{(l,\gamma),(m,\delta))}{ \gamma,\delta\in I_1,\ \widehat{\gamma}=\widehat{\delta},\ l,m\in\bdz}.
\]
 
\setcounter{caseco}{0}

\case \label{ca:5g1}
$q=1$, $M_{1i}^1=M_{1i}^2=\mu$ and both are matched.
Suppose $M_{1i}^1=\mu$ is matched with $M_{0l}^1=\mu$, and $M_{1i}^2=\mu$ is matched with $M_{0u}^2=\mu$.
From $M_{1i}^3=R$ and $\sigma^2\subseteq\sigma^3$ it follows that $M_{0u}^3=R$.
We split into subcases depending on the relationship between $l$ and $u$.

\setcounter{subcaseco}{0}

\subcase
$l=u$.
Note that there are no $\sigma^t$-relationships ($t=1,2$) between $X:=D_{1i}\cup D_{0l}$ and $\Ptw{n,d}\setminus X$. Hence
\[
(\sigma^1\vee\sigma^2)\restr_X=\sigma^1\restr_X\vee\sigma^2\restr_X=\widetilde{\mu}\restr_X\neq\nabla_X=
(\sigma^1\vee\sigma^3)\restr_X,
\]
a contradiction.

\subcase
$l<u$.
This implies $M_{0u}^1\neq\Delta$.
If $M_{0u}^1=\mu$ then it is matched to some $M_{1v}^1=\mu$ with $v>i$.
But then $M_{0u}^3=M_{1v}^3=R$, and so
\[
(\sigma^1\cap\sigma^3)\cap(D_{0u}\times D_{1v})=\widetilde{\mu}\cap (D_{0u}\times D_{1v})\neq\emptyset
=(\sigma^1\cap\sigma^2)\cap (D_{0u}\times D_{1v}),
\]
a contradiction.
If $M_{0u}^1=R$ then
\[
(\sigma^1\cap\sigma^3)\cap(D_{0u}\times\{\zero\})=D_{0u}\times\{\zero\}\neq\emptyset=(\sigma^1\cap\sigma^2)\cap (D_{0u}\times\{\zero\}),
\]
a contradiction.

\subcase $l>u$.
This time $M_{0l}^3=R$, and hence
\[
(\sigma^1\cap\sigma^3)\cap (D_{0l}\times D_{1i})=\widetilde{\mu}\cap (D_{0l}\times D_{1i})\neq \emptyset
= (\sigma^1\cap\sigma^2)\cap (D_{0l}\times D_{1i}),
\]
a contradiction.

\case \label{ca:5g2}
$q=1$, $M_{1i}^1$ is one of $\Delta,\muup,\mudown$ or  an unmatched $\mu$,
and $M_{1i}^2=\mu$.
If $M_{1i}^2=\mu$ is unmatched, then there are no $\sigma^t$-relationships ($t=1,2$) between
$D_{1i}$ and $\Ptw{n,d}\setminus D_{1i}$, and hence
\[
\nab_{D_{1i}} = \si^\vee\restr_{D_{1i}} = (\sigma^1\vee\sigma^2)\restr_{D_{1i}}=\sigma^1\restr_{D_{1i}}\vee\sigma^2\restr_{D_{1i}}=\widetilde{\mu}\restr_{D_{1i}},
\]
a contradiction.  So suppose that $M_{1i}^2=\mu$ is matched with $M_{0l}^2=\mu$.  Since $M_{1i}^3=R$ and ${\si^3\cap(D_{0l}\times D_{1i})\supseteq\si^2\cap(D_{0l}\times D_{1i})\not=\emptyset}$, we have $M_{0l}^3=R$.
If $M_{0l}^1=\Delta$ a similar argument to the above, but this time restricting to $D_{0l}\cup D_{1i}$, leads to a contradiction. 
If $M_{0l}^1=\mu$, and is matched with $M_{1u}^1=\mu$, then necessarily $u>i$, so that $M_{1u}^3=R$, and therefore
\[
(\sigma^1\cap\sigma^3)\cap (D_{0l}\times D_{1u})=\widetilde{\mu}\cap (D_{0l}\times D_{1u})\neq\emptyset
=(\sigma^1\cap\sigma^2)\cap (D_{0l}\times D_{1u}),
\]
a contradiction. Finally, if $M_{0l}^1=R$ then
\[
(\sigma^1\cap\sigma^3)\cap (D_{0l}\times\{\zero\})=D_{0l}\times\{\zero\}\neq \emptyset
= (\sigma^1\cap\sigma^2)\cap(D_{0l}\times\{\zero\}),
\]
a contradiction.

\case\label{ca:5g2a}
$q=1$, $M_{1i}^1=\mu$, and $M_{1i}^2$ is one of $\Delta,\muup,\mudown$ or an unmatched $\mu$.  As in Case \ref{ca:5g2}, $M_{1i}^1=\mu$ must be matched to some $M_{0u}^1=\mu$.  Next note that
\[
\sigma^2\restr_{D_{1i}}=(\sigma^1\cap\sigma^2)\restr_{D_{1i}}=(\sigma^1\cap\sigma^3)\restr_{D_{1i}}=\widetilde\mu\restr_{D_{1i}}\cap\si^3\restr_{D_{1i}}.
\]
If $M_{1i}^3\in\{R,\mu\}$, it follows that $\sigma^2\restr_{D_{1i}}=\widetilde\mu\restr_{D_{1i}}$, and so $M_{1i}^2=\mu$.  By \eqref{eq:M>M}, the only other possibility is for $M_{1i}^3\in\{\muup,\mudown\}$ and $M_{1i}^2=\De$; but in this case ${\sigma^2\restr_{D_{1i}}=\widetilde\mu\restr_{D_{1i}}\cap\si^3\restr_{D_{1i}}=\sigma^3\restr_{D_{1i}}}$, and so $M_{1i}^2=M_{1i}^3$, a contradiction.  We therefore conclude that $M_{1i}^2=\mu$, which is then unmatched, and also $M_{1i}^3=R$.
If $M_{0u}^2=\Delta$, then there are no $\sigma^t$-relationships ($t=1,2$) between $D_{0u}\cup D_{1i}$ and the rest of $\Ptw{n,d}$, and the usual argument (using $M_{1i}^3=R$) again yields a contradiction.
Also, we cannot have $M_{0u}^2=\mu$, since $u<i$ already, and there is no suitable fC-matrix.
But if $M_{0u}^2=R$, then also $M_{0u}^3=R$, and hence
\[
(\sigma^1\cap\sigma^3)\cap(D_{0u}\times D_{1i})=\widetilde{\mu}\cap(D_{0u}\times D_{1i})\neq\emptyset
=(\sigma^1\cap\sigma^2)\cap(D_{0u}\times D_{1i}),
\]
a contradiction.

\case \label{ca:5g3}
$q=0$, $M_{0i}^1\in\{\Delta,\mu\}$ and $M_{0i}^2=\mu$.
Suppose $M_{0i}^2=\mu$ is matched to $M_{1j}^2=\mu$.
From $M_{0i}^3=R$ and $\sigma^2\subseteq\sigma^3$ it follows that $M_{1j}^3=R$.
Therefore $(\sigma^3\setminus\sigma^2)\restr_{D_{1j}}\neq\emptyset$, which
is the situation treated in Cases \ref{ca:5g1}--\ref{ca:5g2a}.

\case \label{ca:5g4}
$q=0$, $M_{0i}^1=\mu$ and $M_{0i}^2=\Delta$.
Let $M_{0i}^1=\mu$ be matched with $M_{1u}^1=\mu$.
If $M_{1u}^2$ is $\Delta,\muup,\mudown$ or an unmatched $\mu$, there are no $\sigma^t$-relationships ($t=1,2$) between $D_{0i}\cup D_{1u}$ and the rest of $\Ptw{n,d}$, and this leads to the usual contradiction.
If $M_{1u}^2=\mu$ is matched to $M_{0l}^2=\mu$ then necessarily $l>i$ and so $M_{0l}^3=R$;
from $\sigma^2\subseteq\sigma^3$ it follows that $M_{1u}^3=R$ as well, and this takes us back to Case \ref{ca:5g1}.
Finally, if $M_{1u}^2=R$, then also $M_{1u}^3=R$, and so
\[
(\sigma^1\cap\sigma^3)\cap (D_{0i}\times D_{1u})=\widetilde{\mu}\cap(D_{0i}\times D_{1u})\neq\emptyset
=(\sigma^1\cap\sigma^2)\cap (D_{0i}\times D_{1u}),
\]
a contradiction.
This completes the proof of this case, and indeed of Stage \ref{st:5g1}.

\stage \label{st:5g2}
$(\ba,\bb)\not\in \D$, but $\sigma^2\restr_{D_{sl}}=\sigma^3\restr_{D_{sl}}$ for all $s\in \bnz$ and $l\in \N$.
Since $(\ba,\bb)\in\sigma^3$ and $(\ba,\bb)\not\in\D$ it follows from Lemma \ref{la:Dqirj} that one of the following holds:
\bit
\item
$M_{qi}^3=M_{rj}^3=R$; or
\item
$M_{qi}^3=M_{rj}^3=\mu$ are matched.
\eit

\setcounter{caseco}{0}

\case \label{ca:5gg1}
$M_{qi}^3=M_{rj}^3=R$.
By the assumptions for this stage, $\sigma^2\restr_{D_{qi}}=\sigma^3\restr_{D_{qi}}=\nabla_{D_{qi}}$ and $\sigma^2\restr_{D_{rj}}=\sigma^3\restr_{D_{rj}}=\nabla_{D_{rj}}$.
On the other hand, since $(\ba,\bb)\in (\sigma^3\setminus\sigma^2)\cap (D_{qi}\times D_{rj})$, we cannot have
$M_{qi}^2=M_{rj}^2=R$.
Without loss assume that $M_{qi}^2\neq R$. 
As in Remark \ref{rem:amb}
we must have $q=n$ and $M_{ni}^2=\S_n$.
If $M_{ni}^1\not=R$, then there are no $\sigma^t$-relationships ($t=1,2$) between~$D_{ni}$ and $\Ptw{n,d}\setminus D_{ni}$, contradicting $(\ba,\bb)\in\sigma^1\vee\sigma^3=\sigma^1\vee\sigma^2$; therefore $M_{ni}^1=R$.
But then ${(\ba,\zero)\in (\sigma^1\cap\sigma^3)\setminus(\sigma^1\cap\sigma^2)}$, a contradiction.

\case \label{ca:5gg2}
$M_{qi}^3=M_{rj}^3=\mu$ are matched.
Without loss we may assume that $q=0$ and $r=1$, and we note that $i<j$.  By \eqref{eq:M>M} we have $M_{0i}^2=\De$.
From $\sigma^2\restr_{D_{1j}}=\sigma^3\restr_{D_{1j}}$ it follows that $M_{1j}^2=\mu$.
This entry cannot be matched to $M_{0i}^2$, because $(\ba,\bb)\in (\sigma^3\setminus\sigma^2)\cap
(D_{0i}\times D_{1j})$. On the other hand, it cannot be matched to any other entry in row $1$, as that would violate
$\sigma^2\subseteq\sigma^3$.
We conclude that $M_{1j}^2=\mu$ is unmatched.

Since $(\ba,\bb)\in\sigma^1\vee\sigma^3=\sigma^1\vee\sigma^2$, there must be $\sigma^1$-relationships between
$D_{0i}$ and another $\D$-class, and also between $D_{1j}$ and another $\D$-class.
In particular, $M_{0i}^1,M_{1j}^1\in\{\mu,R\}$, and if $M_{1j}^1=\mu$ then this is matched in $M^1$.
We cannot have $M_{0i}^1=M_{1j}^1=R$, as this would imply $(\ba,\bb)\in\sigma^1$, in contradiction with
\eqref{eq:5g1}.
For the same reason we cannot have $M_{0i}^1=M_{1j}^1=\mu$ matched to each other.  

\setcounter{subcaseco}{0}

\subcase $M_{0i}^1=\mu$.  This is necessarily matched to some $M_{1l}^1$, and (since either $M_{1j}^1=R$ or else $M_{1j}^1=\mu$ is not matched to $M_{0i}^1$) we have $l<j$.  Since $M_{1j}^2=\mu$ is unmatched, it follows that $M_{1l}^1=\De=M_{0i}^2$.  Thus, there are no $\si^1\vee\si^2$-relationships between $X:=D_{0i}\cup D_{1l}$ and the rest of~$\Ptw{n,d}$.  But this contradicts $(\ba,\bb)\in\si^1\vee\si^3=\si^1\vee\si^2$, since $\ba\in X$ and $\bb\not\in X$.

\subcase $M_{0i}^1=R$.  As noted above, here we must have $M_{1j}^1=\mu$, and this must be matched to some $M_{0l}^1=\mu$ with $l<i$.  This time we have $M_{0l}^2=\De$, and since $M_{1j}^2=\mu$ is unmatched, it follows that there are no $\si^1\vee\si^2$-relationships between $D_{0l}\cup D_{1j}$ and the rest of $\Ptw{n,d}$.  This leads to the same contradiction as in the previous subcase.

This completes the proof of this case, Stage \ref{st:5g2}, and of the proposition. 
\end{proof}

\begin{rem}\label{rem:sts}
Our proof that the lattice $\Cong(\Ptw{n,d})$ is modular involved showing that it contains no pentagon sublattices.  Another way one might hope to show a congruence lattice is modular is to use a classical result of J\'onsson \cite{Jonsson1953}.  Specifically, it follows from \cite[Theorem 1.2]{Jonsson1953} that if $\si\circ\tau\circ\si = \tau\circ\si\circ\tau$ for all congruences $\si$ and $\tau$ on some algebra $A$, then $\Cong(A)$ is modular.  One might therefore wonder if J\'onsson's condition holds in $\Cong(\Ptw{n,d})$, but it turns out that it does not (apart from trivially small cases).  For example, suppose $n\geq2$ and $d\geq1$, and consider the congruences $\si=\cg(M^1)$ and $\tau=\cg(M^2)$ of $\Ptw{n,d}$, for the fC-matrices
\[
M^1 = \Cmatsetup
\begin{array}{|c|c|c|c|c|}\hline
\renewcommand{\arraystretch}{2}
\cellcolor{delcol}\Delta & \cellcolor{delcol}\Delta & \cellcolor{delcol}\Delta & \cellcolor{delcol}\cdots&\cellcolor{delcol}\De \\ \hline
\cellcolor{delcol}\vvdots & \cellcolor{delcol}\vvdots & \cellcolor{delcol}\vvdots & \cellcolor{delcol}\vvdots&\cellcolor{delcol}\vvdots \\ \hline
\cellcolor{delcol}\Delta & \cellcolor{delcol}\Delta & \cellcolor{delcol}\Delta & \cellcolor{delcol}\cdots&\cellcolor{delcol}\De \\ \hline
\cellcolor{delcol}\Delta & \cellcolor{Rcol}R & \cellcolor{Rcol}R & \cellcolor{Rcol}\cdots& \cellcolor{Rcol}R \\ \hline
\cellcolor{delcol}\Delta & \cellcolor{Rcol}R & \cellcolor{Rcol}R & \cellcolor{Rcol}\cdots& \cellcolor{Rcol}R \\ \hline
\end{array}
\AND
M^2 = \Cmatsetup
\begin{array}{|c|c|c|c|c|}\hline
\renewcommand{\arraystretch}{2}
\cellcolor{delcol}\Delta & \cellcolor{delcol}\Delta & \cellcolor{delcol}\Delta & \cellcolor{delcol}\cdots&\cellcolor{delcol}\De \\ \hline
\cellcolor{delcol}\vvdots & \cellcolor{delcol}\vvdots & \cellcolor{delcol}\vvdots & \cellcolor{delcol}\vvdots&\cellcolor{delcol}\vvdots \\ \hline
\cellcolor{delcol}\Delta & \cellcolor{delcol}\Delta & \cellcolor{delcol}\Delta & \cellcolor{delcol}\cdots&\cellcolor{delcol}\De \\ \hline
\cellcolor{delcol}\Delta & \cellcolor{mucol}\mu & \cellcolor{Rcol}R & \cellcolor{Rcol}\cdots& \cellcolor{Rcol}R \\ \hline
\cellcolor{mucol}\mu & \cellcolor{Rcol}R & \cellcolor{Rcol}R & \cellcolor{Rcol}\cdots& \cellcolor{Rcol}R \\ \hline
\end{array}\ .
\]
Let $\al,\be\in D_1$ be such that $\wh\al\not=\wh\be$.  Then  
\[
(\wh\al,0) \mr\tau (\al,1) \mr\si (\be,1) \mr\tau (\wh\be,0),
\]
and hence $((\wh\al,0),(\wh\be,0))\in \tau\circ\si\circ\tau$.
We claim that $((\wh\al,0),(\wh\be,0))\not\in\si\circ \tau\circ\si$.
To see this suppose
\[
(\wh\al,0) \mr\si \ba \mr\tau \bb \mr\si \bc \qquad\text{where $\ba,\bb,\bc\in\Ptw{n,d}$.}
\]
By the form of $\si$ and $\tau$, we must have $\ba=(\wh\al,0)$, and then either $\bb=(\wh\al,0)$ or else $\bb=(\ga,1)$ for some $\ga\in D_1$ with $\wh\ga=\wh\al$.  In the first case it follows that $\bc=(\wh\al,0)$; in the second, $\bc$ belongs to the ideal class $I(\si)$.  In both cases it follows that $\bc\not=(\wh\be,0)$.  
Hence $\tau\circ\si\circ\tau\neq \si\circ \tau\circ\si$, as desired.
\end{rem}

\begin{rem}
\label{rem:deg}
We conclude with a brief overview of the properties of $\Cong(\Ptw{n,d})$ for $n\leq1$.
When $n=0$, the monoid $\Ptw{n,d}$ is isomorphic to the finite nilpotent monoid $\N/(d+1,d+2)^\sharp$ of order $d+2$,
and its congruence lattice is a chain of length $d+2$.
For $n=1$ and $d=0$ the lattice $\Cong(\Ptw{1,0})$ is a three-element chain. For $n=1$ and $d>0$ we have three families of fC-matrix:
\begin{align*}
&\Cmatsetup\begin{array}{r|c|c|c|c|c|c|c|c|c|c|}\hhline{~|-|-|-|-|-|-|-|-|-|-|}
\renewcommand{\arraystretch}{2}
\text{\scriptsize 1} &\cellcolor{delcol}\Delta &\cellcolor{delcol}\dots & \cellcolor{delcol}\Delta & 
\cellcolor{delcol}\Delta & \cellcolor{delcol}\dots & \cellcolor{delcol}\Delta &
\cellcolor{Rcol} R& \cellcolor{Rcol}\dots&\cellcolor{Rcol} R
\\ \hhline{~|-|-|-|-|-|-|-|-|-|-|}
\text{\scriptsize 0}&\cellcolor{delcol}\Delta &\cellcolor{delcol}\dots &\cellcolor{delcol}\Delta & 
\cellcolor{Rcol} R &\cellcolor{Rcol}\dots&\cellcolor{Rcol} R&\cellcolor{Rcol} R& \cellcolor{Rcol}\dots&\cellcolor{Rcol} R
\\ \hhline{~|-|-|-|-|-|-|-|-|-|-|}
\multicolumn{1}{c}{}&\multicolumn{1}{c}{}&\multicolumn{1}{c}{}&\multicolumn{1}{c}{}&\multicolumn{1}{c}{\text{\scriptsize $i$}}&\multicolumn{1}{c}{}&\multicolumn{1}{c}{}&\multicolumn{1}{c}{\text{\scriptsize $j$}}&\multicolumn{1}{c}{}&\multicolumn{1}{c}{}
\end{array}
&&\hspace{-1.8cm}\text{for $0\leq i\leq j\leq d+1$,}\\[2mm]
&\Cmatsetup\begin{array}{r|c|c|c|c|c|c|c|c|c|c|c|c|}\hhline{~|-|-|-|-|-|-|-|-|-|-|-|}
\renewcommand{\arraystretch}{2}
\text{\scriptsize 1} &\cellcolor{delcol}\Delta &\cellcolor{delcol}\dots & \cellcolor{delcol}\Delta &\cellcolor{delcol}\Delta & \cellcolor{mucol} \mu & \cellcolor{mucol} \dots &\cellcolor{mucol} \mu &
\cellcolor{mucol} \mu &
\cellcolor{Rcol} R&\cellcolor{Rcol}\dots& \cellcolor{Rcol} R\\ \hhline{~|-|-|-|-|-|-|-|-|-|-|-|}
\text{\scriptsize 0}&\cellcolor{delcol}\Delta &\cellcolor{delcol}\dots &\cellcolor{delcol}\Delta & 
\cellcolor{mucol}\mu & \cellcolor{mucol} \mu & \cellcolor{mucol} \dots &\cellcolor{mucol} \mu &
\cellcolor{Rcol} R &\cellcolor{Rcol} R&\cellcolor{Rcol}\dots& \cellcolor{Rcol} R\\ \hhline{~|-|-|-|-|-|-|-|-|-|-|-|}
\multicolumn{1}{c}{}&\multicolumn{1}{c}{}&\multicolumn{1}{c}{}&\multicolumn{1}{c}{}&\multicolumn{1}{c}{\text{\scriptsize $i$}}&\multicolumn{1}{c}{}&\multicolumn{1}{c}{}&\multicolumn{1}{c}{}&\multicolumn{1}{c}{\text{\scriptsize $j$}}&\multicolumn{1}{c}{}&\multicolumn{1}{c}{}
\end{array}
&&\hspace{-1.8cm}\text{for $0\leq i< j\leq d$,}\\[2mm]
&\Cmatsetup\begin{array}{r|c|c|c|c|c|c|c|c|c|c|c|}\hhline{~|-|-|-|-|-|-|-|-|-|-|-|}
\renewcommand{\arraystretch}{2}
\text{\scriptsize 1} &\cellcolor{delcol}\Delta &\cellcolor{delcol}\dots & \cellcolor{delcol}\Delta  & \cellcolor{delcol}\Delta &
\cellcolor{delcol}\Delta & \cellcolor{delcol}\dots & \cellcolor{delcol}\Delta &
\cellcolor{mucol} \mu & \cellcolor{Rcol} R& \cellcolor{Rcol}\dots & \cellcolor{Rcol} R 
\\ \hhline{~|-|-|-|-|-|-|-|-|-|-|-|}
\text{\scriptsize 0}&\cellcolor{delcol}\Delta &\cellcolor{delcol}\dots &\cellcolor{delcol}\Delta  
&\cellcolor{mucol}\mu&
\cellcolor{Rcol} R &\cellcolor{Rcol}\dots&\cellcolor{Rcol} R&\cellcolor{Rcol} R&\cellcolor{Rcol} R& \cellcolor{Rcol}\dots & \cellcolor{Rcol} R
\\ \hhline{~|-|-|-|-|-|-|-|-|-|-|-|}
\multicolumn{1}{c}{}&\multicolumn{1}{c}{}&\multicolumn{1}{c}{}&\multicolumn{1}{c}{}&\multicolumn{1}{c}{}&\multicolumn{1}{c}{\text{\scriptsize $i$}}&\multicolumn{1}{c}{}&\multicolumn{1}{c}{}&\multicolumn{1}{c}{}&\multicolumn{1}{c}{\text{\scriptsize $j$}}
\end{array}
&&\hspace{-1.8cm}\text{for $1\leq i< j-1\leq d$.}
\end{align*}
The lattice $\Cong(\Ptw{1,d})$ is modular:
the proof of Proposition \ref{pr:5gons} is valid for $n=1$, even though many of its cases do not arise.
And the lattice remains non-distributive, as witnessed by the diamond from Figure \ref{fig:pediPtw1}, which `survives' in all $\Cong(\Ptw{1,d})$ for $d\geq 1$.
In fact \cite[Figure 7]{ERtwisted1} shows the Hasse diagram of $\Cong(\Ptw{1,4})$, in which many copies of the diamond are seen.
(The pentagon in Figure \ref{fig:pediPtw1} of course does not survive in any $\Ptw{1,d}$.)
\end{rem}

\section{Generators of congruences of \boldmath{$\Ptw{n,d}$}}\label{sec:fgPnd}

The results of Section \ref{sec:fg} also have analogues for the finite monoids $\Ptw{n,d}$.  
Note that in $\Ptw{n,d}$ the non-zero principal ideals are
\[
I_{qi} :=  \bigcup \set{ D_{rj}}{ 0\leq r\leq q,\ i\leq j\leq d}   \cup  \{\zero\} \qquad\text{for $q\in\bnz$ and $i\in\bdz$.}
\]

\begin{thm}\label{thm:fpc}
Let $\ba=(i,\al)\in D_{qi}$ and $\bb=(j,\be)\in D_{rj}$, both considered as elements of $\Ptw{n,d}$, where $q\geq r$, and $i\leq j$ if $q=r$.
\ben
\item \label{fpc1} If $\al=\be$ and $i=j$ (i.e.~$\ba=\bb$), then $\pc\ba\bb=\De_{\Ptw{n,d}}$.  We also have $\pc\zero\zero=\De_{\Ptw{n,d}}$.
\item \label{fpc2} If $[q\geq2$ and $(\ba,\bb)\not\in\H]$ or if $[q=r\leq1$ and $[i\neq j$ or $\wh\al\neq \wh\be]]$ or if $[q=1$, $r=0$ and $[j\geq i$ or $\wh\al\neq \be]]$, then $\pc\ba\bb=R_{I_{qi}\cup I_{rj}}$.  We also have $\pc\ba\zero=R_{I_{qi}}$.
\item \label{fpc3} If $q\geq3$, $(\ba,\bb)\in\H$ and $\ba\neq \bb$, then with $N:=\norm{\pd(\al,\be)}$ we have $\pc\ba\bb=\cg(M)$ for the fC-matrix
\[
\Cmatsetup
\begin{array}{r|c|c|c|c|c|c|c|}\hhline{~|-|-|-|-|-|-|-}
\renewcommand{\arraystretch}{2}
\text{\scriptsize $q$} &\cellcolor{delcol}\De &\cellcolor{delcol}\cdots & \cellcolor{delcol}\De  & \cellcolor{Ncol} N & \cellcolor{Ncol}N & \cellcolor{Ncol} \cdots & \cellcolor{Ncol}N\\ \hhline{~|-|-|-|-|-|-|-} 
&\cellcolor{delcol}\De &\cellcolor{delcol}\cdots & \cellcolor{delcol}\De  & \cellcolor{Rcol} R & \cellcolor{Rcol}R & \cellcolor{Rcol} \cdots & \cellcolor{Rcol} R\\ \hhline{~|-|-|-|-|-|-|-}
&\cellcolor{delcol}\vvdots &\cellcolor{delcol}\vvdots & \cellcolor{delcol}\vvdots  & \cellcolor{Rcol} \vvdots & \cellcolor{Rcol}\vvdots & \cellcolor{Rcol} \vvdots & \cellcolor{Rcol} \vvdots\\ \hhline{~|-|-|-|-|-|-|-}
\text{\scriptsize $0$} &\cellcolor{delcol}\De &\cellcolor{delcol}\cdots & \cellcolor{delcol}\De  & \cellcolor{Rcol} R & \cellcolor{Rcol}R & \cellcolor{Rcol} \cdots & \cellcolor{Rcol} R\\ \hhline{~|-|-|-|-|-|-|-}
\multicolumn{1}{c}{}&\multicolumn{1}{c}{}&\multicolumn{1}{c}{}&\multicolumn{1}{c}{}&\multicolumn{1}{c}{\text{\scriptsize $i$}}&\multicolumn{1}{c}{}&\multicolumn{1}{c}{}&\multicolumn{1}{c}{}
\end{array}
\ .
\]
\item \label{fpc4} If $q=2$, $(\ba,\bb)\in\H$ and $\ba\neq \bb$, then $\pc\ba\bb=\cg(M)$ for the fC-matrix
\[
\Cmatsetup
\begin{array}{r|c|c|c|c|c|c|c|}\hhline{~|-|-|-|-|-|-|-}
\text{\scriptsize $2$} &\cellcolor{delcol}\De &\cellcolor{delcol}\cdots & \cellcolor{delcol}\De  & \cellcolor{Ncol} \S_2 & \cellcolor{Ncol}\S_2 & \cellcolor{Ncol} \cdots &  \cellcolor{Ncol} \S_2\\ \hhline{~|-|-|-|-|-|-|-} 
\text{\scriptsize $1$} &\cellcolor{delcol}\De &\cellcolor{delcol}\cdots & \cellcolor{delcol}\De  & \cellcolor{excepcol} \mu & \cellcolor{mucol}\mu & \cellcolor{mucol} \cdots &  \cellcolor{mucol} \mu\\ \hhline{~|-|-|-|-|-|-|-}
\text{\scriptsize $0$} &\cellcolor{delcol}\De &\cellcolor{delcol}\cdots & \cellcolor{delcol}\De  & \cellcolor{mucol} \mu & \cellcolor{mucol}\mu & \cellcolor{mucol} \cdots &  \cellcolor{Rcol} R\\ \hhline{~|-|-|-|-|-|-|-}
\multicolumn{1}{c}{}&\multicolumn{1}{c}{}&\multicolumn{1}{c}{}&\multicolumn{1}{c}{}&\multicolumn{1}{c}{\text{\scriptsize $i$}}&\multicolumn{1}{c}{}&\multicolumn{1}{c}{}&\multicolumn{1}{c}{}
\end{array}
\ .
\]
\item \label{fpc5} If $q=r=1$ and $i=j$, and if $(\al,\be)$ belongs to one of $\mu_1\sm(\muup\cup\mudown)$, $\muup\sm\De_{D_1}$ or $\mudown\sm\De_{D_1}$, then with $\ze=\mu$, $\muup$ or~$\mudown$, respectively, we have $\pc\ba\bb=\cg(M)$ for the fC-matrix
\[
\Cmatsetup
\begin{array}{r|c|c|c|c|c|c|c|c|}\hhline{~|-|-|-|-|-|-|-|-}
\text{\scriptsize $1$} &\cellcolor{delcol}\De &\cellcolor{delcol}\cdots & \cellcolor{delcol}\De  & \cellcolor{excepcol} \ze & \cellcolor{mucol}\mu & \cellcolor{mucol} \cdots & \cellcolor{mucol}\mu& \cellcolor{mucol}\mu\\ \hhline{~|-|-|-|-|-|-|-|-}
\text{\scriptsize $0$} &\cellcolor{delcol}\De &\cellcolor{delcol}\cdots & \cellcolor{delcol}\De  & \cellcolor{mucol} \mu & \cellcolor{mucol}\mu & \cellcolor{mucol} \cdots & \cellcolor{mucol}\mu& \cellcolor{Rcol}R\\ \hhline{~|-|-|-|-|-|-|-|-}
\multicolumn{1}{c}{}&\multicolumn{1}{c}{}&\multicolumn{1}{c}{}&\multicolumn{1}{c}{}&\multicolumn{1}{c}{\text{\scriptsize $i$}}&\multicolumn{1}{c}{}&\multicolumn{1}{c}{}&\multicolumn{1}{c}{}
\end{array}\ .
\]
\item \label{fpc6} If $q=1$, $r=0$, $i=j+1$ and $\wh\al=\be$, then $\pc\ba\bb=\cg(M)$ for the fC-matrix
\[
\Cmatsetup
\begin{array}{r|c|c|c|c|c|c|c|c|}\hhline{~|-|-|-|-|-|-|-|-}
\text{\scriptsize $1$} &\cellcolor{delcol}\De &\cellcolor{delcol}\cdots & \cellcolor{delcol}\De  & \cellcolor{excepcol} \De & \cellcolor{mucol}\mu & \cellcolor{mucol} \cdots & \cellcolor{mucol}\mu& \cellcolor{mucol}\mu\\ \hhline{~|-|-|-|-|-|-|-|-}
\text{\scriptsize $0$} &\cellcolor{delcol}\De &\cellcolor{delcol}\cdots & \cellcolor{delcol}\De  & \cellcolor{mucol} \mu & \cellcolor{mucol}\mu & \cellcolor{mucol} \cdots & \cellcolor{mucol}\mu& \cellcolor{Rcol}R\\ \hhline{~|-|-|-|-|-|-|-|-}
\multicolumn{1}{c}{}&\multicolumn{1}{c}{}&\multicolumn{1}{c}{}&\multicolumn{1}{c}{}&\multicolumn{1}{c}{\text{\scriptsize $j$}}&\multicolumn{1}{c}{\text{\scriptsize $i$}}&\multicolumn{1}{c}{}&\multicolumn{1}{c}{}
\end{array}\ .
\]
\item \label{fpc7} If $q=1$, $r=0$, $i>j+1$, and $\wh\al=\be$, then $\pc\ba\bb=\cg(M)$ for the fC-matrix
\[
\Cmatsetup
\begin{array}{r|c|c|c|c|c|c|c|c|c|c|c|}\hhline{~|-|-|-|-|-|-|-|-|-|-|-}
\text{\scriptsize $1$} &\cellcolor{delcol}\De &\cellcolor{delcol}\cdots & \cellcolor{delcol}\De& \cellcolor{delcol}\De & \cellcolor{delcol}\De &\cellcolor{delcol}\cdots & \cellcolor{delcol}\De  & \cellcolor{mucol}\mu & \cellcolor{Rcol} R & \cellcolor{Rcol}\cdots & \cellcolor{Rcol} R \\ \hhline{~|-|-|-|-|-|-|-|-|-|-|-}
\text{\scriptsize $0$} &\cellcolor{delcol}\De &\cellcolor{delcol}\cdots & \cellcolor{delcol}\De  & \cellcolor{mucol}\mu & \cellcolor{Rcol} R & \cellcolor{Rcol}\cdots & \cellcolor{Rcol} R & \cellcolor{Rcol} R & \cellcolor{Rcol} R & \cellcolor{Rcol}\cdots & \cellcolor{Rcol} R \\ \hhline{~|-|-|-|-|-|-|-|-|-|-|-}
\multicolumn{1}{c}{}&\multicolumn{1}{c}{}&\multicolumn{1}{c}{}&\multicolumn{1}{c}{}&\multicolumn{1}{c}{\text{\scriptsize $j$}}&\multicolumn{1}{c}{}&\multicolumn{1}{c}{}&\multicolumn{1}{c}{}&\multicolumn{1}{c}{\text{\scriptsize $i$}}&\multicolumn{1}{c}{}&\multicolumn{1}{c}{}&\multicolumn{1}{c}{}
\end{array}\ .
\]
\een
The above exhaust all principal congruences on $\Ptw{n,d}$.
\qed
\end{thm}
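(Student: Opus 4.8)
The plan is to transcribe the proof of Theorem \ref{thm:pc} into the finite setting, exploiting the fact that $\Ptw{n,d}$ has no exceptional congruences, so that only the analogue of Lemma \ref{lem:fg}\ref{it:fg1} is required. First I would record this analogue: for an fC-matrix $M$ and a set $\Om\sub\Ptw{n,d}\times\Ptw{n,d}$, writing $M'$ for the fC-matrix associated to $\Om^\sharp$, we have $\cg(M)=\Om^\sharp$ if and only if $\Om\sub\cg(M)$ and $M\leqc M'$. The forward direction is immediate. For the converse, $\Om\sub\cg(M)$ yields $\Om^\sharp\sub\cg(M)$, whence $M'\leqc M$ by Theorem \ref{thm:fincomp}\ref{it:fc1}; combined with $M\leqc M'$ this forces $M'=M$, so $\Om^\sharp=\cg(M')=\cg(M)$ by the bijection of Theorem \ref{thm:finmain}. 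In each part I would take $\Om=\{(\ba,\bb)\}$ (adjoining $\pc\zero\zero$ or $\pc\ba\zero$ where the statement demands it), reducing everything to the two checks $\Om\sub\cg(M)$ and $M\leqc M'$.

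For parts \ref{fpc3}--\ref{fpc7} I would verify these two hypotheses directly. That $(\ba,\bb)\in\cg(M)$ for the displayed matrix is routine against Definition \ref{de:fCcong}. For $M\leqc M'$, since the rows of any fC-matrix are $\leqc$-increasing (the finite analogue of Remark \ref{rem:leqc}), it suffices to check $M_{qi}\leqc M'_{qi}$ only at the finitely many positions where the stated $M$ first rises above $\De$ (or above a previous $\mu$). At each such position I would read off the restriction $\si_{qi}$ forced by $(\ba,\bb)$ and invoke the entry/restriction correspondence built into Definition \ref{de:fCcong} (the finite counterpart of Table \ref{tab:M} and Remark \ref{rem:amb}), together with the verticality conditions of Definition \ref{de:fC} and the entry order of Figure \ref{fig:M} --- exactly as in cases \ref{pc7} and \ref{pc4} of Theorem \ref{thm:pc}. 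The $q=2$ case \ref{fpc4} needs the extra step of propagating $M'_{2i}\geqc\S_2$ upward to force $M'_{1i},M'_{0i}\geqc\mu$ via verticality, mirroring case \ref{pc7}.

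Parts \ref{fpc1} and \ref{fpc2} are handled separately. Part \ref{fpc1} is trivial. For the Rees cases in part \ref{fpc2} I would argue as in case \ref{pc3} and cases \ref{pc9}--\ref{pc11} of Theorem \ref{thm:pc}: any non-$\H$ pair in a $\D$-class of rank $\geq2$, and any pair of rank $\leq1$ with $\wh\al\neq\wh\be$ (here the collapse is necessarily to $R$, since $\lam$ and $\rho$ do not occur in fC-matrices), forces its whole $\D$-class into the ideal class, so that $\pc\ba\bb=R_{I_{qi}\cup I_{rj}}$ and $\pc\ba\zero=R_{I_{qi}}$; the merging of the infinite cases \ref{pc9}--\ref{pc11} into the single finite case is precisely this simplification.

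Completeness is then automatic: every principal congruence is $\pc\ba\bb$ for some $\ba,\bb$, and after reducing by the symmetry $\pc\ba\bb=\pc\bb\ba$ to $q\geq r$ with $i\leq j$ when $q=r$, the hypotheses of \ref{fpc1}--\ref{fpc7} partition all admissible configurations of $(q,r,i,j)$ together with the $\H$-relationship and the coincidences $\wh\al=\wh\be$. The step I expect to be most delicate is the verification of $M\leqc M'$ in the mixed-row cases \ref{fpc6} and \ref{fpc7}, where $\ba$ and $\bb$ lie in rows $1$ and $0$ and in distinct columns: there one must track the $\mu$-matching of Definition \ref{de:matched} and the boundary column $\min_q(M)$ carefully, so as to certify that the generated fC-matrix genuinely has the stated type (type \ref{fRT4} for \ref{fpc7}) rather than collapsing further to $R$, exactly the subtlety encountered with the matching $\mu$s in the infinite cases \ref{pc15} and \ref{pc16}.
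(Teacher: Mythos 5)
The paper offers no written proof of this theorem: it is stated with the end-of-proof marker attached directly to the statement, the authors evidently regarding it as the routine finite analogue of Theorem \ref{thm:pc}, obtainable either through the Correspondence Theorem applied to the quotient $\Ptw{n,d}=\Ptw n/R_{I_{n,d+1}}$ or by repeating the argument inside $\Ptw{n,d}$. Your proposal takes the second route and is correct: your finite version of Lemma \ref{lem:fg}\ref{it:fg1} is stated and proved properly (the implication $\Om^\sharp\sub\cg(M)\implies M'\leqc M$ is exactly condition \ref{it:fc1} of Theorem \ref{thm:fincomp}), and the per-part verification of $\Om\sub\cg(M)$ together with $M\leqc M'$ at the finitely many positions where the target matrix strictly increases along a row is the same mechanism as in the proof of Theorem \ref{thm:pc}. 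The one place where your sketch is thinner than it should be is part \ref{fpc2}. You justify the collapse to a Rees congruence only for non-$\H$ pairs of rank at least $2$ and for low-rank pairs with $\wh\al\neq\wh\be$, attributing it to the absence of $\lam$ and $\rho$ among fC-matrix entries. But part \ref{fpc2} also absorbs the configurations with $\wh\al=\wh\be$ (resp.\ $\wh\al=\be$) and mismatched columns --- namely $q=r\leq1$ with $i\neq j$, and $q=1$, $r=0$ with $i\leq j$ --- which in the infinite monoid produce the \emph{non-Rees} congruences of Theorem \ref{thm:pc}\ref{pc2}, \ref{pc13} and \ref{pc14}. The reason these too collapse to $R_{I_{qi}\cup I_{rj}}$ is different: clause \textsf{(fC6)} of Definition \ref{de:fCcong} only relates elements of distinct classes $D_{qi}$ and $D_{rj}$ when $i-j=\min_q(M)-\min_r(M)$, and by the finitary row types this difference is $0$ for $q=r$ and strictly positive for $(q,r)=(1,0)$, so such a pair can only be related via \textsf{(fC2)}. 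Once this observation is added, your case analysis is genuinely exhaustive and the proof goes through as you describe.
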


Recalling once more the Correspondence Theorem, from Theorem \ref{thm:fg} we obtain:

\begin{cor}\label{co:ffg}
Every congruence on $\Ptw{n,d}$ can be generated by at most $\lceil\frac{5n}2\rceil$ pairs.  \epfres
\end{cor}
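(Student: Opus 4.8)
The plan is to deduce the bound directly from its infinite counterpart, Theorem \ref{thm:fg}\ref{fg2}, via the realisation of $\Ptw{n,d}$ as a Rees quotient of $\Ptw n$. Recall from Subsection \ref{subsec:Ptw} that $\Ptw{n,d}=\Ptw n/R_{I_{n,d+1}}$, and write $\phi\colon\Ptw n\to\Ptw{n,d}$ for the associated quotient homomorphism, whose kernel is the Rees congruence $\ker\phi=R_{I_{n,d+1}}$. By the Correspondence Theorem, $\phi^{-1}$ gives a lattice isomorphism from $\Cong(\Ptw{n,d})$ onto the interval $[R_{I_{n,d+1}},\nab_{\Ptw n}]$ of $\Cong(\Ptw n)$.

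First I would record the general fact that generating sets push forward along surjections: if $\phi\colon M\to \bar M$ is a surjective homomorphism, $\si\in\Cong(\bar M)$, and $\bar\si:=\phi^{-1}(\si)$ is generated by $\Om\sub M\times M$, then $\si$ is generated by $\phi(\Om):=\set{(\phi(x),\phi(y))}{(x,y)\in\Om}$. One inclusion is immediate, since $\Om\sub\bar\si$ forces $\phi(\Om)\sub\si$, whence $\phi(\Om)^\sharp\sub\si$. For the reverse, $\phi^{-1}(\phi(\Om)^\sharp)$ is a congruence on $M$ containing both $\ker\phi$ and $\Om$, hence containing $\Om^\sharp=\bar\si=\phi^{-1}(\si)$; applying the correspondence (both congruences contain $\ker\phi$) yields $\si\sub\phi(\Om)^\sharp$.

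With this in hand the corollary is almost immediate. Given any $\si\in\Cong(\Ptw{n,d})$, set $\bar\si:=\phi^{-1}(\si)\in\Cong(\Ptw n)$. By Theorem \ref{thm:fg}\ref{fg2} we may write $\bar\si=\Om^\sharp$ for some $\Om$ with $|\Om|\le\lceil\frac{5n}2\rceil$, and then the push-forward fact gives $\si=\phi(\Om)^\sharp$ with $|\phi(\Om)|\le|\Om|\le\lceil\frac{5n}2\rceil$. I expect no genuine obstacle here: the only points needing care are the direction of the correspondence in the push-forward lemma, and the harmless observation that $\phi$ may collapse some generating pairs (which can only decrease the count, and so preserves the bound). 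In particular, the bound is uniform in $d$, exactly as for $\Ptw n$ itself.
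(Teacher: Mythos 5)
Your proposal is correct and follows exactly the paper's route: the paper deduces the corollary from Theorem \ref{thm:fg} by invoking the Correspondence Theorem for the Rees quotient $\Ptw{n,d}=\Ptw n/R_{I_{n,d+1}}$, which is precisely your push-forward argument. You have merely spelled out the standard fact that generating sets of $\phi^{-1}(\si)$ map to generating sets of $\si$, which the paper leaves implicit.
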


\begin{rem}\label{rem:fg0}
When $d=0$ we can strengthen Corollary \ref{co:ffg} considerably.  Examining Figure \ref{fig:CongPn0}, it is clear that all congruences of $\Ptw{n,0}$ are principal, with the possible exception of $\mu$ and $R_{\S_2}$, and that these are generated by two pairs.  In fact, $\mu$ is also principal, as follows from Theorem~\ref{thm:fpc}\ref{fpc5}.  On the other hand, since $R_{\S_2} = R_1\cup\mu_{\S_2}$ is the union of two proper sub-congruences, it follows that $R_{\S_2}$ is minimally generated by two pairs.  This is reminiscent of the situation with the ordinary partition monoid $\P_n$, whose congruences are all generated by at most two pairs \cite{EMRT2018}.

Corollary \ref{co:ffg} can be similarly improved for other values of $d$ relatively small compared to~$n$.  Indeed, examining the proof of Theorem \ref{thm:fg}, congruences requiring many pairs to generate involve ideals with many `corners' and C-matrices with many distinct $N$-symbols, both of which can only occur when $d$ is suitably large.
\end{rem}

\section{Enumeration of congruences of \boldmath{$\Ptw{n,d}$}}
\label{sec:enumeration}

This section is concerned with determining the numbers $|\Cong(\Ptw{n,d})|$.  
The main result is Theorem~\ref{th:uberenu}, in which we
obtain a closed form for $|\Cong(\Ptw{n,d})|$, and prove that the array formed by these numbers has a rational generating function in two variables.
Furthermore, we show that for fixed $n\geq0$ or $d\geq0$, $|\Cong(\Ptw{n,d})|$ is a polynomial in $d\geq0$ or $n\geq4$, respectively, and give the asymptotic behaviour in Remark \ref{rem:as}.

For $n\geq 1$ and $d\geq 0$ we write $\C_{n,d}$ for the set of all $\bnz\times\bdz$ fC-matrices, and $c_{n,d}:=|\C_{n,d}|$ for the number of such matrices.  It follows from Theorem \ref{thm:finmain} that $|\Cong(\Ptw{n,d})|=c_{n,d}$ for all such~$n$ and $d$.

We begin by recording the values of $|\Cong(\Ptw{n,d})|$ for $n\leq1$:

\begin{lemma}
For all $d\geq 0$ we have:
\label{la:enun01}
\begin{thmenumerate}
\item  \label{it:n011}
$|\Cong(\Ptw{0,d})|=d+2$;
\item \label{it:n012}
$|\Cong(\Ptw{1,d})|=(3d^2+5d+6)/2$.
\end{thmenumerate}
\end{lemma}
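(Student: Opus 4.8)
The plan is to prove both parts by directly enumerating fC-matrices, using the identity $|\Cong(\Ptw{n,d})|=c_{n,d}$ from Theorem~\ref{thm:finmain} together with the explicit descriptions recorded in Remark~\ref{rem:deg}.

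For part~\ref{it:n011} I would argue via the isomorphism $\Ptw{0,d}\cong\N/(d+1,d+2)^\sharp$ noted in Remark~\ref{rem:deg}. By the Correspondence Theorem, the congruences of this quotient are in bijection with the congruences $\th$ of $\N$ satisfying $(d+1,d+2)^\sharp\sub\th$. Applying the criterion~\eqref{eq:th1th2} with $\th_1=(d+1,d+2)^\sharp$ forces $\per\th\mid 1$ and $\min\th\leq d+1$, so that $\th=(m,m+1)^\sharp$ for some $m\in\{0,1,\dots,d+1\}$. These $d+2$ congruences form a chain $\nab_\N=(0,1)^\sharp\supsetneq(1,2)^\sharp\supsetneq\cdots\supsetneq(d+1,d+2)^\sharp$, which yields the count $d+2$.

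For part~\ref{it:n012} I would count the fC-matrices for $n=1$. By the degeneracies recorded in Remark~\ref{rem:01} (namely $\muup\equiv\mudown\equiv\De$ and $\lam\equiv\rho\equiv R$), and in the absence of $N$-symbols, every congruence of $\Ptw{1,d}$ arises from exactly one fC-matrix in one of the three families displayed in Remark~\ref{rem:deg}; it then remains to count each family. The first family is parametrised by the pair $(i,j)$ giving the first columns of $R$ in rows $0$ and $1$ subject to $0\leq i\leq j\leq d+1$, of which there are $\binom{d+3}{2}=\frac{(d+2)(d+3)}2$. The second family is parametrised by $0\leq i<j\leq d$, contributing $\binom{d+1}{2}=\frac{d(d+1)}2$. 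The third is parametrised by $1\leq i<j-1\leq d$, contributing $\sum_{i=1}^{d-1}(d-i)=\frac{d(d-1)}2$. Adding these gives
\[
\frac{(d+2)(d+3)}2+\frac{d(d+1)}2+\frac{d(d-1)}2=\frac{3d^2+5d+6}2,
\]
as required; note that this formula also returns $3$ when $d=0$ (the last two families then being empty), matching the three-element chain $\Cong(\Ptw{1,0})$.

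The computations are entirely elementary; the only point requiring care—which is already settled by Remarks~\ref{rem:01} and~\ref{rem:deg}—is that the three families of fC-matrices are mutually disjoint and exhaustive, and that the collapse of $\muup,\mudown$ to $\De$ has been accounted for, so that no congruence of $\Ptw{1,d}$ is counted twice or omitted.
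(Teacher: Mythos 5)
Your proof is correct and follows essentially the same route as the paper, whose own proof simply cites Remark~\ref{rem:deg} and leaves the counting to the reader: part~\ref{it:n011} is read off from the isomorphism $\Ptw{0,d}\cong\N/(d+1,d+2)^\sharp$, and part~\ref{it:n012} from enumerating the three families of fC-matrices listed there. Your parameter counts $\binom{d+3}{2}$, $\binom{d+1}{2}$ and $\frac{d(d-1)}{2}$ are all correct and sum to the stated formula, so you have merely made explicit the "straightforward counting" the paper asserts.
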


\begin{proof}
This follows from Remark \ref{rem:deg}. For $n=0$ the result is explicitly stated, and for $n=1$ it is a straightforward counting of the fC-matrices listed there.
\end{proof}

To deal with larger values of $n$, we assume until further notice that $n\geq2$.  
For any $d\geq 0$ we express $|\Cong(\Ptw{n,d})|$ in terms of certain numbers $c_{n,d}(\sigma)$.
These numbers are defined recursively in $d$, and are also indexed by congruences $\sigma\in\Cong(\Ptw{n,0})$ of the $0$-twisted monoid $\Ptw{n,0}$ as described in Remark \ref{rem:CongPn0} and depicted in Figure \ref{fig:CongPn0}.
For brevity we write $\Delta$ for $\Delta_{\Ptw{n,0}}$, and the interval $[\Delta,\sigma]$ appearing in the last line is in $\Cong(\Ptw{n,0})$.

\begin{lemma}
\label{la:enum}
For $n\geq 2$ and $d\geq 0$ we have
\begin{equation}
\label{eq:Cgcnd}
|\Cong(\Ptw{n,d})|= \sum_{\sigma\in\Cong(\Ptw{n,0})} c_{n,d}(\sigma),
\end{equation}
where the numbers $c_{n,d}(\sigma)$ satisfy the following recursion:
\begin{equation}
\label{eq:rec0}
c_{n,0}(\sigma)=1\quad \text{for all } \sigma\in\Cong(\Ptw{n,0}),
\end{equation}
and, for $d\geq 1$,
\begin{subnumcases}{\label{eq:recn} c_{n,d}(\sigma)=}
1 & if  $\sigma=\Delta$,\label{eq:recna}\\
d+1 & if   $\sigma=R_0,\mudown,\muup$,\label{eq:recnb}\\
6d & if  $\sigma=\mu$,\label{eq:recnc}\\
2d^2+5d & if  $\sigma=\mu_{\S_2}$,\label{eq:recnd}\\
(9d^2-d+4)/2 & if  $\sigma=R_1$,\label{eq:recne}\\
(13d^3+21d^2+2d+12)/6 & if $ \sigma=R_{\S_2}$,\label{eq:recnf}\\
\displaystyle{\sum_{\tau\in [\Delta,\sigma]}} c_{n,d-1}(\tau) & otherwise.\label{eq:recng}
\end{subnumcases}
\end{lemma}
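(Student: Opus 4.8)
The plan is to use Theorem \ref{thm:finmain} to identify $|\Cong(\Ptw{n,d})|=c_{n,d}$ with the number of fC-matrices, and then to stratify the fC-matrices by their rightmost (column $d$) column. The first thing I would record is that the last column of any fC-matrix is itself a valid fC-column: inspecting the row types in Table \ref{tab:fRT}, row $0$ always terminates in $\De$ or $R$, row $1$ in one of $\De,\muup,\mudown,\mu,R$, and a row $q\geq2$ in $\De$, an $N$-symbol, or $R$; moreover when row $1$ ends in $\mu$, $\muup$ or $\mudown$ the shape of types \ref{fRT2}--\ref{fRT4} forces row $0$ to end in $R$, so the last column is exactly one of the columns in \eqref{eq:fC0}. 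Hence (Remark \ref{rem:CongPn0}) each $M$ determines a congruence $\sigma\in\Cong(\Ptw{n,0})$, and setting $c_{n,d}(\sigma)$ to be the number of fC-matrices with last column $\sigma$, equation \eqref{eq:Cgcnd} is immediate by summing over fibres while the base case \eqref{eq:rec0} holds because an fC-matrix for $d=0$ is a single column.

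The heart of the matter is the recursion \eqref{eq:recng}, which I would prove via the ``delete the last column'' map $M\mapsto M'$, whose partial inverse appends the fixed column $\sigma$; surjectivity of the appending direction is checked against the row types and conditions (V1), (V2) in the obvious way. Because every row is weakly $\leqc$-increasing (Remark \ref{rem:leqc}) and, on single columns, the inclusion order of $\Cong(\Ptw{n,0})$ is the entrywise $\leqc$ order (there being no matched $\mu$s when $d=0$), the last column $\tau$ of $M'$ satisfies $\tau\in[\Delta,\sigma]$. The one subtlety is that $M'$ must be a valid fC-matrix: the only failure mode is for row $0$ of $M'$ to end in $\mu$, which forces $\min_0(M)=d$ and hence, since in types \ref{fRT2} and \ref{fRT4} the first $R$ of row $1$ lies strictly to the right of the first $R$ of row $0$, forces row $1$ of $M$ (hence of $\sigma$) to have no final $R$. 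For every $\sigma$ in the ``otherwise'' range — all of which carry $R$ in row $1$ — this cannot happen, so deletion is a bijection onto $\{M':\text{last column}\leq\sigma\}$ and \eqref{eq:recng} follows.

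It then remains to verify the explicit formulas \eqref{eq:recna}--\eqref{eq:recnf} for the eight congruences of $[\Delta,R_{\S_2}]$, where the deletion recursion breaks down (genuinely so for $\mudown,\muup,\mu,\mu_{\S_2}$, whose last-column symbol is a once-occurring $\zeta$ or an unmatched $\mu$ that cannot be repeated in the preceding column). These I would count directly. Since the last column has $\De$ in every row above its support, weak monotonicity forces those rows to be entirely $\De$, so a matrix is pinned down by its lowest two or three rows. The cases $\De,R_0,\mudown,\muup$ give $1,d+1,d+1,d+1$ at once. For $\mu,\mu_{\S_2},R_1,R_{\S_2}$ I would enumerate the admissible rows $0$ and $1$ by splitting on the types \ref{fRT1}--\ref{fRT4}, tracking the single $\zeta$-entry and the matched/unmatched $\mu$s through the offset $\min_1(M)-\min_0(M)$, and — for $\mu_{\S_2}$ and $R_{\S_2}$ — multiplying in the number of placements of the $\S_2$-run in row $2$ permitted by (V1) above the $\mu$/$R$ entries of row $1$. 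Summing over the relevant cut-positions produces polynomials in $d$ of degrees $1,2,2,3$ respectively.

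The step I expect to be the main obstacle is this last one, and within it the cubic count for $R_{\S_2}$: the placements of the $\S_2$-symbols in row $2$ have to be counted jointly with the $\mu$-patterns of rows $0$ and $1$ subject to (V1) and (V2), so the enumeration reduces to nested sums over two cut-positions that must be evaluated in closed form. The bookkeeping is elementary but delicate, and pinning down the precise lower-order coefficients (the $+12$ and linear term in \eqref{eq:recnf}, and likewise in \eqref{eq:recnd}--\eqref{eq:recne}) will require careful handling of the boundary positions and of the single unmatched-$\mu$ case $\zeta=\mu$.
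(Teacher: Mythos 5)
Your proposal is correct and follows essentially the same route as the paper: identify $|\Cong(\Ptw{n,d})|$ with the number of fC-matrices, stratify by the final column (which is an fC-column from \eqref{eq:fC0}), prove \eqref{eq:recng} by deleting/appending the last column — valid precisely because the relevant $\sigma$ have $R$ in rows $0$ and $1$, so no orphaned $\mu$s can arise — and count the sporadic cases \eqref{eq:recnb}--\eqref{eq:recnf} directly by row type. The only difference is cosmetic: the paper observes that \eqref{eq:recng} in fact holds for all $\sigma\supseteq R_1$, which lets it obtain \eqref{eq:recne} and \eqref{eq:recnf} by induction from the earlier cases rather than by the more delicate direct enumeration you anticipate.
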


\begin{proof}
Examining the defining conditions for fC-matrices given in Definition \ref{de:fC}, we see that the right-most column of an fC-matrix from $\C_{n,d}$ is itself an fC-matrix from $\C_{n,0}$, as listed in \eqref{eq:fC0}.  (This is not necessarily the case for the non-final columns of fC-matrices.)  These one-column fC-matrices are in one-one correspondence with the congruences on $\Ptw{n,0}$.  For $\si\in\Cong(\Ptw{n,0})$, let us write~$\C_{n,d}(\si)$ for the set of all $\bn_0\times\bd_0$ fC-matrices whose final column corresponds to the congruence~$\si$, and let $c_{n,d}(\si):=|\C_{n,d}(\si)|$.  So of course
\begin{equation}\label{eq:cndsi}
c_{n,d} = \sum_{\si\in\Cong(\Ptw{n,0})}c_{n,d}(\si),
\end{equation}
and it remains to show that the numbers $c_{n,d}(\sigma)$ satisfy the recursion given by \eqref{eq:rec0} and \eqref{eq:recn}.

With \eqref{eq:rec0} and  \eqref{eq:recna} being clear, we begin with \eqref{eq:recnb}.  In these cases, the only possible row type is \ref{fRT3} with $l=d+1$.  Here, $\ze$ is $\De$, $\muup$ or $\mudown$, as appropriate, and $0\leq k\leq d$ can be chosen arbitrarily.  

For \eqref{eq:recnc}, row types \ref{fRT2}--\ref{fRT4} are possible.  In type \ref{fRT2} we have $k=d$, while $0\leq i\leq d-1$ and $\ze\in\{\mu,\muup,\mudown,\De\}$ can both be chosen arbitrarily; there are thus $4d$ possibilities here.  In type \ref{fRT3} we have $l=d+1$ and $\ze=\mu$, while $0\leq k\leq d$ can be chosen arbitrarily, giving $d+1$ possibilities.  Similarly, there are $d-1$ possibilities in type \ref{fRT4}.  Adding gives $6d$.

For \eqref{eq:recnd}, the same row types are possible as for \eqref{eq:recnc}.  For types \ref{fRT3} and \ref{fRT4} we again respectively have $d+1$ and $d-1$ possibilities.  For type \ref{fRT2}, we may choose some number of~$\S_2$-labels on row $2$; let $j$ be minimal such that $M_{2j}=\S_2$, noting that $j\leq d$.  Then $i\leq j\leq d$ if $\ze=\mu$ or $i+1\leq j\leq d$ otherwise; this gives a total of $\sum_{i=0}^{d-1}((d-i+1)+3(d-i)) = 2d^2+3d$ possibilities with this type.  Adding gives the desired formula. 

Items \eqref{eq:recne} and \eqref{eq:recnf} can be checked directly in similar fashion to \eqref{eq:recnd}, but they also follow from \eqref{eq:recng} and induction on $d$.  Indeed, although \eqref{eq:recng} is only stated for $\si\supseteq R_2$, we will shortly prove that it holds for all $\si\supseteq R_1$.  For example, writing $p(d)=(9d^2-d+4)/2$, the inductive step in~\eqref{eq:recne} involves verifying that
\[
p(d-1) + c_{n,d-1}(\mu) + c_{n,d-1}(\muup) + c_{n,d-1}(\mudown) + c_{n,d-1}(R_0) + c_{n,d-1}(\De_{\Ptw{n,0}}) = p(d).
\]

For \eqref{eq:recng}, consider an fC-matrix $M\in\C_{n,d}(\si)$, where $\si\supseteq R_1$.  This means that column~$d$ of~$M$ has $R$s in the bottom two positions, and it follows that the matrix $M'$ obtained from $M$ by removing column~$d$ belongs to $\C_{n,d-1}$.  Moreover, by examining the row types, we see that the congruence~$\tau$ on $\Ptw{n,0}$ corresponding to column $d-1$ of $M'$ satisfies $\tau\sub\si$, and  the claim follows.
\end{proof}

It is clear from Lemma \ref{la:enum} that the numbers $c_{n,d}(\sigma)$ actually do not depend on $n$ in the following sense.
Recall that for $m\geq n$  the lattice $\Cong(\Ptw{n,0})$ naturally embeds as an ideal of $\Cong(\Ptw{m,0})$.  
Furthermore, under this embedding, any congruence $\sigma\in\Cong(\Ptw{n,0})$ and its image in $\Cong(\Ptw{m,0})$
have the same label as per Figure \ref{fig:CongPn0} (using $R_n$ for the universal congruence on~$\Ptw{n,0}$).
Thus, identifying a congruence with its label, we have that
$c_{n,d}(\sigma)=c_{m,d}(\sigma)$, and we will write simply $c_d(\sigma)$ for this number.

Now suppose $\sigma\in\Cong(\Ptw{n,0})$ is any congruence strictly containing $R_{\S_2}$, and let $\sigma'\in\Cong(\Ptw{n,0})$ be the unique congruence that $\sigma$ covers.
Then, for any $d\geq1$, two applications of \eqref{eq:recng} give
\begin{equation}
\label{eq:cdpasc}
c_d(\sigma)=\sum_{\tau\in [\Delta,\sigma]} c_{d-1}(\tau)=\sum_{\tau\in [\Delta,\sigma']} c_{d-1}(\tau) + c_{d-1}(\sigma)
=c_d(\sigma')+c_{d-1}(\sigma).
\end{equation}
(For the $\si=R_2$ case, where $\si'=R_{\S_2}$, recall from the proof of Lemma \ref{la:enum} that \eqref{eq:recng} holds for any $\si\supseteq R_1$.)
Also, for any $d\geq0$, \eqref{eq:Cgcnd} and \eqref{eq:recng} give
\begin{equation}
\label{eq:Cgsid}
|\Cong(\Ptw{n,d})|=\sum_{\sigma\in \Cong(\Ptw{n,0})} c_d(\sigma)
=\sum_{\sigma\in [\Delta,R_n]} c_d(\sigma)=c_{d+1}(R_n).
\end{equation}

If we identify every $\sigma\in\Cong(\Ptw{n,0})$ satisfying $R_{\S_2}\subseteq \sigma$ with its position
$\pos(\sigma)$
in the natural sequence
\[
R_{\S_2},R_2,R_{\A_3}, R_{\S_3},R_3,R_{\mathcal K_4},R_{\A_4},R_{\S_4},R_4,R_{\A_5},R_{\S_5},R_5,\dots,
\]
starting with $\pos(R_{\S_2})=0$, we can interpret the numbers $c_d(\sigma)$ as a 2-dimensional array $(b(k,d))_{k,d\geq 0}$, where
$c_d(\sigma)=b(\pos(\sigma),d)$.
`Forgetting' the first column $(b(k,0))_{k\geq 0}$, and `shifting' the array by one to the left, we obtain a new array
$(a(k,d))_{k,d\geq 0}$ featuring in the following:

\begin{lemma}
\label{la:ast}
Let $(a(k,d))_{k,d\in\N}$ be the array defined recursively as follows:
\begin{align}
\label{eq:ast1} a(0,d) &= (13d^3+60d^2+83d+48)/6 &&\text{for $d\geq 0$,}\\
\label{eq:ast2} a(k,0) &= k+8 && \text{for $k\geq 0$,}\\
\label{eq:ast3} a(k,d) &= a(k-1,d)+a(k,d-1) &&  \text{for $k,d\geq 1$.}
\end{align}
Then, for all $n\geq 2$ and $d\geq 0$, we have
\begin{equation}
\label{eq:Cgast}
|\Cong(\Ptw{n,d})|=a(k,d),\quad \text{where } k=k(n):=\begin{cases} 1 & \text{if } n=2\\ 4 & \text{if } n=3 \\ 3n-4 &\text{if } n\geq 4.\end{cases}
\end{equation}
\end{lemma}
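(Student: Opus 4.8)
The plan is to recognise that the array $(a(k,d))$ is, up to a single reindexing, nothing but the numbers $c_d(\sigma)$ recorded along the chain above $R_{\S_2}$ in $\Cong(\Ptw{n,0})$, and then to locate $R_n$ on that chain. Concretely, for $n$ large enough let $\sigma_k$ denote the congruence occupying position $k$ in the sequence $R_{\S_2},R_2,R_{\A_3},\dots$, so that $\pos(\sigma_k)=k$ and (in the notation preceding the lemma) $b(k,d)=c_d(\sigma_k)$; here we use that $c_d(\sigma)$ is independent of $n$. The central claim, which is exactly the asserted ``forget the first column and shift left'', is that
\[
a(k,d)=c_{d+1}(\sigma_k)\qquad\text{for all }k,d\geq0.
\]
Granting this, \eqref{eq:Cgsid} gives $|\Cong(\Ptw{n,d})|=c_{d+1}(R_n)=a(\pos(R_n),d)$, so it will only remain to verify $\pos(R_n)=k(n)$.

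First I would prove the central claim by induction on $k+d$, matching the three defining relations \eqref{eq:ast1}--\eqref{eq:ast3} of $(a(k,d))$ against the established behaviour of $c_d(\sigma)$. For the base case $k=0$ I would substitute $e=d+1$ into the closed form \eqref{eq:recnf} for $c_e(R_{\S_2})$ and expand, confirming $c_{d+1}(R_{\S_2})=(13d^3+60d^2+83d+48)/6$, which is \eqref{eq:ast1}. For the base case $d=0$ I would use $c_0(\sigma)=1$ from \eqref{eq:rec0} together with \eqref{eq:cdpasc} taken at $d=1$, giving $c_1(\sigma_k)=c_1(\sigma_{k-1})+1$; since $c_1(R_{\S_2})=8$ this yields $c_1(\sigma_k)=k+8$, which is \eqref{eq:ast2} (and the two base formulas agree at the corner, both giving $8$). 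For the inductive step with $k,d\geq1$, the recurrence \eqref{eq:cdpasc} applied to $\sigma_k$ reads $c_{d+1}(\sigma_k)=c_{d+1}(\sigma_{k-1})+c_d(\sigma_k)$, which under the inductive hypothesis is precisely $a(k,d)=a(k-1,d)+a(k,d-1)$, i.e.\ \eqref{eq:ast3}; one must note that this use of \eqref{eq:cdpasc} is legitimate because $\sigma_k\supseteq R_2$ for $k\geq1$, and because \eqref{eq:recng} was shown to hold for every $\sigma\supseteq R_1$ (covering the boundary case $\sigma_1=R_2$, $\sigma_0=R_{\S_2}$).

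Next I would compute $\pos(R_n)$. Since the top of $\Cong(\Ptw{n,0})$ is the chain $R_{\S_2},R_2,R_{\A_3},R_{\S_3},R_3,\dots,R_{\S_n},R_n$ (Figure \ref{fig:CongPn0}), one has $\pos(R_n)=1+\sum_{q=3}^{n}(\eta_q+1)$, where $\eta_q$ is the number of nontrivial normal subgroups of $\S_q$: namely $\eta_3=2$ (the subgroups $\A_3,\S_3$), $\eta_4=3$ (the subgroups $\mathcal K_4,\A_4,\S_4$), and $\eta_q=2$ for $q\geq5$ (the subgroups $\A_q,\S_q$). Evaluating gives $\pos(R_2)=1$, $\pos(R_3)=4$, $\pos(R_4)=8$, and $\pos(R_n)=8+3(n-4)=3n-4$ for $n\geq4$, which are exactly the values $k(n)$; the cases $n=2,3$ are singled out precisely because of the deviation of $\eta_3,\eta_4$ from the generic value $2$.

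The only genuinely delicate points are the two polynomial verifications in the base cases and the careful handling of the index shift $a(k,d)=c_{d+1}(\sigma_k)$ — in particular one must check that the lower boundary of the $(a(k,d))$ recursion is aligned with the congruence $R_2$ covering $R_{\S_2}$, so that \eqref{eq:cdpasc} is available throughout the induction. The position count, once the normal-subgroup structure of the symmetric groups is recalled, is routine, and I expect no substantial obstacle beyond this bookkeeping.
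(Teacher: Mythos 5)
Your proposal is correct and follows essentially the same route as the paper: identify $a(k,d)$ with $c_{d+1}(\sigma_k)$ via the shift, verify the two boundary formulas against \eqref{eq:recnf} and $c_1(\sigma)=\bigl|[\Delta,\sigma]\bigr|=\pos(\sigma)+8$, observe that \eqref{eq:ast3} is just \eqref{eq:cdpasc}, and finish with \eqref{eq:Cgsid} and the computation of $\pos(R_n)$. The only cosmetic difference is that you package the verification as an explicit induction on $k+d$ and obtain the $d=0$ boundary from the recurrence starting at $c_1(R_{\S_2})=8$ rather than by counting the interval directly; the content is identical.
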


\begin{proof}
The array $(a(k,d))_{k,d\in\N}=(b(k,d+1))_{k,d\in\N}$ is obtained in the way explained before the lemma.
The polynomial featuring in \eqref{eq:ast1} is obtained from that in \eqref{eq:recnf} byevaluating it at $d+1$.
The formula \eqref{eq:ast2} follows, using \eqref{eq:recng} and \eqref{eq:rec0}, from
\[
c_1(\sigma)=\sum_{\tau\in [\Delta,\sigma]}c_0(\tau)=\sum_{\tau\in [\Delta,\sigma]}1
=\bigl|[\Delta,\sigma]\bigr|=\pos(\sigma)+8.
\]
The `Pascal's triangle' recurrence \eqref{eq:ast3} is a direct translation of \eqref{eq:cdpasc}.
Finally, the equality~\eqref{eq:Cgast} is a straightforward translation of   \eqref{eq:Cgsid}, upon noting
that
\[
\pos(R_n)=\begin{cases} 1 & \text{if } n=2\\ 4 &\text{if } n=3\\ 3n-4 & \text{if } n\geq 4.\end{cases}
\qedhere
\]
\end{proof}

The recurrence \eqref{eq:ast1}--\eqref{eq:ast3} for the $a(k,d)$ is sufficiently simple that it allows standard methods to be deployed to compute the generating function
\[
A(x,y) := \sum_{k,d\geq0} a(k,d) x^ky^d,
\]
and even an exact closed form for $a(k,d)$, which we now proceed to do.
While all the manipulations are standard and elementary, the actual calculations have been performed using 
Maple\texttrademark~\cite{Maple}.

Recall that the generating functions for the constant sequence $(1)_{n\geq 0}$ and for the power sequence $(n^m)_{n\geq 0}$, for fixed $m>0$, are given by
\[
\sum_{n\geq0}x^n = \frac1{1-x} \AND 
\sum_{n\geq0}n^mx^n = \sum_{j=0}^m \stirling{m}{j}\frac{j!\cdot x^j}{(1-x)^{j+1}},
\]
where $\stirling{m}{j}$ denotes the Stirling number of the second kind; see \cite[Eq.~(7.46)]{concrete}.
The generating function of any polynomial is then a linear combination of these functions.
In particular, using \eqref{eq:ast1} and \eqref{eq:ast2}, the generating functions for the boundary sequences $(a(k,0))_{k\geq 0}$ and $(a(0,d))_{d\geq 0}$
in variables $x$ and $y$ respectively are:
\[
B_1(x):=\sum_{k\geq0}a(k,0)x^k = \frac{8-7x}{(1-x)^2}
\AND
B_2(y):=\sum_{d\geq0}a(0,d)y^d = \frac{-2y^3+5y^2+2y+8}{(1-y)^4}.
\]
Straightforward manipulations of power series now give 
\begin{align}
\nonumber
A(x,y) &=\frac{B_1(x)\cdot (1-x) + B_2(y)\cdot (1-y) -a(0,0)}{1-x-y}\\
\label{eq:GFa}
&=\frac{1}{1-x-y}\biggl( \frac{x}{1-x} +\frac{-2y^3+5y^2+2y+8}{(1-y)^3}\biggr).
\end{align}
From this we can obtain an exact formula for $a(k,d)$ by writing down the power series for the terms appearing in 
\[
\frac{1}{1-x-y}=\sum_{i,j\geq 0} \binom{i+j}{i} x^iy^j \COMMA
\frac{x}{1-x} = \sum_{i\geq1} x^i \COMMA
\frac{1}{(1-y)^3}=\sum_{j\geq0} \binom{j+2}{2} y^j,
\]
and finding the coefficient of $x^ky^d$:
\begin{align}
\nonumber a(k,d) &= \sum_{i=0}^{k-1}\binom{i+d}{i} + 8\sum_{j=0}^d\binom{k+j}{k}\binom{d-j+2}{2}
+2\sum_{j=0}^{d-1}\binom{k+j}{k}\binom{d-j+1}{2}\\
\nonumber & \hspace{25mm} +5\sum_{j=0}^{d-2}\binom{k+j}{k}\binom{d-j}{2}
-2\sum_{j=0}^{d-3}\binom{k+j}{k}\binom{d-j-1}{2}.
\intertext{Using standard binomial coefficient identities this simplifies to}
\label{eq:aform}
a(k,d) &= \binom{k+d}{d+1}+8\binom{k+d+3}{k+3}+2\binom{k+d+2}{k+3}+5\binom{k+d+1}{k+3}-2\binom{k+d}{k+3},
\end{align}
where we adopt the convention that $\binom{s}{t}=0$ for $t>s$.
Of course, this formula can also be proved directly, by induction, using the recursive definition \eqref{eq:ast1}--\eqref{eq:ast3} of the $a(k,d)$.

We are now ready to state and prove the main result of this section:

\begin{thm}
\label{th:uberenu}
\begin{thmenumerate}
\item \label{it:ub1}
For $n,d\geq0$, the number of congruences of the twisted partition monoid~$\Ptw{n,d}$ is as follows:
\begin{align*}
|\Cong(\Ptw{0,d})|  &=d+2,
\\
|\Cong(\Ptw{1,d})|&=\textstyle{\frac{3d^2+5d+6}{2}},
\\
|\Cong(\Ptw{2,d})|&=\textstyle{\frac{13d^4+106d^3+299d^2+398d+216}{24}},
\\
|\Cong(\Ptw{3,d})|&=\textstyle{\frac{13d^7+322d^6+3262d^5+17920d^4+58597d^3+115318d^2+127128d+60480}{5040}},
\\
|\Cong(\Ptw{n,d})|&=\textstyle{\binom{3n+d-4}{3n-5}+8\binom{3n+d-1}{3n-1}+2\binom{3n+d-2}{3n-1}+5\binom{3n+d-3}{3n-1}-2\binom{3n+d-4}{3n-1}} \text{ for }  n\geq 4.
\end{align*}
\item  \label{it:ub2}
The array $\bigl(|\Cong(\Ptw{n,d})|\bigr)_{n,d\geq 0}$ has a rational generating function.
\item  \label{it:ub3}
For any fixed $n\geq 0$, the function $\N\rightarrow \N$, $d\mapsto |\Cong(\Ptw{n,d})|$, is a polynomial.
For $n\geq 4$ this polynomial has degree $3n-1$ and leading term $13d^{3n-1}/(3n-1)!$.
\item  \label{it:ub4}
For any fixed $d\geq 0$, the function $\{4,5,\dots\}\rightarrow\N$, $n\mapsto |\Cong(\Ptw{n,d})|$, is a polynomial
of degree $d+1$ with leading term $(3n)^{d+1}/(d+1)!$.
\end{thmenumerate}
\end{thm}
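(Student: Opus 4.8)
The plan is to read all four assertions off the explicit data already assembled, principally the closed form \eqref{eq:aform} for $a(k,d)$, the identification $|\Cong(\Ptw{n,d})|=a(k(n),d)$ from Lemma \ref{la:ast} with $k(n)\in\{1,4,3n-4\}$, and the low-rank values in Lemma \ref{la:enun01}. For part \ref{it:ub1}, the cases $n=0,1$ are precisely Lemma \ref{la:enun01}. For $n\geq2$ one substitutes the relevant $k(n)$ into \eqref{eq:aform} and simplifies: when $n\geq4$ the substitution $k=3n-4$ sends the lower index $k+3$ to $3n-1$ and the complementary index to $3n-5$, reproducing the stated binomial expression verbatim, while the $n=2,3$ polynomials come from setting $k=1,4$ and expanding (a routine computation, carried out in Maple).

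For parts \ref{it:ub3} and \ref{it:ub4} I would extract degrees and leading terms directly from \eqref{eq:aform}, using the two complementary forms of each binomial coefficient. Fixing $n$ (so $k=k(n)$), each term $\binom{k+d+c}{k+3}$ is a polynomial in $d$ of degree $k+3$, and $\binom{k+d}{d+1}=\binom{k+d}{k-1}$ one of degree $k-1$; hence $d\mapsto a(k,d)$ is a polynomial, and for $n\geq4$ the four terms with lower index $k+3=3n-1$ supply the top degree, their combined leading coefficient being $(8+2+5-2)/(3n-1)!=13/(3n-1)!$, which gives degree $3n-1$ and leading term $13d^{3n-1}/(3n-1)!$. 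Dually, fixing $d$ and rewriting each binomial with its \emph{smaller} lower index, $\binom{3n+d-1}{3n-1}=\binom{3n+d-1}{d}$ and so on, exhibits $a(3n-4,d)$ as a polynomial in $n$; here the unique term of maximal degree is $\binom{3n+d-4}{3n-5}=\binom{3n+d-4}{d+1}$, a product of $d+1$ linear factors in $n$, whence degree $d+1$ and leading term $(3n)^{d+1}/(d+1)!$.

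Part \ref{it:ub2} carries the heaviest bookkeeping, and I expect it to be the main obstacle. The array we want is indexed by $(n,d)$, whereas the clean rational object in hand is $A(x,y)=\sum_{k,d}a(k,d)x^ky^d$ from \eqref{eq:GFa}, indexed by $(k,d)$; bridging the two requires extracting from $A$ the subsequence $k=3n-4$ for $n\geq4$, that is, the arithmetic progression $k\equiv2\pmod3$ with $k\geq8$, and reindexing it by $n$. This is a multisection followed by a decimation, both of which preserve rationality (the multisection via a cube-root-of-unity filter, the decimation via the substitution sending $x^3$ to $x$ after dividing out the offset $x^{-4}$); one then corrects for the exceptional rows $n=0,1,2,3$ by adding their individual one-variable generating functions, each rational since the corresponding sequence in $d$ is polynomial. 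Performing these standard but delicate manipulations, once more with Maple, produces the explicit rational function recorded in \eqref{eq:GF}, which completes the proof.
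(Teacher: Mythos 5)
Your proposal is correct and follows essentially the same route as the paper: part (i) by substituting $k(n)$ into \eqref{eq:aform} (with the $n\leq1$ cases from Lemma \ref{la:enun01}), parts (iii) and (iv) by reading degrees and leading coefficients off the two complementary forms of the binomial coefficients, and part (ii) by a cube-root-of-unity multisection of $A(x,y)$ followed by reindexing and correction of the exceptional rows $n\leq3$. The only cosmetic difference is in how the offset in the decimation step is described; the substance is identical.
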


\begin{proof}
\ref{it:ub1}
This follows by combining \eqref{eq:Cgast} and \eqref{eq:aform}.
For $|\Cong(\Ptw{n,d})|$ we  rewrite the first term $\binom{3n+d-4}{d+1}$ as $\binom{3n+d-4}{3n-5}$.

\ref{it:ub2}
For simplicity, we will write $c(n,d)=|\Cong(\Ptw{n,d})|$.  A rational form for the generating function
\[
C(x,y) := \sum_{n,d\geq0}c(n,d)x^ny^d
\]
can be obtained by using a sequence of standard manipulations on the generating function $A(x,y)=\sum_{k,d\geq0}a(k,d)x^ky^d$, whose rational form is given in \eqref{eq:GFa}.
Specifically, the following steps need to be performed:
\bit
\item
`Pick out' the terms in $A(x,y)$ corresponding to the rows of $(a(k,d))_{k,d\geq 0}$ indexed by the numbers of the form $k=3l+2$ ($l\geq 0$). Formally, if the array $(f(k,d))_{k,d\geq 0}$ is defined by
\[
f(k,d)=\begin{cases} a(k,d) &\text{if } k\equiv 2 \!\!\!\! \pmod{3}\\ 0 &\text{otherwise},\end{cases}
\]
then the generating function for this array is
\[
F(x,y):= \sum_{k,d\geq0}f(k,d)x^ky^d =\frac{1}{3}\big(A(x,y)+\omega A(\omega x,y)+\omega^2 A(\omega^2 x,y)\big),
\]
where $\omega$ is a primitive cube root of unity in $\mathbb{C}$.
\item
From $F(x,y)$ `remove' the terms corresponding to rows $2$ and $5$ of $(a(k,d))_{k,d\geq 0}$.
This is done by recalling that the sequences $(a(2,d))_{d\geq 0}$ and $(a(5,d))_{d\geq 0}$ are polynomial,
as given by~\eqref{eq:aform}. So their generating functions $A_2(y):= \sum_{d\geq0}a(2,d)y^d$ and $A_5(y):= \sum_{d\geq0}a(5,d)y^d$ can readily be computed, and then the desired generating function is
\[
G(x,y):= F(x,y)-x^2A_2(y)-x^5A_5(y).
\]
\item
Expanding and simplifying $G(x,y)$ we obtain a rational function of the form $x^2 H(x^3,y)$.
\item
The underlying function $H(x,y)$ is nearly our desired generating function $C(x,y)$.
However, it has no terms $x^iy^j$ where $i=0,1$, and the coefficient of a general term $x^iy^j$ ($i\geq 2$) 
is in fact $c(i+2,j)$. Hence, to obtain $C(x,y)$ we need to `shift' $H$ by two, and `insert' generating functions $C_i(y):=\sum_{d\geq0}c(i,d)y^d$ for each $0\leq i\leq3$:
\[
C(x,y)=x^2H(x,y)+C_{0}(y)+xC_{1}(y)+x^2C_{2}(y)+x^3C_{3}(y).
\]
\item
Polynomial expressions for $c(i,d)$, $0\leq i\leq3$, are given in part \ref{it:ub1}, and they can be converted into the corresponding generating functions $C_i(y)$.
Performing the above calculations in Maple, the desired generating function is now:
\eit
{\scriptsize
\begin{multline}\label{eq:GF}
C(x,y)=\frac{1}{(y-1)^9(x-1)(y^3-3y^2+x+3y-1)}
\Bigl((-x^2+x+1)y^{11}+(x^3+7x^2-8x-12)y^{10}
+(-8x^3-19x^2+28x+65)y^9\\
+(19x^3+28x^2-56x-210)y^8+(x^3-34x^2+69x+450)y^7
+(-80x^3+45x^2-49x-672)y^6\\
+(151x^3-32x^2+7x+714)y^5+(-x^5+x^4-122x^3-31x^2+27x-540)y^4+(x^5-x^4+31x^3+87x^2-34x+285)y^3\\
+(8x^5-8x^4+19x^3-76x^2+21x-100)y^2+(4x^5-4x^4-15x^3+31x^2-7x+21)y+x^5-x^4+3x^3-5x^2+x-2\Bigr).
\end{multline}
}

\ref{it:ub3}
That all these functions are polynomial follows from \ref{it:ub1}, given that for fixed integers $s\in\mathbb Z$ and $t\in\N$, $\binom{d+s}t=\frac{(d+s)(d+s-1)\cdots(d+s-t+1)}{t!}$ is a polynomial in $d$ of degree $t$.  For the second statement, it is clear that the highest power of $d$ is $d^{3n-1}$, that only the last four terms contribute such power, and that the coefficient is $(8+2+5-2)/(3n-1)!=13/(3n-1)!$.

\ref{it:ub4}
This time we can rewrite the final formula from \ref{it:ub1} as
\[
|\Cong(\Ptw{n,d})|=\binom{3n+d-4}{d+1}+8\binom{3n+d-1}{d}+2\binom{3n+d-2}{d-1}+5\binom{3n+d-3}{d-2}- 2\binom{3n+d-4}{d-3}.
\]
This is clearly a polynomial in $n$, of degree $d+1$, which comes from the first term with coefficient $3^{d+1}/(d+1)!$.
\end{proof}

\begin{rem}
The first few polynomials in part \ref{it:ub4} are as follows, each valid (only) for $n\geq4$:
\[
|\Cong(\Ptw{n,0})| = 3n+4 \COMMA
|\Cong(\Ptw{n,1})| = \tfrac{9n^2+27n+16}2 \COMMA
|\Cong(\Ptw{n,2})| = \tfrac{9n^3+45n^2+62n+2}2 .
\]
\end{rem}

\begin{rem}\label{rem:as}
The leading terms of the polynomials in parts \ref{it:ub3} and \ref{it:ub4} lead directly to asymptotic expressions:
\bit
\item $|\Cong(\Ptw{0,d})| \sim d$, $|\Cong(\Ptw{1,d})| \sim \frac{3d^2}2$, $|\Cong(\Ptw{2,d})| \sim \frac{13d^4}{24}$ and $|\Cong(\Ptw{3,d})| \sim \frac{13d^7}{5040}$, as $d\to\infty$,
\item $|\Cong(\Ptw{n,d})| \sim \frac{13d^{3n-1}}{(3n-1)!}$ as $d\to\infty$, for fixed $n\geq4$, and
\item $|\Cong(\Ptw{n,d})| \sim \frac{(3n)^{d+1}}{(d+1)!}$ as $n\to\infty$, for fixed $d\geq0$.
\eit
\end{rem}

The numbers of congruences of $\Ptw{n,d}$, for $0\leq n,d\leq 10$, given by Theorem \ref{th:uberenu}  are listed in Table \ref{tab:enum}.
As a verification of our results, we have computed the same numbers 
by generating and counting the fC-matrices, as well as by directly computing the congruences using GAP \cite{GAP4,Semigroups}, although this latter computation is only feasible for smaller combinations of the parameters.

\begin{table}[ht]
\begin{center}
\scalebox{0.8}{
\begin{tabular}{|l|ccccccccccc|}
\hline
$	n\sm d	$ & $	0	$ & $	1	$ & $	2	$ & $	3	$ & $	4	$ & $	5	$ & $	6	$ & $	7	$ & $	8	$ & $	9	$ & $	10	$ \\
\hline
$	0	$ & $	2	$ & $	3	$ & $	4	$ & $	5	$ & $	6	$ & $	7	$ & $	8	$ & $	9	$ & $	10	$ & $	11	$ & $	12	$ \\
$	1	$ & $	3	$ & $	7	$ & $	14	$ & $	24	$ & $	37	$ & $	53	$ & $	72	$ & $	94	$ & $	119	$ & $	147	$ & $	178	$ \\
$	2	$ & $	9	$ & $	43	$ & $	136	$ & $	334	$ & $	696	$ & $	1294	$ & $	2213	$ & $	3551	$ & $	5419	$ & $	7941	$ & $	11254	$ \\
$	3	$ & $	12	$ & $	76	$ & $	329	$ & $	1105	$ & $	3100	$ & $	7608	$ & $	16842	$ & $	34353	$ & $	65560	$ & $	118404	$ & $	204139	$ \\
$	4	$ & $	16	$ & $	134	$ & $	773	$ & $	3456	$ & $	12806	$ & $	41054	$ & $	117273	$ & $	304889	$ & $	732888	$ & $	1648660	$ & $	3503734	$ \\
$	5	$ & $	19	$ & $	188	$ & $	1281	$ & $	6754	$ & $	29413	$ & $	110312	$ & $	366724	$ & $	1103538	$ & $	3053642	$ & $	7865696	$ & $	19043434	$ \\
$	6	$ & $	22	$ & $	251	$ & $	1969	$ & $	11930	$ & $	59547	$ & $	255132	$ & $	965409	$ & $	3293916	$ & $	10294295	$ & $	29832242	$ & $	80951191	$ \\
$	7	$ & $	25	$ & $	323	$ & $	2864	$ & $	19578	$ & $	110012	$ & $	529298	$ & $	2242845	$ & $	8544569	$ & $	29728765	$ & $	95627675	$ & $	287192490	$ \\
$	8	$ & $	28	$ & $	404	$ & $	3993	$ & $	30373	$ & $	189556	$ & $	1010840	$ & $	4737070	$ & $	19912815	$ & $	76266840	$ & $	269426820	$ & $	886585245	$ \\
$	9	$ & $	31	$ & $	494	$ & $	5383	$ & $	45071	$ & $	309114	$ & $	1808352	$ & $	9279855	$ & $	42636438	$ & $	178144941	$ & $	685232184	$ & $	2450483412	$ \\
$	10	$ & $	34	$ & $	593	$ & $	7061	$ & $	64509	$ & $	482051	$ & $	3068039	$ & $	17102328	$ & $	85221356	$ & $	385570064	$ & $	1603380636	$ & $	6189136484	$ \\
\hline
\end{tabular}
}
\caption{The number of congruences on $\Ptw{n,d}$.}
\label{tab:enum}
\end{center}
\end{table}

It is also interesting to compare the enumeration of congruences of $\Ptw{n,d}$ with those of the (un-twisted) partition monoids $\P_n$, as well as the classical transformation monoids, e.g.~the monoids of full transformations $\T_n$, partial transformations $\PT_n$, and partial bijections $\I_n$, on the set $\bn$. 
For the latter, the classical results of Mal'cev \cite{Malcev1952}, Liber \cite{Liber1953} and
{\v{S}}utov \cite{Sutov1961} (see also a more recent, unified presentation in \cite[Section 6.3]{GMbook}) show that for $n\geq 4$ we have
\[
|\Cong(\T_n)|=|\Cong(\PT_n)|=|\Cong(\I_n)|=3n-1.
\]
Furthermore, Theorem \ref{thm:CongPn} and Figure \ref{fig:CongPn}  show that even though the lattice $\Cong(\P_n)$ is more complicated than in the case of transformations, its size $|\Cong(\P_n)|=3n+8$ ($n\geq 4$) remains linear in $n$. 
Our Theorem \ref{th:uberenu} can be viewed as continuing this theme:
 for $n\geq 4$ we have $|\Cong(\Ptw{n,0})|=3n+4$, a linear function, and, for higher $d$, the values $\big(|\Cong(\Ptw{n,d})|\big)_{n\geq 4}$ at least retain the polynomial behaviour, even though with an increasing degree.

\footnotesize
\def\bibspacing{-1.1pt}
\bibliography{biblio}
\bibliographystyle{abbrv}

\end{document}